\newif\ifprivate
\newif\ifdetails
\renewcommand{\gitMark}{\jobname\,\textbullet{}\,\gitFirstTagDescribe\,\textbullet{}\,\gitAuthorName,\,\gitAuthorIsoDate}
\newcommand{\TODO}[1]%
{\par\fbox{\begin{minipage}{0.9\linewidth}\textbf{TODO:} #1\end{minipage}}\par}
\newcommand{\proofparagraph}[1]{\medskip\par\noindent{\itshape #1.} }
\DeclarePairedDelimiter{\abs}{\lvert}{\rvert}
\newcommand{\bfx}{\mathbf{x}}
\newcommand{\bibinom}[3]{\binom{#1}{#2, #3}}
\newcommand{\C}{\mathbb{C}}
\newcommand{\calC}{\mathcal{C}}
\newcommand{\calD}{\mathcal{D}}
\newcommand{\calF}{\mathcal{F}}
\newcommand{\calG}{\mathcal{G}}
\newcommand{\calH}{\mathcal{H}}
\newcommand{\calI}{\mathcal{I}}
\newcommand{\calJ}{\mathcal{J}}
\newcommand{\calK}{\mathcal{K}}
\newcommand{\calL}{\mathcal{L}}
\newcommand{\calM}{\mathcal{M}}
\newcommand{\calS}{\mathcal{S}}
\newcommand{\calT}{\mathcal{T}}
\newcommand{\calV}{\mathcal{V}}
\newcommand{\calX}{\mathcal{X}}
\newcommand{\calY}{\mathcal{Y}}
\newcommand{\calZ}{\mathcal{Z}}
\DeclarePairedDelimiter{\ceil}{\lceil}{\rceil}
\newcommand{\dd}{\mathrm{d}}
\DeclareMathOperator{\diag}{diag}
\newcommand{\DLMF}[2]{\cite[\href{http://dlmf.nist.gov/#1.E#2}{#1.#2}]{NIST:DLMF:v1.0.16}}
\DeclarePairedDelimiterXPP{\f}[2]{\foperator{#1}}(){}{#2}
\DeclarePairedDelimiterXPP{\fexp}[1]{\exp}(){}{#1}
\DeclarePairedDelimiter{\floor}{\lfloor}{\rfloor}
\newcommand{\foperator}[1]{\mathop{{#1}\empty{}}}
\DeclarePairedDelimiter{\fractional}{\{}{\}}
\DeclarePairedDelimiterXPP{\inftynorm}[1]{}{\lVert}{\rVert}{_\infty}{#1}
\newcommand{\itemref}[1]{\eqref{#1}}
\DeclarePairedDelimiter{\iverson}{[}{]}
\newcommand{\kerneldim}{D}
\renewcommand{\MR}[1]{}
\newcommand{\oeis}[1]{\cite[\href{https://oeis.org/#1}{#1}]{OEIS:2018}}
\DeclarePairedDelimiter{\norm}{\lVert}{\rVert}
\DeclarePairedDelimiterXPP{\Oh}[1]{\foperator{O}}(){}{#1}
\DeclarePairedDelimiterXPP{\oh}[1]{\foperator{o}}(){}{#1}
\newcommand{\R}{\mathbb{R}}
\newcommand{\repr}{\mathsf{reprq}}
\DeclarePairedDelimiterXPP{\Res}[2]{\operatorname{Res}}(){}{#1,\, #2}
\DeclarePairedDelimiter{\set}{\{}{\}}
\DeclarePairedDelimiterX{\setm}[2]{\{}{\}}{#1 \colon \mathopen{}#2}
\newcommand{\sigmaabs}{\sigma_{\mathrm{abs}}}
\newcommand{\tildea}{\widetilde{a}}
\newcommand{\tildeA}{\widetilde{A}}
\newcommand{\tildecalC}{\widetilde{\calC}}
\newcommand{\tildeM}{\widetilde{M}}
\newcommand{\tildev}{\widetilde{v}}
\newcommand{\tpmod}[1]{\ensuremath{\undisp{\pmod{#1}}}}
\newcommand{\trinom}[4]{\binom{#1}{#2, #3, #4}}
\newcommand\undisp[1]{\bgroup\@displayfalse #1\egroup}\makeatother
\newcommand{\val}{\mathsf{lval}}
\newcommand{\Z}{\mathbb{Z}}
\newtheorem{theorem}{Theorem}
\newtheorem{corollary}[theorem]{Corollary}
\newtheorem{lemma}{Lemma}[section]
\newtheorem{proposition}[lemma]{Proposition}
\theoremstyle{remark}
\newtheorem{example}[lemma]{Example}
\newtheorem{remark}[lemma]{Remark}
\numberwithin{equation}{section}
\numberwithin{figure}{section}
\numberwithin{table}{section}
\begin{document}
\title{Asymptotic Analysis of Regular Sequences}
%title aofa: Analysis of Summatory Functions of Regular Sequences: Transducer and Pascal's Rhombus
%title analco: Esthetic Numbers and Lifting Restrictions on the Analysis of Summatory Functions of Regular Sequences

\author[C.~Heuberger]{Clemens Heuberger}
\address{Clemens Heuberger,
  Institut f\"ur Mathematik, Alpen-Adria-Universit\"at Klagenfurt,
  Universit\"atsstra\ss e 65--67, 9020 Klagenfurt am W\"orthersee, Austria}
\email{\href{mailto:clemens.heuberger@aau.at}{clemens.heuberger@aau.at}}

\author[D.~Krenn]{Daniel Krenn}
\address{Daniel Krenn,
  Institut f\"ur Mathematik, Alpen-Adria-Universit\"at Klagenfurt,
  Universit\"atsstra\ss e 65--67, 9020 Klagenfurt am W\"orthersee, Austria}
\email{\href{mailto:math@danielkrenn.at}{math@danielkrenn.at} \textit{or}
  \href{mailto:daniel.krenn@aau.at}{daniel.krenn@aau.at}}

\thanks{C.~Heuberger and D.~Krenn are supported by the
   Austrian Science Fund (FWF): P\,28466-N35.}

\keywords{
  Regular sequence,
  Mellin--Perron summation,
  summatory function,
  Tauberian theorem,
  transducer,
  esthetic numbers,
  Pascal's rhombus%
}

\subjclass[2010]{%
05A16; %Combinatorics: Enumerative combinatorics: asymptotic enumeration
11A63, %Radix representation; digital problems
68Q45, %Formal languages and automata
68R05%  %discrete mathematics in relation to computer science: combinatorics
}

\begin{abstract}
  In this article, $q$-regular sequences in the sense of Allouche and
  Shallit are analysed asymptotically. It is shown that the summatory
  function of a regular sequence can asymptotically be decomposed as a finite sum
  of  periodic fluctuations
  multiplied by a scaling factor. Each of these terms corresponds to an
  eigenvalue of the sum of matrices of a linear representation of
  the sequence; only the eigenvalues of absolute value larger than the
  joint spectral radius of the matrices contribute terms which grow faster than the error term.

  The paper has a particular focus on the Fourier
  coefficients of the periodic fluctuations: They are
  expressed as residues
  of the corresponding Dirichlet generating function.
  This makes it possible to compute them in an efficient way.
  The asymptotic analysis deals with Mellin--Perron summations and
  uses two arguments to overcome convergence issues, namely Hölder
  regularity of the fluctuations together with a pseudo-Tauberian
  argument.

  Apart from the very general result, three examples are discussed in more detail:
  \begin{itemize}
  \item sequences defined as the sum of outputs written by a
    transducer when reading a $q$-ary expansion of the input;
  \item the amount of esthetic numbers in the first~$N$ natural numbers; and
  \item the number of odd entries in the rows of Pascal's rhombus.
  \end{itemize}
  For these examples, very precise asymptotic formul\ae{} are presented.
  In the latter two examples, prior to this analysis only rough
  estimates were known.
\end{abstract}

\thanks{The extended
  abstract~\cite{Heuberger-Krenn-Prodinger:2018:pascal-rhombus} (with
  appendices containing proofs available
  as~\href{https://arxiv.org/abs/1802.03266}{arXiv:1802.03266})
  imposes a restriction on the asymptotic growth. The extended
  abstract~\cite{Heuberger-Krenn:2018:esthetic} (with appendices
  containing proofs available
  as~\href{https://arxiv.org/abs/1808.00842}{arXiv:1808.00842}) lifts
  this restriction by completely getting rid of the corresponding
  technical condition.
  This article now contains the full (majorly restructured) proof
  covering all cases. It is shorter and simpler. We now use a
  generating functions approach which also gives additional
  insights. For example, the cancellations in the proof in
  \cite{Heuberger-Krenn-Prodinger:2018:pascal-rhombus} seem to be a
  kind of magic at that point, but with the new approach, it is now
  clear and no surprise anymore that they have to appear.
  Besides, the examples investigated
  in~\cite{Heuberger-Krenn-Prodinger:2018:pascal-rhombus,
    Heuberger-Krenn:2018:esthetic} are now presented with full
  details.
  A new part on computational aspects of the computation of
  Fourier coefficients is added.
  Reading strategies for various interests are now outlined in
  Part~\ref{part:overview} so that readers
  find their ways through this article.}

\thanks{The authors thank Helmut Prodinger for his early involvement
  and Sara Kropf for her comments on an early
  version of this paper.}

\thanks{This paper is published under a
  Creative Commons Attribution 4.0 International License
  \raisebox{-0.5ex}{\includegraphics[height=1em]{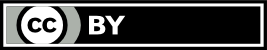}}
  (\url{http://creativecommons.org/licenses/by/4.0/}).
  Copyright is held by the authors.}
  
\maketitle

\newpage
\renewcommand{\contentsname}{}
\newcommand{\nocontentsline}[3]{}
\bgroup\let\addcontentsline=\nocontentsline{}{}{}
\section*{Contents}
\egroup
\vspace*{2ex}
\begin{multicols}{2}
{\vspace*{-12ex}\footnotesize\tableofcontents}
\end{multicols}
\newpage

\part{Introduction}\label{part:overview}

\section{Synopsis: The Objects of Interest and the Result}
In this paper, we study the asymptotic behaviour of the summatory function of a
$q$-regular sequence $x(n)$. At this
point, we give a short overview of the notion of $q$-regular sequences%
\footnote{
  In the standard literature
  \cite{Allouche-Shallit:1992:regular-sequences, Allouche-Shallit:2003:autom}
  these sequences
  are called $k$-regular sequences (instead of $q$-regular sequences).}
and our main result.

One characterisation of a $q$-regular sequence is as follows: The sequence
$x(n)$ is said to be $q$-regular if there are square matrices $A_0$, \ldots, $A_{q-1}$
and a vector-valued sequence $v(n)$ such that
\begin{equation*}
  v(qn+r)=A_r v(n)\qquad \text{for $0\le r<q$ and $n\ge 0$}
\end{equation*}
and such that $x(n)$ is the first component of $v(n)$.

Regular sequences are
intimately related to the $q$-ary expansion of their arguments.
They have been introduced by Allouche and Shallit
\cite{Allouche-Shallit:1992:regular-sequences}; see also
\cite[Chapter~16]{Allouche-Shallit:2003:autom}. Many special
cases have been investigated in the literature; this is also due to their
relation to divide-and-conquer algorithms. Moreover, every $q$-automatic
sequence---those sequences are defined by finite automata---is $q$-regular as well. Take also a look at the
book~\cite{Allouche-Shallit:2003:autom} for many examples.

Our main result is, roughly speaking, that the summatory function of a
$q$-regular sequence $x(n)$ has the asymptotic form
\begin{equation}\label{eq:synopsis-shape-main}
  \sum_{n<N}x(n) = \sum_{j=1}^J N^{\log_q \lambda_j} \frac{(\log
    N)^{k_j}}{k_j!} \Phi_{k_j}(\fractional{\log_q N}) + O(N^{\log_q R}) 
\end{equation}
as $N\to\infty$ for a suitable positive integer~$J$,
suitable constants~$\lambda_j\in \C$,
suitable non-negative integers~$k_j$,
a suitable~$R$ and $1$-periodic continuous functions~$\Phi_{k_j}$.
The $\lambda_j$ will turn out to be eigenvalues of
$C \coloneqq A_0+\cdots+A_{q-1}$, the $k_j$ be related to the multiplicities of these
eigenvalues and the constant $R$ will be a bound for
the joint spectral radius of the matrices $A_0$, \ldots, $A_{q-1}$.

While~\eqref{eq:synopsis-shape-main} gives the shape of the asymptotic
form, gathering as much information as possible on the periodic
fluctuations~$\Phi_{k_j}$ is required to have a full picture. To this aim,
we will give a description of the Fourier coefficients of the $\Phi_{k_j}$
which allows to compute them algorithmically and therefore to describe
these periodic fluctuations with high precision.
In particular, this allows to detect non-vanishing fluctuations. Code%
\footnote{The code accompanying this article can be found at
  \url{https://gitlab.com/dakrenn/regular-sequence-fluctuations}\,.
  It is meant to be used with the open source mathematics software SageMath~\cite{SageMath:2018:8.3}.}
is provided to compute the Fourier coefficients.

We close this introductory section by noting that the normalized sum
$\frac{1}{N} \sum_{n<N}x(n)$ enlightens us about the expectation of a
random element of the sequence~$x(n)$ with respect to uniform distribution
on the non-negative integers smaller than a certain~$N$.

\section{How to Read this Paper}

This is a long (and perhaps sometimes technical) paper and not all readers
might find the time to read it from the very beginning to the very end. We
therefore outline reading strategies for various interests.

For the reader who wants to \emph{apply our results to a particular problem}:
Read Section~\ref{section:introduction:regular-sequences}
on the definition of $q$-regular sequences and Section~\ref{section:introduction:main-result}
containing the main result in a condensed version which should cover
most applications. These two sections also have a simple,
illustrative and well-known running example.
If it turns out that the refined versions of the results are needed, follow
the upcoming paragraph below.

For the reader who still wants to \emph{apply our results to a particular
problem} but finds the \emph{condensed version insufficient},
turn to the overview of the results
(Section~\ref{section:results:overview}) and then continue
with Section~\ref{section:results} where the notations and results
are stated in full generality.
Formulating them will need quite a number of
definitions provided in Section~\ref{sec:definitions-notations}. In order to
cut straight to the results themselves, we will refrain from motivations and
comments on these definitions and
postpone those comments to Section~\ref{sec:motivation-definitions}.

For the reader who wants to \emph{determine the asympotics of a regular sequence}
instead of determining the asymptotics of the summatory function of the 
regular sequence, advice is given in
Section~\ref{section:asy-regular-sequences:non-summatory}.

For the reader who wants to read more about \emph{showcase applications} of our
method yielding \emph{new asymptotic results}, additionally to Section~\ref{section:user-friendly-main-and-example} read
Section~\ref{sec:overview-examples} where an overview of the examples in
this paper is given and then Part~\ref{part:examples}
where these examples are discussed in
detail. For many more examples to which the methods can be applied, read the
original papers~\cite{Allouche-Shallit:1992:regular-sequences,
  Allouche-Shallit:2003:regular-sequences-2} and the book by Allouche and
Shallit~\cite{Allouche-Shallit:2003:autom}
which contain many examples of $q$-regular sequences. 

For the reader who wants to \emph{compute the Fourier
coefficients} for a particular application, use the provided code. Read
Part~\ref{part:numerical} for more details, in particular, see
Section~\ref{section:non-vanishing} for some comments on how to decide
whether fluctuations are constant or even vanish.

Moreover, for the reader who is interested in
the background on the \emph{algorithmic aspects} and details of the
implementation of the actual computation, we also refer to
Part~\ref{part:numerical}; this part will also be useful for the reader
who wants to review the code written for SageMath.

For the reader who is interested in the \emph{history of the problem}, we refer to
Section~\ref{introduction:relation-to-previous-work}.

For the reader who wants to see a \emph{heuristic argument why everything works out},
there is Section~\ref{sec:heuristic} where it is shown that once one does not care about
convergence issues, the Mellin--Perron summation formula of order zero explains
the result.

For the reader who wants to understand the \emph{idea of the proof}, there is
Section~\ref{section:high-level-overview-proof}
with a high level overview of the proof how the above mentioned
convergence issues with the
Mellin--Perron summation formula can be overcome by a pseudo-Tauberian argument.

For the reader who wants to \emph{overcome convergence problems with the Mellin--Perron
summation formula} in other contexts involving periodic fluctuations, we note that
the pseudo-Tauberian argument (Proposition~\ref{proposition:pseudo-Tauber})
is completely independent of our application to
$q$-regular sequences; the only prerequisite is the knowledge on the existence
of the fluctuation and sufficient knowledge on analyticity and growth
of the Dirichlet generating function. As a consequence,
Theorem~\ref{theorem:use-Mellin--Perron} has been
formulated as an independent result and provisions have been made for several
applications of the pseudo-Tauberian argument.

Finally, for the reader who wants to \emph{fully understand the proof}: We have no other advice
than reading
the whole introduction,
the whole Section~\ref{section:results} on results and
the whole Part~\ref{part:proofs} on the proofs starting 
with a very short
Section~\ref{additional-notation} where a few notations used throughout the proofs
are fixed.

\section{User-friendly Main Result and a First Example Application}
\label{section:user-friendly-main-and-example}

\subsection{\texorpdfstring{$q$}{q}-Regular Sequences}\label{section:introduction:regular-sequences}

We start by giving a definition of $q$-regular sequences; see Allouche and
Shallit~\cite{Allouche-Shallit:1992:regular-sequences}. Let $q\ge 2$ be a fixed
integer and $x$ be a sequence on $\Z_{\ge 0}$.\footnote{We use a functional
  notation for sequences,
  i.e., a sequence $x$ on $\Z_{\ge 0}$ is seen as function $x\colon\Z_{\ge 0} \to \C$.}
Then $x$ is said to
be \emph{$(\C, q)$-regular} (briefly: \emph{$q$-regular} or simply \emph{regular}) if the $\C$-vector space
generated by its \emph{$q$-kernel}
\begin{equation*}
  \setm[\big]{x \circ (n \mapsto q^j n+r)}%
  {\text{integers $j\ge 0$, $0\le r<q^j$}}
\end{equation*}
has finite dimension.
In other words, $x$ is $q$-regular if
there are an integer $\kerneldim$ and sequences $x_1$, \dots, $x_\kerneldim$
such that for every $j\ge 0$ and $0\le r<q^j$
there exist complex numbers $c_1$, \ldots, $c_\kerneldim$ with
\begin{equation*}
  x(q^j n+r) = c_1 x_1(n) + \dotsb + c_\kerneldim x_\kerneldim(n)\qquad{\text{for all $n\ge 0$.}}
\end{equation*}

By Allouche and Shallit~\cite[Theorem~2.2]{Allouche-Shallit:1992:regular-sequences},
the sequence~$x$ is $q$-regular if and only if there exists a vector-valued
sequence~$v$ whose first component coincides with
$x$ and there exist square matrices $A_0$, \ldots, $A_{q-1}\in\C^{d\times d}$ such that
\begin{equation}\label{eq:linear-representation}
  v(qn+r) = A_r v(n)\qquad\text{for $0\le r<q$ and $n\ge 0$.}
\end{equation}
This is called a \emph{$q$-linear representation} of the sequence~$x$.

The best-known example for a $2$-regular function is the binary sum-of-digits
function.

\begin{example}\label{example:binary-sum-of-digits}
  For $n\ge 0$, let $x(n)=s(n)$ be the binary sum-of-digits of $n$. We clearly
  have
  \begin{equation}\label{eq:recursion-binary-sum-of-digits}
    \begin{aligned}
      x(2n)&=x(n),\\
      x(2n+1)&=x(n)+1
    \end{aligned}
  \end{equation}
  for $n\ge 0$.
  Indeed, we have
  \begin{equation*}
    x(2^j n+ r) = x(n) + x(r)\cdot 1
  \end{equation*}
  for integers $j\ge 0$, $0\le r <2^j$ and $n\ge 0$; i.e., the complex vector space
  generated by the $2$-kernel is generated by $x$ and the
  constant sequence $n \mapsto 1$.

  Alternatively, we set $v=(x, n \mapsto 1)^\top$ and have
  \begin{align*}
    v(2n)&=
    \begin{pmatrix}
      x(n)\\1
    \end{pmatrix}=
    \begin{pmatrix}
      1&0\\
      0&1
    \end{pmatrix}v(n),\\
    v(2n+1)&=
             \begin{pmatrix}
               x(n)+1\\
               1
             \end{pmatrix}=
    \begin{pmatrix}
      1 & 1\\
      0 & 1
    \end{pmatrix}v(n)
  \end{align*}
  for $n\ge 0$. Thus \eqref{eq:linear-representation} holds with
  \begin{equation*}
    A_0 =
    \begin{pmatrix}
      1&0\\
      0&1
    \end{pmatrix},\qquad
    A_1 =
    \begin{pmatrix}
      1&1\\
      0&1
    \end{pmatrix}.
  \end{equation*}
\end{example}

At this point, we note that a linear
representation~\eqref{eq:linear-representation} immediately leads to an
explicit expression for $x(n)$ by induction.

\begin{remark}\label{remark:regular-sequence-as-a-matrix-product}
  Let $r_{\ell-1}\ldots r_0$ be the $q$-ary digit
  expansion\footnote{
    Whenever we write that $r_{\ell-1}\ldots r_0$ is the  $q$-ary digit
    expansion of $n$, we
    mean that $r_j\in\set{0,\ldots, q-1}$ for $0\le j<\ell$, $r_{\ell-1}\neq 0$ and
    $n=\sum_{0 \le j < \ell} r_j q^j$. In particular, the $q$-ary expansion of
    zero is the empty word.}
  of $n$. Then
  \begin{equation*}
    x(n) = e_1 A_{r_0}\dotsm A_{r_{\ell-1}}v(0)
  \end{equation*}
  where $e_1=\begin{pmatrix}1& 0& \dotsc& 0\end{pmatrix}$.
\end{remark}

\subsection{Condensed Main Result}\label{section:introduction:main-result}

We are interested in the asymptotic behaviour of the summatory function
$X(N)=\sum_{0\le n<N}x(n)$.

At this point, we give a simplified version of our results. We choose any
vector norm $\norm{\,\cdot\,}$ on $\C^d$ and its induced matrix norm. We set $C\coloneqq
\sum_{0 \le r < q} A_r$. We choose $R>0$ such that $\norm{A_{r_1}\dotsm
  A_{r_\ell}}=\Oh{R^\ell}$ holds for all $\ell\ge 0$ and
$r_1$, \dots, $r_{\ell} \in \set{0,\dots,q-1}$.
In other words, $R$ is an upper bound for the joint spectral
radius of $A_0$, \ldots, $A_{q-1}$.
The spectrum of $C$, i.e., the set of eigenvalues of $C$, is denoted by
$\sigma(C)$. For $\lambda\in\C$, let $m(\lambda)$ denote the size of the
largest Jordan block of $C$ associated with $\lambda$; in particular,
$m(\lambda)=0$ if $\lambda\notin\sigma(C)$.
Finally, we consider the scalar-valued Dirichlet series~$\calX$
and the vector-valued Dirichlet series~$\calV$ defined
by\footnote{
Note that the summatory function $X(N)$ contains the summand $x(0)$ but the Dirichlet series cannot.
This is because the choice of including $x(0)$ into $X(N)$ will lead to more consistent results.}
\begin{equation*}
  \calX(s) = \sum_{n\ge 1} n^{-s}x(n) \qquad\text{and}\qquad
  \calV(s) = \sum_{n\ge 1} n^{-s}v(n)
\end{equation*}
where $v(n)$ is the vector-valued sequence defined in \eqref{eq:linear-representation}.
Of course, $\calX(s)$ is the first component of $\calV(s)$.
The principal value of the complex logarithm is denoted by $\log$. The
fractional part of a real number $z$ is denoted by $\fractional{z}\coloneqq z-\floor{z}$.

\begin{theorem}[User-friendly All-In-One Theorem]\label{theorem:simple}
  With the notations above, we have
  \begin{multline}\label{eq:formula-X-n}
    X(N) = \sum_{\substack{\lambda\in\sigma(C)\\\abs{\lambda}>R}}N^{\log_q\lambda}
    \sum_{0\le k<m(\lambda)}\frac{(\log N)^k}{k!}
    \Phi_{\lambda k}(\fractional{\log_q N}) \\
    + \Oh[\big]{N^{\log_q R}(\log N)^{\max\setm{m(\lambda)}{\abs{\lambda}=R}}}
  \end{multline}
  for suitable $1$-periodic continuous functions $\Phi_{\lambda k}$. If there
  are no eigenvalues $\lambda\in\sigma(C)$ with $\abs{\lambda}\le R$, the
  $O$-term can be omitted.

  For $\abs{\lambda}>R$ and $0\le k<m(\lambda)$, the function $\Phi_{\lambda k}$ is Hölder continuous with any exponent
  smaller than $\log_q(\abs{\lambda}/R)$.

  The Dirichlet series $\calV(s)$ converges absolutely and uniformly on compact
  subsets of the half plane $\Re
  s>\log_q R +1$ and can be continued to a meromorphic function on the half plane $\Re
  s>\log_q R$.
  It satisfies the functional equation
  \begin{equation}\label{eq:functional-equation-V}
    \bigl(I-q^{-s}C\bigr)\calV(s)= \sum_{1 \le n < q} n^{-s}v(n) +
    q^{-s}\sum_{0 \le r < q} A_r \sum_{k\ge
      1}\binom{-s}{k}\Bigl(\frac{r}{q}\Bigr)^k \calV(s+k)
  \end{equation}
  for $\Re s>\log_q R$. The right-hand side of~\eqref{eq:functional-equation-V} converges absolutely and uniformly on
  compact subsets of $\Re s>\log_q R$. In particular, $\calV(s)$ can only have
  poles where $q^s\in\sigma(C)$.

  For $\lambda\in\sigma(C)$ with
  $\abs{\lambda}>R$, the Fourier series
  \begin{equation*}
    \Phi_{\lambda k}(u) = \sum_{\ell\in \Z}\varphi_{\lambda k\ell}\exp(2\ell\pi i u)
  \end{equation*}
  converges pointwise for $u\in\R$ where the Fourier coefficients
  $\varphi_{\lambda k\ell}$ are defined by the singular expansion\footnote{We
    use the notion of singular expansion as defined by Flajolet, Gourdon and
    Dumas~\cite[Definition~2]{Flajolet-Gourdon-Dumas:1995:mellin}: it is the
    formal sum of the principal parts of a meromorphic function over all
    poles in the domain given.}
  \begin{equation}\label{eq:Fourier-coefficient:simple}
    \frac{x(0)+\calX(s)}{s} \asymp
    \sum_{\substack{\lambda\in\sigma(C)\\\abs{\lambda}>R}}\sum_{\ell\in\Z}\sum_{0\le k<m(\lambda)} \frac{\varphi_{\lambda k\ell}}{\bigl(s-\log_q \lambda-\frac{2\ell\pi i}{\log q}\bigr)^{k+1}}
  \end{equation}
  for $\Re s>\log_q R$.
\end{theorem}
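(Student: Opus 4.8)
The plan is to establish the functional equation first, since everything else is driven by it, then to bootstrap meromorphic continuation and growth estimates, and only afterwards to extract the periodic fluctuations via Mellin--Perron summation combined with the pseudo-Tauberian argument. First I would derive~\eqref{eq:functional-equation-V} by splitting the defining Dirichlet series $\calV(s)=\sum_{n\ge 1}n^{-s}v(n)$ according to residue classes modulo~$q$: write $n=qm+r$ with $0\le r<q$, use $v(qm+r)=A_r v(m)$ from~\eqref{eq:linear-representation}, and expand $(qm+r)^{-s}=q^{-s}m^{-s}(1+r/(qm))^{-s}$ by the binomial series $\sum_{k\ge 0}\binom{-s}{k}(r/(qm))^k$. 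The $k=0$ term reassembles $q^{-s}C\calV(s)$ (after separating the finitely many summands $1\le n<q$ coming from $m=0$), and the tail $k\ge 1$ gives the series in~$\calV(s+k)$; absolute and uniform convergence of this tail on compact subsets of $\Re s>\log_q R$ follows from the a priori bound $\norm{v(n)}=\Oh{n^{\log_q R+\eps}}$ (itself a consequence of Remark~\ref{remark:regular-sequence-as-a-matrix-product} and the joint-spectral-radius bound defining~$R$) together with $\binom{-s}{k}=\Oh{k^{\Re s-1}}$ and the factor $(r/q)^k\le((q-1)/q)^k$. This same bound also yields absolute and uniform convergence of $\calV(s)$ itself for $\Re s>\log_q R+1$.

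Next I would obtain meromorphic continuation by rewriting the functional equation as $\calV(s)=(I-q^{-s}C)^{-1}(\text{right-hand side})$ and arguing by a finite induction on horizontal strips: on $\Re s>\log_q R+1$ the right-hand side is already holomorphic (each $\calV(s+k)$ lies in the region of absolute convergence), so $\calV$ extends meromorphically to $\Re s>\log_q R$ with poles only where $\det(I-q^{-s}C)=0$, i.e.\ where $q^s\in\sigma(C)$. Because $(I-q^{-s}C)^{-1}$ has poles of order at most $m(\lambda)$ at the points $s$ with $q^s=\lambda$ (this is exactly the Jordan-block structure of~$C$), the poles of $\calV$, and hence of $\calX$ and of $(x(0)+\calX(s))/s$, at $s=\log_q\lambda+2\ell\pi i/\log q$ have order at most $m(\lambda)$; this gives the shape of the singular expansion~\eqref{eq:Fourier-coefficient:simple} with the $\varphi_{\lambda k\ell}$ \emph{defined} as the principal-part coefficients. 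Along the way I would record polynomial growth of $\calV(s)$ in vertical strips away from the poles, which is needed to run Mellin--Perron; this follows from the functional equation by the same strip-by-strip bootstrapping, each step losing only a fixed power of $\abs{s}$.

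Then comes the analytic core: applying the Mellin--Perron summation formula of order~$0$ (heuristically $X(N)=x(0)+\frac{1}{2\pi i}\int_{(c)}\frac{x(0)+\calX(s)}{s}N^s\,\dd s$) and shifting the contour from $c>\log_q R+1$ leftward to $\Re s=\log_q R+\eps$. The residues picked up at the poles $s=\log_q\lambda+2\ell\pi i/\log q$ with $\abs{\lambda}>R$ produce, after collecting over $\ell$, exactly the terms $N^{\log_q\lambda}\sum_k\frac{(\log N)^k}{k!}\Phi_{\lambda k}(\fractional{\log_q N})$ with $\Phi_{\lambda k}(u)=\sum_\ell\varphi_{\lambda k\ell}\exp(2\ell\pi i u)$ — this is why the Fourier coefficients are residues. \textbf{The main obstacle} is that the shifted integral does not converge absolutely (order-$0$ Mellin--Perron only converges conditionally, if at all), so one cannot naively bound the remaining integral by $\Oh{N^{\log_q R+\eps}}$. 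This is precisely where the pseudo-Tauberian argument (Proposition~\ref{proposition:pseudo-Tauber}) and Theorem~\ref{theorem:use-Mellin--Perron} enter: using the already-established existence of the fluctuations, their Hölder regularity with exponent up to $\log_q(\abs\lambda/R)$ (which itself must be proved from the decay rate of the Fourier coefficients $\varphi_{\lambda k\ell}$, or conversely), and the polynomial vertical growth of the Dirichlet series, one upgrades conditional convergence to the genuine error term $\Oh{N^{\log_q R}(\log N)^{\max m(\lambda)}}$ stated in~\eqref{eq:formula-X-n}. Pointwise convergence of the Fourier series then follows from the same Hölder bound via a standard estimate on Fourier coefficients of Hölder-continuous functions. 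The case distinction — whether or not there are eigenvalues with $\abs\lambda\le R$ — only affects whether the contour can be pushed strictly past $\log_q R$, allowing the $O$-term to be dropped.
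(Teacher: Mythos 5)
There is a genuine gap, and it is the central one: your plan is circular at exactly the point the paper identifies as the main difficulty. You propose to obtain the decomposition~\eqref{eq:formula-X-n} by shifting the contour in the zeroth-order Mellin--Perron formula and collecting residues, and then to ``upgrade'' the unjustified shift by invoking Proposition~\ref{proposition:pseudo-Tauber} and Theorem~\ref{theorem:use-Mellin--Perron}, ``using the already-established existence of the fluctuations''. But nothing in your plan establishes that existence: hypothesis~\eqref{eq:F-N-periodic} of Theorem~\ref{theorem:use-Mellin--Perron} -- that the summatory function already decomposes into $N^{\gamma}$ times H\"older-continuous $1$-periodic fluctuations up to an $\Oh{N^{\gamma_0}}$ error -- is an \emph{input} to that theorem, not an output. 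The pseudo-Tauberian argument does not repair the contour shift; it sidesteps it by passing to the $A$-fold repeated summatory function (first- and higher-order Mellin--Perron, which do converge absolutely) and comparing, via Lemma~\ref{lemma:uniqueness-fluctuations}, with the asymptotics obtained by summing the \emph{a priori known} fluctuations. Without an independent proof of existence, the residues you pick up have no asymptotic meaning and the error term in~\eqref{eq:formula-X-n} is unproved. The paper supplies the missing ingredient by elementary means in Theorem~\ref{theorem:contribution-of-eigenspace} (hence Theorem~\ref{theorem:main}): write $F(N)$ explicitly via Lemma~\ref{lemma:explicit-summatory} as a digit-indexed sum of products $C^jB_{r_j}A_{r_{j+1}}\dotsm A_{r_{\ell-1}}$, split along the generalised left eigenspaces of $C$, extend the finite sums to infinite ones, and define the fluctuations on the product space $\Omega$ before descending to $[0,1]$; only afterwards does it feed the resulting expansion into Theorem~\ref{theorem:use-Mellin--Perron} (applied componentwise to $h_a(n)=w_a\bigl(v(n)+v(0)\iverson{n=1}\bigr)$ for the rows $w_a$ of $T$) to identify the Fourier coefficients.

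A second, related flaw: you suggest the H\"older regularity of $\Phi_{\lambda k}$ ``must be proved from the decay rate of the Fourier coefficients $\varphi_{\lambda k\ell}$, or conversely''. Neither direction works in your setup. The decay actually available for the coefficients (roughly $\Oh{\abs{\ell}^{-1-A+\eta}}$ only for the fluctuations of the repeated summatory function, and mere pointwise convergence for the original ones) is far too weak to yield H\"older continuity with every exponent below $\log_q(\abs{\lambda}/R)$; in the paper this regularity comes directly from the explicit construction on $\Omega$, via the estimate~\eqref{eq:Psi-continuity} and the prefix-splitting argument, and it is then \emph{used} (through Dini's criterion and as the hypothesis $\alpha>\Re\gamma-\gamma_0$ of Theorem~\ref{theorem:use-Mellin--Perron}) to get the Fourier expansion -- not the other way around. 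Your treatment of the functional equation, the meromorphic continuation, the pole orders via the Jordan structure of $I-q^{-s}C$, and the vertical growth bounds is essentially the paper's Theorem~\ref{theorem:Dirichlet-series} and is fine; the missing piece is the elementary existence proof of the fluctuations, without which the analytic machinery has nothing to attach to.
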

This theorem is proved in Section~\ref{section:proof-theorem-simple}.
We note:
\begin{itemize}
\item We write $\Phi_{\lambda k}(\fractional{\log_q N})$ to optically
emphasise the $1$-periodicity; technically, we have $\Phi_{\lambda
  k}(\fractional{\log_q N})=\Phi_{\lambda k}(\log_q N)$.
\item The
arguments in the proof could be used to meromophically continue the Dirichlet
series to the complex plane, but we do not need this result for our purposes.
See~\cite{Allouche-Mendes-Peyriere:2000:autom-diric} for the corresponding argument for automatic sequences.
\item 
Sometimes, it will be convenient to write~\eqref{eq:Fourier-coefficient:simple}
in the equivalent explicit formulation
\begin{equation}\label{eq:Fourier-coefficient:simple-as-residue}
  \varphi_{\lambda k \ell}=\Res[\bigg]{\frac{x(0)+\calX(s)}{s}\Bigl(s-\log_q
    \lambda-\frac{2\ell\pi i}{\log q}\Bigr)^{k}}{s=\log_q \lambda+\frac{2\ell\pi i}{\log q}}.
\end{equation}
In particular, this can be used to algorithmically compute
the~$\varphi_{\lambda k \ell}$.
\item Computing the Fourier coefficients~$\varphi_{\lambda k \ell}$
  via the explicit formulation~\eqref{eq:Fourier-coefficient:simple-as-residue}
  by reliable numerical arithmetic (see Part~\ref{part:numerical} for details)
  enables us to detect the non-vanishing of a fluctuation; see also the
  example below and in Section~\ref{sec:transducer}
  (on sequences defined by transducers) for examples where the fluctuation of
  the leading term is in fact constant. There, additional arguments are required to actually prove this fact;
  see Section~\ref{section:non-vanishing} for more details.
\end{itemize}

We come back to the binary sum of digits.

\begin{example}[Continuation of Example~\ref{example:binary-sum-of-digits}]
  \label{example:binary-sum-of-digits:cont}
  We
  have $C=A_0+A_1=\bigl(
  \begin{smallmatrix}
    2&1\\0&2
  \end{smallmatrix}\bigr)
$. As $A_0$ is the identity matrix, any product $A_{r_1}\dotsm A_{r_\ell}$ has
the shape $A_1^k=\bigl(
\begin{smallmatrix}
  1&k\\0&1
\end{smallmatrix}\bigr)
$ where $k$ is the number of factors $A_1$ in the product. This implies that
$R$ with $\norm{A_{r_1}\dotsm A_{r_\ell}}=\Oh{R^\ell}$ may be chosen to be any number greater than $1$. As $C$ is a Jordan block
itself, we simply read off that the only eigenvalue of $C$
is $\lambda=2$ with $m(2)=2$.

Thus Theorem~\ref{theorem:simple} yields
\begin{equation*}
  X(N) = N(\log N) \f{\Phi_{21}}{\fractional{\log_2 N}}
  + N \f{\Phi_{20}}{\fractional{\log_2 N}}
\end{equation*}
for suitable $1$-periodic continuous functions $\Phi_{21}$ and $\Phi_{20}$.

In principle, we can now use the functional equation
\eqref{eq:functional-equation-V} to obtain the Dirichlet series~$\calX$.
Due to the fact that one component of $v$ is the constant
sequence where everything is known, it is more efficient to use an ad-hoc
calculation for $\calX$ by splitting the sum according to the parity of the index
and using the recurrence relation~\eqref{eq:recursion-binary-sum-of-digits} for $x(n)$. We obtain
\begin{align*}
  \calX(s)&=\sum_{n\ge 1}\frac{x(2n)}{(2n)^s} + \sum_{n\ge
            0}\frac{x(2n+1)}{(2n+1)^s}\\
  &=2^{-s}\sum_{n\ge 1}\frac{x(n)}{n^s} + \sum_{n\ge 0}\frac{x(n)}{(2n+1)^s} +
    \sum_{n\ge 0}\frac{1}{(2n+1)^s}\\
  &=2^{-s}\calX(s) + \frac{x(0)}{1^s} + \sum_{n\ge 1}\frac{x(n)}{(2n)^s} +
  \sum_{n\ge 1} x(n)\Bigl(\frac{1}{(2n+1)^s} - \frac{1}{(2n)^s}\Bigr) \\
  &\hspace{4.985em}+
  2^{-s}\sum_{n\ge 0}\frac1{\bigl(n+\frac12\bigr)^s}\\
  &= 2^{1-s}\calX(s) + 2^{-s}\f[\big]{\zeta}{s, \tfrac12} + \sum_{n\ge 1} x(n)\Bigl(\frac{1}{(2n+1)^s} - \frac{1}{(2n)^s}\Bigr),
\end{align*}
where the Hurwitz zeta function $\f{\zeta}{s, \alpha}\coloneqq\sum_{n+\alpha>0}(n+\alpha)^{-s}$ has been used. We get
\begin{equation}\label{eq:sum-of-digits-functional-equation}
  \bigl(1-2^{1-s}\bigr)\calX(s)=2^{-s} \f[\big]{\zeta}{s, \tfrac12} + \sum_{n\ge 1} x(n)\Bigl(\frac{1}{(2n+1)^s} - \frac{1}{(2n)^s}\Bigr).
\end{equation}
As the sum of digits is bounded by the length of the expansion, we have
$x(n)=\Oh{\log n}$. By combining this estimate with
\begin{equation*}
  (2n+1)^{-s}-(2n)^{-s}
  = \int_{2n}^{2n+1} \Bigl(\frac{\dd}{\dd t}t^{-s}\Bigr)\,\dd t
  = \int_{2n}^{2n+1}(-s)t^{-s-1}\,\dd t
  = \Oh[\big]{\abs{s}n^{-\Re s-1}},
\end{equation*}
we see that the sum in \eqref{eq:sum-of-digits-functional-equation}
converges absolutely for $\Re s>0$ and is therefore analytic for $\Re s>0$.

Therefore, the right-hand side of
\eqref{eq:sum-of-digits-functional-equation} is a meromorphic function for $\Re
s>0$ whose only pole is simple and at $s=1$ which originates from
$\f[\big]{\zeta}{s, \tfrac12}$.
Thus, $\calX(s)$ is a meromorphic function for $\Re s>0$ with a double
pole at $s=1$ and simple poles at $1+\frac{2\ell \pi i}{\log 2}$ for
$\ell\in\Z\setminus\set{0}$.

This gives us
\begin{equation}\label{eq:fluctuation-binary-sum-of-digit}
  \begin{aligned}
    \Phi_{21}(u) = \varphi_{210}
    &= \Res[\Big]{\frac{\calX(s)(s-1)}{s}}{s=1} \\
    &= \Res[\Big]{\frac{2^{-s}(s-1)}{1-2^{1-s}}
         \f[\big]{\zeta}{s, \tfrac12}}{s=1}
    = \frac1{2(\log 2)}
  \end{aligned}
\end{equation}
by \eqref{eq:Fourier-coefficient:simple-as-residue} and
\eqref{eq:sum-of-digits-functional-equation}.

We conclude that
\begin{equation*}
  X(N)=\frac12 N \log_2 N + N \f{\Phi_{20}}{\fractional{\log_2 N}}.
\end{equation*}
We will explain in Part~\ref{part:numerical} how to compute
rigorous numerical values for the Fourier coefficients, in our case
those of the fluctuation~$\Phi_{20}$ which can be deduced from
\eqref{eq:sum-of-digits-functional-equation}. 
In this particular case of the binary sum-of-digits, simpler and even explicit
expressions for the Fourier coefficients have been stated and derived by other
authors: They
can be obtained in our set-up by rewriting the residues of $\calX(s)$ in terms of shifted
residues of $\sum_{n\ge 1}\bigl(x(n)-x(n-1)\bigr)n^{-s}$ and by computing the latter
explicitly; see \cite[Proof of
Corollary~2.5]{Heuberger-Kropf-Prodinger:2015:output}. This yields the
well-known result by Delange~\cite{Delange:1975:chiffres}.

It will also turn out that \eqref{eq:fluctuation-binary-sum-of-digit} being a constant function is an
immediate consequence of the fact that $
\begin{pmatrix}
  0& 1
\end{pmatrix}
$ is a left eigenvector of both $A_0$ and $A_1$ associated with the eigenvalue
$1$;
see Theorem~\ref{theorem:contribution-of-eigenspace}.
\end{example}

\subsection{Asymptotics of Regular Sequences}
\label{section:asy-regular-sequences:non-summatory}

This article is written with a focus on the sequence of partial sums of a
regular sequence. In this section, however, we explain how to use all
material for the regular sequence itself.

Let $x(N)$ be a $q$-regular sequence. We may rewrite it as
a telescoping sum
\begin{equation}\label{eq:sum-of-differences}
  x(N) = x(0) + \sum_{n<N} \bigl( x(n+1) - x(n) \bigr).
\end{equation}
By~\cite[Theorems~2.5
and~2.6]{Allouche-Shallit:1992:regular-sequences}, the sequence of
differences $x(n+1) - x(n)$ is again $q$-regular. Conversely,
it is also well-known that the summatory function
of a $q$-regular sequence is itself $q$-regular. (This is an immediate
consequence of \cite[Theorem~3.1]{Allouche-Shallit:1992:regular-sequences}.)

Therefore, we
might also start to analyse a regular sequence by considering it to be the
summatory function of its sequence of differences as
in~\eqref{eq:sum-of-differences}. In this way, we can apply all of
the machinery developed in this article.

We end this short section with some remarks on why focusing on
the sequence of partial sums can be rewarding. When
modelling a quantity by a regular sequences, its asymptotic behaviour
is often not smooth, but the asymptotic behaviour of its summatory
function is. Moreover, we will see throughout this work that from a
technical perspective, considering partial sums is appropriate. Therefore,
we adopt this point of view of summatory functions of $q$-regular sequences
throughout this paper.

\section{Overview of the Full Results and Proofs}

\subsection{Overview of the Results}
\label{section:results:overview}

We have already seen the main results collected in a user-friendly
simplified version as Theorem~\ref{theorem:simple} which was written
down in a self-contained way in
Section~\ref{section:introduction:main-result}.

In Theorem~\ref{theorem:contribution-of-eigenspace} the assumptions
are refined. In particular, this theorem uses the joint spectral
radius~$R$ of the matrices in a linear representation of the sequence
(instead of a suitable bound for this quantity in
Theorem~\ref{theorem:simple}). Theorem~\ref{theorem:contribution-of-eigenspace}
states the contribution of each eigenvalue of the sum~$C$ of matrices
of the linear representation---split into the three cases of smaller,
equal and larger in absolute value than~$R$, respectively. This is formulated in
terms of generalised eigenvectors. As a consequence of this precise
breakdown of contributions, Theorem~\ref{theorem:main}, which collects
the different cases into one result, provides a condition on when the
error term vanishes.

Theorem~\ref{theorem:Dirichlet-series} brings up the full formulation
of the functional equation of the Dirichlet series associated to our
regular sequence. This is accompanied by a meromorphic continuation as
well as bounds on the growth of the Dirichlet series along vertical
lines (i.e., points with fixed real value). The analytic properties
provided by Theorem~\ref{theorem:Dirichlet-series} will be used to
verify the assumptions of Theorem~\ref{theorem:use-Mellin--Perron}.

Theorem~\ref{theorem:use-Mellin--Perron} is in fact stated and proved
very generally: It is not limited to Dirichlet
series coming from matrix products and regular sequences, but
it works for general Dirichlet series
provided that periodicity and continuity properties of the result
are known \emph{a priori}. This theorem
handles the Mellin--Perron summation and the theoretical foundations
for the computation of the Fourier coefficients of the appearing
fluctuations.

We want to point out that Theorem~\ref{theorem:use-Mellin--Perron}
can be viewed as a ``successful'' version of the
Mellin--Perron summation formula of order zero. In fact, the theorem
states sufficient conditions to provide the analytic justification
for the zeroth order formula.

Note that there is another result shown in this article, namely a
pseudo-Tauberian theorem for summing up periodic functions. This is
formulated as Proposition~\ref{proposition:pseudo-Tauber}, and all the
details around this topic are collected in
Section~\ref{sec:pseudo-tauber}. This pseudo-Tauberian argument is an
essential step in proving Theorem~\ref{theorem:use-Mellin--Perron}.

\subsection{Heuristic Approach: Mellin--Perron Summation}\label{sec:heuristic}
The purpose of this section is to explain why the formula
\eqref{eq:Fourier-coefficient:simple} for the Fourier coefficients is
expected. The approach here is heuristic and non-rigorous because we do not
have the required growth estimates. See also \cite{Drmota-Grabner:2010}.

By the Mellin--Perron summation formula of order $0$ (see, for example,
\cite[Theorem~2.1]{Flajolet-Grabner-Kirschenhofer-Prodinger:1994:mellin}),
we have
\begin{equation*}
  \sum_{1\le n<N}x(n) + \frac{x(N)}{2} = \frac1{2\pi i}\int_{\max\set{\log_q R + 2,1}
    -i\infty}^{\max\set{\log_q R + 2,1} +i\infty} \calX(s)\frac{N^s\,\dd s}{s}.
\end{equation*}
By Remark~\ref{remark:regular-sequence-as-a-matrix-product} and the definition
of $R$, we have
$x(N)=\Oh{R^{\log_q N}}=\Oh{N^{\log_q R}}$. Adding the summand $x(0)$ to match our definition of
$X(N)$ amounts to adding $\Oh{1}$.
Shifting the line of integration to the left---we have \emph{no analytic justification}
that this is allowed---and using the location of the poles of $\calX$ claimed in
Theorem~\ref{theorem:simple} yield
\begin{multline*}
  X(N) = \sum_{\substack{\lambda\in\sigma(C)\\\abs{\lambda}>R}}\sum_{\ell\in\Z}
  \Res[\Big]{\frac{\calX(s)N^s}{s}}%
  {s=\log_q \lambda + \frac{2\ell\pi i}{\log q}} \\
  + \frac1{2\pi i}\int_{\log_q R+\varepsilon
    -i\infty}^{\log_q R+\varepsilon +i\infty} \calX(s)\frac{N^s\,\dd s}{s} + \Oh{N^{\log_q R} + 1}
\end{multline*}
for some $\varepsilon>0$.
Expanding $N^s$ as
\begin{equation*}
  N^s = \sum_{k\ge 0} \frac{(\log N)^k}{k!} N^{\log_q \lambda + \frac{2\ell\pi
      i}{\log q}} \Bigl(s-\log_q \lambda
  -\frac{2\ell\pi i}{\log q}\Bigr)^k
\end{equation*}
and assuming that the remainder integral converges absolutely yield
\begin{multline*}
  X(N) = \sum_{\substack{\lambda\in\sigma(C)\\\abs{\lambda}>R}} N^{\log_q
    \lambda}\sum_{0\le k<m_{\lambda\ell}}
  \frac{(\log N)^k}{k!} \sum_{\ell\in\Z}\varphi_{\lambda k\ell}\exp\bigl(2\ell\pi i \log_q
  N\bigr)\\
  + \Oh{N^{\log_q R+\varepsilon}+1}
\end{multline*}
where $m_{\lambda \ell}$ denotes the order of the pole of $\calX(s)/s$ at
$\log_q\lambda + \frac{2\ell\pi i}{\log q}$ and $\varphi_{\lambda k \ell}$ is as
in \eqref{eq:Fourier-coefficient:simple}. (For $\lambda=1$ and $k=0$, the
contribution of $x(0)/s$ in \eqref{eq:Fourier-coefficient:simple} is absorbed
by the error term $\Oh{1}$ here.)

Summarising, this heuristic approach explains most of the formul\ae{} in
Theorem~\ref{theorem:simple}. Some details (exact error term and order of the
poles) are not explained by this approach.
A result ``repairing'' the zeroth order Mellin--Perron formula is known as
Landau's theorem; see \cite[\S~9]{Berthe-Lhote-Vallee:2016:probab}. It is not
applicable to our situation due to multiple poles along vertical lines which
then yield the periodic fluctuations. Instead, we present
Theorem~\ref{theorem:use-Mellin--Perron} which provides the required
justification (not by estimating the relevant quantities, but by reducing the
problem to higher order Mellin--Perron summation). The essential assumption is
that the summatory function can be decomposed into fluctuations multiplied by
some growth factors such as in \eqref{eq:formula-X-n}.

\subsection{High Level Overview of the Proof}
\label{section:high-level-overview-proof}

As we want to use Mellin--Perron summation in some form, we derive
properties of the Dirichlet series associated to the regular sequence. In
particular, we derive a functional equation which allows to compute the
Dirichlet series and its residues with arbitrary precision (Theorem~\ref{theorem:Dirichlet-series}).

We cannot directly use Mellin--Perron summation of order zero
for computing the Fourier coefficients of the fluctuations of interest.
As demonstrated in Section~\ref{sec:heuristic}, however, our theorems
coincide with the results which Mellin--Perron summation of order zero
would give if the required growth estimates could be
provided. Unfortunately, we are unable to prove these required growth
estimates. Therefore, we have to circumvent the problem by applying a
generalisation of the pseudo-Tauberian argument by
Flajolet, Grabner, Kirschenhofer, Prodinger and Tichy~\cite{Flajolet-Grabner-Kirschenhofer-Prodinger:1994:mellin}.

In order to use this argument, we have to know that the asymptotic formula has
the shape~\eqref{eq:formula-X-n}. Note that a successful application
(not \emph{directly} possible!)
of Mellin--Perron summation of order zero would give this directly.
Therefore, we first prove~\eqref{eq:formula-X-n}
and the existence of the
fluctuations (Theorems~\ref{theorem:contribution-of-eigenspace} and~\ref{theorem:main}).
To do so, we decompose the problem into contributions of
the eigenspaces of the matrix $C=A_0+\cdots+A_{q-1}$. The regular sequence is
then expressed as a matrix product. Next, we construct the
fluctuations by elementary means: We replace finite sums occurring in the
summatory functions by infinite sums involving digits using the factorisation
as a matrix product.

Then the pseudo-Tauberian argument states that the summatory function of the
fluctuation is again a fluctuation and there is a
relation between the Fourier coefficients of these fluctuations. The Fourier
coefficients of the summatory function of the fluctuation, however, can be
computed by Mellin--Perron summation of order one, so the Fourier coefficients
of the original fluctuation can be recovered; see Theorem~\ref{theorem:use-Mellin--Perron}.

\subsection{Relation to Previous Work}
\label{introduction:relation-to-previous-work}

The asymptotics of the summatory function of specific examples of regular
sequences has been studied in \cite{Grabner-Heuberger:2006:Number-Optimal},
\cite{Grabner-Heuberger-Prodinger:2005:counting-optimal-joint},
\cite{Dumas-Lipmaa-Wallen:2007:asymp}. There, various methods have been
used to show that the fluctuations exist; then the original
pseudo-Tauberian argument by Flajolet, Grabner, Kirschenhofer,
Prodinger and Tichy~\cite{Flajolet-Grabner-Kirschenhofer-Prodinger:1994:mellin}
is used to compute the Fourier coefficients of the fluctuations.

The first version of the pseudo-Tauberian argument in Theorem~\ref{theorem:use-Mellin--Perron}
was provided in \cite{Flajolet-Grabner-Kirschenhofer-Prodinger:1994:mellin}:
There, no logarithmic factors were allowed, only values $\gamma$ with $\Re \gamma>0$
were allowed and the result contained an error term of $o(1)$ whereas we give a
more precise error estimate in order to allow repeated application.

Dumas~\cite{Dumas:2013:joint, Dumas:2014:asymp} proved the first part
of Theorem~\ref{theorem:simple} using dilation equations. We re-prove it here
in a self-contained way
because we need more explicit results than obtained by Dumas
(e.g., we need explicit expressions for the
fluctuations) to explicitly get the precise
structure depending on the eigenspaces
(Theorem~\ref{theorem:contribution-of-eigenspace}).
Notice that the order of factors in Dumas' paper is inconsistent between
his versions of~\eqref{eq:linear-representation} and
Remark~\ref{remark:regular-sequence-as-a-matrix-product}.

A functional equation for the Dirichlet series of an automatic sequence
has been proved by Allouche, Mendès France and
Peyrière~\cite{Allouche-Mendes-Peyriere:2000:autom-diric}.

In Section~\ref{sec:transducer} we study transducers. The
sequences there are defined as the output sum of transducer automata in the sense of
\cite{Heuberger-Kropf-Prodinger:2015:output}. They are a special case of regular
sequences and are a generalisation of many previously studied concepts.
In that case, much more is known (variance, limiting distribution, higher
dimensional input); see \cite{Heuberger-Kropf-Prodinger:2015:output} for
references and results. A more detailed comparison can be found in Section~\ref{sec:transducer}.
Divide and conquer recurrences (see
\cite{Drmota-Szpankowski:2013:divide-and-conquer} and
\cite{Hwang-Janson-Tsai:2017:divide-conquer-half})
can also be seen as special cases of regular sequences.

The present article gives a unified approach which covers all cases of
regular sequences. As long as the conditions on the joint spectral radius are
met, the main asymptotic terms are not absorbed by the error terms. Otherwise,
the regular sequence is so irregular that the summatory function is not smooth
enough to allow a result of this shape.

\section{Overview of the Examples}
\label{sec:overview-examples}

We take a closer look at three particular examples.
In this section, we provide an overview of these examples;
all details can be found in Part~\ref{part:examples}.

At first gance it seems that
these examples are straight-forward applications of the results. However,
we have to reformulate the relevant questions in terms of a
$q$-regular sequence and will then provide shortcuts for the
computation of the Fourier series.  We put a special effort on the
details which gives additional insights like dependencies on certain
residue classes; see Section~\ref{sec:residue-classes}. Moreover, the
study of these examples also encourages us to investigate symmetries
in the eigenvalues; see
Section~\ref{sec:symmetric-eigenvalues-overview} for an overview and
Section~\ref{sec:symmetric} for general considerations.

We start with transducer automata. Transducers have been chosen in
order to compare the results here with the previously available
results~\cite{Heuberger-Kropf-Prodinger:2015:output}.
In some sense, the results complement each other: While the
results in~\cite{Heuberger-Kropf-Prodinger:2015:output} also contain information on the variance and the limiting
distribution, our approach here yields more terms of the asymptotic
expansion of the mean, at least in the general case. Also, it is a class of
examples.

We then continue with esthetic
numbers. These numbers are an example of an automatic sequence, therefore
can be treated by a transducer. However,
it turns out that the generic results
(the results here and in~\cite{Heuberger-Kropf-Prodinger:2015:output})
degenerate: They are too weak to give a meaningful main term. Therefore a
different effort is needed for esthetic numbers.
No precise asymptotic results were known previously.

The example on Pascal's Rhombus is a choice of a regular sequence
where all components of the vector sequence have some combinatorial
meaning. Again, no precise asymptotic results were known previously.

Section~\ref{sec:overview:further-examples} contains further
examples. Note that there are the two additional
Sections~\ref{sec:residue-classes}
and~\ref{sec:symmetric-eigenvalues-overview} pointing out phenomena
appearing in the analysis of our examples.

\subsection{Transducers}
\label{sec:overview-transducers}

The sum~$\calT(n)$ of the output labels of a complete deterministic finite transducer~$\calT$
when reading the $q$-ary expansion of an integer~$n$ has been investigated
in~\cite{Heuberger-Kropf-Prodinger:2015:output}. As this can be seen as a
$q$-regular sequence, we reconsider the problem in the light of our
general results in this article;
see Section~\ref{sec:transducer}. For the summatory function, the main
terms corresponding to the eigenvalue $q$ can be extracted by both results; if
there are further eigenvalues larger than the joint spectral radius, our
Corollary~\ref{corollary:transducer-main} allows to describe more asymptotic terms which are absorbed by
the error term in~\cite{Heuberger-Kropf-Prodinger:2015:output}. Note, however,
that our approach here does not give any readily available information on the
variance (this could somehow be repaired for specific examples because regular
sequences are known to form a ring) nor on the limiting distribution.

\subsection{Esthetic Numbers}
\label{sec:overview-esthetic}

In this article, we also contribute a
precise asymptotic analysis of $q$-esthetic numbers; see~De~Koninck
and Doyon~\cite{Koninck-Doyon:2009:esthetic-numbers}. These are
numbers whose $q$-ary digit expansion satisfies the condition that
neighboring digits differ by exactly one. The sequence of such numbers
turns out to be $q$-automatic, thus are $q$-regular and can also be
seen as an output sum of a transducer; see the first author's joint
work with Kropf and
Prodinger~\cite{Heuberger-Kropf-Prodinger:2015:output} or
Section~\ref{sec:transducer}. However, the
asymptotics obtained by using the main result of
\cite{Heuberger-Kropf-Prodinger:2015:output} is degenerated
in the sense that the provided main term and second order term both
equal zero; only an error term remains. On the other hand, using a more direct approach via our
main theorem brings up the actual main term and the fluctuation in
this main term. We also explicitly compute the Fourier coefficients.
The full theorem is formulated in
Section~\ref{sec:esthetic-numbers}.
Prior to this precise analysis,
the authors of~\cite{Koninck-Doyon:2009:esthetic-numbers} only performed an analysis
of esthetic numbers by digit-length (and not by the number itself).

The approach used in the analysis of $q$-esthetic numbers can easily
be adapted to numbers defined by other conditions on the word of
digits of their $q$-ary expansion.

\subsection{Dependence on Residue Classes}
\label{sec:residue-classes}

The analysis of $q$-esthetic numbers also brings another aspect into
the light of day, namely a quite interesting dependence of the
behaviour with respect to~$q$ on different moduli:
\begin{itemize}
\item The dimensions in the matrix approach of
  \cite{Koninck-Doyon:2009:esthetic-numbers} need to be increased for
  certain residue classes of~$q$ modulo~$4$ in order to get a
  formulation as a $q$-automatic and $q$-regular sequence,
  respectively.
\item The main result in~\cite{Koninck-Doyon:2009:esthetic-numbers}
  already depends on the parity of $q$ (i.e., on $q$
  modulo~$2$). This reflects our Corollary~\ref{corollary:esthetic:asy}
  by having $2$-periodic
  fluctuations (in contrast to $1$-periodic fluctuations in the main
  Theorem~\ref{theorem:simple}).
\item Surprisingly, the error term in the resulting formula of
  Corollary~\ref{corollary:esthetic:asy} depends on the residue class of $q$ modulo~$3$. This can be seen in the spectrum of the matrix~$C=\sum_{0 \le r < q} A_r$:
  There is an appearance of an eigenvalue~$1$ in certain cases.
\item As an interesting side-note: In the
  spectrum of~$C$, the algebraic multiplicity of the
  eigenvalue~$0$ changes again only modulo~$2$.
\end{itemize}

\subsection{Symmetrically Arranged Eigenvalues}
\label{sec:symmetric-eigenvalues-overview}

Fluctuations with longer periods (like in
the second of the four bullet points above) come from a particular
configuration in the spectrum of~$C$. Whenever eigenvalues are arranged as
vertices of a regular polygon, then their influence can be collected;
this results in periodic fluctuations with larger period than~$1$.
We elaborate on the influence of such eigenvalues in
Section~\ref{sec:symmetric}.
This is then used in the particular cases of esthetic numbers and in
conjunction with the output sum of transducers. More specifically, in the latter example
this yields the second order term in
Corollary~\ref{corollary:transducer-main}; see
also~\cite{Heuberger-Kropf-Prodinger:2015:output}.

\subsection{Pascal's Rhombus}
\label{sec:overview-pascal}

Beside esthetic numbers, we perform an asymptotic analysis of the
number of ones in the rows of Pascal's rhombus. The rhombus is in some
sense a variant of Pascal's triangle---its recurrence is similar to that
of Pascal's triangle. It turns out that the
number of ones in the rows of Pascal's rhombus can be modelled by a
$2$-regular sequence.

The authors
of~\cite{Goldwasser-Klostermeyer-Mays-Trapp:1999:Pascal-rhombus}
investigate this number of ones, but only for blocks whose number of rows
is a power of~$2$. In the precise analysis in
Section~\ref{sec:pascal} we not only obtain the asymptotic formula, we
also explicitly compute the Fourier coefficients.

\subsection{Further Examples}
\label{sec:overview:further-examples}

There are many further examples of specific $q$-regular sequences which await
precise asymptotic analysis, for example the Stern--Brocot sequence~\oeis{A002487}, the
denominators of Farey tree fractions~\oeis{A007306}, the
 number of unbordered factors of length $n$ of the Thue--Morse
 sequence (see \cite{Goc-Mousavi-Shallit:2013}).

The Stern--Brocot sequence is a typical example: It is defined by $x(0)=0$,
$x(1)=1$ and
\begin{equation}\label{eq:stern-brocot:rec}
\begin{aligned}
  x(2n)&=x(n),\\
  x(2n+1)&=x(n)+x(n+1),
\end{aligned}
\end{equation}
i.e., the right-hand sides are linear combinations of shifted versions of the
original sequence.

Note that recurrence relations like~\eqref{eq:stern-brocot:rec} are
not proper linear representations of regular sequences in the sense of~\eqref{eq:linear-representation}. The good news,
however, is that in general, such a sequence is $q$-regular. The
following remark formulates this more explicitly.

\begin{remark}
  Let $x(n)$ be a sequence such that
  there are fixed integers $\ell\le 0\le u$ and constants $c_{rk}$ for $0\le r<q$
  and $\ell\le k\le u$ such that
  \begin{equation*}
    x(qn+r) = \sum_{\ell \le k\le u} c_{rk}x(n+k)
  \end{equation*}
  holds for $0\le r<q$ and $n\ge 0$. Then the sequence $x(n)$ is $q$-regular with
  $q$-linear representation for $v(n)=\bigl(x(n+\ell'), \ldots, x(n), \ldots,
  x(n+u')\bigr)^\top$ where
  \begin{equation*}
    \ell'=\floor[\Big]{\frac{q\ell}{q-1}},\qquad
    u'=\ceil[\Big]{\frac{qu}{q-1}}.
  \end{equation*}
  Note that if $\ell'<0$, then a simple permutation of the components
  of~$v(n)$ brings~$x(n)$ to its first component (so that the above is indeed
  a proper linear representation as defined in
  Section~\ref{section:introduction:regular-sequences}).
\end{remark}

By using this remark on~\eqref{eq:stern-brocot:rec}, we set
$v(n)=\bigl(x(n), x(n+1), x(n+2)\bigr)^\top$ and obtain the $2$-linear
representation
\begin{equation*}
  v(2n)=
  \begin{pmatrix}
    1&0&0\\
    1&1&0\\
    0&1&0
  \end{pmatrix}v(n),\qquad
  v(2n+1)=
  \begin{pmatrix}
    1&1&0\\
    0&1&0\\
    0&1&1
  \end{pmatrix}v(n)
\end{equation*}
for $n\ge 0$ for the Stern--Brocot sequence.

\section{Full Results}\label{section:results}

In this section, we fully formulate our results. As pointed out in
Remark~\ref{remark:regular-sequence-as-a-matrix-product}, regular
sequences can essentially be seen as matrix products. Therefore, we
will study these matrix products instead of regular sequences.
Theorem~\ref{theorem:simple} can then be proved as a simple corollary of the
results for matrix products; see Section~\ref{section:proof-theorem-simple}.

\subsection{Problem Statement}
Let $q\ge 2$, $d\ge 1$ be fixed integers and $A_0$, \ldots,
$A_{q-1}\in\C^{d\times d}$.
We investigate the sequence~$f$ of $d\times d$ matrices such that
\begin{equation}\label{eq:regular-matrix-sequence}
  f(qn+r)=A_r f(n) \quad\text{ for $0\le r<q$, $0\le n$ with  $qn+r\neq 0$}
\end{equation}
and $f(0)=I$.

Let $n$ be an integer with $q$-ary expansion
$r_{\ell-1}\ldots r_0$. Then it is easily seen that \eqref{eq:regular-matrix-sequence} implies that
\begin{equation}\label{eq:f-as-product}
  f(n)=A_{r_0}\ldots A_{r_{\ell-1}}.
\end{equation}

We are interested in the asymptotic behaviour of $F(N)\coloneqq\sum_{0\le n<N} f(n)$.

\subsection{Definitions and Notations}\label{sec:definitions-notations}
In this section, we give all definitions and notations which are required in
order to state the results. For the sake of conciseness, we do not give any
motivations for our definitions here; those are deferred to Section~\ref{sec:motivation-definitions}.

The following notations are essential:
\begin{itemize}
\item
Let $\norm{\,\cdot\,}$ denote a fixed norm on $\C^d$ and its induced matrix
norm on $\C^{d\times d}$.

\item We set $B_r \coloneqq \sum_{0\le r'<r} A_{r'}$ for $0\le
r<q$ and $C\coloneqq\sum_{0\le r<q} A_r$.

\item
The joint spectral radius of $A_0$, \ldots, $A_{q-1}$ is denoted by
\begin{equation*}
  \rho\coloneqq\inf_{\ell}\sup
  \setm[\big]{ \norm{A_{r_1}\ldots A_{r_\ell}}^{1/\ell}}{r_1, \ldots, r_\ell\in\set{0, \ldots, q-1}}.
\end{equation*}
If the set of matrices $A_0$, \dots, $A_{q-1}$ has the \emph{finiteness property},
i.e., there is an $\ell>0$ such that
\begin{equation*}
  \rho = \sup
  \setm[\big]{\norm{A_{r_1}\ldots A_{r_\ell}}^{1/\ell}}{r_1, \ldots, r_\ell\in\set{0, \ldots, q-1}},
\end{equation*}
then we set $R=\rho$. Otherwise, we choose $R>\rho$ in such a way that there is
no eigenvalue $\lambda$ of $C$ with $\rho<\abs{\lambda}\le R$.

\item
The spectrum of $C$, i.e., the set of eigenvalues of $C$, is denoted by
$\sigma(C)$.

\item For a positive integer $n_0$, let $\calF_{n_0}$ be the
  matrix-valued Dirichlet series defined by
  \begin{equation*}
    \calF_{n_0}(s) \coloneqq \sum_{n\ge n_0} n^{-s}f(n)
  \end{equation*}
  for a complex variable $s$.

\item Set $\chi_k\coloneqq \frac{2\pi i k}{\log q}$ for $k\in\Z$.
\end{itemize}

In the formulation of Theorem~\ref{theorem:contribution-of-eigenspace} and
Theorem~\ref{theorem:main}, the following constants are needed
additionally:

\begin{itemize}
\item
Choose a regular matrix $T$ such that $T C T^{-1}\eqqcolon J$ is in Jordan form.

\item  Let $D$ be
the diagonal matrix whose $j$th diagonal element is $1$ if the $j$th diagonal
element of $J$ is not equal to $1$; otherwise the $j$th diagonal element of $D$
is $0$.

\item
Set $C'\coloneqq T^{-1}DJT$.

\item
Set $K\coloneqq T^{-1}DT(I-C')^{-1}(I-A_0)$.

\item
For a $\lambda\in\C$, let $m(\lambda)$ be the size of the largest
Jordan block associated with $\lambda$. In particular, $m(\lambda)=0$ if $\lambda\not\in\sigma(C)$.

\item For $m\ge 0$, set
\begin{equation*}
  \vartheta_m \coloneqq \frac1{m!}T^{-1}(I-D)T(C-I)^{m-1}(I-A_0);
\end{equation*}
here, $\vartheta_0$ remains undefined if $1\in\sigma(C)$.\footnote{
If $1\in\sigma(C)$, then the matrix $C-I$ is singular. In that case, $\vartheta_0$ will never be used.}

\item Define $\vartheta \coloneqq \vartheta_{m(1)}$.
\end{itemize}

All implicit $O$-constants depend on $q$, $d$, the matrices $A_0$, \ldots, $A_{q-1}$ (and therefore on $\rho$),
as well as on $R$.

\subsection{Decomposition into Periodic Fluctuations}
Instead of considering $F(N)$, it is certainly enough to consider $wF(N)$ for
all generalised left eigenvectors $w$ of $C$, e.g., the rows of $T$. The
result for $F(N)$ then follows by taking appropriate linear combinations.

\begin{theorem}\label{theorem:contribution-of-eigenspace}
  Let $w$ be a generalised left eigenvector of rank $m$ of $C$ corresponding to the eigenvalue $\lambda$.
  \begin{enumerate}
  \item\label{item:small-eigenvalue} If $\abs{\lambda}<R$, then
    \begin{equation*}
      wF(N)=wK + (\log_q N)^m w\vartheta_m  + \Oh{N^{\log_q R}}.
    \end{equation*}
  \item\label{item:R-eigenvalue} If $\abs{\lambda}=R$, then
    \begin{equation*}
      wF(N)=wK + (\log_q N)^m w\vartheta_m + \Oh{N^{\log_q R} (\log N)^{m}}.
    \end{equation*}
  \item\label{item:large-eigenvalue} If $\abs{\lambda}>R$, then there are $1$-periodic continuous functions
    $\Phi_k\colon \R\to\C^d$, $0\le k<m$, such that
    \begin{equation*}
      wF(N)=wK + (\log_q N)^mw\vartheta_m + N^{\log_q\lambda} \sum_{0\le k<m}(\log_q N)^k\Phi_k(\fractional{\log_q N})
    \end{equation*}
    for $N\ge q^{m-1}$. The function $\Phi_k$ is Hölder continuous with any
    exponent smaller than  $\log_q\abs{\lambda}/R$.

    If, additionally, the left eigenvector $w(C-\lambda I)^{m-1}$ of $C$ happens to be a left eigenvector to each matrix
    $A_0$, \ldots, $A_{q-1}$ associated with the eigenvalue~$1$, then
    \begin{equation*}
      \Phi_{m-1}(u)=\frac1{q^{m-1}(m-1)!}w(C-q I)^{m-1}
    \end{equation*}
    is constant.
  \end{enumerate}
  Here, $wK=0$  for $\lambda=1$ and $w\vartheta_m=0$ for $\lambda\neq 1$.
\end{theorem}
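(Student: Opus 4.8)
The plan is to reduce everything to the matrix-product structure \eqref{eq:f-as-product} and exploit a recursion for $F(N)$ that separates the ``new digit'' contribution. First I would establish a functional equation for $F$ itself. Writing $N$ in the form $qM+s$ with $0\le s<q$ and splitting $\sum_{0\le n<N}f(n)$ according to the last $q$-ary digit of $n$, one gets
\begin{equation*}
  F(qM+s) = \sum_{0\le r<q} A_r F(M) + B_s f(M) + (\text{boundary correction})
\end{equation*}
up to a careful bookkeeping of whether the block $n\equiv r$ is complete; more precisely $F(qM+s)=\sum_{0\le r<s}A_r F(M+1)+\sum_{s\le r<q}A_r F(M)$ plus the term $f(0)=I$ issue at $n=0$. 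The key point is that the ``dominant'' part of this recursion is $F(qM)\approx C\,F(M)$, which when iterated $\log_q N$ times produces the factor $N^{\log_q\lambda}$ on the $\lambda$-eigenspace and the polynomial-in-$\log_q N$ factors from the Jordan block size $m$. The constant and the $(\log_q N)^m w\vartheta_m$ terms arise as the accumulated lower-order corrections, and the precise form of $K$, $C'$, $\vartheta_m$ in Section~\ref{sec:definitions-notations} is exactly what one obtains by solving the resulting inhomogeneous linear recursion (the matrix $D$ isolates the part of the spectrum away from $1$, where $(I-C')^{-1}$ exists, and $I-D$ isolates the eigenvalue $1$, where instead a polynomial in $\log_q N$ appears — this explains the case distinction $wK=0$ for $\lambda=1$ versus $w\vartheta_m=0$ for $\lambda\neq1$).

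Next I would treat the three regimes. For $\abs{\lambda}<R$ and $\abs{\lambda}=R$, after peeling off the explicit terms $wK+(\log_q N)^m w\vartheta_m$ one is left with a remainder that is controlled by $\norm{A_{r_1}\cdots A_{r_j}}=\Oh{R^j}$ (respectively $\Oh{R^j j^{m}}$ in the borderline case, because the finiteness property may fail and one only has $R>\rho$ with no eigenvalue of $C$ in the gap); summing a geometric-type series in $R$ over the $\Oh{\log_q N}$ scales gives $\Oh{N^{\log_q R}}$, with an extra $(\log N)^m$ in case~\itemref{item:R-eigenvalue}. For $\abs{\lambda}>R$, the remainder after subtracting the explicit lower-order terms must be organised as $N^{\log_q\lambda}$ times a function of $\fractional{\log_q N}$: the plan is to replace the \emph{finite} sum defining $F(N)$ by an \emph{infinite} series over digit strings — this is the ``construct the fluctuations by elementary means'' step advertised in Section~\ref{section:high-level-overview-proof}. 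Concretely, on the generalised eigenspace the iterated recursion expresses $wF(N)/N^{\log_q\lambda}$ as a partial sum of a series whose tail converges geometrically at rate $(R/\abs{\lambda})^j$; the limit, as a function of $\log_q N$, is $1$-periodic and continuous, and the geometric convergence rate $(R/\abs{\lambda})^j$ together with the Lipschitz-in-$j$ behaviour of the truncations yields Hölder continuity with any exponent $<\log_q(\abs{\lambda}/R)$ by the standard dyadic/$q$-adic interpolation argument (the same mechanism as in Flajolet--Grabner--Kirschenhofer--Prodinger--Tichy). The restriction $N\ge q^{m-1}$ just guarantees enough digits are present for the Jordan-block expansion to be valid.

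Finally, for the last assertion, suppose $w(C-\lambda I)^{m-1}$ is a common left eigenvector of all $A_r$ with eigenvalue $1$. Then $w(C-\lambda I)^{m-1}C = w(C-\lambda I)^{m-1}\sum_r A_r = q\,w(C-\lambda I)^{m-1}$, so $\lambda=q$ and $m(q)=m$, i.e. we are necessarily in the eigenvalue-$q$ case. The vector $w(C-qI)^{m-1}=w(C-\lambda I)^{m-1}$ then annihilates every $A_r$-action trivially (it is fixed by each $A_r$), which forces the top-degree coefficient $\Phi_{m-1}$ in the Jordan expansion to be the constant vector; tracking the normalisation through the iterated recursion — each of the $m-1$ nilpotent steps contributes a factor $1/q$ and the multinomial count contributes $1/(m-1)!$ — gives exactly $\Phi_{m-1}(u)=\frac{1}{q^{m-1}(m-1)!}w(C-qI)^{m-1}$. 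I expect the main obstacle to be the second part of~\itemref{item:large-eigenvalue}, namely isolating the genuinely periodic-fluctuating part from the explicit $wK$ and $(\log_q N)^m w\vartheta_m$ terms and proving the precise Hölder exponent; this requires the digit-string series representation to be set up cleanly enough that the error between consecutive truncations is simultaneously $\Oh{(R/\abs{\lambda})^j}$ in size and $\Oh{1}$-Lipschitz in the fractional-part variable, so that the interpolation inequality delivers the stated exponent. The bookkeeping of the boundary corrections in the $F$-recursion (the $B_s f(M)$ terms and the $f(0)=I$ subtlety) is the other place where care is needed, though it is routine.
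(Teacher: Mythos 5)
Your overall strategy is essentially the paper's: you set up the recursion $F(qM+s)=CF(M)+B_s f(M)+(I-A_0)\iverson{qM+s>0}$ (the paper's \eqref{eq:sum-recursion}), iterate it to an explicit formula (Lemma~\ref{lemma:explicit-summatory}), identify the accumulated $(I-A_0)$-corrections as the source of $wK$ and $(\log_q N)^m w\vartheta_m$, dispose of the cases $\abs{\lambda}\le R$ by the bound \eqref{eq:bound-prod}, and for $\abs{\lambda}>R$ reverse the order of summation and pass to an infinite series over digit strings to define the fluctuations; the constant-fluctuation claim via $\lambda=q$ and the normalisation $\frac1{q^{m-1}(m-1)!}$ is also the paper's route.

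There is, however, a genuine gap at the point where you assert that ``the limit, as a function of $\log_q N$, is $1$-periodic and continuous.'' The infinite digit-string series defines a function on sequences of digits, not on $[0,1]$: two sequences, one ending in $0^\infty$ and one in $(q-1)^\infty$, can represent the same real number, and the geometric tail estimate says nothing about the two values agreeing. This is precisely the ``Descent'' step of the paper's proof: one shows $\Psi_k(\bfx)=\Psi_k(\bfx')$ whenever $\val(\bfx)=\val(\bfx')$ by comparing the exact identity at consecutive integers $n$ and $n+1$ whose fractional parts realise the two expansions, and using $wf(n)=\Oh{n^{\log_q R}}$ together with $\abs{\lambda}>R$ to force the difference to vanish in the limit. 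Without this argument the fluctuation is well defined only on digit strings (equivalently, on a dense subset of $[0,1]$ via the canonical expansion), and the claimed continuous periodic $\Phi_k$ with $\Phi_k(0)=\Phi_k(1)$ does not follow. Relatedly, your appeal to a ``standard $q$-adic interpolation'' for the Hölder exponent needs the same care: two nearby reals may have completely different expansions, so the paper splits the interval at an intermediate point whose two expansions share prefixes with the respective endpoints before applying the prefix estimate. Two smaller inaccuracies: in case~\itemref{item:large-eigenvalue} the formula is \emph{exact} for $N\ge q^{m-1}$ because $B_0=0$ makes the finite-to-infinite extension of the sum an identity (your ``tail converges'' phrasing suggests an extra error term); and the factor $(\log N)^m$ in case~\itemref{item:R-eigenvalue} comes from summing the Jordan-block polynomial $j^{m-1}$ without geometric decay when $\abs{\lambda}=R$, not from a possible failure of the finiteness property (in the non-finiteness situation $\abs{\lambda}=R$ cannot occur, by the choice of $R$).
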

This theorem is proved in Section~\ref{section:proof-contribution-of-eigenspace}.
Note that in general, the three summands in the theorem have different growths:
a constant, a logarithmic term and a term whose growth depends essentially
on the joint spectral radius and the eigenvalues larger than the
joint spectral radius, respectively. The vector $w$ is not directly visible in front of
the third summand; instead, the vectors of its Jordan chain are part of the function~$\Phi_k$.

Expressing the identity matrix as linear combinations of generalised left
eigenvalues and summing up the contributions of
Theorem~\ref{theorem:contribution-of-eigenspace} essentially yields the following corollary.

\begin{theorem}\label{theorem:main}
  With the notations above, we have
  \begin{multline*}
    F(N) = \sum_{\substack{\lambda\in\sigma(C)\\\abs{\lambda}>\rho}} N^{\log_q
      \lambda}\sum_{0\le k<m(\lambda)}(\log_q N)^k\Phi_{\lambda
      k}(\fractional{\log_q N})  + (\log_q N)^{m(1)} \vartheta + K\\
    + \Oh[\big]{N^{\log_q R}(\log N)^{\max\setm{m(\lambda)}{\abs{\lambda}=R}}}
  \end{multline*}
  for suitable $1$-periodic continuous functions $\Phi_{\lambda k}$.
  If $1$ is not an eigenvalue of $C$, then $\vartheta=0$.  If
  there are no eigenvalues $\lambda\in\sigma(C)$ with $\abs{\lambda}\le \rho$,
  then the $O$-term can be omitted.

  For $\abs{\lambda}>R$, the function $\Phi_{\lambda k}$ is Hölder continuous with any exponent
  smaller than $\log_q(\abs{\lambda}/R)$.
\end{theorem}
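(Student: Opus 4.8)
The plan is to obtain Theorem~\ref{theorem:main} from Theorem~\ref{theorem:contribution-of-eigenspace} by decomposing the identity matrix along the rows of~$T$. Writing $F(N)=T^{-1}\bigl(TF(N)\bigr)$, the $j$-th row of $TF(N)$ is $w_jF(N)$ where $w_j\coloneqq e_jT$ is a generalised left eigenvector of~$C$ of some rank~$m_j$ associated with some eigenvalue~$\lambda_j$, so $F(N)=\sum_j(T^{-1}e_j^\top)\,w_jF(N)$ with $\sum_j(T^{-1}e_j^\top)w_j=I$. I would expand each $w_jF(N)$ by Theorem~\ref{theorem:contribution-of-eigenspace}\itemref{item:small-eigenvalue}, \itemref{item:R-eigenvalue} or \itemref{item:large-eigenvalue} according to whether $\abs{\lambda_j}$ is smaller than, equal to, or larger than~$R$, and then collect like terms. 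Two observations streamline the bookkeeping: since no eigenvalue of~$C$ lies in $(\rho,R]$, we have $\setm{\lambda\in\sigma(C)}{\abs{\lambda}>\rho}=\setm{\lambda\in\sigma(C)}{\abs{\lambda}>R}$, and every row with $\abs{\lambda_j}>\rho$ actually falls under case~\itemref{item:large-eigenvalue}, so its expansion is exact.

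Next I would track the three harmless kinds of contribution. The constant parts $w_jK$ are simply the rows of $TK$, so summing them against~$T^{-1}$ returns~$K$. For each eigenvalue~$\lambda$ with $\abs{\lambda}>R$, the rows with $\lambda_j=\lambda$ each contribute $N^{\log_q\lambda}\sum_{0\le k<m_j}(\log_q N)^k\Phi^{(j)}_k(\fractional{\log_q N})$; grouping these by the exponent~$k$ after multiplication by~$T^{-1}$ defines $1$-periodic continuous functions $\Phi_{\lambda k}$ as finite linear combinations of the $\Phi^{(j)}_k$, which are therefore Hölder continuous with any exponent below $\log_q(\abs{\lambda}/R)$. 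The only error contributions come from rows with $\abs{\lambda_j}\le R$, the worst being $\Oh{N^{\log_q R}(\log N)^{m_j}}$ for $\abs{\lambda_j}=R$; since $\max\setm{m_j}{\abs{\lambda_j}=R}=\max\setm{m(\lambda)}{\abs{\lambda}=R}$ this yields precisely the stated error term, and by the first observation it disappears entirely when no eigenvalue satisfies $\abs{\lambda}\le\rho$.

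The one step requiring care is the term $(\log_q N)^{m_j}w_j\vartheta_{m_j}$, which vanishes unless $\lambda_j=1$; since the ranks~$m_j$ of the eigenvalue-$1$ rows run through all of $1,\dots,m(1)$ (already inside a single Jordan block of maximal size), several powers of $\log_q N$ appear a priori. Using that $w_j\vartheta_{m_j}=\frac1{m_j!}w_j(C-I)^{m_j-1}(I-A_0)$ for $\lambda_j=1$ (because $e_j(I-D)=e_j$ there) together with the fact that $w_j(C-I)^{m(1)-1}=0$ exactly when $m_j<m(1)$, and that $T^{-1}(I-D)T$ is idempotent, one checks that the coefficient of $(\log_q N)^{m(1)}$ after multiplication by~$T^{-1}$ equals $\frac1{m(1)!}T^{-1}(I-D)T(C-I)^{m(1)-1}(I-A_0)=\vartheta$, while the terms with $m_j<m(1)$ are merely constant vectors times lower powers of $\log_q N$. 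If $\abs{1}>\rho$ (hence $\abs{1}>R$, so $\lambda=1$ is already present in the fluctuation sum), these constants are absorbed into $\Phi_{1k}$ at level $k=m_j$, which does not affect Hölder continuity; if $\abs{1}\le\rho$, then $1$ is itself an eigenvalue of modulus at most~$R$, so they are dominated by the error term and keeping the single summand $(\log_q N)^{m(1)}\vartheta$ in the statement is harmless. Finally, if $1\notin\sigma(C)$ then $D=I$ and $m(1)=0$, so $\vartheta=0$. The whole argument is elementary linear algebra once Theorem~\ref{theorem:contribution-of-eigenspace} is available; collapsing the $\vartheta_{m_j}$-terms onto the single top power $(\log_q N)^{m(1)}$ is the only slightly subtle point.
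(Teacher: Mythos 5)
Your proposal is correct and takes essentially the same route as the paper: decompose $I=\sum_j t_jw_j$ via the rows of $T$ and the columns of $T^{-1}$, apply Theorem~\ref{theorem:contribution-of-eigenspace} to each $w_jF(N)$, and collect the $K$-terms, fluctuations, error terms and $\vartheta$-terms. Your treatment of the eigenvalue-$1$ contributions (only rows with $\lambda_j=1$ and $m_j=m(1)$ contribute to the coefficient of $(\log_q N)^{m(1)}$, which then equals $\vartheta$, while lower-rank contributions are absorbed into the fluctuations $\Phi_{1k}$ or the error term) matches the paper's argument.
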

This theorem is proved in Section~\ref{section:proof:corollary-main}.

\begin{remark}
  We want to point out that the condition $\abs{\lambda}>R$ is
  inherent in the problem: Single summands $f(n)$ might be as large as
  $n^{\log_q R}$ and must therefore be absorbed by the error term in
  any smooth asymptotic formula for the summatory function.
\end{remark}

\subsection{Dirichlet Series}
This section gives the required result on the Dirichlet series~$\calF_{n_0}$. For
theoretical purposes, it is enough to study $\calF\coloneqq\calF_1$; for numerical purposes,
however, convergence improves for larger values of $n_0$. This is because for
large $n_0$ and large $\Re s$, the value of $\calF_{n_0}(s)$ is roughly $n_0^{-s} f(n_0)$; see also Part~\ref{part:numerical}.

\begin{theorem}\label{theorem:Dirichlet-series}Let $n_0$ be a positive
  integer. Then the Dirichlet series $\calF_{n_0}(s)$
  converges absolutely and uniformly on compact subsets of the half plane $\Re s > \log_q \rho + 1$, thus is analytic there.

  We have
  \begin{equation}\label{eq:analytic-continuation}
    \bigl(I-q^{-s}C\bigr)\calF_{n_0}(s) = \calG_{n_0}(s)
  \end{equation}
  for $\Re s>\log_q \rho +1$ with
  \begin{equation}\label{eq:Dirichlet-recursion}
    \calG_{n_0}(s) = \sum_{n_0 \le n < qn_0} n^{-s}f(n)
    + q^{-s} \sum_{0 \le r < q} A_r
    \sum_{k\ge 1} \binom{-s}{k}\Bigl(\frac{r}{q}\Bigr)^k \calF_{n_0}(s+k).
  \end{equation}
  The series in \eqref{eq:Dirichlet-recursion} converge
  absolutely and uniformly on compact sets for $\Re s>\log_q \rho$. Thus \eqref{eq:analytic-continuation} gives a meromorphic
  continuation of $\calF_{n_0}(s)$ to the half plane $\Re s>\log_q \rho$ with
  possible poles at $s=\log_q \lambda + \chi_\ell$ for each
  $\lambda\in \sigma(C)$ with $\abs{\lambda}>\rho$ and $\ell\in\Z$
  whose pole order is at most $m(\lambda)$.

  Let $\delta>0$. For real $z$, we set
  \begin{equation*}
    \mu_\delta(z)= \max\set{ 1 - (z-\log_q \rho -\delta), 0},
  \end{equation*}
  i.e., the linear function on the interval
  $[\log_q\rho+\delta, \log_q\rho+\delta+1]$
  with~$\mu_\delta(\log_q\rho+\delta)=1$ and~$\mu_\delta(\log_q\rho+\delta+1)=0$.
  Then
  \begin{equation}\label{eq:order-F}
    \calF_{n_0}(s) = \Oh[\big]{\abs{\Im s}^{\mu_\delta(\Re s)}}
  \end{equation}
  holds uniformly for $\log_q \rho+\delta\le \Re s$ and $\abs{q^s-\lambda} \ge \delta$
  for all eigenvalues $\lambda\in\sigma(C)$. Here, the implicit $O$-constant
  also depends on $\delta$.
\end{theorem}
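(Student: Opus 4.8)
The plan is to prove the four assertions (absolute convergence, the functional equation, meromorphic continuation, and the growth bound) in that order, since each feeds into the next.

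\emph{Absolute convergence for $\Re s>\log_q\rho+1$.} By~\eqref{eq:f-as-product} and the definition of the joint spectral radius, for every $\eps>0$ we have $\norm{f(n)}=\Oh{(\rho+\eps)^{\lfloor\log_q n\rfloor}}=\Oh{n^{\log_q(\rho+\eps)}}$. Hence $\sum_{n\ge n_0}\norm{n^{-s}f(n)}=\Oh{\sum_n n^{\log_q(\rho+\eps)-\Re s}}$, which converges for $\Re s>\log_q(\rho+\eps)+1$; letting $\eps\to 0$ gives absolute (and, on compact subsets, uniform) convergence for $\Re s>\log_q\rho+1$, so $\calF_{n_0}$ is analytic there.

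\emph{The functional equation.} First I would derive~\eqref{eq:analytic-continuation}--\eqref{eq:Dirichlet-recursion} formally in the region of absolute convergence. Split the defining sum according to the residue of $n$ modulo $q$: write $n=qm+r$ with $0\le r<q$, so that $f(n)=A_r f(m)$ by~\eqref{eq:regular-matrix-sequence}, and the $n<qn_0$ terms are peeled off. This gives
\begin{equation*}
  \calF_{n_0}(s)=\sum_{n_0\le n<qn_0}n^{-s}f(n)+\sum_{0\le r<q}A_r\sum_{m\ge n_0}(qm+r)^{-s}f(m).
\end{equation*}
Now expand $(qm+r)^{-s}=q^{-s}m^{-s}(1+\tfrac{r}{qm})^{-s}=q^{-s}m^{-s}\sum_{k\ge 0}\binom{-s}{k}(\tfrac{r}{q})^k m^{-k}$, valid for $m\ge n_0\ge 1>r/q$. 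Isolating the $k=0$ term yields the $q^{-s}C\,\calF_{n_0}(s)$ contribution; collecting $k\ge 1$ gives the sum in~\eqref{eq:Dirichlet-recursion}, after justifying the interchange of the (absolutely convergent) sums over $m$ and $k$. The main work is the convergence claim for $\calG_{n_0}(s)$ on $\Re s>\log_q\rho$: using $\binom{-s}{k}(r/q)^k=\Oh{\abs{s}^k/(k!\,q^{k})}$ uniformly on compact sets together with $\norm{\calF_{n_0}(s+k)}=\Oh{1}$ for $k$ large (since $\Re(s+k)$ eventually exceeds $\log_q\rho+1$), one bounds the tail geometrically and the finitely many remaining terms are meromorphic with the stated poles. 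Here one must be slightly careful: for $\Re s\le\log_q\rho+1$ the term $\calF_{n_0}(s+1)$ is itself only defined via the continuation, so the argument should proceed by induction on the strip $\log_q\rho+j<\Re s\le\log_q\rho+j+1$, descending $j$ by one at a time; at each step $\calF_{n_0}(s+k)$ for $k\ge 1$ lies in an already-treated strip, and~\eqref{eq:analytic-continuation} (invert $I-q^{-s}C$, which is singular exactly when $q^s\in\sigma(C)$) extends $\calF_{n_0}$ one strip to the left with poles only at $s=\log_q\lambda+\chi_\ell$. The pole order bound $m(\lambda)$ follows by tracking how many times the factor $(I-q^{-s}C)^{-1}$, whose entries have a pole of order $m(\lambda)$ at such $s$, is applied versus cancelled; a clean way is to note that in a Jordan basis the $\lambda$-block of $(I-q^{-s}C)^{-1}$ contributes a pole of order exactly the block size, bounded by $m(\lambda)$, and the right-hand side $\calG_{n_0}$ is regular there at the first step of the induction.

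\emph{The growth bound~\eqref{eq:order-F}.} This is the part I expect to be the main obstacle. Again I would argue strip by strip. For $\Re s\ge\log_q\rho+\delta+1$ the series converges absolutely and $\calF_{n_0}(s)=\Oh{1}$, consistent with $\mu_\delta=0$ there. For $\log_q\rho+\delta\le\Re s<\log_q\rho+\delta+1$, use $\calF_{n_0}(s)=(I-q^{-s}C)^{-1}\calG_{n_0}(s)$; away from the poles (the hypothesis $\abs{q^s-\lambda}\ge\delta$) the inverse is $\Oh{1}$ uniformly, so it suffices to bound $\calG_{n_0}(s)$. In $\calG_{n_0}$, the finite sum $\sum_{n_0\le n<qn_0}n^{-s}f(n)$ is $\Oh{1}$; the series $\sum_{k\ge 1}\binom{-s}{k}(r/q)^k\calF_{n_0}(s+k)$ must be shown to be $\Oh{\abs{\Im s}^{\mu_\delta(\Re s)}}$. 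The $k=1$ term is $\Oh{\abs{s}\cdot\norm{\calF_{n_0}(s+1)}}$, and $s+1$ sits in the strip one to the right, where inductively $\calF_{n_0}=\Oh{\abs{\Im s}^{\mu_\delta(\Re s)-1}}$ (note $\mu_\delta(\Re s+1)=\mu_\delta(\Re s)-1$ on the relevant interval); multiplying by $\abs{s}=\Oh{\abs{\Im s}}$ recovers the exponent $\mu_\delta(\Re s)$. For $k\ge 2$ the binomial coefficient is $\Oh{\abs{s}^k/k!}$ but the factor $q^{-k}$ makes the tail geometrically small, and $\calF_{n_0}(s+k)$ is bounded; so those terms contribute $\Oh{\abs{s}^2}=\Oh{\abs{\Im s}^2}$, which is \emph{worse} than needed when $\mu_\delta(\Re s)<2$. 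The fix—and the delicate point—is to observe that for $k\ge 2$ we actually have $\Re(s+k)\ge\log_q\rho+\delta+2$, so those $\calF_{n_0}(s+k)$ are $\Oh{1}$ with an implied constant that can be taken \emph{uniform in $k$}, while $\sum_{k\ge 2}\binom{-s}{k}(r/q)^k$ is the tail of the binomial series for $(1-r/q)^{-s}$ minus its first two terms, hence $\Oh{(1-r/q)^{-\Re s}}=\Oh{1}$ on any fixed vertical strip once one subtracts off the $\Oh{\abs s}$ and $\Oh 1$ leading terms. So only the $k=1$ term carries the polynomial-in-$\Im s$ growth, giving exactly $\mu_\delta(\Re s)$. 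Finally one checks the induction is well-founded: it terminates because after finitely many leftward strips one reaches $\Re s<\log_q\rho+\delta$, outside the claimed region, and the base strip $\Re s\ge\log_q\rho+\delta+1$ was handled directly. Uniformity of all $O$-constants in $\delta$ (beyond the explicit dependence) comes from the fact that only finitely many strips are involved and each estimate was uniform on the strip.
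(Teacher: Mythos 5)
Your treatment of absolute convergence, of the derivation of the functional equation by splitting $n=qm+r$ and expanding the binomial series, and of the pole-order bound via the Jordan form of $(I-q^{-s}C)^{-1}$ matches the paper's proof in substance. One simplification you missed: since every shift $s+k$ with $k\ge 1$ satisfies $\Re(s+k)>\log_q\rho+1$ as soon as $\Re s>\log_q\rho$, the right-hand side $\calG_{n_0}(s)$ is defined and analytic on the \emph{whole} half plane $\Re s>\log_q\rho$ in a single step; your strip-by-strip induction for the continuation is not wrong, but it is unnecessary, and the worry that ``$\calF_{n_0}(s+1)$ is itself only defined via the continuation'' is unfounded.

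The genuine gap is in the growth bound~\eqref{eq:order-F}, which you correctly flag as the main obstacle but do not actually establish. First, your key identity $\mu_\delta(\Re s+1)=\mu_\delta(\Re s)-1$ is false except at the single point $\Re s=\log_q\rho+\delta$: for $\Re s$ in the critical strip one has $\mu_\delta(\Re s+1)=0$, so the inductive input is only $\calF_{n_0}(s+1)=\Oh{1}$, and the $k=1$ term is $\Oh{\abs{\Im s}}$ --- exponent $1$, not the interpolated exponent $\mu_\delta(\Re s)\in[0,1]$. A fractional exponent cannot be produced by termwise estimates at all; it requires a convexity argument. This is exactly what the paper does (in Lemma~\ref{lemma:shifted-Dirichlet}): the difference $\f{\Sigma}{s,\beta,\calF_{n_0}}=\sum_n\bigl((n+\beta)^{-s}-n^{-s}\bigr)f(n)$ converges for $\Re s>\log_q\rho$ with the bound $\Oh{\abs{s}}$ coming from $(n+\beta)^{-s}-n^{-s}=\Oh{\abs{s}n^{-\Re s-1}}$, it is $\Oh{1}$ on the line $\Re s=\log_q\rho+\delta+1$, and Lindel\"of's theorem on the order of a Dirichlet series \cite[Theorem~14]{Hardy-Riesz:1915} interpolates linearly between these two orders to give exactly $\mu_\delta(\Re s)$. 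Second, your handling of the $k\ge 2$ tail is unsound: the cancellation in $\sum_{k\ge 2}\binom{-s}{k}x^k=(1+x)^{-s}-1+sx$ cannot be invoked once each term carries its own factor $\calF_{n_0}(s+k)$, and the termwise bound $\sum_{k\ge2}\abs{\binom{-s}{k}}(r/q)^k\le(1-r/q)^{-\abs{s}}$ is \emph{exponentially} large in $\abs{\Im s}$; even the correct Taylor-remainder estimate applied under the $n$-sum only yields $\Oh{\abs{\Im s}^2}$ for that tail. The repair is to abandon the binomial-series form for the purpose of the growth estimate, bound the un-split difference $\f{\Sigma}{s,r/q,\calF_{n_0}}$ directly as above, and then apply Phragm\'en--Lindel\"of convexity; the binomial series is needed only for the explicit formula~\eqref{eq:Dirichlet-recursion} and its convergence on compact sets, where $\abs{s}$ is bounded.
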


Note that by the introductory remark on $\calF_{n_0}(s)$, the infinite
sum over $k$ in~\eqref{eq:Dirichlet-recursion} can be well approximated
by a finite sum. Detailed error bounds are discussed in
Part~\ref{part:numerical}. Therefore the theorem allows to transfer the information
on $\calF_{n_0}(s)$ for large~$\Re s$ where convergence is unproblematical
to values of $s$ where the convergence of the Dirichlet series $\calF_{n_0}$ itself is bad.

\begin{remark}\label{remark:Dirichlet-series:bound}
  By the identity theorem for analytic functions, the meromorphic
  continuation of $\calF_{n_0}$ is unique on the domain given in the
  theorem. Therefore, the bound~\eqref{eq:order-F} does not depend on
  the particular expression for the meromorphic continuation given
  in~\eqref{eq:analytic-continuation}
  and~\eqref{eq:Dirichlet-recursion}.
\end{remark}

Theorem~\ref{theorem:Dirichlet-series} is proved in
Section~\ref{section:proof:Dirichlet-series}. In the proof we
translate the linear representation
of $f$ into a system of equations involving $\calF_{n_0}(s)$
and shifted versions like $\sum_{n\ge n_0}f(n)(n+\beta)^{-s}$.
We will have to bound the
difference between the shifted and unshifted versions of the Dirichlet series.
These bounds are provided by the following lemma.
It will turn out
to be useful to have it as a result listed in this section and not
buried in the proofs sections.

\begin{lemma}\label{lemma:shifted-Dirichlet}
  Let $\calD(s) = \sum_{n \ge n_0} d(n)/n^s$ be a Dirichlet series with
  coefficients $d(n)=\Oh{n^{\log_q R'}}$ for all $R'>\rho$.
  Let $\beta\in\C$ with $\abs{\beta}<n_0$ and $\delta>0$. Set
  \begin{equation*}
    \f{\Sigma}{s, \beta, \calD} \coloneqq
    \sum_{n\ge n_0} \frac{d(n)}{(n+\beta)^s} - \calD(s).
  \end{equation*}
  Then
  \begin{equation*}
    \f{\Sigma}{s, \beta, \calD} = \sum_{k\ge 1}
    \binom{-s}{k} \beta^k \calD(s+k),
  \end{equation*}
  where the series converges
  absolutely and uniformly on compact sets for $\Re s>\log_q \rho$,
  thus $\f{\Sigma}{s, \beta, \calD}$ is analytic there.
  Moreover, with $\mu_\delta$ as in Theorem~\ref{theorem:Dirichlet-series},
  \begin{equation*}
    \f{\Sigma}{s, \beta, \calD}=\Oh[\big]{\abs{\Im s}^{\mu_\delta(\Re s)}}
  \end{equation*}
  as $\abs{\Im s}\to\infty$
  holds uniformly for $\log_q \rho + \delta\le \Re s\le \log_q \rho +\delta+1$.
\end{lemma}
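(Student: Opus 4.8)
The plan is to obtain the binomial-series expansion from the generalized binomial theorem applied to $(n+\beta)^{-s} = n^{-s}(1+\beta/n)^{-s}$, and then justify the rearrangement and the growth bound by a careful splitting of the summation according to whether $n$ is small or large relative to $\abs{\Im s}$. Concretely, for each fixed $n \ge n_0 > \abs{\beta}$, we have the absolutely convergent expansion
\begin{equation*}
  \frac{1}{(n+\beta)^s} = n^{-s}\Bigl(1+\frac{\beta}{n}\Bigr)^{-s}
  = n^{-s}\sum_{k\ge 0}\binom{-s}{k}\Bigl(\frac{\beta}{n}\Bigr)^k,
\end{equation*}
so that $d(n)(n+\beta)^{-s} - d(n)n^{-s} = \sum_{k\ge 1}\binom{-s}{k}\beta^k d(n) n^{-s-k}$. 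Summing over $n$ and interchanging the two sums — which is legitimate once we verify absolute convergence of the double sum — gives the claimed formula, provided that each $\calD(s+k)$ converges, which holds for $\Re s > \log_q \rho$ since $\Re(s+k) > \log_q\rho + 1 \ge \log_q R' + \varepsilon$ for suitable $R'>\rho$ once $k\ge 1$ (using $d(n)=\Oh{n^{\log_q R'}}$).

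The next step is to establish absolute and uniform convergence on compact subsets of $\Re s > \log_q\rho$. First I would pick $R'>\rho$ with $\log_q R' < \Re s$ on the compact set in question, so that $\abs{d(n)} \le c\, n^{\log_q R'}$. Then $\abs{\binom{-s}{k}\beta^k \calD(s+k)}$ is bounded by something like $\binom{\abs{s}+k-1}{k}\abs{\beta}^k \cdot C_k$, where $C_k = \sum_{n\ge n_0} c\, n^{\log_q R' - \Re s - k}$ decays geometrically in $k$ because $\abs{\beta}<n_0$ and the exponent $\log_q R' - \Re s - k$ is eventually very negative; more precisely $C_k \le c'\, n_0^{\log_q R' - \Re s - k + 1}/(\Re s + k - \log_q R' - 1)$ for $k$ large. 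The binomial coefficient grows only polynomially in $k$ (for fixed $s$), so $\sum_k \binom{\abs s+k-1}{k}(\abs\beta/n_0)^k < \infty$ dominates, giving the convergence. This also shows $\f{\Sigma}{s,\beta,\calD}$ is analytic there as a locally uniform limit of analytic functions.

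The main obstacle is the growth bound $\f{\Sigma}{s,\beta,\calD} = \Oh{\abs{\Im s}^{\mu_\delta(\Re s)}}$ uniformly on the strip $\log_q\rho+\delta \le \Re s \le \log_q\rho+\delta+1$. Here the naive term-by-term estimate above is not uniform in $\Im s$ because $\binom{\abs s+k-1}{k}$ blows up with $\abs s$. Instead I would go back to the integral form of the remainder: writing $(n+\beta)^{-s} - n^{-s} = \int_0^\beta (-s)(n+t)^{-s-1}\,\dd t$ (a straight-line path, valid since $\abs\beta < n_0 \le n$), we get $\abs{(n+\beta)^{-s}-n^{-s}} \le \abs s\,\abs\beta\, (n-\abs\beta)^{-\Re s-1} = \Oh{\abs{s}\, n^{-\Re s - 1}}$. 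Hence $\abs{\f{\Sigma}{s,\beta,\calD}} \le \Oh{\abs s}\sum_{n\ge n_0} n^{\log_q R' - \Re s - 1}$, which converges when $\Re s > \log_q R'$, i.e. for $\Re s$ near the right end of the strip it is $\Oh{\abs{\Im s}}$ — but this is too weak near the left end where we need $\Oh{1}$ (since $\mu_\delta(\log_q\rho+\delta)=1$, actually it is exactly $\Oh{\abs{\Im s}}$ there) and genuinely subthreshold in between. The resolution, mirroring the proof of Theorem~\ref{theorem:Dirichlet-series}, is to split the sum at $n \asymp \abs{\Im s}$: for $n \le \abs{\Im s}$ use the crude bound $\abs{(n+\beta)^{-s}-n^{-s}} \le \abs{(n+\beta)^{-s}} + \abs{n^{-s}} = \Oh{n^{-\Re s}}$ and sum $\sum_{n_0 \le n \le \abs{\Im s}} n^{\log_q R' - \Re s} = \Oh{\abs{\Im s}^{\max\{1 + \log_q R' - \Re s,\, 0\} + \epsilon}}$ (the $+\epsilon$ or a logarithm only at the transition point, absorbed since $R'>\rho$ is arbitrary); for $n > \abs{\Im s}$ use the integral bound giving $\Oh{\abs s}\sum_{n>\abs{\Im s}} n^{\log_q R' - \Re s - 1} = \Oh{\abs{\Im s}\cdot \abs{\Im s}^{\log_q R' - \Re s}} = \Oh{\abs{\Im s}^{1 + \log_q R' - \Re s}}$. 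Both pieces are $\Oh{\abs{\Im s}^{1 + \log_q R' - \Re s}}$, and letting $R' \downarrow \rho$ while relabeling $\delta$ appropriately turns the exponent into $\mu_\delta(\Re s) = \max\{1 - (\Re s - \log_q\rho - \delta),\, 0\}$ on the stated strip, completing the proof.
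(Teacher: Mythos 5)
Your proposal is correct, and its first half (deriving the identity $\f{\Sigma}{s,\beta,\calD}=\sum_{k\ge1}\binom{-s}{k}\beta^k\calD(s+k)$ from the binomial series and justifying the interchange by the geometric decay of $(\abs{\beta}/n_0)^k$ against the polynomial growth of the binomial coefficient in $k$) is essentially the paper's argument. Where you genuinely diverge is the growth bound on the strip $\log_q\rho+\delta\le\Re s\le\log_q\rho+\delta+1$: the paper establishes only the two edge cases --- order at most $1$ on the left edge via the mean-value estimate $(n+\beta)^{-s}-n^{-s}=\Oh{\abs{s}n^{-\Re s-1}}$, and order $0$ to the right of the abscissa of absolute convergence --- and then invokes Lindel\"of's convexity theorem for the order of a Dirichlet series (Hardy--Riesz, Theorem~14) to interpolate linearly and obtain the exponent $\mu_\delta(\Re s)$. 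You instead prove the convexity bound directly by splitting the sum at $n\asymp\abs{\Im s}$, using the triangle inequality for small $n$ and the integral bound for large $n$; this is the classical elementary derivation of such convexity estimates and is self-contained (no appeal to the theory of the order of Dirichlet series), at the price of the uniformity bookkeeping near the transition exponent $\Re s-\log_q R'=1$ that you correctly flag as absorbed by taking $\log_q R'$ strictly below $\log_q\rho+\delta$. Two minor points: your intermediate claim that ``both pieces are $\Oh[\big]{\abs{\Im s}^{1+\log_q R'-\Re s}}$'' should keep the $\max\{\cdot,0\}$ from your displayed bound for the first piece (when the exponent is negative that piece is only $\Oh{\log\abs{\Im s}}$, still harmless); and the crude estimate $\abs{(n+\beta)^{-s}}=\Oh{n^{-\Re s}}$ for bounded $n$ and large $\abs{\Im s}$ really requires $\arg(n+\beta)$ to be controlled, i.e.\ effectively real $\beta$ --- but the paper's own estimate has exactly the same restriction, and all applications in the paper use real $\beta$, so this is not a defect you introduced.
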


\subsection{Fourier Coefficients}
As discussed in Section~\ref{sec:heuristic}, we would like to apply the zeroth
order Mellin--Perron summation formula but need analytic justification. In the
following theorem we prove that whenever it is known that the result is a
periodic fluctuation, the use of zeroth order Mellin--Perron summation can be
justified. In contrast to the remaining parts of the paper, this theorem does \emph{not}
assume that $f(n)$ is a matrix product.

\begin{theorem}\label{theorem:use-Mellin--Perron}
  Let $f$ be a sequence on $\Z_{>0}$, let $\gamma_0\in\R\setminus \Z_{\le 0}$ and $\gamma\in\C$ with $\Re \gamma>
  \gamma_0$, $\delta>0$,
  $q>1$ be real numbers with $\delta \le \pi/(\log q)$
  and $\delta < \Re \gamma-\gamma_0$,
  and let $m$ be a positive integer. Moreover, let $\Phi_j$ be
  Hölder continuous (with exponent $\alpha$ with
  $\Re\gamma-\gamma_0<\alpha\le 1$) $1$-periodic functions for $0\le j<m$ such that
  \begin{equation}\label{eq:F-N-periodic}
    F(N)\coloneqq \sum_{1\le n< N} f(n) = \sum_{\substack{j+k=m-1\\0\le j<m}}N^\gamma \frac{(\log N)^k}{k!}
    \Phi_j(\fractional{\log_q N}) + \Oh{N^{\gamma_0}}
  \end{equation}
  for integers $N\to\infty$.

  For the Dirichlet series $\calF(s)\coloneqq \sum_{n\ge 1}n^{-s}f(n)$
  assume that
  \begin{itemize}
  \item there is some real number $\sigmaabs\ge \Re \gamma$ such that $\calF(s)$ converges absolutely for $\Re s>\sigmaabs$;
  \item  the function $\calF(s)/s$
    can be continued to a meromorphic function for $\Re s > \gamma_0-\delta$
    such that poles can only occur at $\gamma+\chi_\ell$ for $\ell\in\Z$ and such that these poles have order at
    most $m$ and a possible pole at $0$; the local expansions are written as
    \begin{equation}\label{eq:Fourier:F-s-principal-part}
      \frac{\calF(s)}{s}=\frac1{(s-\gamma-\chi_\ell)^m}\sum_{j\ge 0}\varphi_{j\ell}(s-\gamma-\chi_\ell)^j
    \end{equation}
    with suitable constants $\varphi_{j\ell}$ for $j$, $\ell\in\Z$;
  \item there is some real number~$\eta>0$ such that
    for $\gamma_0 \le \Re s  \le \sigmaabs$ and
    $\abs{s-\gamma-\chi_\ell}\ge \delta$ for all $\ell\in\Z$, we have
    \begin{equation}\label{eq:Dirichlet-order}
      \calF(s) = \Oh[\big]{\abs{\Im s}^{\eta}}
    \end{equation}
    for $\abs{\Im s}\to\infty$.
  \end{itemize}
  All implicit $O$-constants may depend on $f$, $q$, $m$, $\gamma$, $\gamma_0$,
  $\alpha$, $\delta$, $\sigmaabs$ and $\eta$.

  Then
  \begin{equation}\label{eq:Fourier:fluctuation-as-Fourier-series}
    \Phi_j(u) = \sum_{\ell\in \Z}\varphi_{j\ell}\exp(2\ell\pi i u)
  \end{equation}
  for $u\in\R$, $\ell\in\Z$ and $0\le j<m$.

  If $\gamma_0<0$ and $\gamma\notin \frac{2\pi i}{\log q}\Z$, then $\calF(0)=0$.
\end{theorem}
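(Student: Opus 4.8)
The plan is to run the Mellin--Perron machinery carefully. The core idea: zeroth-order Mellin--Perron relates $F(N)$ to a vertical integral of $\calF(s)N^s/s$, which cannot be shifted directly because we lack growth control deep in the critical strip. The trick (as outlined in Section~\ref{section:high-level-overview-proof}) is to integrate once more — pass to first-order Mellin--Perron — which improves convergence by a factor $1/s$ and allows the contour shift. The summatory function of the periodic fluctuation is then itself a fluctuation by the pseudo-Tauberian Proposition~\ref{proposition:pseudo-Tauber}, and comparing Fourier coefficients on both sides recovers~\eqref{eq:Fourier:fluctuation-as-Fourier-series}.

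Concretely, first I would apply the first-order (or higher, if $\eta$ is large) Mellin--Perron formula to write $\sum_{1\le n<N}(N-n)f(n)$, or more precisely the Cesàro-type sum $\int_1^N F(t)\,\dd t$, as a vertical integral $\frac1{2\pi i}\int_{(\sigmaabs+\eps)} \calF(s)\frac{N^{s+1}}{s(s+1)}\,\dd s$. The extra pole factor $1/(s+1)$ together with the assumed polynomial bound~\eqref{eq:Dirichlet-order} guarantees absolute convergence after shifting the contour left past $\Re s=\gamma_0-\delta$. Collecting residues at $s=\gamma+\chi_\ell$ (using the local expansion~\eqref{eq:Fourier:F-s-principal-part}) and at $s=0$ gives $\int_1^N F(t)\,\dd t = $ (a main term which is $N^{\gamma+1}$ times a fluctuation, built from the $\varphi_{j\ell}$) $+\,\Oh{N^{\gamma_0+1-\delta}}$; the shifted integral is now genuinely $\Oh{N^{\gamma_0+1-\delta}}$ by absolute convergence. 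On the other hand, substituting the a priori shape~\eqref{eq:F-N-periodic} of $F(N)$ and integrating — here the pseudo-Tauberian Proposition~\ref{proposition:pseudo-Tauber} is exactly the tool that turns $\int_1^N t^\gamma (\log t)^k \Phi_j(\fractional{\log_q t})\,\dd t$ into $N^{\gamma+1}$ times a Hölder-continuous $1$-periodic fluctuation whose Fourier coefficients are explicit linear combinations of the Fourier coefficients of the $\Phi_j$ — gives a second expression for $\int_1^N F(t)\,\dd t$ of the same shape. Matching the two expressions term by term in the Fourier expansion and in the powers of $\log N$ yields a triangular linear system relating the Fourier coefficients of the (summatory) fluctuations to the $\varphi_{j\ell}$; inverting it (the Hölder hypothesis $\Re\gamma-\gamma_0<\alpha$ ensures the relevant series converge and the system is well-posed) produces precisely~\eqref{eq:Fourier:fluctuation-as-Fourier-series}. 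The pointwise convergence of the Fourier series then follows from Hölder continuity via standard Fourier analysis.

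For the final sentence: when $\gamma_0<0$ we may take $N\to\infty$ through integers with $N^{\gamma_0}\to 0$, and the growth hypothesis lets us push the contour in the plain zeroth-order formula $\sum_{1\le n<N}f(n)+\tfrac12 f(N) = \frac1{2\pi i}\int \calF(s)N^s/s\,\dd s$ all the way to a line with negative real part; the residue at $s=0$ is $\calF(0)$ (a simple pole from $1/s$, since $\gamma\notin\frac{2\pi i}{\log q}\Z$ means $0$ is not one of the $\gamma+\chi_\ell$, so $\calF$ is regular at $0$), while the left-over integral and the main term both tend to $0$, forcing $\calF(0)=0$.

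The main obstacle I anticipate is bookkeeping with the residue expansions: extracting the coefficient of each power of $\log N$ from the residue of $\calF(s)N^s/(s(s+1))$ at a pole of order $m$ requires carefully expanding $N^s=N^{\gamma+\chi_\ell}\sum_k \frac{(\log N)^k}{k!}(s-\gamma-\chi_\ell)^k$ and $\frac{1}{s(s+1)}$ about $s=\gamma+\chi_\ell$, then convolving with the principal part in~\eqref{eq:Fourier:F-s-principal-part}; keeping the indices straight so that the resulting linear system between the $\varphi_{j\ell}$ and the Fourier coefficients of the summatory fluctuation is manifestly triangular (hence invertible) is the delicate part. A secondary technical point is justifying the interchange of summation and integration when inserting~\eqref{eq:F-N-periodic}, and ensuring uniformity of all error terms in $N$ — this is where the quantitative $\Oh{N^{\gamma_0}}$ error (rather than $o(1)$) in the hypothesis, together with $\delta<\Re\gamma-\gamma_0$, is used.
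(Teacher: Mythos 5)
Your main argument follows the paper's strategy: compute a repeated (Ces\`aro-type) summatory function of $f$ in two ways --- once by iterating the pseudo-Tauberian Proposition~\ref{proposition:pseudo-Tauber} on the hypothesised shape~\eqref{eq:F-N-periodic}, once by $A$-th order Mellin--Perron with $A>\eta$ followed by the now-legitimate contour shift --- and compare. Two remarks on that part. First, since $F$ is a step function, $\int_1^N F(t)\,\dd t$ is again a \emph{discrete} sum $\sum_{1\le n<N}(N-n)f(n)$; substituting~\eqref{eq:F-N-periodic} therefore produces sums $\sum_n n^{\gamma}(\log n)^k\Phi_j(\fractional{\log_q n})$, not integrals, and this is exactly where the pseudo-Tauberian proposition (which converts such sums into integrals plus a controlled error) is needed --- your phrasing suggests it acts on the integral, which would be a triviality. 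Second, ``matching term by term'' requires a uniqueness statement for asymptotic expansions with periodic fluctuations sampled at $\log_q N$ for integer $N$ (the paper's Lemma~\ref{lemma:uniqueness-fluctuations}), and your ``triangular, hence invertible'' system degenerates precisely when $\gamma+a+\chi_{\ell'}=0$ for some $1\le a\le A$: the factor to be divided out is then $Z$ itself and one must track an extra pole of order $m+1$ at $s=-a$ (the index $j=-1$ in the paper). These points are repairable, but they are where the real work lies.

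The genuine gap is your argument for $\calF(0)=0$. You propose to shift the contour in the \emph{zeroth-order} formula $\frac1{2\pi i}\int\calF(s)N^s\,\dd s/s$ to a line of negative real part. Under the stated hypotheses the integrand is only $\Oh[\big]{\abs{\Im s}^{\eta-1}}$ with $\eta>0$ on vertical lines away from the poles, which is not absolutely integrable; this contour shift is exactly the unjustified step that the whole theorem is designed to circumvent, so it cannot be invoked here. (Moreover, your claim that ``the main term tends to $0$'' fails whenever $\Re\gamma>0$, which is allowed even with $\gamma_0<0$.) In the paper, $\calF(0)=0$ falls out of the same comparison used for the Fourier coefficients: the residue of $\calF(s)N^{s+A}/\bigl(s(s+1)\dotsm(s+A)\bigr)$ at $s=0$ contributes a term $\calF(0)N^A/A!$, whereas the expansion obtained from the iterated pseudo-Tauberian argument contains polynomial terms $c_bN^b$ only for $b\le A-1$; Lemma~\ref{lemma:uniqueness-fluctuations} then forces $\calF(0)=0$.
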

This theorem is proved in Section~\ref{section:proof:use-Mellin--Perron}.
The theorem is more general than necessary for $q$-regular sequences because
Theorem~\ref{theorem:Dirichlet-series} shows that we could use some $0<\eta<1$.
However, it might be applicable in other cases, so we prefer to state it in
this more general form.

\subsection{Fluctuations of Symmetrically Arranged Eigenvalues}
\label{sec:symmetric}

In our main results, the occurring fluctuations are always
$1$-periodic functions. However, if eigenvalues of the sum of matrices
of the linear representation are
arranged in a symmetric way, then we can combine summands and get
fluctuations with longer periods. This is in particular true if all
vertices of a regular polygon (with center~$0$) are eigenvalues.

\begin{proposition}\label{proposition:symmetric-eigenvalues}
  Let $\lambda\in\C$, and let $k\ge0$ and $p>0$ be integers. Denote by $U_p$ the
  set of $p$th roots of unity. Suppose for each $\zeta\in U_p$
  we have a continuous $1$-periodic function
  \begin{equation*}
    \Phi_{(\zeta\lambda)}(u)
    = \sum_{\ell\in\Z}\varphi_{(\zeta\lambda)\ell}\exp(2\ell\pi i u)
  \end{equation*}
  whose Fourier coefficients are
  \begin{equation*}
    \varphi_{(\zeta \lambda)\ell}
    =\Res[\bigg]{\calD(s)
      \Bigl(s - \log_q (\zeta\lambda) - \frac{2\ell\pi i}{\log q}\Bigr)^k}%
    {s=\log_q (\zeta\lambda) + \frac{2\ell\pi i}{\log q}}
  \end{equation*}
  for a suitable function $\calD$.

  Then
  \begin{equation}\label{eq:proposition:symmetric-eigenvalues:sum-fluct}
    \sum_{\zeta\in U_p} N^{\log_q (\zeta\lambda)} (\log_q N)^k
    \Phi_{(\zeta\lambda)}(\fractional{\log_q N})
    = N^{\log_q \lambda} (\log_q N)^k \Phi(p\fractional{\log_{q^p} N})
  \end{equation}
  with a continuous $p$-periodic function
  \begin{equation*}
    \Phi(u)
    = \sum_{\ell\in\Z}\varphi_{\ell}\fexp[\Big]{\frac{2\ell\pi i}{p} u}
  \end{equation*}
  whose Fourier coefficients are
  \begin{equation*}
    \varphi_{\ell}
    =\Res[\bigg]{\calD(s)
      \Bigl(s - \log_q \lambda - \frac{2\ell\pi i}{p\log q}\Bigr)^k}%
    {s=\log_q \lambda + \frac{2\ell\pi i}{p\log q}}.
  \end{equation*}
\end{proposition}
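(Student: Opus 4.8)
The plan is to substitute the given Fourier expansions of the $\Phi_{(\zeta\lambda)}$ into the left-hand side of~\eqref{eq:proposition:symmetric-eigenvalues:sum-fluct} and to recognise the resulting expression as a single continuous $p$-periodic function evaluated at $p\fractional{\log_{q^p}N}$; the arithmetic heart is the bijection between the doubly-indexed family of potential poles $\log_q(\zeta\lambda)+\chi_\ell$ (with $\zeta\in U_p$, $\ell\in\Z$) and the singly-indexed family $\log_q\lambda+\frac{2n\pi i}{p\log q}$ (with $n\in\Z$). First I would fix branches once and for all: write each $\zeta\in U_p$ uniquely as $\fexp{2\pi i a_\zeta/p}$ with $a_\zeta\in\{0,\dots,p-1\}$, and check that both sides of~\eqref{eq:proposition:symmetric-eigenvalues:sum-fluct} are unchanged if the value of $\log_q(\zeta\lambda)$ used in the hypothesis is replaced by $\log_q(\zeta\lambda)+\chi_j$ for some $j\in\Z$: such a shift multiplies $N^{\log_q(\zeta\lambda)}$ by $\fexp{2\pi ij\log_qN}$ and replaces $\varphi_{(\zeta\lambda)\ell}$ by $\varphi_{(\zeta\lambda),\ell+j}$, hence $\Phi_{(\zeta\lambda)}(\fractional{\log_qN})$ by $\fexp{-2\pi ij\fractional{\log_qN}}\Phi_{(\zeta\lambda)}(\fractional{\log_qN})$, and $\fexp{2\pi ij(\log_qN-\fractional{\log_qN})}=1$. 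So without loss of generality $\log_q(\zeta\lambda)=\log_q\lambda+\frac{2\pi i a_\zeta}{p\log q}$ for all $\zeta\in U_p$. I would then define the candidate fluctuation directly by
\[
  \Phi(u)\coloneqq\sum_{\zeta\in U_p}\fexp[\Big]{\frac{2\pi i a_\zeta}{p}u}\,\Phi_{(\zeta\lambda)}(u),\qquad u\in\R .
\]

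This $\Phi$ is continuous, being a finite sum of continuous functions, and it is $p$-periodic, since each $\Phi_{(\zeta\lambda)}$ is $1$-periodic (hence $p$-periodic, as $p\in\Z$) and $\fexp{2\pi i a_\zeta}=1$. To verify~\eqref{eq:proposition:symmetric-eigenvalues:sum-fluct}, set $u=p\fractional{\log_{q^p}N}=\log_qN-p\floor{\log_{q^p}N}$; this differs from $\log_qN$ by an integer, so $\fractional{u}=\fractional{\log_qN}$ and thus $\Phi_{(\zeta\lambda)}(u)=\Phi_{(\zeta\lambda)}(\fractional{\log_qN})$, while $\fexp{\frac{2\pi i a_\zeta}{p}u}=\fexp{2\pi i a_\zeta\log_{q^p}N}=N^{\log_q\zeta}$ (using $\log_qN=p\log_{q^p}N$), so that $\fexp{\frac{2\pi i a_\zeta}{p}u}\,N^{\log_q\lambda}=N^{\log_q\lambda+\log_q\zeta}=N^{\log_q(\zeta\lambda)}$ in the chosen branch. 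Multiplying by $N^{\log_q\lambda}(\log_qN)^k$ and summing over $\zeta\in U_p$ reproduces exactly the left-hand side of~\eqref{eq:proposition:symmetric-eigenvalues:sum-fluct}, establishing the identity.

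It remains to identify the Fourier coefficients of $\Phi$. Since $\Phi$ is continuous and $p$-periodic,
\[
  \varphi_\ell=\frac1p\int_0^p\Phi(u)\fexp[\big]{-\tfrac{2\ell\pi i}{p}u}\,\dd u
  =\sum_{\zeta\in U_p}\frac1p\int_0^p\Phi_{(\zeta\lambda)}(u)\fexp[\big]{-\tfrac{2\pi i(\ell-a_\zeta)}{p}u}\,\dd u .
\]
Splitting each integral over $[0,p)$ into its $p$ unit subintervals and using the $1$-periodicity of $\Phi_{(\zeta\lambda)}$ shows that the $\zeta$-th integral equals $\varphi_{(\zeta\lambda),(\ell-a_\zeta)/p}$ when $p\mid(\ell-a_\zeta)$ and vanishes otherwise, because the geometric sum $\sum_{r=0}^{p-1}\fexp{-2\pi i(\ell-a_\zeta)r/p}$ equals $p$ or $0$. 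For each $\ell\in\Z$ the congruence $a_\zeta\equiv\ell\pmod p$ singles out the unique $\zeta_\ell\coloneqq\fexp{2\pi i(\ell\bmod p)/p}\in U_p$, and then $(\ell-a_{\zeta_\ell})/p=\floor{\ell/p}$, so $\varphi_\ell=\varphi_{(\zeta_\ell\lambda),\floor{\ell/p}}$. Finally $\log_q\zeta_\ell=\frac{2\pi i(\ell\bmod p)}{p\log q}$ and $\chi_{\floor{\ell/p}}=\frac{2\pi i\floor{\ell/p}}{\log q}$ combine to $\log_q(\zeta_\ell\lambda)+\chi_{\floor{\ell/p}}=\log_q\lambda+\frac{2\ell\pi i}{p\log q}$, and correspondingly $\bigl(s-\log_q(\zeta_\ell\lambda)-\chi_{\floor{\ell/p}}\bigr)^k=\bigl(s-\log_q\lambda-\frac{2\ell\pi i}{p\log q}\bigr)^k$; hence the residue of $\calD(s)\bigl(s-\log_q(\zeta_\ell\lambda)-\chi_{\floor{\ell/p}}\bigr)^k$ at $s=\log_q(\zeta_\ell\lambda)+\chi_{\floor{\ell/p}}$ defining $\varphi_{(\zeta_\ell\lambda),\floor{\ell/p}}$ is literally the residue claimed for $\varphi_\ell$.

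Since the statement is essentially a repackaging identity I do not expect a deep obstacle; the genuine care is in the bookkeeping — fixing a branch of the complex logarithm consistently between the hypothesis and the conclusion (the branch-independence check above), making sure that $\ell\mapsto(\ell\bmod p,\floor{\ell/p})$ really is the intended bijection $\Z\leftrightarrow\{0,\dots,p-1\}\times\Z$, and keeping the two reductions consistent, namely ``modulo $1$'' for $\log_qN$ (entering the $\Phi_{(\zeta\lambda)}$) versus ``modulo $p$'' via $\log_{q^p}N$ (entering $\Phi$).
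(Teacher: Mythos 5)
Your proof is correct and takes essentially the same route as the paper: you construct the same function $\Phi(u)=\sum_{\zeta\in U_p}\exp(2\pi i a_\zeta u/p)\,\Phi_{(\zeta\lambda)}(u)$ and use the same bijection $\ell\leftrightarrow(\ell\bmod p,\floor{\ell/p})$ between the two families of poles. The only differences are cosmetic: the paper handles the logarithm branch by choosing the representative window $j_0\le j<j_0+p$ adapted to the principal value (instead of your normalisation $a_\zeta\in\{0,\dots,p-1\}$ together with the branch-invariance check), and it identifies the Fourier coefficients by reindexing the substituted double series rather than via your orthogonality integrals.
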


Note that we again write $\Phi(p\fractional{\log_{q^p} N})$ to
optically emphasise the $p$-periodicity. Moreover, the factor
$(\log_q N)^k$ in~\eqref{eq:proposition:symmetric-eigenvalues:sum-fluct}
could be cancelled, however it is there to
optically highlight the similarities to the main results (e.g.\@
Theorem~\ref{theorem:simple}).
The proof of Proposition~\ref{proposition:symmetric-eigenvalues}
can be found in
Section~\ref{sec:proof-symmetric-eigenvalues}.

The above proposition will be used for proving
Corollary~\ref{corollary:transducer-main} which deals with transducer
automata; there, the second order term exhibits a fluctuation with
possible period larger than~$1$. We will also use the proposition for
the analysis of esthetic numbers in
Section~\ref{sec:esthetic-numbers}.

\begin{remark}
We can view Proposition~\ref{proposition:symmetric-eigenvalues} from
a different perspective:
A $q$-regular sequence is $q^p$-regular as well
(by~\cite[Theorem~2.9]{Allouche-Shallit:1992:regular-sequences}).
Then, all eigenvalues $\zeta\lambda$ of the original sequence
become eigenvalues $\lambda^p$ whose algebraic multiplicity is the sum
of the individual multiplicities but the sizes of the corresponding
Jordan blocks do not change.
Moreover, the joint spectral radius is also taken to the $p$th power.
We apply, for example,
Theorem~\ref{theorem:simple} in our $q^p$-world and get again
$1$-period fluctuations.
Note that for actually computing the Fourier coefficients,
the approach presented in the proposition seems to be more suitable.
\end{remark}

\section{Remarks on the Definitions}\label{sec:motivation-definitions}
In this section, we give some motivation for and comments on the definitions listed in
Section~\ref{sec:definitions-notations}.

\subsection{\texorpdfstring{$q$}{q}-Regular Sequences vs.\ Matrix Products}\label{section:q-regular-matrix-product}
We note one significant difference between the study of $q$-regular sequences
as in \eqref{eq:linear-representation} and the study of matrix
products~\eqref{eq:f-as-product}.
The recurrence \eqref{eq:linear-representation} is supposed to hold for
$qn+r=0$, too; i.e. $v(0)=A_0v(0)$. This implies that $v(0)$ is either the zero
vector (which is not interesting at all) or that $v(0)$ is a right eigenvector of
$A_0$ associated with the eigenvalue~$1$.

We do not want to impose this condition in the study of the matrix
product~\eqref{eq:f-as-product}. Therefore, we exclude the case $qn+r=0$ in
\eqref{eq:regular-matrix-sequence}.
This comes at the price of the terms $K$,
$\vartheta_m$, $\vartheta$ in Theorem~\ref{theorem:contribution-of-eigenspace} which
vanish if multiplied by a right eigenvector to the eigenvalue $1$ of $A_0$ from the
right. This is the reason why Theorem~\ref{theorem:simple} has simpler
expressions than those encountered in Theorem~\ref{theorem:contribution-of-eigenspace}.

\subsection{Joint Spectral Radius}
Let
\begin{equation*}
\rho_\ell\coloneqq \sup
\setm[\big]{\norm{A_{r_1}\ldots A_{r_\ell}}^{1/\ell}}{r_1, \ldots, r_\ell\in\set{0, \ldots, q-1}}.
\end{equation*}
Then the
submultiplicativity of the norm and Fekete's subadditivity lemma~\cite{Fekete:1923:ueber-verteil} imply that
$\lim_{\ell\to\infty}\rho_\ell=\inf_{\ell>0}\rho_{\ell}=\rho$;
cf.~\cite{Rota-Strang:1960}. In view of equivalence of norms, this shows that
the joint spectral radius does not depend on the chosen norm. For our purposes,
the important point is that the choice of $R$ ensures that there is an
$\ell_0>0$ such that $\rho_{\ell_0}\le R$, i.e., $\norm{A_{r_1}\ldots
A_{r_{\ell_0}}}\le R^{\ell_0}$ for all $r_j\in\set{0,\ldots, q-1}$. For any $\ell>0$, we use long division to write
$\ell=s\ell_0+r$, and by submultiplicativity of the norm, we get $\norm{A_{r_1}\ldots
    A_{r_\ell}}\le R^{s\ell_0} \rho_{r}^r$ and thus
\begin{equation}\label{eq:bound-prod}
  \norm{A_{r_1}\ldots
    A_{r_\ell}}=\Oh{R^{\ell}}
\end{equation}
for all $r_j\in\set{0,\ldots,q-1}$ and $\ell\to\infty$. We will only use
\eqref{eq:bound-prod} and no further properties of the joint spectral radius.
Note that~\eqref{eq:f-as-product} and \eqref{eq:bound-prod} imply that
\[f(n)=\Oh{R^{\log_q n}}=\Oh{n^{\log_q R}}\]
for $n\to\infty$.

As mentioned, we say that the set of matrices $A_0$, \dots, $A_{q-1}$,
has the \emph{finiteness property} if there is an $\ell>0$ with
$\rho_\ell=\rho$; see~\cite{Jungers:2009:joint-spectral-radius,
  Lagarias-Wang:1995:finiteness-conjecture-jsr}.

\subsection{Constants for Theorem~\ref{theorem:contribution-of-eigenspace}}\label{section:constants-for-theorem}
In contrast to usual conventions, we write matrix representations of
endomorphisms as multiplications $x\mapsto xM$ where $x$ is a (row) vector in
$\C^d$ and $M$ is a matrix. Note that we usually denote this endomorphism
by the corresponding calligraphic letter, for example, the endomorphism represented
by the matrix~$M$ is denoted by $\calM$.

Consider the endomorphism $\calC$ which maps a row vector $x\in\C^d$ to $xC$
and its generalised eigenspaces $W_\lambda$ for $\lambda\in\C$.  (These are the
generalised left eigenspaces of $C$. If $\lambda\notin\sigma(C)$, then
$W_\lambda=\set{0}$.) Then it is well-known that $\calC\rvert_{W_\lambda}$ is an
endomorphism of $W_\lambda$ and that
$\C^d=\bigoplus_{\lambda\in\sigma(C)}W_\lambda$. Let $\calT$ be the basis formed by
the rows of $T$. Then the matrix representation of $\calC$
with respect to~$\calT$ is $J$.

Let now $\calD$ be the endomorphism of $\C^d$ which acts as identity on
$W_\lambda$ for $\lambda\neq 1$ and as zero on $W_1$. Its matrix representation
with respect to the basis $\calT$ is $D$; its matrix representation with
respect to the standard basis is $T^{-1}DT$.

Finally, let $\calC'$ be the endomorphism $\calC'=\calC \circ \calD$. As
$\calC$ and $\calD$ decompose along
$\C^d=\bigoplus_{\lambda\in\sigma(C)}W_\lambda$ and $\calD$ commutes with every
other endomorphism on $W_\lambda$ for all $\lambda$, we clearly also have
$\calC'=\calD\circ\calC$. Thus the matrix representation of $\calC'$ with
respect to $\calT$ is $DJ=JD$; its matrix representation with respect to the
standard basis is $T^{-1}DJT=C'$.

Now consider a generalised left eigenvector $w$ of $C$.  If it is associated to the eigenvalue $1$, then $w
T^{-1}DT=\calD(w)=0$, $wK=0$ and $wC'=\calC'(w)=0$. Otherwise, that is, if $w$ is associated to an eigenvalue not equal to $1$, we have $wT^{-1}DT=\calD(w)=w$,
$wC'=\calC'(w)=\calC(w)=wC$,
$w{C'}^j={\calC'}^j(w)=\calC^j(w)=wC^j$ for $j\ge 0$ and $w\vartheta_m=0$. Also note that
$1$ is not an eigenvalue of $C'$, thus $I-C'$ is indeed regular.
If $1$ is not an eigenvalue of $C$, then everything is simpler:
$D$ is the identity matrix, $C'=C$, $K=(I-C)^{-1}(I-A_0)$ and $\vartheta=0$.

\part{Examples}\label{part:examples}

In this part we investigate three examples in-depth. For an overview,
we refer to Section~\ref{sec:overview-examples} where some of the
appearing phenomena are discussed as well. Further
examples are also mentioned there.

\section{Sequences Defined by Transducer Automata}
\label{sec:transducer}
We discuss the asymptotic analysis related to transducers; see also
Section~\ref{sec:overview-transducers} for an overview.

\subsection{Transducer and Automata}

Let us start with two paragraphs recalling some notions around transducer
automata. A \emph{transducer automaton} has a finite set of
\emph{states} together with \emph{transitions} (directed edges)
between these states. Each transition has an \emph{input label} and an
\emph{output label} out of the \emph{input alphabet} and the
\emph{output alphabet}, respectively.
A transducer is said to be \emph{deterministic} and \emph{complete}
if for every state and every letter of the input alphabet, there is exactly one
transition starting in this state with this input label.

A deterministic and complete transducer processes a word (over the
input alphabet) in the following way:
\begin{itemize}
\item It starts at its unique initial state.
\item Then the transducer reads the word letter by letter and for each
  letter
  \begin{itemize}
  \item takes the transition with matching input label,
  \item the output label is written, and
  \item we proceed to the next state (according to the end of the
    transition).
  \end{itemize}
\item Each state has a \emph{final output label} that is written when
  we \emph{halt} in this final state; we call a transducer with this
  property a \emph{subsequential transducer}.
\end{itemize}
We refer to \cite[Chapter~1]{Berthe-Rigo:2010:combin} for a more
detailed introduction to transducers and automata.

Now we are ready to start with the set-up for our example.

\subsection{Sums of Output Labels}

Let $q\ge 2$ be a positive integer. We consider a complete deterministic
subsequential transducer  $\calT$ with input alphabet $\set{0, \ldots, q-1}$
and output alphabet $\C$; see \cite{Heuberger-Kropf-Prodinger:2015:output}.
For a non-negative integer $n$, let $\calT(n)$ be the sum of the output labels
(including the final output label) encountered when the transducer reads the
$q$-ary expansion of $n$. Therefore, letters of the input alphabet will from now on be called digits.

This concept has been thoroughly studied in
\cite{Heuberger-Kropf-Prodinger:2015:output}: There, $\calT(n)$ is considered
as a random variable defined on the probability space $\set{0, \ldots, N-1}$
equipped with uniform distribution. The expectation in this model corresponds
(up to a factor of~$N$) to our summatory function $\sum_{0\le n<N}\calT(n)$.
We remark that in \cite{Heuberger-Kropf-Prodinger:2015:output}, the variance
and limiting distribution of the random variable $\calT(n)$ have also been
investigated. Most of the results there are also valid for higher dimensional input.

The purpose of this section is to show that $\calT(n)$ is a $q$-regular
sequence and to see that the corresponding results
in~\cite{Heuberger-Kropf-Prodinger:2015:output} also follow from our
more general framework here. We note that the
binary sum of digits considered in Example~\ref{example:binary-sum-of-digits}
is the special case of $q=2$ and the transducer consisting of a single state
which implements the identity map. For additional special cases of this
concept; see \cite{Heuberger-Kropf-Prodinger:2015:output}. Note that our result
here for the summatory function contains (fluctuating) terms for all
eigenvalues $\lambda$ of the adjacency matrix of the underlying digraph with
$\abs{\lambda}>1$ whereas in  \cite{Heuberger-Kropf-Prodinger:2015:output} only
contributions of those eigenvalues $\lambda$ with $\abs{\lambda}=q$ are
available, all other contributions are absorbed by the error term there.

\subsection{Some Perron--Frobenius Theory}

We will need the following consequence of Perron--Frobenius theory.
By a \emph{component} of a digraph we always mean a strongly
connected component.  We call a component  \emph{final} if there
are no arcs leaving the component. The \emph{period} of a component
is the greatest common divisor of its cycle lengths. The \emph{final period} of
a digraph is the least common multiple of the periods of its final components.

\begin{lemma}\label{lemma:Perron--Frobenius-again}
  Let $D$ be a directed graph  where each
  vertex has outdegree $q$. Let $M$ be its adjacency matrix and $p$ be its
  final period.
  Then $M$ has spectral radius $q$, $q$ is an
  eigenvalue of $M$ and for all eigenvalues $\lambda$ of $M$ of modulus $q$, the
  algebraic and geometric multiplicities coincide and $\lambda = q\zeta$ for
  some $p$th root of unity $\zeta$.
\end{lemma}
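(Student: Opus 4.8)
The plan is to reduce everything to classical Perron--Frobenius theory for irreducible non-negative matrices, applied componentwise. First I would condense~$D$ to its strongly connected components and order them so that the adjacency matrix~$M$ becomes block upper triangular, with the diagonal blocks~$M_i$ being the adjacency matrices of the components~$C_i$ (a diagonal block is the $1\times1$ zero block if the corresponding component is a single vertex with no loop). Since every vertex has outdegree~$q$, the row sums of~$M$ are all equal to~$q$; hence $\norm{M}_\infty=q$ and the spectral radius satisfies $\rho(M)\le q$. On the other hand, a final component~$C_i$ has no arcs leaving it, so inside~$C_i$ every vertex still has outdegree~$q$; thus $M_i$ has constant row sum~$q$, is irreducible (as~$C_i$ is strongly connected and has at least one arc — a final component of a digraph in which every vertex has outdegree~$\geq1$ cannot be a single loopless vertex), and therefore by Perron--Frobenius has~$q$ as an eigenvalue. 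So $\rho(M)=q$ and $q\in\sigma(M)$.

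Next I would pin down the eigenvalues of modulus~$q$. Because $M$ is block triangular, $\sigma(M)=\bigcup_i\sigma(M_i)$, and for a non-final or trivial component the block~$M_i$ has row sums~$\le q$ with at least one row sum~$<q$ somewhere along any path to a final component; a standard argument (e.g.\ via the fact that $\rho(M_i)<q$ unless $C_i$ is final, since an irreducible non-negative matrix strictly dominated in some row by the constant-$q$ matrix has spectral radius~$<q$) shows that only the final components contribute eigenvalues of modulus~$q$. For a final component~$C_i$ of period~$p_i$, the irreducible matrix~$M_i$ with constant row sum~$q$ has, by the Perron--Frobenius theory of periodic (cyclic) non-negative matrices, exactly the eigenvalues of modulus~$q$ equal to $q\zeta$ where $\zeta$ ranges over the $p_i$-th roots of unity, each simple as an eigenvalue of~$M_i$. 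Since $p_i\mid p$, each such~$\zeta$ is a $p$-th root of unity, giving the claimed form $\lambda=q\zeta$.

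Finally I would handle the multiplicity claim. For an eigenvalue~$\lambda$ of~$M$ with $\abs{\lambda}=q$, algebraic multiplicity in~$M$ equals the sum over final components~$C_i$ of the (algebraic) multiplicity of~$\lambda$ in~$M_i$; each of those is~$1$ by the cyclic Perron--Frobenius theory, and moreover for an irreducible matrix a peripheral eigenvalue is a simple root of the characteristic polynomial, so geometric and algebraic multiplicities agree within each block. The subtlety — and the one step I expect to be the main obstacle — is ruling out that a Jordan block of size~$\ge2$ for~$\lambda$ is created by the \emph{interaction} of two different final blocks sharing~$\lambda$ together with the strictly-upper-triangular coupling part of~$M$. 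To exclude this I would use that~$M$ restricted to the peripheral spectrum behaves like a scaled stochastic matrix: after the change of variables $M\mapsto q^{-1}M$, the matrix is row-stochastic, and for a row-stochastic (more generally, power-bounded) matrix every eigenvalue of modulus~$1$ is semisimple; indeed $\norm{(q^{-1}M)^k}_\infty=1$ for all~$k$ rules out polynomial growth, so no nontrivial Jordan block can attach to an eigenvalue on the unit circle. That power-boundedness argument simultaneously gives semisimplicity of \emph{all} modulus-$q$ eigenvalues of~$M$ in one stroke, so I would actually lead with it rather than with the block-by-block bookkeeping, keeping the component analysis only for identifying \emph{which} $q\zeta$ occur and for the existence of~$q$ itself.
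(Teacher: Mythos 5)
Your proof is correct, and its skeleton coincides with the paper's: condensation into strongly connected components, a block upper triangular adjacency matrix, row sums equal to~$q$ giving spectral radius at most~$q$, classical Perron--Frobenius on the irreducible final blocks to identify the peripheral eigenvalues as $q\zeta$ with $\zeta$ a root of unity of order dividing the period of the component, and strict inequality for the non-final blocks. There are two genuine points of divergence. First, for the non-final components the paper pads each such component with loops up to out-degree~$q$ and invokes a strict-domination comparison theorem to conclude that the spectral radius of the block is strictly less than~$q$, whereas you appeal to the standard fact that an irreducible non-negative matrix with row sums at most~$q$ and at least one row sum strictly less has spectral radius strictly less than~$q$; these are interchangeable (and you correctly flag the trivial single-vertex block separately). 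Second, and more substantially, for the semisimplicity of the peripheral eigenvalues the paper extends a left eigenvector of each final block by zeros to a left eigenvector of~$M$ (possible precisely because no arcs leave a final component), so that the geometric multiplicity is at least the algebraic one; you instead normalise to the row-stochastic matrix $q^{-1}M$, observe $\inftynorm{(q^{-1}M)^k}=1$ for all~$k$, and conclude that power-boundedness forbids nontrivial Jordan blocks attached to eigenvalues on the unit circle. Your argument disposes of the cross-block interaction worry in one stroke and needs no bookkeeping of where the eigenvectors live; the paper's argument is more explicit in that it actually exhibits the eigenvectors. Both are valid, and your identification of the Jordan-block interaction as the one nontrivial step is exactly right.
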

This lemma follows from setting $t=0$ in
\cite[Lemma~2.3]{Heuberger-Kropf-Prodinger:2015:output}. As
\cite[Lemma~2.3]{Heuberger-Kropf-Prodinger:2015:output} proves more than we
need here and depends on the notions of that article, we extract the relevant
parts of \cite{Heuberger-Kropf-Prodinger:2015:output} to provide a
self-contained (apart from Perron--Frobenius theorem) proof of
Lemma~\ref{lemma:Perron--Frobenius-again}.
\begin{proof}
  As usual, the condensation of $D$ is
  the graph resulting from contracting each component of the original digraph
  to a single new vertex. By construction, the condensation is acyclic.

  We choose a refinement of the partial order of the components given by the successor relation in
  the condensation to a linear order in such a way that the final components
  come last. Note that this implies that if there is an arc from one component to another, the
  former component comes before the latter component in our linear order. We
  then denote the components by $\calC_1$, \ldots, $\calC_k$, $\calC_{k+1}$,
  \ldots, $\calC_{k+\ell}$ where the first $k$ components are non-final and
  the last $\ell$ are final.
  Without loss of generality, we assume that the vertices of the original digraph $D$ are labeled such that
  vertices within a component get successive labels and such that the linear
  order of the components established above is respected.

  Therefore, the adjacency matrix $M$ is an upper block triagonal matrix of the
  shape
  \begin{equation*}
    M=\left(
    \begin{array}{c|c|c|c|c|c}
      M_1&\star&\star&\star&\star&\star\\\hline
      0&\raisebox{-0.5ex}[-0.25ex]{$\ddots$}&\star&\star&\star&\star\\\hline
      0&0&M_k&\star&\star&\star\\\hline
      0&0&0&M_{k+1}&0&0\\\hline
      0&0&0&0&\raisebox{-0.5ex}[-0.25ex]{$\ddots$}&0\\\hline
      0&0&0&0&0&M_{k+\ell}
    \end{array}
    \right)
  \end{equation*}
  where $M_j$ is the adjacency matrix of the component $\calC_j$.

  Each row of the non-negative square matrix $M$ has sum $q$ by construction.
  Thus $\inftynorm{M}=q$ and therefore the spectral radius of $M$ is bounded
  from above by $q$. As the all ones vector is obviously a right eigenvector
  associated with the eigenvalue $q$ of $M$, the spectral radius of $M$ equals
  $q$. The same argument applies to $M_{k+1}$, \ldots,
  $M_{k+\ell}$.

  By construction, the
  matrices $M_{k+1}$, \ldots, $M_{k+\ell}$ are irreducible.
  For $1\le j\le \ell$ all eigenvalues $\lambda$ of $M_{k+j}$ of modulus $q$ have
  algebraic and geometric multiplicities $1$ by Perron--Frobenius theory
  and $\lambda = q \zeta$ for some $p_{k+j}$th root of unity $\zeta$ where $p_{k+j}$ is
  the period of $\calC_{k+j}$.

  By construction, the vertices of the components $\calC_j$ for $1\le j\le k$
  have out-degree at most $q$. We add loops to these vertices to increase
  their out-degree to $q$, resulting in $\tildecalC_j$. The corresponding
  adjacency matrices are denoted by $\tildeM_j$. By the above argument,
  $\tildeM_j$ has spectral radius $q$ for $1\le j\le k$. As $M_j\le \tildeM_j$ (component-wise)
  and $M_j\neq \tildeM_j$ by construction, the spectral radius of $M_j$ is strictly less than
  $q$ by \cite[Theorem~8.8.1]{Godsil-Royle:2001:alggraphtheory}.

  A left eigenvector $v_j$ of $M_{k+j}$ for $1\le j\le \ell$ can easily be
  extended to a left eigenvector $(0, \ldots, 0, v_j, 0, \ldots, 0)$ of
  $M$. This observation shows that the geometric multiplicity of any eigenvalue
  of $M$ of modulus $q$ is at least its algebraic multiplicity. This concludes
  the proof.
\end{proof}
%%% Local Variables:
%%% mode: latex
%%% TeX-master: "regular-sequences.tex"
%%% End:

\subsection{Analysis of Output Sums of Transducers}

  We consider the states of $\calT$ to be numbered by $\set{1, \ldots, d}$ for some
  positive integer $d\ge 1$ such that the initial state is state~$1$. We set
  $\calT_j(n)$ to be the sum of the output labels (including the final output
  label) encountered when the transducer reads the $q$-ary expansion of $n$ when
  starting in state~$j$. By construction, we have $\calT(n)=\calT_1(n)$ and
  $\calT_j(0)$ is the final output label of state~$j$. We set
  $y(n)=\bigl(\calT_1(n), \ldots, \calT_d(n)\bigr)$.
  For $0\le r<q$, we define the $d\times d$-dimensional $\set{0, 1}$-matrix $P_r$ in such a
  way that there is a one in row~$j$, column~$k$ if and only if there is a
  transition from state~$j$ to state~$k$ with input label $r$. The vector $o_r$
  is defined by setting its $j$th coordinate to be the output label of the transition
  from state~$j$ with input label $r$.

For $n_0\ge 1$, we set
\begin{equation*}
\calX(s)=\sum_{n\ge 1}n^{-s}\calT(n),\qquad
\calY_{n_0}(s)=\sum_{n\ge n_0}n^{-s}y(n),\qquad
\zeta_{n_0}(s, \alpha)=\sum_{n\ge n_0}(n+\alpha)^{-s}.
\end{equation*}
The last Dirichlet series is a truncated version of the Hurwitz zeta function.

\begin{corollary}
  \label{corollary:transducer-main}
  Let $\calT$ be a transducer as described at the beginning of
  this section. Let $M$ be the adjacency matrix and $p$ be the final period of
  the underlying digraph. For $\lambda\in\C$ let $m(\lambda)$
  be the size of the largest Jordan block associated with the eigenvalue
  $\lambda$ of $M$.

  Then the sequence
  $n\mapsto\calT(n)$ is a $q$-regular sequence and
  \begin{equation}\label{eq:transducer:summatory-as-fluctuation}
  \begin{aligned}
    \sum_{0\le n<N}\calT(n) = e_\calT N\log_q N  &+ N\Phi(\log_q N)\\
    &+ \sum_{\substack{\lambda\in\sigma(M)\\
        1<\abs{\lambda}<q
      }} N^{\log_q \lambda} \sum_{0\le k<m(\lambda)}(\log_q N)^k\Phi_{\lambda
      k}(\log_q N)\\
    &+ \Oh[\big]{(\log N)^{\max\setm{m(\lambda)}{\abs{\lambda}=1}}}
  \end{aligned}
  \end{equation}
  for some continuous $p$-periodic function $\Phi$, some continuous
  $1$-periodic functions~$\Phi_{\lambda k}$ for $\lambda\in\sigma(M)$ with $1<\abs{\lambda}<q$ and $0\le
  k<m(\lambda)$ and some constant
  $e_\calT$.

  Furthermore,
  \begin{equation*}
    \Phi(u)=\sum_{\ell\in\Z}\varphi_\ell\exp\Bigl(\frac{2\ell\pi i}{p}u\Bigr)
  \end{equation*}
  with
  \begin{equation*}
    \varphi_\ell = \Res[\Big]{\frac{\calX(s)}{s}}{s=1+\frac{2\ell\pi
        i}{p\log q}}
  \end{equation*}
  for $\ell\in\Z$.
  The Fourier series expansion of $\Phi_{\lambda k}$ for $\lambda\in\sigma(M)$
  with $1<\abs{\lambda}<q$ is given in Theorem~\ref{theorem:simple}.

  The Dirichlet series $\calY_{n_0}$ satisfies the functional equation
  \begin{equation}\label{eq:transducer-functional-equation}
    \begin{aligned}
    \bigl(I-q^{-s}M\bigr)\calY_{n_0}(s) &= \sum_{n_0\le n<qn_0} n^{-s}y(n)
    + q^{-s}\sum_{0\le r<q}\zeta_{n_0}\bigl(s, \tfrac{r}{q}\bigr)o_r\\
    &\phantom{={}}+ q^{-s}\sum_{0\le r<q}P_r\sum_{k\ge 1}\binom{-s}{k}\Bigl(\frac rq\Bigr)^k\calY_{n_0}(s+k).
    \end{aligned}
  \end{equation}
\end{corollary}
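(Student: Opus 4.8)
The plan is to reduce Corollary~\ref{corollary:transducer-main} to the already-established general machinery (Theorem~\ref{theorem:simple}, Theorem~\ref{theorem:Dirichlet-series} or its user-friendly packaging, Proposition~\ref{proposition:symmetric-eigenvalues}, and Lemma~\ref{lemma:Perron--Frobenius-again}) by exhibiting a concrete $q$-linear representation of the sequence $n\mapsto\calT(n)$. First I would augment the vector $y(n)=(\calT_1(n),\dots,\calT_d(n))$ by a constant component so that the final output labels $\calT_j(0)$ and the per-step output contributions can be carried along linearly. Concretely, reading the digit $r$ first at the front of the $q$-ary expansion means passing from state $j$ to the state reached via input $r$, adding the output label $o_r$ of that transition; so the recursion is $\calT_j(qn+r)=o_{r,j}+\calT_{k}(n)$ where $k$ is the $r$-successor of $j$, i.e. $y(qn+r)=P_r\,y(n)+o_r$ in vector form (with $o_r$ the column of transition output labels). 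To turn this affine recursion into a genuine linear representation~\eqref{eq:linear-representation}, set $v(n)=(y(n)^\top, 1)^\top$ and $A_r=\left(\begin{smallmatrix}P_r & o_r\\ 0 & 1\end{smallmatrix}\right)$; then $v(qn+r)=A_r v(n)$ holds for all $n\ge 0$, including $n=0$ since $v(0)=(\calT_1(0),\dots,\calT_d(0),1)^\top$ and $A_0 v(0)=v(0)$ exactly because $\calT_j(0)$ is the final output of the $0$-successor of $j$ together with the constant component. This shows $\calT(n)$ is $q$-regular, giving the first assertion.

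Next I would identify the spectral data. We have $C=\sum_r A_r=\left(\begin{smallmatrix}M & \sum_r o_r\\ 0 & q\end{smallmatrix}\right)$ where $M=\sum_r P_r$ is the adjacency matrix of the underlying digraph (each state has out-degree $q$). Its spectrum is $\sigma(C)=\sigma(M)\cup\{q\}$. By Lemma~\ref{lemma:Perron--Frobenius-again}, $M$ has spectral radius $q$, the eigenvalue $q$ has matching algebraic and geometric multiplicities, and every modulus-$q$ eigenvalue is $q\zeta$ for a $p$th root of unity $\zeta$, with $p$ the final period; moreover the non-final strongly connected components contribute eigenvalues of modulus $<q$. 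For the joint spectral radius: the matrices $A_r$ are nonnegative with row sums $q$ only in the last row, and products $A_{r_1}\cdots A_{r_\ell}$ have the block-triangular shape $\left(\begin{smallmatrix}P_{r_1}\cdots P_{r_\ell} & * \\ 0 & 1\end{smallmatrix}\right)$; the top-left block is bounded (a $0$/$1$ walk-counting matrix, entries $\le$ number of length-$\ell$ paths) and the upper-right block grows polynomially at worst, so $\norm{A_{r_1}\cdots A_{r_\ell}}=\Oh{\mathrm{poly}(\ell)}$, hence one may take any $R>1$ in Theorem~\ref{theorem:simple}. Thus the eigenvalues contributing fluctuating main terms are exactly those of $C$ with modulus $>1$: the value $q$ itself together with the polygon of modulus-$q$ eigenvalues $q\zeta$ of $M$ (all with simple Jordan blocks, so $m=1$), and the eigenvalues $\lambda$ of $M$ with $1<\abs{\lambda}<q$.

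Now I would apply Theorem~\ref{theorem:simple} to this representation. The eigenvalue $q$ and its polygon-partners $q\zeta$ each contribute a term $N\,\Phi_{(q\zeta)0}(\fractional{\log_q N})$ (no log factor since $m=1$ there — the $N\log_q N$ comes from the interaction of $q\in\sigma(C)$ being present both as the plain eigenvalue $q$ and, if $p>1$, its rotations, plus the fact that $q$ sits in a Jordan block of size possibly $2$ once $C$ is considered — I would check: since $q$ has geometric = algebraic multiplicity in $M$ but $C$ glues the constant block at eigenvalue $q$, a $2\times 2$ Jordan block can appear, giving the $e_\calT N\log_q N$ leading term). Collecting the $q\zeta$ contributions via Proposition~\ref{proposition:symmetric-eigenvalues} with $\lambda=q$, $k=0$ packages them into a single continuous $p$-periodic function $\Phi$ with Fourier coefficients $\varphi_\ell=\Res{\calX(s)/s}{s=1+2\ell\pi i/(p\log q)}$, exactly as claimed; note $\log_q(q\zeta)=1+2j\pi i/(p\log q)$ for the relevant roots of unity, so the poles of $\calX(s)/s$ at those points are precisely what the residue formula picks up. The intermediate eigenvalues $1<\abs{\lambda}<q$ give the stated sum with $1$-periodic $\Phi_{\lambda k}$ whose Fourier expansion is the one in Theorem~\ref{theorem:simple}, and the eigenvalues of modulus $1$ (if any) produce the $\Oh{(\log N)^{\max m(\lambda)}}$ error term, with the constant $K,\vartheta$-type pieces absorbed since $v(0)$ is a $1$-eigenvector of $A_0$ (cf.\ Section~\ref{section:q-regular-matrix-product}). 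Finally, the functional equation~\eqref{eq:transducer-functional-equation} is a direct transcription of~\eqref{eq:functional-equation-V} for $\calF_{n_0}$ applied to the matrices $A_r$ above: splitting $\sum_{n\ge n_0}n^{-s}y(qn+r)$-style sums by residue $\bmod\ q$ and using $y(qn+r)=P_r y(n)+o_r$ turns the affine part $o_r$ into the Hurwitz-type term $q^{-s}\zeta_{n_0}(s,r/q)o_r$ while the linear part reproduces the $P_r$-convolution with binomial coefficients exactly as in the general theorem.

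\medskip
The main obstacle I anticipate is bookkeeping rather than conceptual: verifying the precise Jordan structure of $C$ at the eigenvalue $q$ (to justify the single $\log_q N$ factor on the leading term $e_\calT N\log_q N$ and no higher powers), and checking that the contributions at modulus-$1$ eigenvalues — including any genuine eigenvalue $1$ of $C$, whose multiplicity in $M$ depends on the digraph — really do collapse into the claimed error term once $wK$ and $w\vartheta_m$ vanish against the eigenvector $v(0)$. A secondary technical point is confirming that the $R>1$ choice is legitimate, i.e.\ that there is no eigenvalue of $C$ with $\rho<\abs{\lambda}\le R$ for the chosen $R$; this follows because $\rho=1$ would force strictly-less-than-$q$ and the finitely many eigenvalues in $(1,\infty)$ are isolated, but it should be stated carefully.
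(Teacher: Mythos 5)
Your overall route (lift the affine transducer recursion to a block matrix representation, analyse the spectrum of $C$ via Lemma~\ref{lemma:Perron--Frobenius-again}, collect the modulus-$q$ eigenvalues with Proposition~\ref{proposition:symmetric-eigenvalues}, and derive \eqref{eq:transducer-functional-equation} by splitting modulo $q$ so that the affine part becomes a truncated Hurwitz zeta and the shifts are handled as in Lemma~\ref{lemma:shifted-Dirichlet}) is the paper's route. But there is a genuine error at the foundation: you claim $v(qn+r)=A_rv(n)$ holds for all $n\ge 0$, i.e.\ $A_0v(0)=v(0)$. That identity would require $\calT_j(0)=\calT_{t(j,0)}(0)+o(j,0)$ for every state $j$, where $\calT_j(0)$ is the final output label of state $j$ itself; this fails in general (e.g.\ for a transducer with all transition outputs $1$ and all final outputs $0$ one gets $0\neq 0+1$). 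Hence your $(d+1)$-dimensional system is \emph{not} a linear representation in the sense of \eqref{eq:linear-representation}, the claimed proof of $q$-regularity is incomplete, Theorem~\ref{theorem:simple} is not directly applicable, and your later remark that the $K$- and $\vartheta$-terms vanish ``since $v(0)$ is a $1$-eigenvector of $A_0$'' rests on the same false premise. The paper circumvents this by working with the matrix product $f(n)$ of \eqref{eq:regular-matrix-sequence} (so that $v(n)=f(n)v(0)$, with $n=0$ excluded from the recursion) and applying Theorem~\ref{theorem:contribution-of-eigenspace}, whose extra terms $wK$ and $(\log_q N)^m w\vartheta_m$ are then absorbed by the stated error term since the joint spectral radius is $1$ and $1\neq q$; $q$-regularity itself is proved separately by enlarging the vector with a component $\iverson{n=0}$ and correcting $A_0$ accordingly.

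The second gap concerns the leading term. You leave the Jordan structure of $C$ at the eigenvalue $q$ ``to check'', and, more importantly, you never argue why the coefficient of $N\log_q N$ is a \emph{constant} $e_\calT$: a size-$2$ Jordan block at $q$ would a priori only give $N\log N\,\Phi_{q1}(\fractional{\log_q N})$ with a periodic $\Phi_{q1}$. The paper settles both points by an explicit eigenvector analysis of $C=\bigl(\begin{smallmatrix}M&b_M\\0&q\end{smallmatrix}\bigr)$: either every left eigenvector $w$ of $M$ to $q$ satisfies $wb_M=0$ (then $q$ is semisimple for $C$ and $e_\calT=0$), or one may take $w_1b_M=1$, in which case $(w_1,0)$ is a generalised left eigenvector of rank $2$ with $(w_1,0)(C-qI)=(0,1)$, and $(0,1)$ is a common left eigenvector of all $A_r$ to the eigenvalue $1$; the final statement of Theorem~\ref{theorem:contribution-of-eigenspace}~\itemref{item:large-eigenvalue} then forces this dominant fluctuation to be constant, which is exactly what produces $e_\calT N\log_q N$ in \eqref{eq:transducer:summatory-as-fluctuation}. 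One also needs the (easy, but necessary) observation that eigenvectors and Jordan chains of $M$ to eigenvalues $\lambda\neq q$ extend uniquely to $C$, so that $m(\lambda)$ in the corollary may indeed be read off from $M$. Without these arguments the asymptotic formula is not established in the form claimed, even though the rest of your outline matches the paper's proof.
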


Note that the functional equation~\eqref{eq:transducer-functional-equation}
is preferrable over the functional equation given in
Theorem~\ref{theorem:Dirichlet-series} for the generic case
of a regular sequence: The generic functional equation
suggests a double pole at $s=1+\chi_\ell$ for all $\ell\in\Z$
whereas the occurrence of the Hurwitz zeta function
in~\eqref{eq:transducer-functional-equation} shows that
there is a double pole $s=1$ but single poles at $s=1+\chi_\ell$
for all $\ell\in\Z\setminus\{0\}$. Numerically, the same occurrence
of the Hurwitz zeta function is also advantageous because it
allows to decouple the problem.

\subsection{Proof of Corollary~\ref{corollary:transducer-main}}

\begin{proof}[Proof of Corollary~\ref{corollary:transducer-main}]
  The proof is split into several steps.

  \proofparagraph{Recursive Description}
  We set
  $v(n)=\bigl(\calT_1(n), \ldots, \calT_d(n), 1\bigr)^\top$\!.
  For $1\le j\le d$ and $0\le r<q$, we define $t(j, r)$ and $o(j, r)$ to be the
  target state and output label of the unique transition from
  state $j$ with input label $r$, respectively. Therefore,
  \begin{equation}\label{eq:transducer-to-matrix-product}
    \calT_j(qn+r) = \calT_{t(j, r)}(n) + o(j, r)
  \end{equation}
  for $1\le j\le d$, $n\ge 0$, $0\le r<q$ with $qn+r>0$.

  For $0\le r<q$, define $A_r=(a_{rjk})_{1\le j,\, k\le d+1}$ by
  \begin{equation*}
    a_{rjk} =
    \begin{cases}
      \iverson{t(j, r) = k}& \text{if $j$, $k\le d$,}\\
      o(j, r)& \text{if $j\le d$, $k=d+1$,}\\
      \iverson{k=d+1}& \text{if $j=d+1$.}
    \end{cases}
  \end{equation*}
  Then \eqref{eq:transducer-to-matrix-product} is equivalent to
  \begin{equation*}
    v(qn+r) = A_r v(n)
  \end{equation*}
  for $n\ge 0$, $0\le r<q$ with $qn+r>0$. Defining $f(n)$ as in
  \eqref{eq:regular-matrix-sequence} for these $A_r$, we see that
  $v(n)=f(n)v(0)$.

  \proofparagraph{$q$-Regular Sequence}
  If we insist on a proper formulation as a regular sequence, we rewrite
  \eqref{eq:transducer-to-matrix-product} to
  \begin{equation}\label{eq:transducer-to-regular-sequence}
    \calT_j(qn+r)= \calT_{t(j,r)}(n) + o(j, r) +
    \iverson{r=0}\iverson{n=0}\bigl(\calT_j(0)-\calT_{t(j,0)}(0)-o(j, 0)\bigr)
  \end{equation}
  for $1\le j\le d$, $n\ge 0$, $0\le r<q$. Setting $\tildev(n)=\bigl(\calT_1(n), \ldots,
  \calT_d(n), 1, \iverson{n=0}\bigr)$ and
  $\tildeA_r=(\tildea_{rjk})_{1\le j,\, k\le d+2}$ with
  \begin{equation*}
    \tildea_{rjk} =
    \begin{cases}
      \iverson{t(j, r) = k}& \text{if $j$, $k\le d$,}\\
      o(j, r)& \text{if $j\le d$, $k=d+1$,}\\
      \iverson{r=0}\bigl(\calT_j(0)-\calT_{t(j,0)}(0)-o(j, 0)\bigr)& \text{if $j\le d$, $k=d+2$,}\\
      \iverson{k=d+1}& \text{if $j=d+1$,}\\
      \iverson{k=d+2}\iverson{r=0}& \text{if $j=d+2$,}
    \end{cases}
  \end{equation*}
  the system~\eqref{eq:transducer-to-regular-sequence} is equivalent to
  \begin{equation*}
    \tildev(qn+r) = \tildeA_r \tildev(n)
  \end{equation*}
  for $n\ge 0$, $0\le r<q$.

  \proofparagraph{Eigenvalue~$1$}
  By construction, the matrices $A_r$ have the shape
  \begin{equation*}
    A_r = \left(
      \begin{array}{c|c}
        P_r&o_r\\\hline
        0&1
      \end{array}
      \right).
  \end{equation*}
  It is
  clear that $(0, \ldots, 0, 1)$ is a left eigenvector of $A_r$ associated with
  the eigenvalue~$1$.

  \proofparagraph{Joint Spectral Radius}
  We claim that $A_0, \ldots, A_{q-1}$ have joint spectral radius $1$. Let
  $\inftynorm{\,\cdot\,}$ denote the maximum norm of complex vectors as well as the induced
  matrix norm, i.e., the maximum row sum norm. Let $j_1$, \ldots,
  $j_\ell\in\set{0,\ldots, q-1}$. It is easily shown by induction on $\ell$
  that
  \begin{equation*}
    A_{j_1}\dotsm A_{j_\ell}=\left(
      \begin{array}{c|c}
        P&b_P\\\hline
        0&1
      \end{array}
\right)
  \end{equation*}
  for some $P\in\C^{d\times d}$ and $b_P\in\C^d$ with $\inftynorm{P}\le 1$ and $\inftynorm{b_P}\le \ell \max_{0\le
    r<q}\inftynorm{o_r}$.
  Thus, we obtain
  \begin{equation*}
    \inftynorm{A_{j_1}\dotsm A_{j_\ell}}\le 1+\ell\max_{0\le
      r<q}\inftynorm{o_r}.
  \end{equation*}
  As $1$ is an eigenvalue of each matrix~$A_r$ for $0\le r<q$,
  the joint spectral radius equals~$1$, which proves the claim.

  \proofparagraph{Eigenvectors and Asymptotics}
  We now consider $C=\sum_{0\le r<q}A_r$. It has the shape
  \begin{equation*}
    C = \left(
      \begin{array}{c|c}
        M&b_M\\\hline
        0&q
      \end{array}
      \right)
  \end{equation*}
  where $b_M$ is some complex vector.

  Let $w_1$, \ldots, $w_\ell$ be a linearly independent system of left
  eigenvectors of $M$ associated with the eigenvector $q$.
  If $w_j b_M=0$ for $1\le j\le \ell$, then $(w_1, 0)$,
  \ldots, $(w_\ell, 0), (0, 1)$ is a linearly independent system of left
  eigenvectors of $C$ associated with the eigenvalue $q$. In that case
  and because of Lemma~\ref{lemma:Perron--Frobenius-again},
  algebraic and geometric multiplicities of $q$ as an eigenvalue of $C$ are
  both equal to $\ell+1$.

  Otherwise, assume without loss of generality that $w_1 b_M=1$. Then
  \begin{equation*}
    \bigl(w_2 - (w_2 b_M)w_1, 0\bigr),\,
    \ldots,\,
    \bigl(w_\ell - (w_\ell b_M)w_1, 0\bigr),\,
    \bigl(0, 1\bigr)
  \end{equation*}
  is a linearly independent
  system of left eigenvectors of $C$ associated with the eigenvalue
  $q$. Additionally, $(w_1, 0)$ is a generalised left eigenvector of rank $2$
  of $C$ associated with the eigenvalue $q$ with $(w_1, 0)(C-qI)=(0, 1)$. As
  noted above, the vector
  $(0, 1)$ is a left eigenvector to each matrix $A_0$, \ldots, $A_{q-1}$.

  Similarly, it is easily seen that any left eigenvector of $M$ associated with
  some eigenvalue $\lambda\neq q$ can  be extended uniquely to a left
  eigenvector of $C$ associated with the same eigenvalue. The same is true for
  chains of generalised left eigenvectors associated with $\lambda\neq q$.

  Therefore, in both of the above cases, Theorem~\ref{theorem:contribution-of-eigenspace}
  yields
  \begin{equation*}
    \begin{aligned}
    \sum_{0\le n<N}\calT(N) = e_\calT N\log_q N &+ \sum_{\zeta \in U_p} N^{\log_q
      (q\zeta)}\Phi_{(q\zeta)}(\fractional{\log_q N}) \\
    &+ \sum_{\substack{\lambda\in\sigma(M)\\
        1<\abs{\lambda}<q
      }} N^{\log_q \lambda} \sum_{0\le k<m(\lambda)}(\log_q N)^k\Phi_{\lambda
      k}(\log_q N)\\
    &+ \Oh[\big]{(\log N)^{\max\setm{m(\lambda)}{\abs{\lambda}=1}}}
    \end{aligned}
  \end{equation*}
  for some constant $e_\calT$ (which vanishes in the first case) and some
  $1$-periodic continuous functions $\Phi_{(q\zeta)}$ and $\Phi_{\lambda k}$ where $\zeta$ runs through
  the $p$th roots of unity~$U_p$ and $\lambda$ through the eigenvalues of $M$ with
  $1<\abs{\lambda}<q$ and $0\le k<m(\lambda)$.

  Proposition~\ref{proposition:symmetric-eigenvalues}
  leads to \eqref{eq:transducer:summatory-as-fluctuation}.

  \proofparagraph{Fourier Coefficients}
  By Theorem~\ref{theorem:simple}, we have
  \begin{equation*}
    \Phi_{(q\zeta)}(u)=\sum_{\ell\in\Z}\varphi_{(q\zeta)\ell}\exp(2\ell\pi i u)
  \end{equation*}
  with
  \begin{equation*}
    \varphi_{(q\zeta)\ell}=\Res[\Big]{\frac{\calT(0)+\calX(s)}{s}}{s=1+\log_q \zeta + \frac{2\ell\pi
        i}{\log q}}
  \end{equation*}
  for a $p$th root of unity $\zeta \in U_p$ and $\ell\in\Z$.
  Therefore and by noting that $\calT(0)$ does not contribute
  to the residue, Proposition~\ref{proposition:symmetric-eigenvalues}
  leads to the Fourier series given in the
  corollary.

  \proofparagraph{Functional Equation}
  By \eqref{eq:transducer-to-matrix-product}, we have
  \begin{align*}
    \calY_{n_0}(s) &= \sum_{n_0\le n<qn_0} n^{-s}y(n) + \sum_{n\ge
      n_0}\sum_{0\le r<q}(qn+r)^{-s}y(qn+r)\\
    &= \sum_{n_0\le n<qn_0} n^{-s}y(n) + \sum_{n\ge
      n_0}\sum_{0\le r<q}(qn+r)^{-s}\bigl(P_r y(n) + o_r\bigr)\\
    &= \sum_{n_0\le n<qn_0} n^{-s}y(n) + q^{-s}\sum_{0\le r<q}P_r
\sum_{n\ge
      n_0}\Bigl(n+\frac{r}{q}\Bigr)^{-s}y(n) \\
    &\hspace*{8.95em}
    + q^{-s}\sum_{0\le r<q}\zeta_{n_0}\bigl(s, \tfrac{r}{q}\bigr)o_r.
  \end{align*}
  Using Lemma~\ref{lemma:shifted-Dirichlet}
  yields the result.
\end{proof}

%%% Local Variables:
%%% mode: latex
%%% TeX-master: "regular-sequences.tex"
%%% End:

\section{Esthetic Numbers}
\label{sec:esthetic-numbers}
We discuss the asymptotic analysis of esthetic numbers; see also
Section~\ref{sec:overview-esthetic} for an overview.

Let again be $q\geq2$ a fixed integer. We call a non-negative integer~$n$ a
\emph{$q$-esthetic number} (or simply an \emph{esthetic number}) if its
$q$-ary digit expansion $r_{\ell-1} \dots r_0$ satisfies
$\abs{r_j - r_{j-1}} = 1$ for all $j\in\set{1,\dots,\ell-1}$;
see~De~Koninck and Doyon~\cite{Koninck-Doyon:2009:esthetic-numbers}.

In~\cite{Koninck-Doyon:2009:esthetic-numbers} the authors count
$q$-esthetic numbers with a given length of their $q$-ary digit
expansion. They provide an explicit (in form of a sum of $q$ summands)
as well as an asymptotic formula for
these counts. We aim for a more precise analysis and head for an
asymptotic description of the amount of $q$-esthetic numbers up the an
arbitrary value~$N$ (in contrast to only powers of~$q$
in~\cite{Koninck-Doyon:2009:esthetic-numbers}).

\subsection{A $q$-Linear Representation}

\begin{figure}
  \centering

  \begin{tikzpicture}[auto,
    initial text=, initial distance=5ex,
    >=latex,
    accepting text=,
    every state/.style={minimum size=3.2em}]

    \node[state, initial below, accepting] (I) at (0.000000, 0.000000) {$\mathcal{I}$};

    \node[state] (e0) at (180:5) {$0$};
    \node[state, accepting] (e1) at (155:5) {$1$};
    \node[state, accepting] (e2) at (130:5) {$2$};
    \node[state, accepting] (e3) at (105:5) {$3$};

    \draw[dotted, thick] (95:5) arc (95:35:5);

    \node[state, accepting] (eq2) at (25:5) {$q-2$};
    \node[state, accepting] (eq1) at (0:5) {$q-1$};

    \path[->] (I) edge node[rotate=0, anchor=south] {$0$} (e0);
    \path[->] (I) edge node[rotate=-25, anchor=south] {$1$} (e1);
    \path[->] (I) edge node[rotate=-50, anchor=south] {$2$} (e2);
    \path[->] (I) edge node[rotate=-75, anchor=south] {$3$} (e3);
    \path[->] (I) edge node[rotate=25, anchor=south] {$q-2$} (eq2);
    \path[->] (I) edge node[rotate=0, anchor=south] {$q-1$} (eq1);

    \path[->] (e0) edge[bend left] node[rotate=77.5, anchor=south] {$1$} (e1);
    \path[->] (e1) edge[bend left] node[rotate=77.5, anchor=north] {$0$} (e0);
    \path[->] (e1) edge[bend left] node[rotate=52.5, anchor=south] {$2$} (e2);
    \path[->] (e2) edge[bend left] node[rotate=52.5, anchor=north] {$1$} (e1);
    \path[->] (e2) edge[bend left] node[rotate=27.5, anchor=south] {$3$} (e3);
    \path[->] (e3) edge[bend left] node[rotate=27.5, anchor=north] {$2$} (e2);

    \path[->] (eq2) edge[bend left] node[rotate=-77.5, anchor=south] {$q-1$} (eq1);
    \path[->] (eq1) edge[bend left] node[rotate=-77.5, anchor=north] {$q-2$} (eq2);

  \end{tikzpicture}
  
  \caption{Automaton~$\mathcal{A}$ recognizing esthetic numbers.}
  \label{fig:esthetic-automaton}
\end{figure}
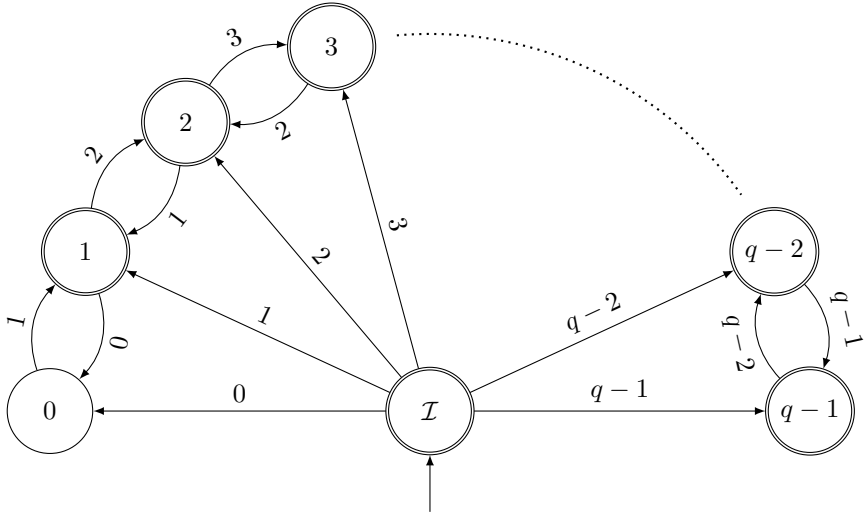

The language consisting of the $q$-ary digit expansions (seen as words
of digits) which are $q$-esthetic
is a regular language, because it is recognized by the
automaton~$\mathcal{A}$ in
Figure~\ref{fig:esthetic-automaton}. Therefore, the indicator sequence
of this language, i.e., the $n$th entry is $1$ if $n$ is $q$-esthetic
and $0$ otherwise, is a $q$-automatic sequence and therefore also
$q$-regular. Let us name this sequence~$x(n)$.

Let $A_0$, \dots, $A_{q-1}$ be the transition matrices of the
automaton~$\mathcal{A}$, i.e., $A_r$ is the adjacency matrix of the
directed graph induced by a transition with digit~$r$.
To make this more explicit, we have
the following $(q+1)$-dimensional square
matrices: Each row and column corresponds to the states~$0$, $1$,
\dots, $q-1$, $\mathcal{I}$. In matrix~$A_r$, the only non-zero entries
are in column~$r\in\set{0,1,\dots,q-1}$, namely $1$ in the rows~$r-1$ and $r+1$ (if
available) and in row~$\mathcal{I}$ as there are transitions from
these states to state~$r$ in the automaton~$\mathcal{A}$.

Let us make this more concrete by considering $q=4$. We obtain the matrices
\begin{align*}
  A_0 &=
  \begin{pmatrix}
    0 & 0 & 0 & 0 & 0 \\
    1 & 0 & 0 & 0 & 0 \\
    0 & 0 & 0 & 0 & 0 \\
    0 & 0 & 0 & 0 & 0 \\
    1 & 0 & 0 & 0 & 0
  \end{pmatrix},
  &
  A_1 &=
  \begin{pmatrix}
    0 & 1 & 0 & 0 & 0 \\
    0 & 0 & 0 & 0 & 0 \\
    0 & 1 & 0 & 0 & 0 \\
    0 & 0 & 0 & 0 & 0 \\
    0 & 1 & 0 & 0 & 0
  \end{pmatrix},
  \\
  A_2 &=
  \begin{pmatrix}
    0 & 0 & 0 & 0 & 0 \\
    0 & 0 & 1 & 0 & 0 \\
    0 & 0 & 0 & 0 & 0 \\
    0 & 0 & 1 & 0 & 0 \\
    0 & 0 & 1 & 0 & 0
  \end{pmatrix},
  &
  A_3 &=
  \begin{pmatrix}
    0 & 0 & 0 & 0 & 0 \\
    0 & 0 & 0 & 0 & 0 \\
    0 & 0 & 0 & 1 & 0 \\
    0 & 0 & 0 & 0 & 0 \\
    0 & 0 & 0 & 1 & 0
  \end{pmatrix}.
\end{align*}

We are almost at a $q$-linear representation of our sequence; we still
need vectors on both sides of the matrix products. We have
\begin{equation*}
  x(n) = e_{q+1}\, A_{r_0} \cdots A_{r_{\ell-1}} v(0)
\end{equation*}
for $r_{\ell-1} \dots r_0$ being the $q$-ary expansion of~$n$ and vectors
$e_{q+1}=\begin{pmatrix}0& \dotsc& 0&1\end{pmatrix}$ and
$v(0)=\begin{pmatrix}0&1& \dotsc& 1\end{pmatrix}^\top$.
As $A_0 v(0)=0\neq v(0)$, this is not a linear representation of a regular
sequence. Thus we cannot use Theorem~\ref{theorem:simple}, but need to use
Theorem~\ref{theorem:contribution-of-eigenspace}. However, the difference is
slight: we simply cannot omit the contributions of the constant vector $Kv(0)$.
However, it will turn out that the joint spectral radius is $1$, so the
contribution will be absorbed by the error term anyway.

To see that the above holds, we have two different interpretations:
The first is that the row vector
\begin{equation*}
  w(n) = e_{q+1}\, A_{r_0} \cdots A_{r_{\ell-1}}
\end{equation*}
is the unit vector corresponding to the most significant digit
of the $q$-ary expansion of~$n$ or, in view of the
automaton~$\mathcal{A}$, corresponding to the final state.
Note that we read the digit expansion from the least significant digit
to the most significant one
(although it would be possible the other way round as well).
We have $w(0)=e_{q+1}$
which corresponds to the empty word and
being in the initial state~$\mathcal{I}$ in the automaton.
The vector~$v(0)$ corresponds to the fact that
all states of~$\mathcal{A}$ except~$0$ are accepting.

The other interpretation is: The $r$th component of the column vector
\begin{equation*}
  v(n) = A_{r_0} \cdots A_{r_{\ell-1}} v(0)
\end{equation*}
has the following two meanings:
\begin{itemize}
\item In the automaton~$\mathcal{A}$, we start in state $r$ and then
  read the digit expansion of $n$. The $r$th component is then the indicator
  function whether we remain esthetic, i.e., end in an accepting
  state.
\item To a word ending with $r$ we append the digit expansion of
  $n$. The $r$th component is then the indicator function whether the result
  is an esthetic word.
\end{itemize}

At first glance, our problem here seems to be a special case of the
transducers studied in Section~\ref{sec:transducer}. However, the
automaton~$\mathcal{A}$ is not complete. Adding a sink to have a
formally complete automaton, however, adds an eigenvalue $q$ and thus
a much larger dominant asymptotic term, which would then be multiplied
by~$0$. Therefore, the results
of~\cite{Heuberger-Kropf-Prodinger:2015:output} do not apply to this
case here.

\subsection{Full Asymptotics}

We now formulate our main result for the amount of esthetic numbers
smaller than a given integer~$N$. We abbreviate this amount by
\begin{equation*}
  X(N) = \sum_{0 \le n < N} x(n)
\end{equation*}
and have the following corollary.

\begin{corollary}
  \label{corollary:esthetic:asy}
  Fix an integer~$q\geq2$.
  Then the number~$X(N)$ of $q$-esthetic numbers smaller than $N$ is
  \begin{multline}\label{eq:esthetic:asy-main}
    X(N) = \sum_{j\in\set{1,2,\dots,\ceil{\frac{q-2}{3}}}}
    N^{\log_q (2\cos(j\pi/(q+1)))} \Phi_{j}(2\fractional{\log_{q^2} N}) \\
    + \Oh[\big]{(\log N)^{\iverson{q \equiv -1 \tpmod 3}}}
  \end{multline}
  with $2$-periodic continuous functions~$\Phi_{j}$.
  Moreover, we can effectively compute the Fourier coefficients of
  each~$\Phi_{j}$ (as explained in Part~\ref{part:numerical}).
  If $q$ is even, then the functions $\Phi_{j}$ are actually $1$-periodic.
  If $q$ is odd, then the functions $\Phi_j$ for even $j$ vanish.
\end{corollary}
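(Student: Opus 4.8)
The plan is to write $X(N)=e_{q+1}F(N)v(0)$ with $F(N)=\sum_{0\le n<N}f(n)$ and $f$ defined by~\eqref{eq:regular-matrix-sequence} for the matrices $A_0,\dots,A_{q-1}$ above, and to feed this into Theorems~\ref{theorem:contribution-of-eigenspace} and~\ref{theorem:main}. Two facts must be set up first. (a) \emph{Joint spectral radius.} Each $A_r$ has only its $r$-th column nonzero (entries $1$ in the neighbouring rows and in row $\mathcal I$), so $\inftynorm{A_r}=1$ and $\rho\le\rho_1=1$; on the other hand $A_0A_1$ has a $1$ on its diagonal (row $1$, column $1$), so $\rho\ge 1$. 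Thus $\rho=\rho_1=1$, the finiteness property holds, and $R=\rho=1$. (b) \emph{Spectrum of $C$.} Ordering the states as $0,\dots,q-1,\mathcal I$, the matrix $C=\sum_r A_r$ is block lower triangular with top-left block the adjacency matrix of the path $P_q$ and zero last column, so $\sigma(C)=\{0\}\cup\{2\cos(j\pi/(q+1)):1\le j\le q\}$, all eigenvalues of $P_q$ being simple. Using $\abs{2\cos\theta}>1\iff\theta\in(0,\pi/3)\cup(2\pi/3,\pi)$ and $\lfloor q/3\rfloor=\ceil{(q-2)/3}$, the eigenvalues of modulus $>1$ are exactly the simple pairs $\pm\mu_j$ with $\mu_j\coloneqq 2\cos(j\pi/(q+1))$, $1\le j\le\ceil{(q-2)/3}$, and an eigenvalue of modulus exactly $R=1$ (namely $\pm 1$) occurs iff $q\equiv -1\tpmod 3$.

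\textbf{Main term via the eigenspace decomposition and symmetrisation.}
Since $A_0v(0)=0\neq v(0)$, this is not a proper linear representation, so I apply Theorem~\ref{theorem:contribution-of-eigenspace} to $wF(N)$ over generalised left eigenvectors $w$ of $C$ (or Theorem~\ref{theorem:main} directly) and sandwich between $e_{q+1}$ and $v(0)$. Because $R=1$ (hence $N^{\log_q R}=1$) and $m(\pm1)=1$, the surviving correction terms $e_{q+1}Kv(0)$ and, when $q\equiv-1\tpmod3$, $(\log_q N)^{m(1)}e_{q+1}\vartheta v(0)$ are all $\Oh{(\log N)^{\iverson{q\equiv-1\tpmod 3}}}$, matching the error term in~\eqref{eq:esthetic:asy-main}. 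What remains is the sum over the pairs $\pm\mu_j$ (with $m(\pm\mu_j)=1$) of terms $N^{\log_q(\pm\mu_j)}\,e_{q+1}\Phi_{(\pm\mu_j),0}(\fractional{\log_q N})\,v(0)$. I then combine each pair $\{\mu_j,-\mu_j\}$ through Proposition~\ref{proposition:symmetric-eigenvalues} with $p=2$, $U_2=\{1,-1\}$, $k=0$, and common function $\calD(s)=(x(0)+\calX(s))/s$, where $\calX(s)=\sum_{n\ge1}x(n)n^{-s}=e_{q+1}\calF(s)v(0)$; the required analytic properties of $\calX$ and the residue description of the Fourier coefficients come from Theorem~\ref{theorem:Dirichlet-series} together with the Mellin--Perron machinery (Theorem~\ref{theorem:use-Mellin--Perron}). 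This turns each pair into $N^{\log_q\mu_j}\Phi_j(2\fractional{\log_{q^2}N})$ with $\Phi_j$ continuous $2$-periodic, yielding~\eqref{eq:esthetic:asy-main}. A functional equation for $\calX$ obtained by splitting $\sum_n x(n)n^{-s}$ according to the least significant digit and using $x(qn+r)=v(n)_r$ makes the Fourier coefficients $\varphi_{j\ell}=\Res{\calD(s)}{s=\log_q\mu_j+\ell\pi i/\log q}$ accessible to the reliable numerics of Part~\ref{part:numerical}.

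\textbf{Parity refinements.}
These rest on deciding which of the poles $s=\log_q(\pm\mu_j)+\chi_\ell$ carries a nonzero residue of $\calX(s)=e_{q+1}\calF(s)v(0)$. Since $\pm\mu_j$ is a simple eigenvalue of $C$ and $\calG$ (hence $\calF$) has no further singularity there (no eigenvalue has modulus $\mu_j q^k>\mu_1$ for $k\ge1$), the residue of $\calF$ at such a pole is $\tfrac1{\log q}\Pi_\lambda\calG(s_0)$ with $\Pi_\lambda=uw^\top/(w^\top u)$, so $\Res{\calX}{s_0}$ is a scalar multiple of $(e_{q+1}u)\cdot\bigl(w^\top\calG(s_0)v(0)\bigr)$. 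Writing a right eigenvector of $C$ for $\lambda=2\cos(j'\pi/(q+1))$ as $\binom{u'}{u_{\mathcal I}}$ with $P_qu'=\lambda u'$ gives $e_{q+1}u=u_{\mathcal I}=\tfrac1\lambda\sum_r u'_r$, and the closed form $\sum_r u'_r=\sin(qj'\pi/(2(q+1)))\sin(j'\pi/2)/\sin(j'\pi/(2(q+1)))$ shows this vanishes exactly when the path-index $j'$ is even. Since $\mu_j$ has path-index $j$ and $-\mu_j$ has path-index $q+1-j$, for $q$ odd and $j$ even both indices are even, whence $\Res{\calX}{\log_q(\pm\mu_j)+\chi_\ell}=0$ for all $\ell$ and $\Phi_j\equiv0$; and for $q$ even exactly the member of $\{\mu_j,-\mu_j\}$ with odd path-index can contribute, so (tracing this through the Fourier indexing of Proposition~\ref{proposition:symmetric-eigenvalues}) the $\Phi_j$ reduce to $1$-periodic functions. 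The main obstacle is precisely this vanishing analysis: one must show that when $e_{q+1}u\neq0$ the second factor $w^\top\calG(s_0)v(0)$ is genuinely nonzero — which requires identifying it with (the residue of) the twisted Dirichlet series $\lambda\sum_n w'_{n\bmod q}\,x(n)\,n^{-s}$ and controlling its poles through the eigenvector structure of $P_q$ and the parity structure of digit expansions of esthetic numbers — so that no contributions are spuriously created or destroyed.
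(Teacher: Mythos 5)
Your argument is, in all essentials, the paper's own proof: joint spectral radius $1$ from the row-sum bound together with alternating products of neighbouring matrices; the spectrum of $C$ read off from the path graph via Chebyshev polynomials of the second kind; the main term from Theorem~\ref{theorem:contribution-of-eigenspace} (needed because $A_0v(0)\neq v(0)$) combined with Proposition~\ref{proposition:symmetric-eigenvalues} for the pairs $\pm\mu_j$, with the $K$- and $\vartheta$-terms absorbed into the stated error; and the parity refinements resting on the Dirichlet-kernel evaluation of $\sum_r U_r(\lambda/2)$ (the paper's Lemma~\ref{lemma:sum}), which vanishes exactly for even path index. The one genuine difference is \emph{where} this vanishing is exploited: the paper expands $e_{q+1}=\sum_k c_kw_k$ in the left eigenbasis and notes that $c_k=e_{q+1}t_k$ is the last component of the right eigenvector of $C$, which is zero for even $k$ by Proposition~\ref{proposition:esthetic-eigenvectors}, so the even-index fluctuations are multiplied by $0$ already in the real-variable asymptotics. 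You instead kill the residues of $\calX$ at the corresponding poles via the spectral projector; the scalar $e_{q+1}u$ you compute is exactly the paper's $c_k$, so the two arguments are equivalent, but yours needs the residue/Fourier-coefficient identification (Theorem~\ref{theorem:use-Mellin--Perron}) to be in place before the parity statements can be drawn, whereas the paper's does not.

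Your closing ``main obstacle'' is not one. The corollary asserts an error bound, $2$-periodicity and continuity of the $\Phi_j$, computability of their Fourier coefficients, and certain vanishing/$1$-periodicity statements; none of these requires showing that any residue $w^\top\calG(s_0)v(0)$ (equivalently, any fluctuation) is \emph{nonzero}. Unnoticed extra vanishing can only strengthen every claim made, so nothing can be ``spuriously created'', and ``spuriously destroyed'' contributions are harmless here. The paper accordingly proves no non-vanishing statement in this corollary and defers that question to the reliable numerics of Part~\ref{part:numerical} and Section~\ref{section:non-vanishing}. With that last sentence deleted, your proof is complete to the same extent as the paper's. (One shared point that deserves more care in both write-ups: for even $q$ and even $j$ it is the negative member $-\mu_j$ of the pair that survives the parity sieve, so normalising its contribution to $N^{\log_q\mu_j}$ as in \eqref{eq:esthetic:asy-main} leaves a factor $e^{i\pi\log_q N}$, and the resulting $\Phi_j$ is a priori $1$-antiperiodic rather than $1$-periodic; this first becomes relevant for $q=6$.)
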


If $q=2$, then the corollary results in $X(N)=\Oh{\log N}$.
However, for each length, the only word of digits satisfying the
esthetic number condition has alternating digits $0$ and $1$,
starting with~$1$ at its most significant digit. The
corresponding numbers~$n$ form the so-called
Lichtenberg sequence~\oeis{A000975}.

Back to a general~$q$: For the asymptotics,
the main quantities influencing the growth are the
eigenvalues of the matrix~$C = A_0+\dots+A_{q-1}$. Continuing our
example $q=4$ above, this matrix is
\begin{equation*}
  C = A_0 + A_1 + A_2 + A_3 =
  \begin{pmatrix}
    0 & 1 & 0 & 0 & 0 \\
    1 & 0 & 1 & 0 & 0 \\
    0 & 1 & 0 & 1 & 0 \\
    0 & 0 & 1 & 0 & 0 \\
    1 & 1 & 1 & 1 & 0
  \end{pmatrix},
\end{equation*}
and its eigenvalues are
$\pm 2\cos(\frac{\pi}{5})=\pm \frac12\bigl(\sqrt{5} + 1\bigr) = \pm1.618\dots$,
$\pm 2\cos(\frac{2\pi}{5})=\pm \frac12\bigl(\sqrt{5} - 1\bigr) = \pm0.618\dots$
and $0$, all with algebraic and geometric multiplicity $1$. Therefore it turns out that
the growth of the main term is
$N^{\log_4(\sqrt{5} + 1) - \frac12}=N^{0.347\dots}$, see
Figure~\ref{fig:fluct-esthetic}.
The first few Fourier coefficients are shown in Table~\ref{table:esthetic:fourier}.

\begin{figure}
  \centering
  \includegraphics{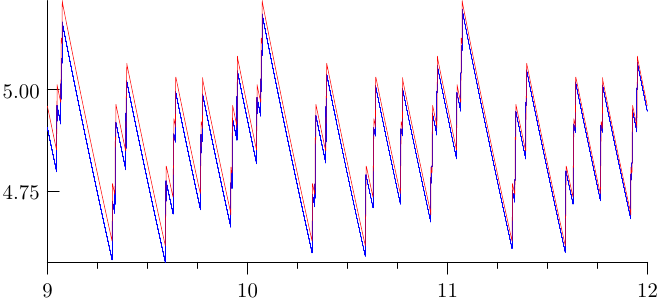}
  \caption{Fluctuation in the main term of the asymptotic expansion of $X(N)$
    for $q=4$.
    The figure shows $\f{\Phi_1}{u}$ (red) approximated by
    its trigonometric polynomial of degree~$1999$ as well as
    $X(4^u) / N^{u(\log_4(\sqrt{5} + 1) - \frac12)}$ (blue).}
  \label{fig:fluct-esthetic}
\end{figure}

\begin{table}
  \centering
  \begin{equation*}\footnotesize
    \begin{array}{r|l}
      \multicolumn{1}{c|}{\ell} &
      \multicolumn{1}{c}{\varphi_{1\ell}} \\
      \hline
      0&\phantom{-}4.886821584515\\
      1&\phantom{-}0.036565359077 - 0.012421753685i\\
      2&\phantom{-}0.0131103199420 - 0.017152133508i\\
      3&-0.0023895069366 - 0.0506880727105i\\
      4&-0.017328669452 + 0.025036392542i\\
      5&\phantom{-}0.011186380630 - 0.0066357472861i\\
      6&\phantom{-}0.0086354015002 + 0.018593736873i\\
      7&-0.014899262928 + 0.0297436287202i\\
      8&-0.003867454968 + 0.0064534688733i\\
      9&\phantom{-}0.0033747695643 + 0.006159612843i\\
      10&-0.002149675882 + 0.006474570022i
    \end{array}
  \end{equation*}
  \caption{Fourier coefficients of~$\Phi_1$ for $q=4$
    (Corollary~\ref{corollary:esthetic:asy}). All stated digits are
    correct; see also Part~\ref{part:numerical}.}
  \label{table:esthetic:fourier}
\end{table}

\subsection{Eigenvectors}

Before proving Corollary~\ref{corollary:esthetic:asy}, we collect
information on the eigenvalues of $C$.

  The matrix $C = A_0+\dots+A_{q-1}$ has a block decomposition into
  \begin{equation*}
    C = 
    \left(\begin{array}{c|c}
      M & \mathbf{0} \\
      \hline
      \mathbf{1} & 0
    \end{array}\right)
  \end{equation*}
  for vectors~$\mathbf{0}$ (vector of zeros) and $\mathbf{1}$
  (vector of ones) of suitable dimension.
  Therefore, one eigenvalue of~$C$ is~$0$ and the others are the eigenvalues
  of~$M$.

In contrast to \cite[Sections~4 and~5]{Koninck-Doyon:2009:esthetic-numbers},
we use the Chebyshev polynomials\footnote{%
    Chebyshev polynomials are frequently occurring phenomena
    in lattice path analysis, see for instance~\cite{Bruijn-Knuth-Rice:1972,
      Flajolet:1980:combinat-continued-fractions}.
    We have such a lattice path here,
    so their appearance is not surprising.}%
  \footnote{%
    Up to replacing $2X$ by $X$, the polynomials~$U_n$ used here
    correspond to the
    polynomials~$p_n$ used in~\cite{Koninck-Doyon:2009:esthetic-numbers}.}
$U_n$ of the second kind defined by
\begin{align*}
  U_0(X) &= 1, & U_1(X)&=2X, & U_{n+1}(X)=2X\,U_n(X)-U_{n-1}(X)
\end{align*}
for $n\ge 1$.
It is well-known that
\begin{equation}\label{eq:U-n-sinus-relation}
  U_n(\cos\varphi)=\frac{\sin((n+1)\varphi)}{\sin(\varphi)}
\end{equation}
and, as a consequence, the roots of $U_n$ are given by
\begin{equation*}
  \cos\Bigl(\frac{k\pi}{n+1}\Bigr), \qquad 1\le k\le n,
\end{equation*}
for $n\ge 1$.

The following lemma is similar to~\cite[Proposition~3]{Koninck-Doyon:2009:esthetic-numbers}.
\begin{lemma}\label{lemma:eigenvector-M}
  Let $v\neq 0$ be a vector and $\lambda\in\C$.

  Then $v$ is an eigenvector to the eigenvalue $\lambda$ of $M$ if and only if
  $\lambda = 2\cos(\frac{k\pi}{q+1})$ for some $1\le k\le q$ and
  \begin{equation*}
    v=\Bigl(U_j\Bigl(\frac{\lambda}{2}\Bigl)\Bigr)_{0\le j<q}
  \end{equation*}
  (up to a scalar factor).

  In particular, $0$ is an eigenvalue of $M$ if and only if $q$ is odd.
\end{lemma}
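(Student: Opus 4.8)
The plan is to make the matrix~$M$ explicit from the automaton~$\mathcal A$ and then solve the eigenvalue equation by matching it with the defining recurrence of the Chebyshev polynomials. From the transitions of~$\mathcal A$ (from state~$i$ one moves only to the states~$i+1$ and~$i-1$, whenever they exist), the block~$M$ of~$C$ indexed by the states $0,\dots,q-1$ is the $q\times q$ symmetric tridiagonal matrix with zero diagonal and ones on the two adjacent diagonals, i.e.\ the adjacency matrix of the path on $q$~vertices. Since $M$ is symmetric, its left and right eigenvectors coincide, so I may write the eigenvalue equation as $Mv=\lambda v$ for a column vector $v=(v_0,\dots,v_{q-1})^\top$.

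Writing $Mv=\lambda v$ out componentwise gives $v_1=\lambda v_0$ from row~$0$, the three-term relation $v_{i-1}+v_{i+1}=\lambda v_i$ for $1\le i\le q-2$, and $v_{q-2}=\lambda v_{q-1}$ from row~$q-1$. If $v_0=0$, the relation forces $v=0$; hence for a nonzero eigenvector $v_0\neq 0$, and after rescaling I may assume $v_0=1$. Then $v_1=\lambda=U_1(\lambda/2)$, while the relation $v_{i+1}=\lambda v_i-v_{i-1}$ is exactly the recurrence $U_{n+1}(X)=2X\,U_n(X)-U_{n-1}(X)$ evaluated at $X=\lambda/2$; by induction $v_j=U_j(\lambda/2)$ for $0\le j\le q-1$. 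This shows the eigenvector is, up to a scalar, the claimed one, and reduces the question of which~$\lambda$ occur to the single remaining boundary equation $U_{q-2}(\lambda/2)=\lambda\,U_{q-1}(\lambda/2)$.

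To finish, I would apply the Chebyshev recurrence once more: $U_q(\lambda/2)=\lambda\,U_{q-1}(\lambda/2)-U_{q-2}(\lambda/2)$, so the boundary equation is equivalent to $U_q(\lambda/2)=0$. By the description of the roots of~$U_q$ recalled before the lemma (a consequence of~\eqref{eq:U-n-sinus-relation}), this means $\lambda/2=\cos(k\pi/(q+1))$ for some $1\le k\le q$, i.e.\ $\lambda=2\cos(k\pi/(q+1))$; conversely, each such~$\lambda$ yields an eigenvector by the construction above. As these $q$ numbers are pairwise distinct, they are precisely the eigenvalues of the $q\times q$ matrix~$M$. For the last assertion, $0$ is among them exactly when $\cos(k\pi/(q+1))=0$ for some admissible~$k$, i.e.\ when $k=(q+1)/2\in\{1,\dots,q\}$, which happens precisely when $q$ is odd.

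I do not expect a real obstacle here; the only care needed is the bookkeeping at the two boundary rows of~$M$ (indices~$0$ and~$q-1$), the normalisation $v_0=1$, and correctly quoting the root set of~$U_q$ from~\eqref{eq:U-n-sinus-relation}.
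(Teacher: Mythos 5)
Your proof is correct and follows essentially the same route as the paper's own argument (which likewise writes out $Mv=\lambda v$ componentwise, identifies the three-term relation with the Chebyshev recurrence to get $v_j=v_0\,U_j(\lambda/2)$, and reduces the boundary row to $v_0\,U_q(\lambda/2)=0$). The only additions are the explicit identification of $M$ as the path adjacency matrix and the parity argument for the final assertion, both of which are fine.
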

\ifdetails
\begin{proof}Write $v=(v_j)_{0\le j<q}$. By assumption, we have
  \begin{align*}
    v_1 &= \lambda v_0,\\
    v_{j-1}+v_{j+1}&=\lambda v_j,&\qquad 1&\le j\le q-2,\\
    v_{q-2}&=\lambda v_{q-1}
  \end{align*}
  or, equivalently,
  \begin{align*}
    v_1 &= 2\frac{\lambda}{2} v_0,\\
    v_{j+1}&=2\frac{\lambda}{2} v_j-v_{j-1},&\qquad 1&\le j\le q-2,\\
    0&=2\frac{\lambda}{2} v_{q-1} - v_{q-2}.
  \end{align*}

  By induction, we easily see that the first $q-1$ equations are equivalent to
  $v_j=v_0 U_j(\frac{\lambda}{2})$ for $0\le j<q$. The last equation is
  equivalent to $v_0U_q(\lambda/2)=0$. We see that $v_0$ leads to $v=0$, so we
  can exclude this case. Thus $U_q(\lambda/2)=0$ and
  $\lambda/2=\cos(k\pi/(q+1))$ for some $1\le k\le q$.
\end{proof}
\else
\begin{proof}
  See the statement and the proof of~\cite[Proposition~3]{Koninck-Doyon:2009:esthetic-numbers}.
\end{proof}
\fi

\begin{lemma}\label{lemma:sum}
  Let $1\le k\le q$, $\lambda=2\cos(k\pi/(q+1))$ and $v$ be an eigenvector of
  $M$ to $\lambda$. Then
    $\langle \mathbf{1}, v\rangle = 0$ holds if and only if $k$ is even.
\end{lemma}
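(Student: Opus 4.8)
The plan is to reduce the inner product $\langle\mathbf{1},v\rangle$ to a trigonometric sum using the explicit form of the eigenvector. By Lemma~\ref{lemma:eigenvector-M} we may take $v=\bigl(U_j(\lambda/2)\bigr)_{0\le j<q}$ (up to a scalar, which does not affect vanishing). Writing $\varphi=k\pi/(q+1)$, so that $\lambda/2=\cos\varphi$, and invoking~\eqref{eq:U-n-sinus-relation}, this gives
\begin{equation*}
  \langle \mathbf{1}, v\rangle = \sum_{0\le j<q} U_j(\cos\varphi)
  = \frac{1}{\sin\varphi}\sum_{1\le m\le q}\sin(m\varphi),
\end{equation*}
where $\sin\varphi\neq 0$ since $1\le k\le q$.

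Next I would evaluate the sine sum by the standard closed form $\sum_{1\le m\le q}\sin(m\varphi)=\sin(q\varphi/2)\,\sin\bigl((q+1)\varphi/2\bigr)\big/\sin(\varphi/2)$. The whole point of the choice $\varphi=k\pi/(q+1)$ is that $(q+1)\varphi/2=k\pi/2$, so the last factor is $\sin(k\pi/2)$, which vanishes precisely when $k$ is even. This immediately yields $\langle\mathbf{1},v\rangle=0$ whenever $k$ is even.

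For the converse it remains to observe that, when $k$ is odd, the other factors are nonzero, so $\langle\mathbf{1},v\rangle\neq 0$. Since $0<\varphi/2<\pi/2$ we have $\sin(\varphi/2)>0$; and $\sin(q\varphi/2)=\sin\bigl(qk\pi/(2(q+1))\bigr)=0$ would force $2(q+1)\mid qk$, hence $(q+1)\mid qk$, hence $(q+1)\mid k$ because $\gcd(q+1,q)=1$, which is impossible for $1\le k\le q$. I do not expect a real obstacle here; the only step needing a little care is this last non-vanishing check. If one prefers to avoid the closed form for the sine sum, the same conclusion can be reached by working directly with the eigenvector recursion $v_1=\lambda v_0$, $v_{j-1}+v_{j+1}=\lambda v_j$, $v_{q-2}=\lambda v_{q-1}$ and telescoping an appropriate combination of these relations, but the trigonometric route above seems cleanest.
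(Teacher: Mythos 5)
Your proposal is correct and follows essentially the same route as the paper: reduce $\langle\mathbf{1},v\rangle$ via Lemma~\ref{lemma:eigenvector-M} and~\eqref{eq:U-n-sinus-relation} to the sine sum, evaluate it in the closed form $\sin(q\varphi/2)\sin((q+1)\varphi/2)/\sin(\varphi/2)$ (the paper derives this via the geometric series of $\exp(i\varphi)$ rather than quoting it), and check that all factors other than $\sin(k\pi/2)$ are non-zero by the same divisibility argument using $\gcd(q,q+1)=1$.
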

\begin{proof}\allowdisplaybreaks We write $\varphi\coloneqq k\pi/(q+1)$. By
  Lemma~\ref{lemma:eigenvector-M}
  and~\eqref{eq:U-n-sinus-relation} and a summation similar to the Dirichlet kernel, we have
  \begin{align*}
    \langle \mathbf{1}, v\rangle &=\sum_{0\le j<q}U_j(\cos\varphi)\\
    &=\frac{1}{\sin\varphi}\sum_{0\le j<q}\sin((j+1)\varphi)\\
    &=\frac1{\sin\varphi}\Im \sum_{0\le j<q}\exp(i\varphi)^{j+1}\\
    &=\frac1{\sin\varphi} \Im \Bigl(\exp(i\varphi)\frac{1-\exp(iq\varphi)}{1-\exp(i\varphi)}\Bigr)\\
    &=\frac1{\sin\varphi} \Im \Bigl(\exp\Bigl(\frac{i(q+1)\varphi}{2}\Bigr)\frac{\exp\bigl(-\frac{iq\varphi}{2}\bigr)-\exp\bigl(\frac{iq\varphi}{2}\bigr)}{\exp\bigl(-\frac{i\varphi}{2}\bigr)-\exp\bigl(\frac{i\varphi}{2}\bigr)}\Bigr)\\
    &=\frac{\sin\bigl(\frac{q\varphi}{2}\bigr)}{\sin\varphi\sin\bigl(\frac{\varphi}{2}\bigr)} \Im \exp\Bigl(\frac{i(q+1)\varphi}{2}\Bigr)\\
    &=\frac{\sin\bigl(\frac{q\varphi}{2}\bigr)\sin\bigl(\frac{(q+1)\varphi}{2}\bigr)}{\sin\varphi\sin\bigl(\frac{\varphi}{2}\bigr)}.
  \intertext{Inserting the value of $\varphi$ leads to}
    \langle \mathbf{1}, v\rangle &=\frac{\sin\bigl(\frac{qk\pi}{2(q+1)}\bigr)\sin\bigl(\frac{k\pi}{2}\bigr)}{\sin\bigl(\frac{k\pi}{q+1}\bigr)\sin\bigl(\frac{k\pi}{2(q+1)}\bigr)}.
  \end{align*}
  For $1\le k\le q$, it is clear that $0<k\pi/(q+1)<\pi$ and
  $0<k\pi/(2(q+1))<\pi$, so the denominator of this fraction is non-zero.
  We also claim that $\sin\bigl(\frac{qk\pi}{2(q+1)}\bigr)\neq 0$: Otherwise,
  we have $2(q+1)\mid qk$, hence $q+1 \mid qk$, which implies that $q+1\mid
  k$ because $\gcd(q, q+1)=1$. However, it cannot be that $q+1\mid k$ because $1\le
  k\le q$.

  As a consequence, $\langle \mathbf{1}, v\rangle=0$ if and only
  if $k/2$ is an integer.
\end{proof}

\begin{lemma}
  The characteristic polynomial of $C$ is
  \begin{equation*}
    X\prod_{1\le k\le q}\Bigl(X-2\cos\Bigl(\frac{k\pi}{q+1}\Bigr)\Bigr).
  \end{equation*}
  In particular, all eigenvalues of~$M$ apart from $0$ are eigenvalues of~$C$
  with algebraic multiplicity $1$. If $q$ is even, then $0$ has algebraic multiplicity
  $1$ as an eigenvalue of~$C$; if $q$ is odd, then $0$ has algebraic
  multiplicity $2$ as an eigenvalue of~$C$.
\end{lemma}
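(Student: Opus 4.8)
The plan is to reduce the computation to the block structure of $C$ recorded just above the statement. Writing
\[
  C = \left(\begin{array}{c|c} M & \mathbf{0} \\ \hline \mathbf{1} & 0 \end{array}\right),
\]
the last column of $XI-C$ is $(0,\dots,0,X)^{\top}$, so expanding the determinant along it gives $\det(XI-C)=X\det(XI-M)$. Hence it suffices to determine the characteristic polynomial of the $q\times q$ matrix $M$.

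Next I would identify $M$ concretely: from the explicit description of the $A_r$ (the non-zero entries of $A_r$ lie in column~$r$, in rows $r-1$ and $r+1$ when available), the entry $M_{ij}$ equals~$1$ precisely when $\abs{i-j}=1$, i.e.\ $M$ is the adjacency matrix of the path on the $q$ vertices $0,\dots,q-1$. By Lemma~\ref{lemma:eigenvector-M}, the eigenvalues of $M$ are exactly the numbers $2\cos(k\pi/(q+1))$ for $1\le k\le q$, each with a one-dimensional eigenspace. Since $\cos$ is strictly decreasing on $(0,\pi)$, these $q$ values are pairwise distinct, so a $q\times q$ matrix having all of them as eigenvalues must have each with algebraic multiplicity~$1$; therefore
\[
  \det(XI-M)=\prod_{1\le k\le q}\Bigl(X-2\cos\Bigl(\frac{k\pi}{q+1}\Bigr)\Bigr).
\]
Combined with the first step, this is the asserted characteristic polynomial of~$C$.

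Finally, for the algebraic multiplicity of the eigenvalue~$0$ it only remains to do the bookkeeping. The factor $X$ coming from the block decomposition contributes~$0$ with multiplicity one. By the last assertion of Lemma~\ref{lemma:eigenvector-M}, $0$ is an eigenvalue of $M$ if and only if $q$ is odd (indeed $2\cos(k\pi/(q+1))=0$ forces $k=(q+1)/2$, which lies in $\set{1,\dots,q}$ exactly when $q$ is odd), and then it is simple since all eigenvalues of $M$ are. Hence $0$ has algebraic multiplicity~$1$ as an eigenvalue of~$C$ when $q$ is even and~$2$ when $q$ is odd. I do not anticipate any real obstacle: the argument is a short assembly of the block determinant formula and Lemma~\ref{lemma:eigenvector-M}; the only point requiring a little care is not conflating the eigenvalue~$0$ contributed by the~$\mathcal{I}$-state with a possible eigenvalue~$0$ of~$M$.
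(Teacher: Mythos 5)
Your proposal is correct and follows essentially the same route as the paper: exploit the block (lower) triangular structure of $C$ to get $\det(XI-C)=X\det(XI-M)$, then invoke Lemma~\ref{lemma:eigenvector-M} for the eigenvalues of $M$ and the multiplicity bookkeeping for the eigenvalue~$0$. You merely spell out details the paper leaves implicit (the column expansion and the pairwise distinctness of the values $2\cos(k\pi/(q+1))$ forcing simple eigenvalues), which is fine.
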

\begin{proof}
  The matrix $C$ is a block lower triangular matrix, so the characteristic
  polynomial is the product of the characteristic polynomials of
  the matrices~$M$ and~$0$.

  The statement on the algebraic multiplicities follows from Lemma~\ref{lemma:eigenvector-M}.
\end{proof}

We can summarise our findings on the eigenvectors and eigenvalues of $C$ as follows.

\begin{proposition}\label{proposition:esthetic-eigenvectors}
  Let $v\in\C^{q}$, $w\in\C$, not both $0$, and let $\lambda\in\C$.

  Then $\bigl(\begin{smallmatrix}v\\w\end{smallmatrix}\bigr)\neq 0$ is an eigenvector of
  $C$ to the eigenvalue $\lambda$ if and only if one of the following
  conditions hold:
  \begin{enumerate}
  \item\label{enu:esthetic-eigenvectors:neq0}
    $0\neq \lambda = 2\cos\bigl(\frac{k\pi}{q+1}\bigr)$ for some $1\le k\le
    q$ and $k\neq\frac{q+1}{2}$, $v$ is an eigenvector of $M$ to $\lambda$, and
    $w=0$ if $k$ is even and $\lambda w=\langle \mathbf{1}, v\rangle\neq 0$ if $k$ is
    odd;
  \item $\lambda=0$, $v=0$, $w\neq 0$;
  \item $\lambda=0$, $q\equiv 3\pmod 4$, $v$ is an eigenvector of $M$ and $w=0$.
  \end{enumerate}

  In particular, the eigenvalue~$\lambda=0$ of~$C$ has
  \begin{itemize}
  \item algebraic and geometric multiplicity $2$ if $q\equiv 3\pmod 4$,
  \item algebraic multiplicity $2$ and geometric multiplicity
    $1$ if $q\equiv 1\pmod 4$, and
  \item algebraic and geometric multiplicity $1$
    for even $q$.
  \end{itemize}
\end{proposition}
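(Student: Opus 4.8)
The plan is to unfold the eigenvector equation through the block decomposition of $C$ and then reduce everything to the two lemmas on $M$ just established.

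First I would rewrite $C\bigl(\begin{smallmatrix}v\\w\end{smallmatrix}\bigr)=\lambda\bigl(\begin{smallmatrix}v\\w\end{smallmatrix}\bigr)$ in block form using $C=\bigl(\begin{smallmatrix}M&\mathbf 0\\\mathbf 1&0\end{smallmatrix}\bigr)$; this is equivalent to the pair of conditions $Mv=\lambda v$ and $\langle\mathbf 1,v\rangle=\lambda w$. From here I would distinguish $v=0$ and $v\neq 0$. If $v=0$, then $w\neq0$ by hypothesis, so the second condition forces $\lambda w=0$, hence $\lambda=0$; conversely $\bigl(\begin{smallmatrix}0\\w\end{smallmatrix}\bigr)$ with $w\neq0$ is an eigenvector to~$0$ for every~$q$. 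This is exactly case~(2), contributing a one-dimensional subspace to the eigenspace of~$0$.

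If $v\neq 0$, then $Mv=\lambda v$ says $v$ is an eigenvector of $M$, so Lemma~\ref{lemma:eigenvector-M} gives $\lambda=2\cos(k\pi/(q+1))$ for a unique $1\le k\le q$ and determines $v$ up to a scalar. I would then split according to whether $\lambda\neq0$ or $\lambda=0$. When $\lambda\neq0$ (equivalently $k\neq\tfrac{q+1}{2}$), the second condition determines $w=\langle\mathbf 1,v\rangle/\lambda$, and Lemma~\ref{lemma:sum} says $\langle\mathbf 1,v\rangle=0$ precisely when $k$ is even; this yields case~(1), with $w=0$ for even~$k$ and $\lambda w=\langle\mathbf 1,v\rangle\neq0$ for odd~$k$. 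When $\lambda=0$ --- which forces $q$ odd and $k=\tfrac{q+1}{2}$ --- the conditions become $Mv=0$ and $\langle\mathbf 1,v\rangle=0$, and by Lemma~\ref{lemma:sum} the latter holds iff $\tfrac{q+1}{2}$ is even, i.e.\ iff $q\equiv3\pmod4$. Thus for $q\equiv3\pmod4$ we obtain the eigenvectors $\bigl(\begin{smallmatrix}v\\0\end{smallmatrix}\bigr)$ with $v$ the (up to scalar unique) eigenvector of $M$ to~$0$, which is case~(3), whereas for $q\equiv1\pmod4$ no eigenvector of~$C$ to~$0$ has $v\neq0$.

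For the multiplicity statement I would invoke the preceding lemma on the characteristic polynomial of $C$: the algebraic multiplicity of~$0$ equals~$1$ for even~$q$ and~$2$ for odd~$q$. The geometric multiplicity is then read off from the case analysis: for even~$q$, and also for $q\equiv1\pmod4$, only case~(2) contributes, so the eigenspace of~$0$ is one-dimensional; for $q\equiv3\pmod4$ cases~(2) and~(3) contribute the linearly independent vectors $\bigl(\begin{smallmatrix}0\\w\end{smallmatrix}\bigr)$ and $\bigl(\begin{smallmatrix}v\\0\end{smallmatrix}\bigr)$, and since $0$ is a simple eigenvalue of $M$ by Lemma~\ref{lemma:eigenvector-M} there is nothing more, so the eigenspace is two-dimensional. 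There is no deep obstacle; the only point requiring care is the $\lambda=0$ branch --- correctly identifying $k=\tfrac{q+1}{2}$, observing that this forces $q$ odd, translating ``$\tfrac{q+1}{2}$ even'' into ``$q\equiv3\pmod4$'', and confirming in the $q\equiv1\pmod4$ case that the geometric multiplicity genuinely drops below the algebraic one.
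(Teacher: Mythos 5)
Your proposal is correct and follows essentially the same route as the paper: rewrite the eigenvector equation in block form as $Mv=\lambda v$ and $\langle\mathbf 1,v\rangle=\lambda w$, then settle the cases via Lemma~\ref{lemma:eigenvector-M} and Lemma~\ref{lemma:sum}, with the multiplicities read off from the characteristic-polynomial lemma and the case analysis. The only difference is organisational (you split on $v=0$ versus $v\neq0$ first, the paper splits on $\lambda$ and the residue of $q$), which does not change the substance.
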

\begin{proof}
  The vector $\bigl(\begin{smallmatrix}v\\w\end{smallmatrix}\bigr)$
  is an eigenvector if and only if
  \begin{align*}
    Mv&=\lambda v,\\
    \langle \mathbf{1}, v\rangle&=\lambda w.
  \end{align*}
  
  First assume that $\lambda\neq 0$. Then $v=0$ leads to $w=0$,
  contradiction. Therefore, $v$ is an eigenvector of $M$ to the eigenvalue
  $\lambda$ and $\lambda=2\cos\bigl(\frac{k\pi}{q+1}\bigr)$ for some
  $1\le k\le q$ by Lemma~\ref{lemma:eigenvector-M}.
  Then $w=0$ if and only if $k$ is even
  by Lemma~\ref{lemma:sum}.

  Now assume that $\lambda=0$ and $q$ is even. Then $0$ is not an eigenvalue of
  $M$ by Lemma~\ref{lemma:eigenvector-M}. Thus $v=0$ and $w\neq 0$.

  Now, assume that $\lambda=0$ and $q\equiv 3\pmod 4$.
  Then
  $\lambda=2\cos\bigl(\frac{\pi}{2}\bigr)=2\cos\bigl(\frac{\frac{q+1}{2}\pi}{q+1}\bigr)$.
  By Lemma~\ref{lemma:sum}, the eigenvector $v$ of $M$ leads to an eigenvector
  $\bigl(\begin{smallmatrix}v\\0\end{smallmatrix}\bigr)$ of $C$; and there is
  an additional eigenvector
  $\bigl(\begin{smallmatrix}0\\w\end{smallmatrix}\bigr) \neq 0$.

  Finally, assume that $\lambda=0$ and $q\equiv 1\pmod 4$. In this case,
  by Lemma~\ref{lemma:sum}, it cannot be that $v\neq 0$ is an eigenvector of $M$
  because this would lead to $0\neq \langle \mathbf{1}, v\rangle=\lambda w=0$,
  a contradiction. Thus the only eigenvector is
  $\bigl(\begin{smallmatrix}0\\w\end{smallmatrix}\bigr) \neq 0$.
\end{proof}

%%% Local Variables:
%%% mode: latex
%%% TeX-master: "regular-sequences"
%%% End:

\subsection{Proof of the Asymptotic Result}

\begin{proof}[Proof of Corollary~\ref{corollary:esthetic:asy}]
  We work out the conditions and parameters for using
  Theorem~\ref{theorem:simple}.

  \proofparagraph{Joint Spectral Radius}
  As all the square matrices $A_0$, \dots, $A_{q-1}$ have a maximum
  absolute row sum norm equal to $1$, the joint spectral radius of
  these matrices is bounded by~$1$.

  Let $r\in\set{1,\dots,q-1}$. Then any product with alternating
  factors $A_{r-1}$ and $A_r$, i.e., a finite product
  $A_{r-1}A_rA_{r-1}\cdots$, has absolute row sum norm at least~$1$ as
  the word $(r-1)r(r-1)\dots$ is $q$-esthetic. Therefore the joint
  spectral radius of $A_{r-1}$ and $A_r$ is at
  least~$1$. Consequently, the joint spectral radius of $A_0$, \dots,
  $A_{q-1}$ equals~$1$.

  \proofparagraph{Asymptotics}
  We apply our Theorem~\ref{theorem:simple}.
  We have $\lambda_j=-\lambda_{q+1-j}$, so we combine our approach
  with Proposition~\ref{proposition:symmetric-eigenvalues}. Moreover,
  we have $\lambda_j>1$ iff $\frac{j}{q+1}<\frac{1}{3}$ iff
  $j\leq\ceil{\frac{q-2}{3}}$.
  This results
  in~\eqref{eq:esthetic:asy-main}.

  We now assume that $q$ is even. In this case, we still have to show
  that the functions $\Phi_j$ are actually $1$-periodic. We now need to
  use Theorem~\ref{theorem:contribution-of-eigenspace}. Let $w_1$, $w_2$,
  \ldots, $w_{q-1}$, $w_q$ be the rows of $T$ where the order is chosen in such
  a way that
  \begin{equation*}
    J=\diag\Bigl(2\cos\Bigl(\frac{\pi}{q+1}\Bigr), \ldots,
                 2\cos\Bigl(\frac{q\pi}{q+1}\Bigr), 0\Bigr).
  \end{equation*}
  We write $e_{q+1}=\sum_{k=1}^q c_k w_k$ for suitable $c_k\in\R$. Setting
  $c\coloneqq \begin{pmatrix}c_1&c_2&\cdots&c_q\end{pmatrix}$, this means that
  $e_{q+1}=cT$, or equivalently, $c=e_{q+1} T^{-1}$. The columns of $T^{-1}$ are
  the right eigenvectors of $C$ described in
  Proposition~\ref{proposition:esthetic-eigenvectors}. Then
  Proposition~\ref{proposition:esthetic-eigenvectors}~(\ref{enu:esthetic-eigenvectors:neq0}) implies that $c_k=0$ for
  even $k$ with $1\le k\le q$. This means that all fluctuations corresponding
  to eigenvalues $2\cos(k\pi/(q+1))$ for even $k$ with $1\le k\le q$ are
  multiplied by $0$ and do not contribute to the result.
  As $\abs{\cos(\frac{q+1-k}{q+1}\pi)}=\abs{\cos(\frac{k}{q+1}\pi)}$, but
  $q+1-k$ and $k$ have different parities, there is no need to use
  Proposition~\ref{proposition:symmetric-eigenvalues} and all fluctuations are
  $1$-periodic.

  The same argument can be used for the case of odd $q$, but in this case,
  $q+1-k$ and $k$ have the same parity. So
  Proposition~\ref{proposition:symmetric-eigenvalues}
  is used for odd $k$, and fluctuations to both eigenvalues $2\cos(k\pi/(q+1))$
  and $2\cos((q+1-k)\pi/(q+1))$ vanish for even~$k$.

  \proofparagraph{Fourier Coefficients}
  We can compute the Fourier coefficients according to
  Theorem~\ref{theorem:simple} and
  Proposition~\ref{proposition:symmetric-eigenvalues};
  see also Part~\ref{part:numerical}.
\end{proof}

%%% Local Variables:
%%% mode: latex
%%% TeX-master: "regular-sequences"
%%% End:

\section{Pascal's Rhombus}
\label{sec:pascal}
We discuss the asymptotic analysis of odd entries in Pascal's rhombus; see also
Section~\ref{sec:overview-pascal} for an overview.

We consider Pascal's rhombus~$\mathfrak{R}$ which is,
for integers~$i\geq0$ and $j$, the array with entries $r_{i,j}$, where
\begin{itemize}
\item $r_{0,j} = 0$ all $j$,
\item $r_{1,0}=1$ and $r_{1,j}=0$ for all $j\neq0$,
\item and
\begin{equation*}
  r_{i,j} = r_{i-1,j-1} + r_{i-1,j} + r_{i-1,j+1} + r_{i-2,j}
\end{equation*}
for $i \geq 1$.
\end{itemize}

\begin{figure}
  \centering
  \includegraphics[width=\linewidth]{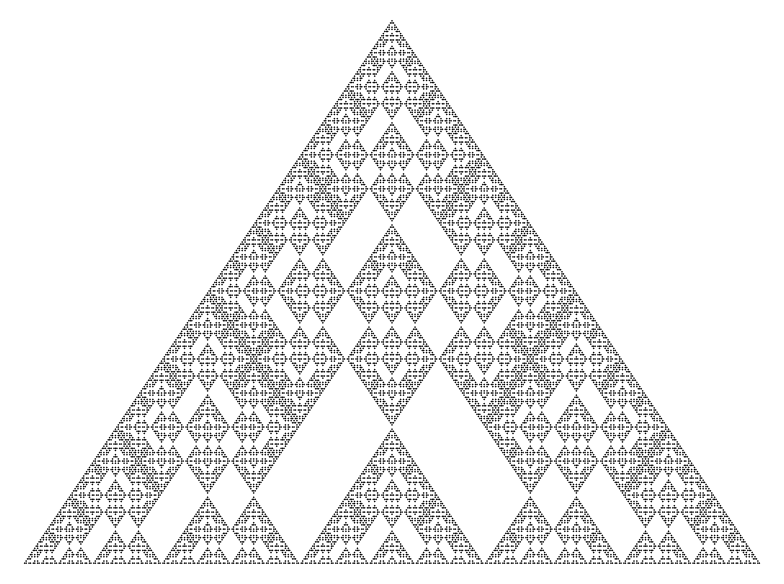}
  \caption{Pascal's rhombus modulo~$2$.}
  \label{fig:pascal-one}
\end{figure}

We are interested in the number of odd entries in the first~$N$ rows
of this
rhombus. In~\cite{Goldwasser-Klostermeyer-Mays-Trapp:1999:Pascal-rhombus}
the authors investigate this quantity for $N$ being a power of~$2$.
We again aim for a more precise analysis and asymptotic description.

So, let $\mathfrak{X}$ be equal to $\mathfrak{R}$ but with
entries taken modulo~$2$; see also Figure~\ref{fig:pascal-one}.
We partition $\mathfrak{X}$ into the four sub-arrays
\begin{itemize}
\item $\mathfrak{E}$ consisting only of the rows and columns of
  $\mathfrak{X}$ with even indices, i.e., the entries~$r_{2i, 2j}$,
\item $\mathfrak{Y}$ consisting only of the rows with odd indices and
  columns with even indices, i.e., the entries~$r_{2i-1, 2j}$,
\item $\mathfrak{Z}$ consisting only of the rows with even indices and
  columns with odd indices, i.e., the entries~$r_{2i, 2j-1}$, and
\item $\mathfrak{N}$ consisting only of the rows and columns with odd
  indices, i.e., the entries~$r_{2i-1, 2j-1}$.
\end{itemize}
Note that $\mathfrak{E} = \mathfrak{X}$ and $\mathfrak{N}=0$;
see~\cite{Goldwasser-Klostermeyer-Mays-Trapp:1999:Pascal-rhombus}.

\subsection{Recurrence Relations and $2$-Regular Sequences}
\label{sec:recurrences}

Let $X(N)$, $Y(N)$ and $Z(N)$ be the number of ones in the first $N$ rows
(starting with row index~$1$)
of $\mathfrak{X}$, $\mathfrak{Y}$ and $\mathfrak{Z}$, respectively.

Goldwasser, Klostermeyer, Mays and
Trapp~\cite[(12)--(14)]{Goldwasser-Klostermeyer-Mays-Trapp:1999:Pascal-rhombus}
get the recurrence relations
\begin{align*}
  X(N) &= X(\floor{\tfrac N2}) + Y(\ceil{\tfrac N2}) + Z(\floor{\tfrac N2}), \\
  Y(N) &= X(\ceil{\tfrac N2}) + X(\floor{\tfrac N2}-1) + Z(\floor{\tfrac N2}) + Z(\ceil{\tfrac N2}-1), \\
  Z(N) &= 2 X(\floor{\tfrac N2}) + 2 Y(\ceil{\tfrac N2})
\end{align*}
for $N\ge2$, and $X(0)=Y(0)=Z(0)=0$, $X(1)=1$, $Y(1)=1$ and $Z(1)=2$
(cf.~\cite[Figures~2 and~3]{Goldwasser-Klostermeyer-Mays-Trapp:1999:Pascal-rhombus}).
Distinguishing between even and odd indices gives
\begin{align*}
  X(2N) &= X(N) + Y(N) + Z(N), \\
  X(2N+1) &= X(N) + Y(N+1) + Z(N), \\
  Y(2N) &= X(N) + X(N-1) + Z(N) + Z(N-1), \\
  Y(2N+1) &= X(N+1) + X(N-1) + 2Z(N), \\
  Z(2N) &= 2X(N) + 2Y(N), \\
  Z(2N+1) &= 2X(N) + 2Y(N+1)
\end{align*}
for all $N\ge1$.
Now we build the backward differences
$x(n) = X(n) - X(n-1)$, $y(n) = Y(n) - Y(n-1)$ and $z(n) = Z(n) - Z(n-1)$.
These $x(n)$, $y(n)$ and $z(n)$ are the number
of ones in the $n$th row of $\mathfrak{X}$, $\mathfrak{Y}$ and
$\mathfrak{Z}$, respectively, and clearly
\begin{equation*}
  X(N) = \sum_{1\leq n \leq N} x(n),
  \qquad
  Y(N) = \sum_{1\leq n \leq N} y(n),
  \qquad
  Z(N) = \sum_{1\leq n \leq N} z(n)
\end{equation*}
holds. We obtain
\begin{subequations}
  \label{eq:rec-pascal-rhombus:main}
  \begin{align}
    x(2n)&=x(n)+z(n), &
    x(2n+1)&=y(n+1), \label{eq:rec-x}\\
    y(2n)&= x(n-1)+z(n), &
    y(2n+1)&=x(n+1) +z(n), \label{eq:rec-y}\\
    z(2n)&= 2x(n), &
    z(2n+1)&=2y(n+1) \label{eq:rec-z}
  \end{align}
\end{subequations}
for $n\ge1$, and $x(0)=y(0)=z(0)=0$, $x(1)=1$, $y(1)=1$ and $z(1)=2$.

Let us write our coefficients as the
vector
\begin{equation}\label{eq:pascal:vec-v}
  v(n) = \bigl(x(n), x(n+1), y(n+1), z(n), z(n+1)\bigr)^\top.
\end{equation}
It turns out that the components included into $v(n)$ are
sufficient for a self-contained linear representation of~$v(n)$.
In particular, it is not necessary to include~$y(n)$.
By using the recurrences~\eqref{eq:rec-pascal-rhombus:main}, we find that
\begin{equation*}
  v(2n) = A_0 v(n)
  \qquad\text{and}\qquad
  v(2n+1) = A_1 v(n)
\end{equation*}
for all\footnote{ Note that $v(0) = A_0 v(0)$ and $v(1) = A_1 v(0)$ are indeed
  true.}
 $n\ge0$ with the matrices
\begin{equation*}
  A_0 =
  \begin{pmatrix}
    1 & 0 & 0 & 1 & 0 \\
    0 & 0 & 1 & 0 & 0 \\
    0 & 1 & 0 & 1 & 0 \\
    2 & 0 & 0 & 0 & 0 \\
    0 & 0 & 2 & 0 & 0
  \end{pmatrix}
  \qquad\text{and}\qquad
  A_1 =
  \begin{pmatrix}
    0 & 0 & 1 & 0 & 0 \\
    0 & 1 & 0 & 0 & 1 \\
    1 & 0 & 0 & 0 & 1 \\
    0 & 0 & 2 & 0 & 0 \\
    0 & 2 & 0 & 0 & 0
  \end{pmatrix},
\end{equation*}
and with $v(0) = (0,1,1,0,2)^\top$.
Therefore, the sequences $x(n)$, $y(n)$ and $z(n)$ are $2$-regular.

\subsection{Full Asymptotics}
\label{sec:asymptotics}

\begin{corollary}\label{corollary:pascal-rhombus:main}
  We have
  \begin{equation}\label{eq:pascal-rhombus:main-asy}
    X(N) = \sum_{1\leq n \leq N} x(n)
    = N^\gamma \f{\Phi}{\fractional{\log_2 N}} + \Oh{N \log_2 N}
  \end{equation}
  with $\gamma = \log_2 \bigl(3+\sqrt{17}\,\bigr)-1 = 1.83250638358045\ldots$ and
  a $1$-periodic function $\Phi$ which is Hölder continuous with
  any exponent smaller than $\gamma-1$.

  Moreover, we can effectively compute the Fourier coefficients
  of~$\Phi$ (as explained in Part~\ref{part:numerical}).
\end{corollary}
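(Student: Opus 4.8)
The plan is to read the conclusion off from Theorem~\ref{theorem:simple}, applied to the $2$-linear representation of $x(n)$ set up in Section~\ref{sec:recurrences}. The first point to check is that this is a \emph{proper} representation in the sense of Section~\ref{section:introduction:regular-sequences}: the vector $v(0)=(0,1,1,0,2)^\top$ satisfies $A_0v(0)=v(0)$ and $x(n)$ is the first component of $v(n)$, so Theorem~\ref{theorem:simple} applies directly with $C\coloneqq A_0+A_1$, and the more refined Theorem~\ref{theorem:contribution-of-eigenspace} is not needed here. After that, everything reduces to identifying $R$, the spectrum $\sigma(C)$, and the Jordan block sizes $m(\lambda)$.

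For the joint spectral radius: both $A_0$ and $A_1$ have all absolute row sums at most~$2$, so their induced maximum-row-sum norms are at most~$2$, whence $\rho\le 2$; conversely $2$ is already an eigenvalue of $A_0$ (for instance $A_0\,(3,1,2,3,2)^\top=2\,(3,1,2,3,2)^\top$), so $\rho=2$ and the finiteness property holds with $\ell=1$. Consequently $R=\rho=2$ is the value used in Theorem~\ref{theorem:simple}. For the spectrum: a direct check shows that $C$ commutes with the permutation matrix of the double transposition $(1\,2)(4\,5)$, so $\C^5$ splits into a $3$-dimensional invariant subspace (spanned by $e_1+e_2$, $e_3$, $e_4+e_5$) and a $2$-dimensional one (spanned by $e_1-e_2$, $e_4-e_5$). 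Computing the matrix of $C$ on a basis of each block yields the characteristic polynomials $X^3-X^2-8X-4=(X+2)(X^2-3X-2)$ and $X^2-X-2=(X-2)(X+1)$, so $\sigma(C)=\bigl\{\tfrac{3+\sqrt{17}}2,\tfrac{3-\sqrt{17}}2,-2,2,-1\bigr\}$, all eigenvalues being simple, hence $m(\lambda)=1$ for every $\lambda\in\sigma(C)$. The only eigenvalue of modulus exceeding $R=2$ is $\lambda_1=\tfrac{3+\sqrt{17}}2$, with $\log_2\lambda_1=\log_2(3+\sqrt{17})-1=\gamma$.

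Theorem~\ref{theorem:simple} then gives
\begin{multline*}
  \sum_{0\le n<N}x(n)=N^{\gamma}\,\Phi_{\lambda_1 0}(\fractional{\log_2 N})
  +\Oh[\big]{N^{\log_2 2}\,(\log N)^{\max\setm{m(\lambda)}{\abs{\lambda}=2}}}\\
  =N^{\gamma}\,\Phi(\fractional{\log_2 N})+\Oh{N\log N},
\end{multline*}
with $\Phi\coloneqq\Phi_{\lambda_1 0}$ Hölder continuous with any exponent below $\log_2(\lambda_1/2)=\gamma-1$, and whose Fourier coefficients are the residues of $\calX(s)/s$ at $\gamma+\chi_\ell$; these are obtained from the functional equation~\eqref{eq:functional-equation-V} for $\calV$ by the reliable numerical arithmetic of Part~\ref{part:numerical}. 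To pass to the sum $\sum_{1\le n\le N}$ of the corollary, note that $x(0)=0$ gives $\sum_{1\le n\le N}x(n)=\sum_{0\le n<N}x(n)+x(N)$, and $x(N)=e_1A_{r_0}\dotsm A_{r_{\ell-1}}v(0)=\Oh{N^{\log_2 R}}=\Oh{N}$ is swallowed by the error term; this yields~\eqref{eq:pascal-rhombus:main-asy}.

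The main obstacle is the spectral analysis of $C$: one needs the eigenvalues exactly (to pin down the irrational growth exponent $\gamma$) and, equally, the fact that the two eigenvalues on the critical circle $\abs{\lambda}=R=2$ are semisimple, which is precisely what keeps the error at $\Oh{N\log N}$ rather than a higher power of $\log N$. The symmetry reduction to two small blocks makes this computation routine; verifying properness, bounding $\rho$, and the trivial index shift are then bookkeeping.
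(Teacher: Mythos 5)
Your proposal is correct and follows essentially the same route as the paper: bound the joint spectral radius by the row-sum norm and an explicit eigenvalue $2$ to get $R=\rho=2$ with the finiteness property, compute $\sigma(C)=\bigl\{\tfrac{3\pm\sqrt{17}}{2},\pm2,-1\bigr\}$ with all eigenvalues simple, and read off \eqref{eq:pascal-rhombus:main-asy} from Theorem~\ref{theorem:simple} (your symmetry reduction for the characteristic polynomial and your handling of the index shift via $x(N)=\Oh{N}$ are harmless presentational variants). The only divergence is in the Fourier-coefficient claim, where you invoke the generic five-dimensional functional equation \eqref{eq:functional-equation-V} while the paper deliberately sets up the reduced three-dimensional system of Lemma~\ref{lemma:meromorphic} for numerical efficiency --- a choice the paper itself describes as optional.
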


We get analogous results for the sequences~$Y(N)$ and $Z(N)$ (each with
its own periodic function~$\Phi$, but the same exponent $\gamma$).
The fluctuation~$\Phi$ of $X(N)$ is visualized in Figure~\ref{fig:fluct-a} and its
first few Fourier coefficients are shown in Table~\ref{table:pascal-rhombus:fourier}.

\begin{figure}
  \centering
  \includegraphics{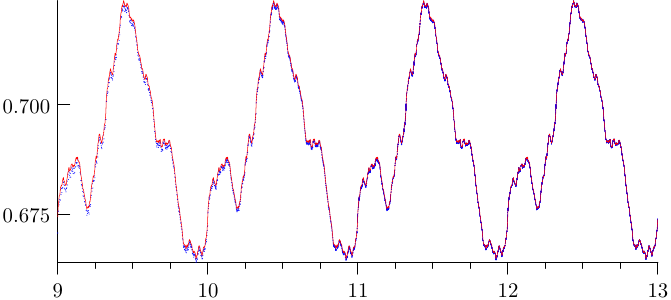}
  \caption{Fluctuation in the main term of the asymptotic expansion of $X(N)$.
    The figure shows $\f{\Phi}{u}$ (red) approximated by
    its trigonometric polynomial of degree~$1999$ as well as
    $X(2^u) / 2^{u\gamma}$ (blue).}
  \label{fig:fluct-a}
\end{figure}
\begin{table}
  \centering
  \begin{equation*}\footnotesize
    \begin{array}{r|l}
\multicolumn{1}{c|}{\ell} &
\multicolumn{1}{c}{\varphi_\ell} \\
\hline
0 & \phantom{-}0.6911615112341912755021246 \\
1 & -0.01079216311240407872950510 - 0.0023421761940286789685827i \\
2 & \phantom{-}0.00279378637350495172116712 - 0.00066736128659728911347756i \\
3 & -0.00020078258323645842522640 - 0.0031973663977645462669373i \\
4 & \phantom{-}0.00024944678921746747281338 - 0.0005912995467076061497650i \\
5 & -0.0003886698612765803447578 + 0.00006723866319930148568431i \\
6 & -0.0006223575988893574655258 + 0.00043217220614939859781542i \\
7 & \phantom{-}0.00023034317364181383130476 - 0.00058663168772856091427688i \\
8 & \phantom{-}0.0005339060804798716172593 - 0.0002119380802590974909465i \\
9 & \phantom{-}0.0000678898389770175928529 - 0.00038307823285486235280185i \\
10 & -0.00019981745997355255061991 - 0.00031394569060142799808175i \\
    \end{array}
  \end{equation*}
  \caption{Fourier coefficients of~$\Phi$
    (Corollary~\ref{corollary:pascal-rhombus:main}). All stated digits are
  correct; see also Part~\ref{part:numerical}.}
\label{table:pascal-rhombus:fourier}
\end{table}

\subsection{Proof of the Asymptotic Result}

At this point, we only prove~\eqref{eq:pascal-rhombus:main-asy} of
Corollary~\ref{corollary:pascal-rhombus:main}. We deal with the
Fourier coefficients in Section~\ref{sec:fourier}. As in the
introductory example of the binary sum-of-digits functions
(Example~\ref{example:binary-sum-of-digits}), we could get Fourier
coefficients by Theorem~\ref{theorem:simple} and the $2$-linear
representation of Section~\ref{sec:recurrences} directly. However,
the information in the vector~$v(n)$ (see \eqref{eq:pascal:vec-v})
is redundant with respect to the asymptotic main term as it contains
$x(n)$ and $z(n)$ as well as $x(n+1)$ and $z(n+1)$; both pairs are
asymptotically equal in the sense of~\eqref{eq:pascal-rhombus:main-asy}.
Therefore, we head for an only $3$-dimensional functional system of equations
for our Dirichlet series of $x(n)$, $y(n)$ and $z(n)$
(instead of a $5$-dimensional system).

\begin{proof}[Proof of~\eqref{eq:pascal-rhombus:main-asy}]
    We use Theorem~\ref{theorem:simple}.

    \proofparagraph{Joint Spectral Radius}
    First we compute the joint spectral radius $\rho$ of
    $A_0$ and $A_1$. Both matrices have a maximum absolute
    row sum equal to $2$, thus $\rho\leq 2$, and both
    matrices have~$2$ as an eigenvalue. Therefore we obtain
    $\rho=2$. Moreover, the finiteness property of the linear
    representation is satisfied by considering only products with exactly one
    matrix factor $A_0$ or $A_1$.

    Thus, we have $R=\rho=2$.

    \proofparagraph{Eigenvalues}
    Next, we compute the spectrum~$\sigma(C)$ of $C=A_0+A_1$. The
    matrix $C$ has the eigenvalues~$\lambda_1=\bigl(3+\sqrt{17}\,\bigr)/2=3.5615528128088\ldots$,
    $\lambda_2=2$, $\lambda_3=-2$, $\lambda_4=-1$ and
    $\lambda_5=\bigl(3-\sqrt{17}\,\bigr)/2=-0.5615528128088\ldots$ (each with multiplicity one).
    Note that $\lambda_1$ and $\lambda_5$ are the zeros of the
    polynomial~$U^2-3U-2$.

    \proofparagraph{Asymptotics}
    By using Theorem~\ref{theorem:simple}, we obtain an
    asymptotic formula for $X(N-1)$. Shifting from $N-1$ to $N$ does not
    change this asymptotic formula, as this shift is absorbed by the
    error term $\Oh{N \log_2 N}$.
\end{proof}

\subsection{Dirichlet Series and Meromorphic Continuation}
\label{sec:meromorphic}

In the lemma below,
we provide the functional equation~\eqref{eq:pascal:functional-equation}
as a system of three equations. This is in contrast
to the generic functional equation
provided by Theorem~\ref{theorem:Dirichlet-series} which is a system of five
equations.

Let $n_0\ge2$ be an integer and define
\begin{align*}
  \f{\calX_{n_0}}{s} &= \sum_{n\geq n_0} \frac{x(n)}{n^s}, &
  \f{\calY_{n_0}}{s} &= \sum_{n\geq n_0} \frac{y(n)}{n^s}, &
  \f{\calZ_{n_0}}{s} &= \sum_{n\geq n_0} \frac{z(n)}{n^s}.
\end{align*}

\begin{lemma}\label{lemma:meromorphic}
  Set
  \begin{equation*}
    M = I -
    \begin{pmatrix}
      2^{-s} & 2^{-s} & 2^{-s} \\
      2^{1-s} & 0 & 2^{1-s} \\
      2^{1-s} & 2^{1-s} & 0 \\
    \end{pmatrix}.
  \end{equation*}
  Then
  \begin{equation}\label{eq:pascal:functional-equation}
    M
    \begin{pmatrix}
      \f{\calX_{n_0}}{s} \\ \f{\calY_{n_0}}{s} \\ \f{\calZ_{n_0}}{s}
    \end{pmatrix}
    =
    \begin{pmatrix}
      \f{\calJ_{n_0}}{s} \\ \f{\calK_{n_0}}{s} \\ \f{\calL_{n_0}}{s}
    \end{pmatrix}\!,
  \end{equation}
  where
  \begin{align*}
    \f{\calJ_{n_0}}{s} &= 2^{-s} \f{\Sigma}{s, -\tfrac12, \calY_{n_0}}
    + \calI_{\calJ_{n_0}}(s), \\
    &\;\calI_{\calJ_{n_0}}(s) = - \frac{y(n_0)}{(2n_0-1)^s}
    + \sum_{n_0\leq n<2n_0} \frac{x(n)}{n^s}, \\
    \f{\calK_{n_0}}{s} &=
    2^{-s} \f{\Sigma}{s, 1, \calX_{n_0}} + 2^{-s} \f{\Sigma}{s, -\tfrac12, \calX_{n_0}}
    + 2^{-s} \f{\Sigma}{s, \tfrac12, \calZ_{n_0}}
    + \calI_{\calK_{n_0}}(s), \\
    &\;\calI_{\calK_{n_0}}(s) = \frac{x(n_0-1)}{(2n_0)^s} - \frac{x(n_0)}{(2n_0-1)^s}
    + \sum_{n_0\leq n<2n_0} \frac{y(n)}{n^s}, \\
    \f{\calL_{n_0}}{s} &= 2^{1-s} \f{\Sigma}{s, -\tfrac12, \calY_{n_0}}
    + \calI_{\calL_{n_0}}(s), \\
    &\;\calI_{\calL_{n_0}}(s) = - \frac{2 y(n_0)}{(2n_0-1)^s}
    + \sum_{n_0\leq n<2n_0} \frac{z(n)}{n^s},
  \end{align*}
  with the notion of $\Sigma$ as in Lemma~\ref{lemma:shifted-Dirichlet},
  provides meromorphic continuations
  of the Dirichlet series~$\f{\calX_{n_0}}{s}$, $\f{\calY_{n_0}}{s}$,
  and $\f{\calZ_{n_0}}{s}$ for $\Re s > \gamma_0=1$ with the only possible
  poles at $\gamma + \chi_\ell$ for $\ell\in\Z$,
  all of which are simple poles.
\end{lemma}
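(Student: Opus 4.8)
\emph{Approach.} The plan is to imitate the proof of the generic functional equation of Theorem~\ref{theorem:Dirichlet-series}, but working directly in the three variables $x$, $y$, $z$ and using the recurrences~\eqref{eq:rec-pascal-rhombus:main} in place of a linear representation. Since $x(n)$, $y(n)$, $z(n)$ are first components of $2$-regular sequences with $R=\rho=2$, they are all $\Oh{n}$, so $\f{\calX_{n_0}}{s}$, $\f{\calY_{n_0}}{s}$, $\f{\calZ_{n_0}}{s}$ converge absolutely for $\Re s>2$, and the rearrangements below are legitimate there. For each of the three series I would peel off the initial block $\sum_{n_0\le n<2n_0}$, split the remaining tail $\sum_{n\ge 2n_0}$ into even indices $n=2m$ and odd indices $n=2m+1$ (both with $m\ge n_0$), substitute the corresponding right-hand sides from~\eqref{eq:rec-x}, \eqref{eq:rec-y}, \eqref{eq:rec-z}, and collect the resulting index-shifted sums into applications of the operator $\f{\Sigma}{\,\cdot\,,\,\cdot\,,\,\cdot\,}$ of Lemma~\ref{lemma:shifted-Dirichlet}.

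\emph{Deriving the system.} Concretely, for $\f{\calX_{n_0}}{s}$ the even part contributes $2^{-s}\bigl(\f{\calX_{n_0}}{s}+\f{\calZ_{n_0}}{s}\bigr)$ via $x(2m)=x(m)+z(m)$, while the odd part, via $x(2m+1)=y(m+1)$ and the reindexing $k=m+1$ (so that $2m+1=2(k-\tfrac12)$), contributes $2^{-s}\bigl(\f{\Sigma}{s,-\tfrac12,\calY_{n_0}}+\f{\calY_{n_0}}{s}\bigr)$ together with a single stray boundary term $-y(n_0)(2n_0-1)^{-s}$ arising from passing between $\sum_{k\ge n_0+1}$ and $\sum_{k\ge n_0}$; moving the $\f{\calX_{n_0}}{s}$, $\f{\calY_{n_0}}{s}$, $\f{\calZ_{n_0}}{s}$ contributions to the left reproduces exactly the first row of~\eqref{eq:pascal:functional-equation}, with $\calI_{\calJ_{n_0}}$ absorbing this stray term and the initial block. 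The same mechanical procedure with~\eqref{eq:rec-y} and~\eqref{eq:rec-z} yields the other two rows; the only additional feature is the backward shift $x(m-1)$ in $y(2m)=x(m-1)+z(m)$, which produces $\f{\Sigma}{s,1,\calX_{n_0}}$ plus a stray term $x(n_0-1)(2n_0)^{-s}$. This is precisely where $n_0\ge 2$ is used: so that $x(n_0-1)$ makes sense, and so that every shift parameter $\beta\in\{-\tfrac12,\tfrac12,1\}$ occurring satisfies $\abs{\beta}<n_0$, as Lemma~\ref{lemma:shifted-Dirichlet} requires.

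\emph{Continuation and poles.} By Lemma~\ref{lemma:shifted-Dirichlet} each $\f{\Sigma}{s,\beta,\calD}$ on the right is analytic for $\Re s>\log_2\rho=1$, and the $\calI$-terms are finite sums, hence entire; so the whole right-hand side of~\eqref{eq:pascal:functional-equation} is analytic for $\Re s>1$. Writing $M=I-2^{-s}\widetilde C$ with $\widetilde C=\bigl(\begin{smallmatrix}1&1&1\\2&0&2\\2&2&0\end{smallmatrix}\bigr)$, one has $\det M(s)=0$ exactly when $2^s$ is an eigenvalue of $\widetilde C$; the characteristic polynomial of $\widetilde C$ factors as $(\lambda^2-3\lambda-2)(\lambda+2)$, so its eigenvalues are $-2$ and $\lambda_1,\lambda_5=(3\pm\sqrt{17})/2$. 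Since $\log_2\abs{-2}=1$, $\log_2\abs{\lambda_5}<0$ and $\log_2\lambda_1=\gamma$, the only solutions of $\det M(s)=0$ with $\Re s>1$ are $s=\gamma+\chi_\ell$, $\ell\in\Z$; there the zero of $\det M$ is simple (only the factor $1-2^{-s}\lambda_1$ vanishes, with nonzero $s$-derivative), while $\lambda_1$ being a simple eigenvalue of $\widetilde C$ keeps $\mathrm{adj}\,M$ nonzero there, so $M^{-1}=\mathrm{adj}(M)/\det M$ is meromorphic on $\Re s>1$ with a simple pole at each $\gamma+\chi_\ell$ and no other poles. Multiplying~\eqref{eq:pascal:functional-equation} by $M^{-1}$ then extends $\f{\calX_{n_0}}{s}$, $\f{\calY_{n_0}}{s}$, $\f{\calZ_{n_0}}{s}$ meromorphically to $\Re s>1$, with possible poles only at the $\gamma+\chi_\ell$ and of order at most one, as claimed. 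The only genuine work is the bookkeeping in the middle step: keeping track of the three initial blocks and the handful of stray boundary terms (in $y(n_0)$, $x(n_0)$ and $x(n_0-1)$) so that they assemble precisely into $\calI_{\calJ_{n_0}}$, $\calI_{\calK_{n_0}}$, $\calI_{\calL_{n_0}}$ as written; the continuation step is routine once Lemma~\ref{lemma:shifted-Dirichlet} and the factorisation of the characteristic polynomial of $\widetilde C$ are in hand.
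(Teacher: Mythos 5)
Your proposal is correct and follows essentially the same route as the paper's proof: derive each row of \eqref{eq:pascal:functional-equation} by splitting the tail of the Dirichlet series by parity, substituting the recurrences \eqref{eq:rec-pascal-rhombus:main}, and packaging the index shifts via Lemma~\ref{lemma:shifted-Dirichlet} (with the same boundary terms in $y(n_0)$, $x(n_0)$, $x(n_0-1)$), then invert $M$ using that $\det M = 2^{-3s}\bigl(2^{2s}-3\cdot2^s-2\bigr)\bigl(2^s+2\bigr)$ has only simple zeros, of which only those with $2^s=(3+\sqrt{17})/2$ lie in $\Re s>1$. Your reformulation of the determinant step through the eigenvalues of $\widetilde C$ is equivalent to the paper's direct computation, so there is nothing substantive to add.
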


\begin{proof}
  We split the proof into several steps.

  \proofparagraph{Functional Equation}
  \ifdetails
  From \eqref{eq:rec-x} we obtain
  \begin{equation*}
    \f{\calX_{n_0}}{s} = \sum_{n_0\leq n<2n_0} \frac{x(n)}{n^s}
    + \sum_{n\geq n_0} \frac{x(n)}{(2n)^s}
    + \sum_{n\geq n_0} \frac{z(n)}{(2n)^s}
    + \sum_{n\geq n_0} \frac{y(n+1)}{(2n+1)^s}
  \end{equation*}
  The second and third summands become $2^{-s} \f{\calX_{n_0}}{s}$ and $2^{-s} \f{\calZ_{n_0}}{s}$,
  respectively, and we are left to rewrite the fourth summand. By
  using Lemma~\ref{lemma:shifted-Dirichlet} with $\beta=-1/2$ we
  get
  \begin{align*}
    \sum_{n\geq n_0} \frac{y(n+1)}{(2n+1)^s}
    &= 2^{-s} \sum_{n\geq n_0} \frac{y(n)}{(n-\frac12)^s}
    - \frac{y(n_0)}{(2n_0-1)^s} \\
    &= 2^{-s} \f{\calY_{n_0}}{s}
    + 2^{-s} \f{\Sigma}{s, -\tfrac12, \calY_{n_0}} - \frac{y(n_0)}{(2n_0-1)^s}.
  \end{align*}
  The first row of \eqref{eq:pascal:functional-equation} now follows.

  Similarly, from~\eqref{eq:rec-y}\else From~\eqref{eq:rec-y}\fi{} we obtain
  \begin{equation}\label{eq:func:Ys}
    \begin{split}
    \f{\calY_{n_0}}{s} &= \sum_{n_0\leq n<2n_0} \frac{y(n)}{n^s}
    + \sum_{n\geq n_0} \frac{x(n-1)}{(2n)^s}
    + \sum_{n\geq n_0} \frac{z(n)}{(2n)^s} \\
    &\phantom{=}\hphantom{0}
    + \sum_{n\geq n_0} \frac{x(n+1)}{(2n+1)^s}
    + \sum_{n\geq n_0} \frac{z(n)}{(2n+1)^s} \\
    &= \sum_{n_0\leq n<2n_0} \frac{y(n)}{n^s}
    + 2^{-s} \sum_{n\geq n_0} \frac{x(n)}{(n+1)^s} + \frac{x(n_0-1)}{(2n_0)^s}
    + 2^{-s} \sum_{n\geq n_0} \frac{z(n)}{n^s} \\
    &\phantom{=}\hphantom{0}
    + 2^{-s} \sum_{n\geq n_0} \frac{x(n)}{(n-\frac12)^s} - \frac{x(n_0)}{(2n_0-1)^s}
    + 2^{-s} \sum_{n\geq n_0} \frac{z(n)}{(n+\frac12)^s}\\
    &= \sum_{n_0\leq n<2n_0} \frac{y(n)}{n^s}
    + 2^{-s} (\f{\calX_{n_0}}{s} + \Sigma(s, 1, \calX_{n_0})) + \frac{x(n_0-1)}{(2n_0)^s}
    + 2^{-s} \calZ_{n_0}(s) \\
    &\phantom{=}\hphantom{0}
    + 2^{-s} \bigl(\f{\calX_{n_0}}{s} + \Sigma(s, -\tfrac{1}{2}, \calX_{n_0})\bigr) - \frac{x(n_0)}{(2n_0-1)^s} \\
    &\phantom{=}\hphantom{0}
    + 2^{-s} \bigl(\f{\calZ_{n_0}}{s} + \Sigma(s, \tfrac{1}{2}, \calZ_{n_0})\bigr).
    \end{split}
  \end{equation}
  The second row of \eqref{eq:pascal:functional-equation}
  follows.
  \ifdetails

  Similarly, \eqref{eq:rec-z} yields
  \begin{align*}
    \f{\calZ_{n_0}}{s} &= \sum_{n_0\leq n<2n_0} \frac{z(n)}{n^s}
    + 2 \sum_{n\geq n_0} \frac{x(n)}{(2n)^s}
    + 2 \sum_{n\geq n_0} \frac{y(n+1)}{(2n+1)^s} \\
    &= \sum_{n_0\leq n<2n_0} \frac{z(n)}{n^s}
    + 2^{1-s} \sum_{n\geq n_0} \frac{x(n)}{n^s}
    + 2^{1-s} \sum_{n\geq n_0} \frac{y(n)}{(n-\tfrac12)^s} - \frac{2 y(n_0)}{(2n_0-1)^s},
  \end{align*}
  and the third row of \eqref{eq:pascal:functional-equation} follows.
  \else
  Similarly,~\eqref{eq:rec-x} and~\eqref{eq:rec-z} yield the first and third
  rows of~\eqref{eq:pascal:functional-equation}, respectively.
  \fi

  \proofparagraph{Determinant and Zeros}
  The determinant of $M$ is
  \begin{equation*}
    \f{\Delta}{s} = \det M
    = 2^{-3s} \bigl(2^{2s} - 3\cdot 2^s - 2\bigr) \bigl(2^s + 2\bigr).
  \end{equation*}
  It is an entire function.

  All zeros of $\Delta$ are simple zeros.
  In particular, solving $\f{\Delta}{s} = 0$ gives $2^s = 3/2 \pm \sqrt{17}/2$ (the two zeros of $U^2-3U-U$) and $2^s = -2$.
  A solution $\f{\Delta}{s_0} = 0$
  implies that $s_0 + 2\pi i \ell/\log 2$ with $\ell\in\Z$ satisfies
  the same equation as well.

  Moreover, set $\gamma=\log_2 \bigl(3+\sqrt{17}\,\bigr) - 1 = 1.8325063835804\dots$.
  Then the only zeros with $\Re s > \gamma_0=1$ are at
  $\gamma + \chi_\ell$ with $\chi_\ell = 2\pi i \ell / \log 2$ for $\ell\in\Z$.

  It is no surprise that the $\gamma$ of this lemma and the $\gamma$
  in the proof of Corollary~\ref{corollary:pascal-rhombus:main} which comes from the
  $2$-linear representation of Section~\ref{sec:recurrences} coincide.

  \proofparagraph{Meromorphic Continuation}
  Let
  $\calD_{n_0}\in\set{\calX_{n_0},\calY_{n_0},\calZ_{n_0}}$.
  The Dirichlet series~$\f{\calD_{n_0}}{s}$ is
  analytic for $\Re s > 2 = \log_2 \rho + 1$ with $\rho=2$ being the
  joint spectral radius by Theorem~\ref{theorem:Dirichlet-series}.
  We use the
  functional equation~\eqref{eq:pascal:functional-equation} which
  provides the continuation, as we write $\f{\calD_{n_0}}{s}$ in terms of
  $\f{\calJ_{n_0}}{s}$, $\f{\calK_{n_0}}{s}$ and $\f{\calL_{n_0}}{s}$.
  By Lemma~\ref{lemma:shifted-Dirichlet},
  these three functions are analytic for $\Re s > 1$.

  The zeros (all are simple zeros)
  of the denominator~$\f{\Delta}{s}$ are the only possibilities
  for the poles of $\f{\calD_{n_0}}{s}$ for $\Re s > 1$.
\end{proof}

\subsection{Fourier Coefficients}
\label{sec:fourier}

We are now ready to prove the rest of Corollary~\ref{corollary:pascal-rhombus:main}.

\begin{proof}[Proof of Corollary~\ref{corollary:pascal-rhombus:main}]
  We verify that we can apply Theorem~\ref{theorem:use-Mellin--Perron}.

  The steps of this proof in Section~\ref{sec:asymptotics} provided us
  already with an asymptotic
  expansion~\eqref{eq:pascal-rhombus:main-asy}. Lemma~\ref{lemma:meromorphic}
  gives us the meromorphic function for $\Re s>\gamma_0=1$ which comes from
  the Dirichlet series
  $\bigl(\f{\calX_{n_0}}{s}, \f{\calY_{n_0}}{s}, \f{\calZ_{n_0}}{s}\bigr)^\top$\!.
  It can only have poles (all simple) at $s=\gamma + \chi_\ell$ for $\ell\in\Z$ and
  satisfies the assumptions in
  Theorem~\ref{theorem:use-Mellin--Perron} by
  Theorem~\ref{theorem:Dirichlet-series} and
  Remark~\ref{remark:Dirichlet-series:bound}.

  Therefore a computation of the Fourier coefficients via computing
  residues (see \eqref{eq:Fourier-coefficient:simple-as-residue}) is possible by
  Theorem~\ref{theorem:use-Mellin--Perron}, and this residue may be
  computed from~\eqref{eq:pascal:functional-equation} via Cramer's
  rule.
\end{proof}

We refer to Part~\ref{part:numerical} for details on the actual
computation of the Fourier coefficients.

%%% Local Variables:
%%% mode: latex
%%% TeX-master: "regular-sequences"
%%% End:

\part{Proofs}\label{part:proofs}

Before reading this part on the collected proofs, it is recommended to
recall the definitions and notations of
Section~\ref{sec:definitions-notations}. Some additional notations which are only used in the proofs are
introduced in the following section.

\section{Additional Notations}\label{additional-notation}
We use Iverson's convention $\iverson{\mathit{expr}}=1$ if
$\mathit{expr}$ is true and $0$ otherwise, which was popularised
by Graham, Knuth and Patashnik~\cite{Graham-Knuth-Patashnik:1994}.
We use the notation
$z^{\underline{\ell}}\coloneqq z(z-1)\dotsm (z-\ell+1)$ for falling factorials.
We use $\binom{n}{k_1, \dotsc, k_r}$ for multinomial coefficients. We sometimes
write a binomial coefficient $\binom{n}{a}$ as $\bibinom{n}{a}{b}$ with $a+b=n$ when we want
to emphasise the symmetry and analogy to a multinomial coefficient.

\section{Decomposition into Periodic Fluctuations: Proof of Theorem~\ref{theorem:contribution-of-eigenspace}}
\label{section:proof-contribution-of-eigenspace}

We first give an overview over the proof.

\begin{proof}[Overview of the Proof of Theorem~\ref{theorem:contribution-of-eigenspace}]
The first step will be
to express the summatory function $F$ in terms of the matrices $C$, $B_r$ and
$A_r$. Essentially, this corresponds to the fact that the summatory function of
a $q$-regular function is again $q$-regular. This expression of $F$ will
consist of two terms: the first is a sum over $0\le j<\log_q N$
involving a $j$th power of $C$ and matrices $B_r$ and $A_r$ depending on the
$\ell-j$ most significant digits of $N$. The second term is again a sum, but
does not depend on the digits of $N$; it only encodes the fact that $f(0)=A_0
f(0)$ may not hold. The fact that we are interested in $wF(N)$ for the
generalised left eigenvector~$w$ corresponding to the eigenvalue~$\lambda$
allows to express $wC^j$ in terms of $w\lambda^j$ (plus some other terms if $w$
is not an eigenvector).

The second term can be disposed of by elementary observations using a geometric
series. We reverse the order of summation in the first summand and extend it to
an infinite sum. The infinite sum is written in terms of periodic fluctuations;
the difference between the infinite sum and the finite sum is absorbed by the
error term.
In order not to have to deal with ambiguities due to non-unique
$q$-ary expansions of real numbers, we define the fluctuations on an infinite
product space instead of the unit interval.
\end{proof}

\subsection{Upper Bound for Eigenvalues of~\texorpdfstring{$C$}{C}}
We start with an upper bound for the eigenvalues of $C$ in terms of the joint
spectral radius.
\begin{lemma}\label{lemma:eigenvalue-spectral-radius-bound}
  Let $\lambda\in\sigma(C)$. Then $\abs{\lambda}\le q\rho$.
\end{lemma}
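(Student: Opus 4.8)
The plan is to compare powers of $C$ with products of the matrices $A_r$ of the same length. Fix $\lambda\in\sigma(C)$ and choose a right eigenvector $v\neq 0$ with $Cv=\lambda v$. Then $C^{\ell}v=\lambda^{\ell}v$ for every $\ell\ge 1$, and applying the induced matrix norm gives $\abs{\lambda}^{\ell}\norm{v}=\norm{C^{\ell}v}\le\norm{C^{\ell}}\norm{v}$, hence $\abs{\lambda}^{\ell}\le\norm{C^{\ell}}$.

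Next I would expand $C^{\ell}=\bigl(\sum_{0\le r<q}A_r\bigr)^{\ell}$ by multilinearity as the sum of the $q^{\ell}$ products $A_{r_1}\dotsm A_{r_{\ell}}$ over all tuples $(r_1,\dots,r_{\ell})\in\set{0,\dots,q-1}^{\ell}$. By the triangle inequality and the definition of $\rho_{\ell}$ recalled in Section~\ref{sec:motivation-definitions},
\[
  \norm{C^{\ell}}\le\sum_{(r_1,\dots,r_{\ell})}\norm{A_{r_1}\dotsm A_{r_{\ell}}}\le q^{\ell}\rho_{\ell}^{\ell},
\]
so that $\abs{\lambda}\le q\rho_{\ell}$ for every $\ell\ge 1$. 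Passing to the infimum over $\ell$ (equivalently, to the limit $\ell\to\infty$) and using $\inf_{\ell>0}\rho_{\ell}=\lim_{\ell\to\infty}\rho_{\ell}=\rho$, which is established in Section~\ref{sec:motivation-definitions} via Fekete's subadditivity lemma, yields $\abs{\lambda}\le q\rho$.

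There is essentially no serious obstacle here; the only point worth flagging is that the naive estimate $\abs{\lambda}\le\norm{C}\le q\rho_1$ obtained from $\ell=1$ is too weak, so one genuinely has to work with arbitrarily high powers of $C$ in order to replace $\rho_1$ by the joint spectral radius $\rho$ on the right-hand side.
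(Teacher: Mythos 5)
Your proof is correct and follows essentially the same route as the paper: bound $\abs{\lambda}^{\ell}$ by $\norm{C^{\ell}}$, expand $C^{\ell}$ into the $q^{\ell}$ products $A_{r_1}\dotsm A_{r_\ell}$, and take $\ell$th roots and the limit $\ell\to\infty$. The only (cosmetic) difference is that you work directly with $\rho_\ell$ and the identity $\lim_\ell \rho_\ell=\rho$ from Fekete's lemma, whereas the paper bounds the products via~\eqref{eq:bound-prod} to get $\abs{\lambda}\le qR$ and then observes that this holds for every $R>\rho$.
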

\begin{proof}
  For $\ell\to\infty$, we have
  \begin{equation*}
    \abs{\lambda}^\ell
    \le \max \setm{\abs{\lambda}}{\lambda \in \sigma(C)}^\ell
    = \Oh[\big]{\norm{C^\ell}}
  \end{equation*}
  and
  \begin{equation*}
    \norm{C^\ell}
    \le \sum_{0\le r_1, \ldots, r_\ell<q}
    \norm{A_{r_1}\dotsm A_{r_\ell}}
    = \Oh{q^\ell R^\ell}
  \end{equation*}
  by \eqref{eq:bound-prod}. Taking $\ell$th roots and the limit $\ell\to\infty$
  yields $\abs{\lambda}\le qR$. This last inequality does not depend on
  our particular (cf.\@ Section~\ref{sec:definitions-notations}) choice
  of $R>\rho$, so the inequality is valid for all $R>\rho$, and
  we get the result.
\end{proof}

\subsection{Explicit Expression for the Summatory Function}
In this section, we give an explicit formula for $F(N)=\sum_{0\le n<N} f(n)$ in
terms of the matrices $A_r$, $B_r$ and $C$.

\begin{lemma}\label{lemma:explicit-summatory}
  Let $N$ be an integer with $q$-ary expansion
  $r_{\ell-1}\ldots r_0$. Then
  \begin{equation*}
    F(N)=\sum_{0\le j<\ell} C^j B_{r_j} A_{r_{j+1}}\dotsm
    A_{r_{\ell-1}} + \sum_{0\le j<\ell} C^j (I-A_0).
  \end{equation*}
\end{lemma}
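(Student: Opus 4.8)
The plan is to prove the formula by induction on the length $\ell$ of the $q$-ary expansion of $N$, using the splitting of the sum $\sum_{0\le n<N} f(n)$ according to the least significant digit. The key structural fact is that writing $n = qm+r$ with $0\le r<q$ turns $f(n) = A_r f(m)$ by~\eqref{eq:regular-matrix-sequence}, so a block of $q$ consecutive values of $f$ starting at a multiple of $q$ sums (via $\sum_{0\le r<q} A_r f(m) = C f(m)$) to $C f(m)$, while an incomplete trailing block contributes a partial sum $B_{r_0} f(m)$ with $B_{r_0} = \sum_{0\le r'<r_0} A_{r'}$.

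\textbf{Base case.} For $\ell = 0$ we have $N = 0$ (empty expansion), both sums are empty, and $F(0) = \sum_{0\le n<0} f(n) = 0$, so the identity holds. (One may also check $\ell=1$ directly as a sanity test: $N = r_0$, and $F(r_0) = \sum_{0\le n<r_0} f(n) = f(0) + \sum_{1\le n<r_0} f(n) = I + \sum_{1\le n<r_0} A_n \cdot I = I + (B_{r_0} - A_0) = B_{r_0} + (I - A_0)$, matching the claimed formula with $j=0$ only.)

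\textbf{Inductive step.} Assume the formula for all integers with $q$-ary expansions of length at most $\ell-1$, and let $N$ have expansion $r_{\ell-1}\ldots r_0$ of length $\ell$. Write $N = qM + r_0$ where $M$ has expansion $r_{\ell-1}\ldots r_1$ of length $\ell-1$. I would split
\begin{equation*}
  F(N) = \sum_{0\le n<qM} f(n) + \sum_{qM\le n<qM+r_0} f(n).
\end{equation*}
For the first sum, group the indices as $n = qm+r$ with $0\le m<M$ and $0\le r<q$; since $qm+r \ge q \ge 1 > 0$ whenever $m\ge 1$, and the only possibly problematic term $n=0$ (i.e.\ $m=r=0$) still satisfies $f(0)=I$ — but note that for $m=0,r=0$ the recurrence $f(0)=A_0 f(0)$ need not hold, which is exactly what the second correction term records. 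Concretely, $\sum_{0\le n<qM} f(n) = \sum_{0\le m<M} \sum_{0\le r<q} f(qm+r)$; for $m\ge 1$ each inner sum is $\sum_r A_r f(m) = C f(m)$, while for $m=0$ the inner sum is $f(0) + \sum_{1\le r<q} A_r f(0) = I + (C - A_0) = C + (I - A_0) = C f(0) + (I - A_0)$. Hence
\begin{equation*}
  \sum_{0\le n<qM} f(n) = \sum_{0\le m<M} C f(m) + (I - A_0) = C\,F(M) + (I-A_0).
\end{equation*}
For the second (trailing) sum, $\sum_{qM\le n<qM+r_0} f(n) = \sum_{0\le r<r_0} f(qM+r) = \sum_{0\le r<r_0} A_r f(M) = B_{r_0} f(M)$, and by~\eqref{eq:f-as-product}, $f(M) = A_{r_1}\dotsm A_{r_{\ell-1}}$, so this equals $B_{r_0} A_{r_1}\dotsm A_{r_{\ell-1}}$, which is precisely the $j=0$ term of the first sum in the claimed formula. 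Now apply the induction hypothesis to $F(M)$: $F(M) = \sum_{0\le j<\ell-1} C^j B_{r_{j+1}} A_{r_{j+2}}\dotsm A_{r_{\ell-1}} + \sum_{0\le j<\ell-1} C^j(I-A_0)$. Multiplying by $C$ shifts the index $j\mapsto j+1$, turning the first piece into $\sum_{1\le j<\ell} C^j B_{r_j} A_{r_{j+1}}\dotsm A_{r_{\ell-1}}$ and the second into $\sum_{1\le j<\ell} C^j(I-A_0)$; adding the $j=0$ terms (the trailing sum $B_{r_0}A_{r_1}\dotsm A_{r_{\ell-1}}$ and the leftover $(I-A_0)$ from the first sum's correction) completes the expression to the full ranges $0\le j<\ell$, as required.

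\textbf{Main obstacle.} There is no serious obstacle here; the only point demanding care is the bookkeeping around $n=0$ — the recurrence~\eqref{eq:regular-matrix-sequence} is explicitly stated only for $qn+r\neq 0$, and the second summand $\sum_{0\le j<\ell} C^j(I-A_0)$ exists precisely to absorb the discrepancy $f(0) - A_0 f(0) = I - A_0$ propagated through the higher-order blocks. One should also double-check the edge cases where some $r_j = 0$ (so $B_{r_j}$ may be the zero matrix — this causes no problem, the term just vanishes) and confirm that when $N$ is itself a power of $q$ the leading digit $r_{\ell-1}=1$ behaves correctly. An alternative, non-inductive route is to interchange the order of summation directly in $\sum_{0\le n<N} f(n)$ by classifying each $n<N$ according to the longest common prefix its $q$-ary expansion shares with that of $N$; this reproduces the same two sums and may be cleaner to present, but the induction above is the most transparent.
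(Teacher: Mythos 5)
Your proof is correct and takes essentially the same approach as the paper: the paper isolates the one-digit recursion $F(qN+r)=CF(N)+B_r f(N)+(I-A_0)\iverson{qN+r>0}$ and iterates it, which is precisely the identity your inductive step establishes before applying the induction hypothesis. One small remark: your inductive step needs $M\ge 1$ (i.e.\ $\ell\ge 2$) for the $m=0$ block to be present, so the $\ell=1$ computation you label a ``sanity test'' is actually a required second base case — but since you verified it fully, the argument is complete.
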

\begin{proof}
  We claim that
  \begin{equation}\label{eq:sum-recursion}
    F(qN+r)=C F(N) + B_r f(N) + (I-A_0)\iverson{qN+r > 0}
  \end{equation}
  holds for non-negative integers $N$ and $r$  with $0\le r<q$.

  We now prove \eqref{eq:sum-recursion}: Using
  \eqref{eq:regular-matrix-sequence} and $f(0)=I$ yields
  \begin{align*}
    F(qN+r)
    &= f(0)\, \iverson{qN+r > 0}
    + \sum_{\substack{0<qn+r'<qN+r\\0\le n\\ 0\le r'<q}} f(qn+r')\\
    &= f(0)\, \iverson{qN+r > 0}
    + \sum_{\substack{0<qn+r'<qN+r\\0\le n\\ 0\le r'<q}} A_{r'}f(n)\\
    &= \bigl(f(0)-A_{0}f(0)\bigr) \iverson{qN+r > 0}
    + \sum_{\substack{0\le qn+r'<qN+r\\0\le n\\ 0\le r'<q}} A_{r'}f(n)\\
    &= (I-A_0) \iverson{qN+r > 0}
    + \sum_{0\le n<N}\sum_{0\le r'<q} A_{r'}f(n)
    + \sum_{0\le r'<r} A_{r'}f(N)\\
    &= (I-A_0) \iverson{qN+r > 0}
    + CF(N)+B_{r}f(N).
  \end{align*}
  This concludes the proof of \eqref{eq:sum-recursion}.

  Iteration of \eqref{eq:sum-recursion} and using~\eqref{eq:f-as-product} yield the assertion of the lemma;
  cf.~\cite[Lemma~3.6]{Heuberger-Kropf-Prodinger:2015:output}.
\end{proof}

\subsection{Proof of Theorem~\ref{theorem:contribution-of-eigenspace}}

\begin{proof}[Proof of Theorem~\ref{theorem:contribution-of-eigenspace}]
  For readability, this proof is split into several
  steps.

  \proofparagraph{Setting}
  Before starting the actual proof, we introduce the setting
  using an infinite product space
  which will be used
  to define the fluctuations $\Phi_k$. We also introduce the maps
  linking the infinite product space to the unit interval.

  We will first introduce
  functions $\Psi_k$ defined on the infinite product space
  \begin{equation*}
    \Omega\coloneqq
    \setm[\big]{\bfx=(x_0, x_1, \ldots)}{
      x_j\in\set{0, \ldots,q-1} \text{ for $j\ge 0$}, x_0\neq 0}.
  \end{equation*}
  We equip it with the metric such that two elements~$\bfx\neq\bfx'$ with
  a common prefix of length~$j$ and $x_j\neq x'_j$ have distance~$q^{-j}$.
  We consider the map $\val\colon \Omega\to [0, 1]$ with
  \begin{equation*}
    \val(\bfx) \coloneqq \log_q\sum_{j\ge 0}x_jq^{-j};
  \end{equation*}
  see Figure~\ref{fig:commutative-diagram}. By using the assumption that the
  zeroth component of elements of $\Omega$ is assumed to be non-zero, we easily check that $\val$ is
  Lipschitz-continuous; i.e.,
  \begin{equation}\label{eq:lval:Lipschitz}
    \abs[\big]{\val(\bfx)-\val(\bfx')} = \Oh{q^{-j}}
  \end{equation}
  for $\bfx\neq\bfx'$ with a common prefix of length~$j$.

  \begin{figure}[htbp]
    \centering
    \begin{tikzcd}
      \Omega \arrow{rr}{\Psi}\arrow[shift left=0.5ex]{rd}{\val}&&\C^d\\
      &{[0, 1]}\arrow{ru}{\Phi}\arrow[dotted,shift left=0.5ex]{lu}{\repr}
    \end{tikzcd}
    \caption{Maps in the proof of Theorem~\ref{theorem:contribution-of-eigenspace}.}
    \label{fig:commutative-diagram}
  \end{figure}
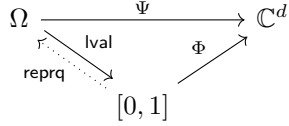
  For $y\in[0, 1)$, let $\repr(y)$ be the unique $\bfx\in\Omega$ with
  $\val(\bfx)=y$ such that $\bfx$ does not end on infinitely many digits~$q-1$, i.e.,
  $\repr(y)$ represents a $q$-ary expansion of $q^y$. This means that
  $\val\circ\repr$ is the identity on $[0, 1)$.

  From the definition of the metric on $\Omega$,
  recall that a function
  $\Psi\colon \Omega\to\C^d$ is continuous if and only if for each
  $\varepsilon>0$, there is a $j$ such that
  $\norm{\Psi(\bfx')-\Psi(\bfx)}<\varepsilon$ holds for all $\bfx$ and
  $\bfx'$ that have a common prefix of length $j$.
  Further recall from the universal property of quotients that if such a continuous function $\Psi$ satisfies
  $\Psi(\bfx)=\Psi(\bfx')$ whenever $\val(\bfx)=\val(\bfx')$, then there is a
  unique continuous function $\Phi\colon [0, 1]\to\C^d$ such that $\Phi\circ\val=\Psi$.
  This will be used in the ``Descent''-step of the proof.

  \proofparagraph{Notation}
  We will deal with the two sums in Lemma~\ref{lemma:explicit-summatory}
  separately. We will first introduce notations corresponding to this split
  and to the eigenvector structure.

  Let $N$ have the $q$-ary expansion
  $r_{\ell-1}\ldots r_0$ and set
  \begin{equation*}
    F_1(N) \coloneqq \sum_{0\le j<\ell} C^j B_{r_j} A_{r_{j+1}}\ldots
    A_{r_{\ell-1}}, \qquad F_2(N) \coloneqq \sum_{0\le j<\ell} C^j(I-A_0)
  \end{equation*}
  so that $F(N)=F_1(N)+F_2(N)$ by Lemma~\ref{lemma:explicit-summatory}.

  We consider the Jordan chain $w=v_{0}'$, \ldots, $v_{m-1}'$ generated by $w$,
  i.e., $v_k'=w(C-\lambda I)^k$ for $0\le k<m$ and $v_{m-1}'$ is a left eigenvector
  of $C$. Thus we have $wC^j=\sum_{0\le
    k<m}\binom{j}{k}\lambda^{j-k}v_k'$  for all $j\ge 0$.
  If $\lambda\neq 0$, choose vectors $v_0$, \ldots, $v_{m-1} \in \C^d$ such that
  \begin{equation}\label{eq:C-sum-eigenvectors}
    wC^j=\lambda^j\sum_{0\le k<m}j^kv_k
  \end{equation}
  holds for all $j\ge 0$. These vectors are
  suitable linear combinations of the vectors $v_0'$, \ldots,
  $v_{m-1}'$. We note that we have
  \begin{equation}\label{eq:v_m-1-expression}
    v_{m-1}=\frac1{\lambda^{m-1}(m-1)!}v_{m-1}'.
  \end{equation}

  \proofparagraph{Second Summand}
  We will now rewrite $wF_2(N)$ by evaluating the geometric sum
  and rewriting it in terms of a fluctuation.

  We claim that
  \begin{multline}\label{eq:constant-term}
    wF_2(N) = wK + N^{\log_q \lambda}\sum_{0\le k<m} (\log_q
    N)^k\Phi^{(2)}_{k}(\fractional{\log_q N}) \\
    + (\log_q N)^mw\vartheta_m
    + \iverson{\lambda = 0} \Oh{N^{\log_q R}}
  \end{multline}
  for suitable continuously differentiable functions $\Phi^{(2)}_{ k}$ on $\R$,
  $0\le k<m$. If $R=0$, then $\Oh{N^{\log_q R}}$ shall mean that the error
  vanishes for almost all $N$.

  Consider first the case that $\lambda \neq 1$.
  Because of $wC^j=w{C'}^j$ and $wT^{-1}DT=w$ (see Section~\ref{section:constants-for-theorem}) we have
  \begin{align*}
    wF_2(N)&=\sum_{0\le j<\ell}w{C'}^j \bigl(I-A_0\bigr)\\
           &=w\bigl(I-{C'}^\ell\bigr)\bigl(I-C'\bigr)^{-1}\bigl(I-A_0\bigr)
           = wK - wC^\ell \bigl(I-C'\bigr)^{-1}\bigl(I-A_0\bigr).
  \end{align*}
  If $\lambda=0$, then $wC^\ell=0$ for almost all $\ell$. We may set
  $\Phi^{(2)}_k=0$ for $0\le k<m$ and \eqref{eq:constant-term} is shown.
  Otherwise, as
  we have $\ell-1=\floor{\log_q N}=\log_q N - \fractional{\log_q N}$ and
  by~\eqref{eq:C-sum-eigenvectors}, we can
  rewrite $wC^\ell$ as
  \begin{equation*}
    wC^\ell=\lambda^{\ell}\sum_{0\le k'<m}\ell^{k'}
    v_{k'}=\lambda^{1+\log_q N-\fractional{\log_q N}}\sum_{0\le k'<m}(\log_q
    N+1-\fractional{\log_q N})^{k'} v_{k'}.
  \end{equation*}
  Let
  \begin{equation*}
    G_2(L, \nu)\coloneqq-\lambda^{1-\nu}\sum_{0\le k'<m}(L+1-\nu)^{k'} v_{k'}(I-C')^{-1}(I-A_0)
  \end{equation*}
  for reals $L$ and $\nu$,
  i.e.,
  \begin{equation*}
    wF_2(N)=wK + \lambda^{\log_q N} G_2(\log_q N, \fractional{\log_q N}).
  \end{equation*}
  By the binomial theorem, we have
  \begin{equation*}
    G_2(L, \nu)=-\lambda^{1-\nu}\sum_{0\le k<m}L^k\sum_{\substack{0\le r \\ k+r<m}}\bibinom{k+r}{k}{r}(1-\nu)^r v_{k+r}(I-C')^{-1}(I-A_0).
  \end{equation*}
  This leads to a representation
  $G_2(L, \nu)=\sum_{0\le k<m}L^k\Phi^{(2)}_{ k}(\nu)$ for continuously differentiable functions
  \begin{equation*}
    \Phi_k^{(2)}(\nu)=-\lambda^{1-\nu}\sum_{\substack{0\le r <m-k}}\bibinom{k+r}{k}{r}(1-\nu)^r v_{k+r}(I-C')^{-1}(I-A_0)
  \end{equation*}
  for $0\le k<m$. As the functions~$\Phi^{(2)}_{k}$ are continuously differentiable,
  they are Lipschitz
  continuous on compact subsets of $\R$. We note that in the case $k=m-1$, the
  only occurring summand is for $r=0$, which implies that
  \begin{equation}\label{eq:fluctuation-2-m-1}
    \Phi_{m-1}^{(2)}(\nu) = -\lambda^{1-\nu}v_{m-1}(I-C')^{-1}(I-A_0).
  \end{equation}
  Rewriting $\lambda^{\log_q N}$ as
  $N^{\log_q \lambda}$ and recalling that $w\vartheta_m=0$ yields \eqref{eq:constant-term} for $\lambda\neq 1$.

  We now turn to the case $\lambda=1$. We use $wC^j=\sum_{0\le
    k<m}\binom{j}{k}v_k'$ for $j\ge 0$ as above.
  Thus
  \begin{align*}
    wF_2(N) &= \sum_{0\le j<\ell}\sum_{0\le k<m}\binom{j}{k}v'_{k}(I-A_0)\\
    &= \sum_{0\le k<m}v'_k(I-A_0)\sum_{0\le j<\ell}\binom{j}{k}\\
    &= \sum_{0\le k<m}v'_k(I-A_0) \binom{\ell}{k+1},
  \end{align*}
  where the identity \cite[(5.10)]{Graham-Knuth-Patashnik:1994} (``summation on the upper index'')
  has been used in the last step.

  Thus $wF_2(N)$ is a polynomial in $\ell$ of degree $m$. By writing
  $\ell=1+\log_qN-\fractional{\log_q N}$, we can again rewrite this as a
  polynomial in $\log_q N$ whose coefficients depend on $\fractional{\log_q N}$.
  The coefficient of $(\log_q N)^m$ comes from
  $v_{m-1}'(I-A_0)\binom{\ell}{m}$, therefore, this coefficient is
  \begin{equation*}
    \frac1{m!}v_{m-1}'(I-A_0)=\frac1{m!}w(C-I)^{m-1}(I-A_0)=w\vartheta_m.
  \end{equation*}
  The additional factor $T^{-1}(I-D)T$ in $\vartheta_m$ has been introduced in order
  to annihilate generalised eigenvectors to other eigenvalues.  By construction
  of $K$, we have $wK=0$. Thus we have shown \eqref{eq:constant-term} for
  $\lambda=1$, too.

  \proofparagraph{Lifting the Second Summand}
  For later use---at this point, this may seem to be quite artificial---we
  set $\Psi^{(2)}_{k}=\Phi^{(2)}_{k}\circ \val$. As $\Phi^{(2)}_{k}$ is continuously differentiable, it
  is Lipschitz continuous on $[0, 1]$. As $\val$ is also Lipschitz continuous,
  so is $\Psi_k^{(2)}$.

  \proofparagraph{First Summand}
  We now turn to $wF_1(N)$. To explain our plan, assume that $w$ is in fact an
  eigenvector. Then $wF_1(N)=\sum_{0\le j<\ell}\lambda^j
  wB_{r_j}A_{r_{j+1}}\ldots A_{r_{\ell-1}}$. For $\abs{\lambda}\le R$, it will
  be rather easy to see that the result holds. Otherwise,
  we will factor out $\lambda^\ell$ and
  write the sum as $wF_1(N)=\lambda^{\ell} \sum_{0\le
    j<\ell}\lambda^{-(\ell-j)}wB_{r_j}A_{r_{j+1}}\ldots A_{r_{\ell-1}}$.
  We will then reverse the order of summation and extend the sum to an infinite
  sum, which will be represented by periodic fluctuations. The difference
  between the finite and the infinite sums will be absorbed by the error term.
  The periodic fluctuations will be defined on the infinite product space $\Omega$.

  We now return to the general case of a generalised eigenvector $w$ and the
  actual proof.
  If $\lambda=0$, we certainly have $\abs{\lambda}\le R$ and we are in one of
  the first two cases of this theorem. Furthermore, we have
  $wC^j=0$ for $j\ge m$, thus
  \begin{equation*}
    wF_1(N)=\Oh[\bigg]{\sum_{0\le j<m} R^{\ell-j}}
    = \Oh{R^{\ell}}
    = \Oh{N^{\log_q R}}
  \end{equation*}
  by using~\eqref{eq:bound-prod}.
  Together with~\eqref{eq:constant-term}, the result follows.

  From now on, we may assume that $\lambda\neq 0$.
  By using~\eqref{eq:C-sum-eigenvectors}, we have
  \begin{equation}\label{eq:w-F-1-n}
    wF_1(N)=\sum_{0\le j<\ell} \lambda^j\biggl(\sum_{0\le k<m}j^kv_k \biggr)B_{r_j} A_{r_{j+1}}\ldots
    A_{r_{\ell-1}}.
  \end{equation}
  We first consider the case that $\abs{\lambda}<R$ (corresponding to
  Theorem~\ref{theorem:contribution-of-eigenspace}, \itemref{item:small-eigenvalue}). We get
  \begin{align*}
    wF_1(N) &= \Oh[\bigg]{\sum_{0\le j<\ell}\abs{\lambda}^j j^{m-1}
    R^{\ell-j}} \\
    &= \Oh[\bigg]{R^{\ell} \sum_{0\le j<\ell}
    j^{m-1}\Bigl(\frac{\abs{\lambda}}{R}\Bigr)^j}
    =\Oh{R^{\ell}}
    =\Oh{N^{\log_q R}},
  \end{align*}
  where \eqref{eq:bound-prod} was used.
  Together with \eqref{eq:constant-term}, the result follows.

  Next, we consider the case where $\abs{\lambda}=R$
  (Theorem~\ref{theorem:contribution-of-eigenspace}, \itemref{item:R-eigenvalue}).
  In that case, we get
  \begin{equation*}
    wF_1(N)=\Oh[\bigg]{\sum_{0\le j<\ell}\abs{\lambda}^j j^{m-1}
      R^{\ell-j}}
    = \Oh[\bigg]{R^\ell\sum_{0\le j<\ell}j^{m-1}}
    =\Oh{R^\ell \ell^m}.
  \end{equation*}
  Again, the result follows.

  From now on, we may assume that $\abs{\lambda}>R$. We set $Q\coloneqq
  \abs{\lambda}/R$ and note that $1<Q\le q$ by assumption and Lemma~\ref{lemma:eigenvalue-spectral-radius-bound}.
  We claim that there are continuous functions $\Psi^{(1)}_{k}$ on $\Omega$ for $0\le k<m$
  such that
  \begin{equation}\label{eq:first-term}
    wF_1(N) = N^{\log_q \lambda}\sum_{0\le k<m} (\log_q N)^k
    \f[\big]{\Psi^{(1)}_{k}}{\repr(\fractional{\log_q N})}
  \end{equation}
  and such that
  \begin{equation}\label{eq:quasi-Hoelder}
    \norm[\big]{\Psi^{(1)}_{ k}(\bfx)-\Psi^{(1)}_{ k}(\bfx')}=\Oh{j^{m-1} Q^{-j}}
  \end{equation}
  when the first $j$ entries of $\bfx$ and $\bfx'\in\Omega$ coincide.

    Write $N=q^{\ell-1+\fractional{\log_q N}}$ and let $\bfx=\repr(\fractional{\log_q N})$,
    i.e., $\bfx$ is the $q$-ary expansion of $q^{\fractional{\log_q N}}=N/q^{\ell-1}\in[1, q)$
    ending on infinitely many zeros. This means that $x_j=r_{\ell-1-j}$
    for $0\le j<\ell$ and $x_j=0$ for $j\ge \ell$.
    Reversing the order of summation in \eqref{eq:w-F-1-n} yields
    \begin{align*}
      wF_1(N)=\lambda^{\ell-1}\sum_{0\le j<\ell}\lambda^{-j}\biggl(\sum_{0\le k<m}(\ell-1-j)^kv_k \biggr)B_{x_j} A_{x_{j-1}}\ldots
    A_{x_0}.
    \end{align*}
    For $j\ge \ell$, we have $x_j=0$ and therefore $B_{x_j}=0$. Thus we may
    extend the sum to run over all $j\ge 0$, i.e.,
    \begin{equation*}
      wF_1(N)=\lambda^{\ell-1}\sum_{j\ge 0}\lambda^{-j}\biggl(\sum_{0\le k<m}(\ell-1-j)^kv_k \biggr)B_{x_j} A_{x_{j-1}}\ldots
    A_{x_0}.
    \end{equation*}
    We insert $\ell-1=\log_q N - \fractional{\log_q N}$ and obtain
    \begin{equation*}
      wF_1(N)=\lambda^{\log_q
        N}
      \f[\big]{G_1}{\log_q N, \repr(\fractional{\log_q N})}
    \end{equation*}
    where
    \begin{align*}
      G_1(L, \bfx)&=\lambda^{-\val(\bfx)}\sum_{j\ge 0}\lambda^{-j}\biggl(\sum_{0\le
        k<m}(L-\val(\bfx) - j)^kv_k \biggr)B_{x_j} A_{x_{j-1}}\ldots
    A_{x_0}\\
&=\lambda^{-\val(\bfx)}\sum_{j\ge 0}\lambda^{-j}\biggl(\sum_{\substack{0\le a,\ 0\le r,\
  0\le s\\a+r+s<m}}L^a
  (-j)^r \trinom{a+r+s}{a}{r}{s}\\&\hspace*{11.225em}\times\bigl(-\val(\bfx)\bigr)^{s}v_{a+r+s} \biggr)B_{x_j} A_{x_{j-1}}\ldots
    A_{x_0}
    \end{align*}
    for $L\in\R$ and $\bfx\in\Omega$. Note that in contrast to $G_2$, the second argument of $G_1$ is an element of
    $\Omega$ instead of $\R$.
    Collecting $G_1(L, \bfx)$ by powers of $L$, we get
    \begin{equation*}
      G_1(L, \bfx) = \sum_{0\le k<m} L^k \Psi^{(1)}_{ k}(\bfx)
    \end{equation*}
    where
    \begin{equation*}
      \Psi^{(1)}_k(\bfx) = \sum_{j\ge 0}\lambda^{-j}\sum_{0\le r<m-k}j^r
      \f[\big]{\psi_{kr}}{\val(\bfx)} B_{x_j}A_{x_{j-1}}\ldots A_{x_0}
    \end{equation*}
    for functions
    \begin{equation*}
      \psi_{kr}(\nu)=\lambda^{-\nu}
  (-1)^r\sum_{0\le s<m-k-r} \trinom{k+r+s}{k}{r}{s}(-\nu)^{s}v_{k+r+s}
    \end{equation*}
    which are continuously differentiable and therefore Lipschitz
    continuous on the unit interval.
    This shows \eqref{eq:first-term}.
    For $k=m-1$, only summands with $r=s=0$ occur, thus
    \begin{equation}\label{eq:fluctuation-1-m-1}
      \Psi_{m-1}^{(1)}(\bfx)=\sum_{j\ge 0}\lambda^{-j-\val(\bfx)}v_{m-1}B_{x_j}A_{x_{j-1}}\ldots A_{x_0}.
    \end{equation}

    Note that $\Psi^{(1)}_{ k}(\bfx)$ is majorised by
    \begin{equation*}
      \Oh[\bigg]{\sum_{j\ge 0} \abs{\lambda}^{-j} j^{m-1} R^{j}}
    \end{equation*}
    according to \eqref{eq:bound-prod}.
    We now prove \eqref{eq:quasi-Hoelder}. So let $\bfx$ and $\bfx'$ have a
    common prefix of length $i$. Consider the summand of $\Psi^{(1)}_k(\bfx)$ with index $j$.
    First consider the case that $j<i$. For all $r$, we have
    \begin{equation*}
      \norm[\big]{\f[\big]{\psi_{kr}}{\val(\bfx)}-\f[\big]{\psi_{kr}}{\val(\bfx')}}
      = \Oh{q^{-i}}
    \end{equation*}
    due to Lipschitz continuity of $\psi_{kr}\circ \val$.
    As the matrix product~$A_{x_{j-1}} \ldots A_{x_0}$
    is the same for $\bfx$ and $\bfx'$, the
    difference with respect to this summand is bounded by
    \begin{equation*}
      \Oh[\big]{\abs{\lambda}^{-j}j^{m-1}q^{-i}R^{j}}
      = \Oh{q^{-i}j^{m-1}Q^{-j}}.
    \end{equation*}
    Thus the total
    contribution of all summands with $j<i$ is $\Oh{q^{-i}}$.
    Any summand with $j \ge i$ is bounded by
    $\Oh[\big]{\abs{\lambda}^{-j}j^{m-1}R^{j}} = \Oh{j^{m-1}Q^{-j}}$,
    which leads to a total contribution of $\Oh{i^{m-1}Q^{-i}}$.
    Adding the two bounds leads to a bound of $\Oh{i^{m-1}Q^{-i}}$, as
    requested.

    \proofparagraph{Descent}
    As we have defined the periodic fluctuations $\Psi^{(1)}_k$ on the infinite
    product space $\Omega$, we now need to prove that the periodic fluctuation
    descends to a periodic fluctuation on the unit interval. To do so,
    we will verify that the
    values of the fluctuation coincide whenever
    sequences in the infinite product space
    correspond to the same real number in the interval.

    By setting $\Psi_k(\bfx)=\Psi^{(1)}_{k}(\bfx)+\Psi^{(2)}_{k}(\bfx)$, we obtain
    \begin{equation}\label{eq:w-F-n}
      wF(N)=wK + N^{\log_q\lambda} \sum_{0\le k<m}(\log_q
      N)^k\f[\big]{\Psi_k}{\repr(\fractional{\log_q N})} +(\log_q N)^mw\vartheta_m
    \end{equation}
    and
    \begin{equation}\label{eq:Psi-continuity}
      \norm{\Psi_{k}(\bfx)-\Psi_{k}(\bfx')}=\Oh{j^{m-1}Q^{-j}}
    \end{equation}
    whenever $\bfx$ and $\bfx'\in\Omega$ have a common prefix of length $j$.

    It remains to show that $\Psi_k(\bfx)=\Psi_k(\bfx')$ holds whenever
    $\val(\bfx)=\val(\bfx')$ or $\val(\bfx)=0$ and $\val(\bfx')=1$.

    Choose $\bfx$ and $\bfx'$ such that one of the above
    two conditions on $\val$ holds and such that $x_j=0$ for $j\ge j_0$ and
    $x'_j=q-1$ for $j\ge j_0$. Be aware that now the prefixes of
    $\bfx$ and $\bfx'$ of length $j_0$ do not coincide except for the trivial
    case $j_0=0$.

    Fix some $j\ge j_0$ and set $\bfx''$ to be
    the prefix of $\bfx'$ of length $j$, followed by infinitely many zeros.
    Note that we have $q^{\val(\bfx'')}=q^{\val(\bfx')}-q^{-(j-1)}$. Set
    $n=q^{j-1+\val(\bfx'')}$. By construction, we have
    $n+1=q^{j-1+\val(\bfx)+\iverson{\val(\bfx)=0}}$. This implies
    $\repr(\fractional{\log_q n})=\bfx''$ and
    $\repr(\fractional{\log_q(n+1)})=\bfx$. Taking the difference of
    \eqref{eq:w-F-n} for $n+1$ and $n$ yields
    \begin{multline*}
      wf(n)=(n+1)^{\log_q \lambda} \sum_{0\le k<m}\bigl(\log_q (n+1)\bigr)^k
      \Psi_k(\bfx)  - n^{\log_q \lambda} \sum_{0\le k<m} (\log_q n)^k
      \Psi_k(\bfx'') \\+\big((\log_q(n+1))^m-(\log_q n)^m\big)w\vartheta_m.
    \end{multline*}
    We estimate $n+1$ as $n(1+\Oh{1/n})$ and get
    \begin{equation}\label{eq:Tenenbaum-2}
      wf(n)=n^{\log_q \lambda }\sum_{0\le k<m} (\log_q n)^k
      \bigl(\Psi_k(\bfx)-\Psi_k(\bfx'')\bigr)
      + \Oh[\big]{n^{\log_q \abs{\lambda} -1}(\log n)^{m-1}}.
    \end{equation}
    We have $wf(n)=\Oh{R^j}=\Oh{R^{\log_q n}}=\Oh{n^{\log_q R}}$ by~\eqref{eq:f-as-product} and~\eqref{eq:bound-prod}. By \eqref{eq:Psi-continuity},
    \begin{equation*}
      \norm[\big]{\Psi_k(\bfx'')-\Psi_k(\bfx')}
      = \Oh[\big]{(\log n)^{m-1}n^{-\log_q Q}}
    \end{equation*}
    which is used below to replace $\bfx''$ by $\bfx'$.
    Inserting these estimates in \eqref{eq:Tenenbaum-2} and dividing by
    $n^{\log_q \lambda}$ yields
    \begin{equation}\label{eq:Tenenbaum-3}
      \sum_{0\le k<m}(\log_q n)^k\bigl(\Psi_k(\bfx')-\Psi_k(\bfx)\bigr)
      = \Oh[\big]{n^{-\log_q Q} (\log n)^{2m-2}}.
    \end{equation}
    Note that $\Psi_k(\bfx')-\Psi_k(\bfx)$ does not depend on $j$. Now we let
    $j$ (and therefore $n$) tend to infinity. We see that
    \eqref{eq:Tenenbaum-3} can only remain true if
    $\Psi_k(\bfx')=\Psi_k(\bfx)$ for $0\le k<m$, which we had set out to show.

    Therefore, $\Psi_k$ descends to a continuous function $\Phi_k$ on $[0, 1]$ with
    $\Phi_k(0)=\Phi_k(1)$; thus $\Phi_k$ can be extended to a $1$-periodic continuous
    function.

    \proofparagraph{Hölder Continuity}
    We will now prove Hölder continuity. As the fluctuations have been defined
    on the infinite product space $\Omega$, we will basically have to prove Hölder
    continuity there. The difficulty will be that points in the unit interval which
    are close to each other there may have drastically different $q$-ary
    expansions, thus correspond to drastically different points in the infinite
    product space $\Omega$. To circumvent this problem, the interval between
    the two points will be split into two parts.

    We
    first claim that for $0\le y<y'''<1$,
    we have
    \begin{equation}\label{eq:Hoelder-1}
      \norm[\big]{\Phi_k(y''')-\Phi_k(y)}
      = \Oh[\big]{(\log(q^{y'''}-q^{y}))^{m-1}(q^{y'''}-q^y)^{\log_q Q}}
    \end{equation}
    as $y'''\to y$.
    To prove this, let $\bfx\coloneqq \repr(y)$ and $\bfx'''\coloneqq
    \repr(y''')$.
    Let $\ell$ be the length of the longest common prefix of $\bfx$ and
    $\bfx'''$ and choose $j\ge 0$ such that $q^{-j}\le q^{y'''}-q^y< q^{-j+1}$.
    We define $\bfx'$ and $\bfx''\in\Omega$ such that
    \begin{alignat*}{4}
      \bfx&=(x_0,x_1,\ldots, x_{\ell-1}, x_{\ell},{}&& x_{\ell+1},{}&& x_{\ell+2},{}&& \ldots),\\
      \bfx'&=(x_0, x_1, \ldots, x_{\ell-1}, x_{\ell},{}&& q-1,{}&& q-1,{}&& \ldots),\\
      \bfx''&=(x_0, x_1, \ldots, x_{\ell-1}, x_{\ell}+1,{}&& 0,{}&& 0,{}&& \ldots),\\
      \bfx'''&=(x_0, x_1, \ldots, x_{\ell-1}, x'''_{\ell},{}&& x'''_{\ell+1},{}&&
      x'''_{\ell+2},{}&& \ldots)
    \end{alignat*}
    and set $y'\coloneqq\val(\bfx')$ and $y''\coloneqq\val(\bfx'')$.
    As $\val(\bfx)=y<y'''=\val(\bfx''')$, we have $x'''_\ell>x_\ell$. We
    conclude that $y\leq y'=y''\leq y'''$. Therefore,
    \begin{equation*}
      q^{y'}-q^{y} \le q^{y'''}-q^{y}< q^{-j+1},
    \end{equation*}
    so in view of the fact that each entry of
    $\bfx'$ is greater or equal than the corresponding entry of $\bfx$,
    the expansions $\bfx$ and $\bfx'$ must have a common prefix of length $j$.
    Similarly, the expansions $\bfx''$ and $\bfx'''$ must have a common prefix
    of length~$j$. Thus \eqref{eq:Psi-continuity} implies that
    \begin{align*}
      \norm[\big]{\Phi_k(y''')-\Phi_k(y)}
      &\le \norm[\big]{\Phi_k(y''')-\Phi_k(y'')}+
      \norm[\big]{\Phi_k(y')-\Phi_k(y)}\\
      &= \norm[\big]{\Psi_k(\bfx''')-\Psi_k(\bfx'')}+
      \norm[\big]{\Psi_k(\bfx')-\Psi_k(\bfx)}
      = \Oh{j^{m-1}Q^{-j}}.
    \end{align*}
    Noting that $-j = \log_q(q^{y'''}-q^y) + \Oh{1}$ leads to~\eqref{eq:Hoelder-1}.

    In order to prove Hölder continuity with exponent $\alpha<\log_q Q$, we first
    note that Lipschitz-continuity of $y\mapsto q^y$ on the interval $[0, 1]$
    shows that \eqref{eq:Hoelder-1} implies
    \begin{equation*}
      \norm[\big]{\Phi_k(y''')-\Phi_k(y)}
      = \Oh[\big]{(y'''-y)^{\alpha}}.
    \end{equation*}
    This can then easily be extended to arbitrary reals $y<y'''$
    by periodicity of $\Phi_k$ because it is sufficient to consider small
    $y'''-y$ and the interval may be subdivided at an integer between $y$ and $y'''$.

    \proofparagraph{Constant Dominant Fluctuation}
    To finally prove the final assertion on constant fluctuations, we will have
    to inspect the explicit expression for the fluctuations using the
    additional assumption.

    Under the additional
    assumption that the vector~$w(C-\lambda I)^{m-1}=v_{m-1}'$ is a left eigenvector to all
    matrices $A_0$, \ldots, $A_{q-1}$ associated with the eigenvalue $1$, the same holds for $v_{m-1}$
    by~\eqref{eq:v_m-1-expression}. Then $v_{m-1}$ is also a left eigenvector of
    $C$ associated with the eigenvalue $q$. In particular, $\lambda=q\neq 1$.

    We can compute $\Phi_{m-1}^{(2)}(\nu)$ using
    \eqref{eq:fluctuation-2-m-1}. As $v_{m-1}\in W_{q}$, we have $v_{m-1}C=v_{m-1}C'$
    by definition of $C'$ (see Section~\ref{section:constants-for-theorem}) which implies that
    $v_{m-1}(I-C')^{-1}=\frac1{1-q}v_{m-1}$. As $v_{m-1}(I-A_0)=0$ by
    assumption, we conclude that $\Phi_{m-1}^{(2)}(\nu)=0$ in this case.

    We use \eqref{eq:fluctuation-1-m-1} to compute $\Psi_{m-1}^{(1)}(\bfx)$.
    By assumption, $v_{m-1}B_{x_j}=x_j v_{m-1}$ which implies that
    \begin{equation*}
      \Psi_{m-1}^{(1)}(\bfx)
      = q^{-\val(\bfx)} \biggl(\sum_{j\ge 0}q^{-j}x_j\biggr) v_{m-1}
      =q^{-\val(\bfx)}q^{\val(\bfx)}v_{m-1}=v_{m-1}
    \end{equation*}
    by definition of $\val$.

    Together with \eqref{eq:v_m-1-expression}, we obtain the assertion.
\end{proof}

\subsection{Proof of Theorem~\ref{theorem:main}}\label{section:proof:corollary-main}

\begin{proof}[Proof of Theorem~\ref{theorem:main}]
  We denote the rows of $T$ as $w_1$, \ldots, $w_d$ and the columns of $T^{-1}$
  by $t_1$, \ldots, $t_d$. Thus $\sum_{1 \le j \le d} t_jw_j=I$ and $w_j$ is a
  generalised left eigenvector of $C$ of some rank $m_j$ corresponding to some
  eigenvalue
  $\lambda_j\in\sigma(C)$. Theorem~\ref{theorem:contribution-of-eigenspace} and
  the fact that there are no eigenvalues of $C$ of absolute value between
  $\rho$ and $R$
  then immediately imply that
  \begin{align*}
    F(N) &= \sum_{1 \le j \le d} t_j w_j F(N) \\
    &= K + \sum_{1 \le j \le d} (\log_q N)^{m_j} t_jw_j \vartheta_{m_j}\\
    &\phantom{= K}\; + \sum_{\substack{1\le j\le d\\\abs{\lambda_j}>\rho }} N^{\log_q \lambda_j}
    \sum_{0\le k<m_j}(\log_q N)^k t_j\Psi_{jk}(\fractional{\log_q N}) \\
    &\phantom{= K}\; + \iverson{\exists \lambda\in\sigma(C)\colon \abs{\lambda}\le\rho}
    \Oh[\big]{N^{\log_q R}(\log N)^{\max\set{0}\cup \setm{m_j}{\abs{\lambda_j}=R}}}
  \end{align*}
  for some  $1$-periodic Hölder continuous functions $\Psi_{jk}$ with exponent
  less than $\log_q\abs{\lambda_j}/R$. The first summand $K$ as well as the
  error term already coincide with the result stated in the theorem.
  From Section~\ref{section:constants-for-theorem} we recall that $w_j\vartheta_{m_j}=0$
  for $\lambda_j\neq 1$.

  We set
  \begin{equation*}
    \Phi_{\lambda k}(u)\coloneqq  \sum_{\substack{1\le j\le
        d\\\lambda_j=\lambda\\k<m_j}}
    \bigl(t_j\Psi_{jk}(u) +\iverson{\lambda=1}\iverson{m_j=k}t_jw_j\vartheta_{m_j}\bigr)
  \end{equation*}
  for $\lambda\in\sigma(C)$ with $\abs{\lambda}>\rho$ and $0\le k<m(\lambda)$.

  Then we still have to account for
  \begin{equation}\label{eq:phi-m-1-sum}
    (\log_q N)^{m(1)}\sum_{\substack{1\le j\le d\\\lambda_j=1\\m_j=m(1)}}t_jw_j\vartheta_{m(1)}.
  \end{equation}
  The factor $(C-I)^{m(1)-1}$ in the definition of $\vartheta_{m(1)}$ implies that
  $w_j\vartheta_{m(1)}$ vanishes unless $\lambda_j=1$ and $m_j=m(1)$. Therefore, the
  sum in \eqref{eq:phi-m-1-sum} equals $\vartheta$.
\end{proof}

\section{Meromorphic Continuation of the Dirichlet Series: Proof of
  Theorem~\ref{theorem:Dirichlet-series}}
\label{section:proof:Dirichlet-series}
For future use, we state an estimate for the binomial
coefficient. Unsurprisingly, it is a consequence of
a suitable version of Stirling's formula.
Alternatively, it can be seen as the most basic case of Flajolet and
Odlyzko's singularity
analysis~\cite[Proposition~1]{Flajolet-Odlyzko:1990:singul}, where
uniformity in $s$ is easily checked.

\begin{lemma}\label{lemma:binomial-coefficient-asymptotics}
  Let $k\in\Z$, $k\ge 0$. Then
  \begin{equation}\label{eq:binomial-coefficient-estimate}
    \abs[\bigg]{\binom{-s}{k}}\sim \frac{1}{\abs{\Gamma(s)}}k^{\Re s-1}
  \end{equation}
  uniformly for $s$ in a compact subset of $\C$ and $k\to\infty$.
\end{lemma}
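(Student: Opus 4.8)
The plan is to reduce the claim to the classical asymptotics of a quotient of Gamma functions. First I would rewrite the binomial coefficient in closed form: factoring $-1$ out of each of the $k$ linear factors,
\begin{equation*}
  \binom{-s}{k}=\frac{(-s)(-s-1)\dotsm(-s-k+1)}{k!}
  =(-1)^k\,\frac{s(s+1)\dotsm(s+k-1)}{k!}
  =\frac{(-1)^k}{\Gamma(s)}\cdot\frac{\Gamma(s+k)}{\Gamma(k+1)},
\end{equation*}
using $(s)_k=\Gamma(s+k)/\Gamma(s)$ and $k!=\Gamma(k+1)$; here $1/\Gamma$ is read as the entire function, so the expression is also correct at the non-positive integers, where it evaluates to $0$. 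For $s\in\Z_{\le 0}$ both sides of~\eqref{eq:binomial-coefficient-estimate} vanish once $k>\abs{s}$ (since $\binom{-s}{k}=\binom{-s}{k}$ is then zero and $1/\abs{\Gamma(s)}=0$), so there is nothing to prove in that case; from now on I may assume $\Gamma(s)$ is finite and nonzero.

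Next I would invoke the uniform form of Stirling's formula for the ratio $\Gamma(z+a)/\Gamma(z+b)\sim z^{a-b}$ (see \DLMF{5.11}{13}; this is exactly the most basic case of the singularity analysis referred to above, with uniformity checked there). Specialising to $z=k$, $a=s$, $b=1$ gives
\begin{equation*}
  \frac{\Gamma(s+k)}{\Gamma(k+1)}=k^{s-1}\bigl(1+\Oh{1/k}\bigr)
\end{equation*}
as $k\to\infty$, uniformly for $s$ in any bounded (in particular any compact) subset of $\C$. Taking absolute values in the two displays and using that $k$ is a positive real, so $\abs{k^{s-1}}=k^{\Re(s-1)}=k^{\Re s-1}$, yields
\begin{equation*}
  \abs[\bigg]{\binom{-s}{k}}=\frac{1}{\abs{\Gamma(s)}}\,k^{\Re s-1}\bigl(1+\Oh{1/k}\bigr),
\end{equation*}
which is~\eqref{eq:binomial-coefficient-estimate}; the error term being uniform over the compact set supplies the asserted uniformity in $s$.

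The one place that needs (minor) care is precisely the uniformity in $s$ of the Gamma-ratio asymptotic. But a compact subset of $\C$ is bounded, and the error estimates accompanying the Stirling expansion of $\Gamma(z+a)/\Gamma(z+b)$ hold uniformly for the parameters ranging over bounded sets, so this presents no real obstacle; the remainder of the argument is elementary manipulation of the Gamma function, and so the whole proof is short.
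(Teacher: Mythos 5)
Your proof is correct and takes essentially the same route as the paper: rewrite $\binom{-s}{k}=\frac{(-1)^k}{\Gamma(s)}\frac{\Gamma(s+k)}{\Gamma(k+1)}$ by negating the upper index and then invoke the uniform Stirling-type asymptotics for the ratio of Gamma functions (DLMF~5.11). Your additional remark about non-positive integer values of $s$ is a harmless refinement that the paper leaves implicit.
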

\begin{proof}
  By \cite[(5.14)]{Graham-Knuth-Patashnik:1994} (``negating the upper index''), we rewrite the binomial coefficient as
  \begin{equation*}
    \binom{-s}{k}=(-1)^{k}\binom{s+k-1}{k}=\frac{(-1)^k}{\Gamma(s)}\frac{\Gamma(s+k)}{\Gamma(k+1)}.
  \end{equation*}
  Thus~\eqref{eq:binomial-coefficient-estimate} follows by \DLMF{5.11}{12}
  (which is an easy consequence of Stirling's formula for the Gamma function).
\end{proof}

\begin{proof}[Proof of Lemma~\ref{lemma:shifted-Dirichlet}]
  We have
  \begin{equation}\label{eq:Dirichlet-shifted:Sigma-as-diff}
    \f{\Sigma}{s, \beta, \calD}
    = \sum_{n\ge n_0} \bigl((n+\beta)^{-s}-n^{-s}\bigr) d(n)
  \end{equation}
  for $\Re s>\log_q R'+ 1$.
  We note that
  \begin{equation*}
    (n+\beta)^{-s} - n^{-s}
    = n^{-s}\Bigl(\Bigl(1+\frac{\beta}{n}\Bigr)^{-s} - 1 \Bigr)
    = \Oh[\big]{\abs{s}n^{-\Re s-1}}.
  \end{equation*}
  Therefore,
  \begin{equation*}
    \f{\Sigma}{s, \beta, \calD} = \Oh[\bigg]{\abs{s}\sum_{n\ge n_0}n^{\log_q R'-\Re s-1}},
  \end{equation*}
  and the series
  converges for $\Re s>\log_q R'$. As this holds for all $R'>\rho$, we obtain
  $\f{\Sigma}{s, \beta, \calD}=\Oh{\abs{\Im s}}$ as $\abs{\Im s}\to\infty$
  uniformly for $\log_q \rho + \delta \le \Re s \le \log_q \rho+\delta+1$.
  In the language of \cite[\S~III.3]{Hardy-Riesz:1915},
  $\f{\Sigma}{s, \beta, \calD}$ has order at most $1$ for $\log_q \rho + \delta \le \Re s \le \log_q
  \rho+\delta+1$. As $\log_q \rho+\delta+1$ is larger
  than the abscissa of absolute convergence of $\f{\Sigma}{s, \beta, \calD}$, it is clear that
  $\f{\Sigma}{s, \beta, \calD}=\Oh{1}$ for $\Re s=\log_q \rho+\delta+1$, i.e., $\f{\Sigma}{s, \beta, \calD}$ has order at most $0$ for
  $\Re s=\log_q \rho+\delta+1$. By Lindelöf's theorem
  (see \cite[Theorem~14]{Hardy-Riesz:1915}), we conclude that
  $\f{\Sigma}{s, \beta, \calD}=\Oh[\big]{\abs{\Im s}^{\mu_\delta(\Re s)}}$ for $\log_q \rho + \delta\le \Re s\le
  \log_q \rho +\delta+1$.

  For $\Re s > \log_q R' + 1$, we may
  rewrite~\eqref{eq:Dirichlet-shifted:Sigma-as-diff}
  using the binomial series as
  \begin{align}\label{eq:shifted-Dirichlet:diff:inner-sum}
    \f{\Sigma}{s, \beta, \calD} &=
    \sum_{n\ge n_0}{n^{-s}}\sum_{k\ge 1}\binom{-s}{k}\frac{\beta^k}{n^k} d(n)\notag\\
    &= \sum_{k\ge 1}
    \binom{-s}{k} \beta^k \sum_{n\ge n_0} n^{-(s+k)} d(n).
  \end{align}
  Switching the order of summation was legitimate because
  \begin{align*}
    \norm[\bigg]{\sum_{n\ge n_0} n^{-(s+k)} d(n)}
    &\le \sum_{n\ge n_0} n^{-(\Re s+k)} \norm{d(n)}\\
    &= \sum_{n\ge n_0} \Oh[\big]{n^{\log_q R'-\Re s -k}}
    = \Oh[\big]{n_0^{\log_q R'-\Re s-k+1}}
  \end{align*}
  for $\Re s+k>\log_q R'+1$ and Lemma~\ref{lemma:binomial-coefficient-asymptotics} imply absolute and
  uniform convergence for $s$ in a compact set.
  Noting that the previous arguments hold again for all $R'>\rho$ and that
  the inner sum in \eqref{eq:shifted-Dirichlet:diff:inner-sum}
  is $\calD(s+k)$ completes the proof.
\end{proof}

\begin{proof}[Proof of Theorem~\ref{theorem:Dirichlet-series}]
  As $f(n)=\Oh{R^{\log_q n}}=\Oh{n^{\log_q R}}$ by \eqref{eq:f-as-product} and
  \eqref{eq:bound-prod}, the Dirichlet series
  $\calF_{n_0}(s) = \sum_{n \ge n_0} n^{-s} f(n)$
  (see Section~\ref{sec:definitions-notations})
  converges absolutely and uniformly on compact sets for $\Re s>\log_q R+1$.
  As this holds for all $R>\rho$, i.e., does not depend on
  our particular (cf.\@ Section~\ref{sec:definitions-notations}) choice
  of $R>\rho$, this convergence result holds
  for $\Re s>\log_q \rho+1$.

  We use~\eqref{eq:regular-matrix-sequence} and
  Lemma~\ref{lemma:shifted-Dirichlet} (including its notation)
  to rewrite $\calF_{n_0}$ as
  \begin{align*}
    \calF_{n_0}(s) &= \sum_{n_0 \le n < qn_0}n^{-s}f(n) + \sum_{0 \le r < q} \sum_{n\ge n_0} (qn+r)^{-s} f(qn+r)\\
    &= \sum_{n_0 \le n < qn_0} n^{-s}f(n) + q^{-s} \sum_{0 \le r < q} A_r
    \sum_{n\ge n_0} \Bigl(n+\frac{r}{q}\Bigr)^{-s} f(n)\\
    &= \sum_{n_0 \le n < qn_0} n^{-s}f(n) + q^{-s}C\calF_{n_0}(s) + \calH_{n_0}(s)
  \end{align*}
  with
  \begin{equation*}
    \calH_{n_0}(s)\coloneqq q^{-s} \sum_{0 \le r < q} A_r \f[\big]{\Sigma}{s, \tfrac{r}{q}, \calF_{n_0}}
  \end{equation*}
  for $\Re s>\log_q R+ 1$.
  Thus
  \begin{equation}\label{eq:functional-equation-H}
    \bigl(I-q^{-s}C\bigr)\calF_{n_0}(s) = \sum_{n_0 \le n < qn_0}n^{-s}f(n)+\calH_{n_0}(s)
  \end{equation}
  for $\Re s>\log_q R+ 1$.
  By Lemma~\ref{lemma:shifted-Dirichlet} we have
  $\calH_{n_0}(s)=\Oh[\big]{\abs{\Im s}^{\mu_\delta(\Re s)}}$
  for $\log_q \rho + \delta\le \Re s\le
  \log_q \rho +\delta+1$.
  Rewriting the expression for $\calH_{n_0}(s)$ using the binomial series
  (see Lemma~\ref{lemma:shifted-Dirichlet} again) yields
  \begin{equation*}
    \calH_{n_0}(s)
    = q^{-s}\sum_{0 \le r < q} A_r \sum_{k\ge
      1}\binom{-s}{k}\Bigl(\frac{r}{q}\Bigr)^k \calF_{n_0}(s+k).
  \end{equation*}
  Combining this with~\eqref{eq:functional-equation-H}
  yields the expression~\eqref{eq:Dirichlet-recursion} for $\calG_{n_0}$.

  Solving \eqref{eq:analytic-continuation} for $\calF_{n_0}$ yields the
  meromorphic continuation of $\calF_{n_0}(s)$ to $\Re s>\log_q R$ (and thus to $\Re
  s>\log_q \rho$) with possible poles where
  $q^s$ is an eigenvalue of $C$. As long as $q^s$ keeps a fixed positive
  distance $\delta$ from the eigenvalues, the bound for $\calG_{n_0}$
  (coming from the bound for $\calH_{n_0}$) carries over
  to a bound for $\calF_{n_0}$, i.e., \eqref{eq:order-F}.

  To estimate the order of the poles, let $w$ be generalised left eigenvector
  of rank $m$ of $C$ corresponding to an eigenvalue $\lambda$ with $\abs{\lambda}>R$. We claim that
  $w\calF_{n_0}(s)$ has a pole of order at most $m$ at $s=\log_q \lambda+\chi_k$ and no other
  poles for $\Re s>\log_q R$. We prove this by induction on $m$.

  Set $v\coloneqq w(C-\lambda I)$. By definition, $v=0$ or $v$ is a generalised
  eigenvector of rank $m-1$ of $C$. By induction hypothesis, $v\calF_{n_0}(s)$ has a
  pole of order at most $m-1$ at $s=\log_q \lambda+\chi_k$ for $k\in\Z$ and no other
  poles for $\Re s>\log_q R$.

  Multiplying \eqref{eq:analytic-continuation} by $w$,
  inserting the definition of~$v$ and reordering the summands yields
  \begin{equation*}
    \bigl(1 - q^{-s}\lambda\bigr)w\calF_{n_0}(s) = q^{-s}v \calF_{n_0}(s) + w\calG_{n_0}(s).
  \end{equation*}
  The right-hand side has a pole of order at most $m-1$ at $\log_q \lambda+\chi_k$ for
  $k\in\Z$ and $1-q^{-s}\lambda$ has a simple zero at the same places. This
  proves the claim.
\end{proof}

\section{Fourier Coefficients: Proof of
  Theorem~\ref{theorem:use-Mellin--Perron}}
\label{section:proof:use-Mellin--Perron}
In contrast to the rest of this paper, this section does not directly relate to
a regular sequence but gives a general method to derive Fourier coefficients of
fluctuations.

\subsection{Pseudo-Tauberian Theorem}
\label{sec:pseudo-tauber}

In this section, we generalise the pseudo-Tau\-be\-rian argument by Flajolet, Grabner,
Kirschenhofer, Prodinger and
Tichy~\cite[Proposition~6.4]{Flajolet-Grabner-Kirschenhofer-Prodinger:1994:mellin}.
The basic idea is that for a $1$-periodic Hölder-continuous function
$\Phi$ and $\gamma\in\C$, there is a $1$-periodic continuously
differentiable function $\Psi$ such that
\begin{equation*}
  \sum_{1\le n<N} n^{\gamma} \Phi(\log_q n)
  = N^{\gamma+1} \Psi(\log_q N) + \oh{N^{\Re \gamma+1}},
\end{equation*}
and there is a straight-forward relation between the Fourier
coefficients of $\Phi$ and the Fourier coefficients of $\Psi$. This relation
exactly corresponds to the additional factor $s+1$ when transitioning
from the zeroth order Mellin--Perron formula to the first order
Mellin--Perron formula.

In contrast
to~\cite[Proposition~6.4]{Flajolet-Grabner-Kirschenhofer-Prodinger:1994:mellin},
we allow for an additional logarithmic factor, have weaker growth
conditions on the Dirichlet series and quantify the error. We also
extend the result to all complex $\gamma$. The generalisation from
$q=2$ there to our real~$q>1$ is trivial.

\begin{proposition}\label{proposition:pseudo-Tauber}
  Let $\gamma\in\C$ and $q>1$ be a real number,  $m$ be a
  positive integer, $\Phi_0$, \ldots, $\Phi_{m-1}$ be $1$-periodic Hölder continuous
  functions with exponent $\alpha>0$, and $0<\beta<\alpha$. Then there exist continuously differentiable functions
  $\Psi_{-1}$, $\Psi_{0}$, \ldots, $\Psi_{m-1}$, periodic with period $1$, and a constant $c$ such that
  \begin{multline}
    \sum_{1\le n< N}n^\gamma \sum_{\substack{j+k=m-1\\0\le j<m}}\frac{(\log n)^{k}}{k!}\Phi_j(\log_q n)\\
    =c + N^{\gamma+1}\sum_{\substack{j+k=m-1\\-1\le j<m}} \frac{(\log N)^{k}}{k!}\Psi_j(\log_q N)
    + \Oh[\big]{N^{\Re \gamma+1-\beta}}
    \label{eq:pseudo-Tauber-relation}
  \end{multline}
  for integers $N\to\infty$.

  Denote the Fourier coefficients of $\Phi_j$ and $\Psi_j$ by $\varphi_{j\ell}\coloneqq
  \int_0^1\Phi_j(u)\exp(-2\ell\pi i u)\, \dd u$ and $\psi_{j\ell}\coloneqq
  \int_0^1\Psi_j(u)\exp(-2\ell\pi i u)\, \dd u$, respectively.
  Then the corresponding generating functions fulfil
  \begin{equation}\label{eq:pseudo-Tauber-Fourier}
    \sum_{0\le j<m}\varphi_{j\ell}Z^j = \Bigl(\gamma+1+\frac{2\ell \pi i}{\log q} + Z\Bigr)\sum_{-1\le j<m}\psi_{j\ell}Z^j
    +\Oh{Z^m}
  \end{equation}
  for $\ell\in \Z$ and $Z\to 0$.

  If $q^{\gamma+1}\neq 1$, then $\Psi_{-1}$ vanishes.
\end{proposition}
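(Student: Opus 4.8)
The plan is to prove \eqref{eq:pseudo-Tauber-relation} first by an elementary comparison of the sum with an integral, and then to read off \eqref{eq:pseudo-Tauber-Fourier} by testing the (linear and completely explicit) construction against pure Fourier modes. By linearity it is enough to treat a single summand $n^\gamma\frac{(\log n)^k}{k!}\Phi(\log_q n)$, where $\Phi$ is a fixed $1$-periodic Hölder continuous function with exponent $\alpha$ and $k\ge 0$ is an integer; the full statement follows by summing over the $m$ original terms and reindexing the resulting powers of $\log N$ by $j=m-1-k'$, the top power $(\log N)^m$ (hence $\Psi_{-1}$) coming only from the $j=0$ term.

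The first step is to show that the integral is an exact fluctuation. Substituting $t=q^v$, splitting $\int_0^{\log_q N}$ over the unit intervals $[i,i+1]$, and using the $1$-periodicity of $\Phi$ to factor the $i$th piece as $q^{(\gamma+1)i}$ times a polynomial in $i$ whose coefficients are the constants $\int_0^1 q^{(\gamma+1)w}w^b\Phi(w)\,\dd w$, one is left with the polynomial-weighted geometric sum $\sum_{i<U}q^{(\gamma+1)i}i^a$; this equals $q^{(\gamma+1)U}$ times a degree-$a$ polynomial in $U$ plus a constant when $q^{\gamma+1}\neq1$, and a degree-$(a{+}1)$ polynomial in $U$ when $q^{\gamma+1}=1$. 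After inserting $U=\log_q N-\fractional{\log_q N}$ and $q^{(\gamma+1)U}=N^{\gamma+1}q^{-(\gamma+1)\fractional{\log_q N}}$ this yields
\[
  \int_1^N t^\gamma\tfrac{(\log t)^k}{k!}\,\Phi(\log_q t)\,\dd t
  = c_0 + N^{\gamma+1}\!\!\!\sum_{0\le k'\le k+\iverson{q^{\gamma+1}=1}}\!\!\!\frac{(\log N)^{k'}}{k'!}\,\widetilde\Psi_{k'}(\log_q N)
\]
for a constant $c_0$ and $1$-periodic functions $\widetilde\Psi_{k'}$ which are continuously differentiable: their values are built from integrals of the continuous function $\Phi$, and matching of one-sided limits and derivatives at the integers is automatic since $\int_1^N(\cdot)\,\dd t$ depends continuously differentiably on $N$. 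In particular the power $(\log N)^{k+1}$ appears precisely when $q^{\gamma+1}=1$.

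The second step bounds the discretisation error directly, without any Fourier analysis or Euler--Maclaurin expansion. With $g(t)=t^\gamma\frac{(\log t)^k}{k!}\Phi(\log_q t)$,
\[
  \sum_{1\le n<N}g(n)-\int_1^N g(t)\,\dd t
  = \sum_{n=1}^{N-1}\int_n^{n+1}\bigl(g(n)-g(t)\bigr)\,\dd t =: R(N),
\]
and for $t\in[n,n+1]$ the Hölder bound on $\Phi$, together with $\log_q(n{+}1)-\log_q n=\Oh{1/n}$ and $\alpha\le 1$, gives $\abs{g(n)-g(t)}=\Oh{n^{\Re\gamma-\alpha}(\log n)^k}$. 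Summing over $n$, we get $R(N)=c_R+\Oh{N^{\Re\gamma+1-\alpha}(\log N)^{k+1}}$, where $c_R$ is the value of the (absolutely convergent) series $\sum_{n\ge1}\int_n^{n+1}(g(n)-g(t))\,\dd t$ when $\Re\gamma<\alpha-1$ and $c_R=0$ otherwise; since $\beta<\alpha$ this error term is $\Oh{N^{\Re\gamma+1-\beta}}$. Combining the two steps with $c:=g(1)/2+c_0+c_R$ (absorbing the single value $g(1)$ and the trivial difference between $\int_1^{N-1}$ and $\int_1^N$ into the constant) proves \eqref{eq:pseudo-Tauber-relation}.

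Finally, the construction of the $\widetilde\Psi_{k'}$ above is linear in $\Phi$, so their Fourier coefficients, hence those of the $\Psi_j$, are obtained by taking $\Phi(u)=\exp(2\ell\pi i u)$, i.e.\ $\Phi(\log_q n)=n^{\chi_\ell}$, and reading off powers of $\log N$ from the closed form $\int_1^N t^{\gamma+\chi_\ell}\frac{(\log t)^k}{k!}\,\dd t=\frac{N^{\gamma+\chi_\ell+1}}{\gamma+1+\chi_\ell}\sum_{0\le i\le k}\frac{(-1)^i}{(\gamma+1+\chi_\ell)^i}\frac{(\log N)^{k-i}}{(k-i)!}+\frac{(-1)^{k+1}}{(\gamma+1+\chi_\ell)^{k+1}}$ (valid when $\gamma+1+\chi_\ell\neq0$; when $\gamma+1+\chi_\ell=0$ one uses $\sum_{n<N}n^{-1}(\log n)^k=\frac{(\log N)^{k+1}}{k+1}+\Oh{1}$ instead). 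Passing to the variable $\log_q N$ this gives $\psi_{j\ell}=\sum_{0\le a\le j}(-1)^{j-a}\varphi_{a\ell}\bigl(\gamma+1+\frac{2\ell\pi i}{\log q}\bigr)^{-(j-a+1)}$, which is exactly the coefficientwise form of the identity $\sum_{0\le j<m}\varphi_{j\ell}Z^j=\bigl(\gamma+1+\frac{2\ell\pi i}{\log q}+Z\bigr)\sum_{-1\le j<m}\psi_{j\ell}Z^j+\Oh{Z^m}$; and since $\psi_{-1,\ell}=0$ unless $\gamma+1+\chi_\ell=0$, this also yields the last assertion that $\Psi_{-1}$ vanishes when $q^{\gamma+1}\neq1$. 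The one genuinely delicate point is the integral step: getting the polynomial-degree bookkeeping right, cleanly isolating the degenerate case $q^{\gamma+1}=1$ (where the geometric sum loses a factor and the degree goes up by one, producing $\Psi_{-1}$), and checking that the manufactured coefficient functions are truly $C^1$ and $1$-periodic across the integer points; the error estimate and the Fourier computation are then routine.
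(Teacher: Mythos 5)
Your argument is correct in substance but follows a genuinely different route from the paper's. The paper packages the logarithmic powers into a generating variable $Z$ (working with $\f{\Phi}{u,Z}=\sum_j\Phi_j(u)Z^j$), approximates the period-by-period Riemann sums by integrals with a Darboux-sum error bound, constructs $\f{\Psi}{u,Z}$ explicitly, verifies its $1$-periodicity and differentiability via the ODE $\partial_u\Psi=-(\log Q)\Psi+(\log q)\Phi$, and then obtains \eqref{eq:pseudo-Tauber-Fourier} by a direct double-integral computation of $\int_0^1\f{\Psi}{u,Z}\exp(-2\ell\pi iu)\,\dd u$. You instead work summand by summand, compare the full sum with the full integral via the pointwise Hölder estimate $\abs{g(n)-g(t)}=\Oh{n^{\Re\gamma-\alpha}(\log n)^k}$ (equivalent in content to the paper's Darboux bound, but more elementary to state), evaluate the integral exactly as a fluctuation via periodicity and polynomial-weighted geometric sums, and then get the Fourier relation by linearity of the construction, testing on the pure modes $\Phi(u)=\exp(2\ell\pi iu)$ and using the closed-form antiderivative of $t^{\gamma+\chi_\ell}(\log t)^k$. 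What your route buys is a cleaner conceptual explanation of where the factor $\gamma+1+\chi_\ell+Z$ comes from (it is literally the denominator in the antiderivative); what it costs is two steps you leave implicit and should spell out: (i) passing from trigonometric polynomials to general Hölder $\Phi$ requires noting that the functionals $\Phi\mapsto\psi_{j\ell}$ are $L^2$-continuous (they are built from integrating $\Phi$ against bounded kernels) and that the output of a pure mode $\ell'$ is again a pure mode $\ell'$ by uniqueness of the exact decomposition, so that off-diagonal contributions vanish by orthogonality; and (ii) your claim that $C^1$-periodicity of the $\widetilde\Psi_{k'}$ is ``automatic'' needs the short argument that matching the one-sided limits of the globally $C^1$ function $N\mapsto\int_1^Ng$ and of its derivative at $N=q^p$ for infinitely many integers $p$ forces each coefficient of the polynomial in $p$ to match separately (the analogue of Lemma~\ref{lemma:uniqueness-fluctuations}); the paper avoids both issues by its explicit computations. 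The spurious $g(1)/2$ in your constant is harmless, and your bookkeeping of the degenerate case $q^{\gamma+1}=1$ (degree jump by one, producing $\Psi_{-1}$, and the single index $\ell$ with $\gamma+1+\chi_\ell=0$) is consistent with \eqref{eq:pseudo-Tauber-Fourier}.
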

\begin{remark}
  Note that the constant $c$ is absorbed by the error term if
  $\Re\gamma+1>\alpha$, in particular if $\Re\gamma>0$.
  Therefore, this constant does not occur in the
  article~\cite{Flajolet-Grabner-Kirschenhofer-Prodinger:1994:mellin}.
\end{remark}
\begin{remark}
  \label{remark:recurrence-fluctuation}
  The factor $\gamma+1+\frac{2\ell \pi i}{\log q} + Z$ in
  \eqref{eq:pseudo-Tauber-Fourier} will turn out to
  correspond exactly to the additional factor $s+1$ in the first order
  Mellin--Perron summation formula with the substitution
  $s=\gamma+\frac{2\ell\pi i}{\log q}+ Z$ such that the local expansion around
  the pole in $s=\gamma+\frac{2\ell\pi i}{\log q}$ of the Dirichlet generating
  function is conveniently written as a Laurent series in $Z$. See the proof of
  Theorem~\ref{theorem:use-Mellin--Perron} for details.
\end{remark}

Before actually proving Proposition~\ref{proposition:pseudo-Tauber},
we give an outline.

\begin{proof}[Overview of the Proof of Proposition~\ref{proposition:pseudo-Tauber}]
  We start with the left-hand side
  of~\eqref{eq:pseudo-Tauber-relation} and split the range of summation
  according to $\floor{\log_q n}$, thereby, in terms of our periodic functions,
  split after each period. We then use periodicity of the~$\Phi_j$
  and collect terms. This results in
  Riemann sums which converge to the corresponding
  integrals. Therefore, we can approximate these sums by the integrals.

  More rewriting constructs and reveals the functions~$\Psi_j$ (of the
  right-hand side of~\eqref{eq:pseudo-Tauber-relation}): These functions
  are basically defined via the above mentioned integral.
  We then show
  that these functions are indeed periodic and that their Fourier coefficients
  relate to the Fourier coefficients of the~$\Phi_j$.
  The latter is done by a direct computation of the integrals
  defining these coefficients.

  For this proof, we use an approach via exponential generating functions. 
  This reduces the overhead for
  dealing with the logarithmic factors $(\log n)^k$
  in~\eqref{eq:pseudo-Tauber-relation} such that we can essentially
  focus on the case~$m=1$.
  The resulting formula~\eqref{eq:pseudo-Tauber-relation}
  follows by extracting a suitable coefficient of this power series.

  There is another benefit of the generating function approach:
  This formulation allows to easily translate the relation between
  the Fourier coefficients here to the additional factors occurring
  when transitioning to higher order Mellin--Perron summation
  formul\ae{},
  in particular the factor $s+1$ in the first order Mellin--Perron summation.
\end{proof}
\begin{proof}[Proof of Proposition~\ref{proposition:pseudo-Tauber}]
  We split the proof into six parts.
  
  \proofparagraph{Notations}
  We start by defining quantities that are used through the whole proof.

  Without loss of generality, we assume that $q^{\Re \gamma+1}\neq q^{\alpha}$:
  otherwise, we slightly decrease $\alpha$ keeping the inequality
  $\beta<\alpha$ intact.
  We use the abbreviations $\Lambda\coloneqq \floor{\log_q N}$,
  $\nu\coloneqq \fractional{\log_q N}$, i.e.,
  $N=q^{\Lambda+\nu}$. We use the generating functions
  \begin{align*}
    \f{\Phi}{u, Z}&\coloneqq \sum_{0\le j<m}\Phi_j(u)Z^j,\\
    L(N, Z)&\coloneqq \sum_{1\le n<N}n^{\gamma+Z} \f{\Phi}{\log_q n, Z}=\sum_{1\le
             n<N}n^\gamma \fexp[\big]{(\log n) Z}\f{\Phi}{\log_q n, Z},\\
    Q(Z)&\coloneqq q^{\gamma+1+Z}
  \end{align*}
  for $0\le u\le 1$ and $0<\abs{Z}<2r$ where $r>0$ is chosen such that
  $r<(\alpha-\beta)/2$ and such that
  $Q(Z)\neq 1$ and $\abs{Q(Z)}\neq q^{\alpha}$ for these $Z$.
  (The condition $Z\neq 0$ is only needed for the case $q^{1+\gamma}=1$.)
  We will stick to the above choice of~$r$ and restrictions for~$Z$ throughout
  the proof.

  It is easily seen that  the left-hand side
  of~\eqref{eq:pseudo-Tauber-relation} equals $[Z^{m-1}]L(N, Z)$, where
  $[Z^{m-1}]$ denotes extraction of the coefficient of $Z^{m-1}$.

  \proofparagraph{Approximation of the Sum by an Integral}
  We will now rewrite $L(N, Z)$ so that its shape is that of a
  Riemann sum, therefore enabling us to approximate it by an integral.

  Splitting the range of summation with respect to powers of $q$ yields
  \begin{align*}
    L(N, Z) = \phantom{+\;}&
    \sum_{0\le p<\Lambda}\sum_{q^p\le n<q^{p+1}}n^{\gamma+Z}
    \f{\Phi}{\log_q n, Z} \\
    +\; &
    \sum_{q^\Lambda\le n< q^{\Lambda+\nu}}n^{\gamma+Z}\f{\Phi}{\log_q n, Z}.
  \end{align*}
  We write $n=q^px$ (or $n=q^\Lambda x$ for the second sum), use the
  periodicity of $\Phi$ in $u$ and get
  \begin{align*}
    L(N, Z) = \phantom{+\;}&
    \sum_{0\le p<\Lambda}Q(Z)^p\sum_{\substack{x\in q^{-p}\Z\\ 1\le x < q}}
     x^{\gamma+Z}\f{\Phi}{\log_q x, Z}\frac{1}{q^p} \\
    +\; &
    Q(Z)^\Lambda \sum_{\substack{x\in q^{-\Lambda}\Z\\ 1\le x < q^{\nu}}}
    x^{\gamma+Z}\f{\Phi}{\log_q x, Z}\frac{1}{q^\Lambda}.
  \end{align*}
  The inner sums are Riemann sums converging
  to the corresponding integrals for $p\to\infty$.
  We set
  \begin{equation*}
    I(u, Z)\coloneqq\int_{1}^{q^u}x^{\gamma+Z} \f{\Phi}{\log_q x, Z}\,\dd x.
  \end{equation*}
  It will be convenient to change variables $x=q^w$ in $I(u, Z)$ to get
  \begin{equation}\label{eq:Pseudo-Tauber:I-definition}
    I(u, Z)=(\log q)\int_{0}^{u}Q(Z)^w \f{\Phi}{w, Z}\,\dd w.
  \end{equation}
  We define the error~$\varepsilon_p(u, Z)$ by
  \begin{equation*}
    \sum_{\substack{x\in q^{-p}\Z\\
        1\le x < q^u}}x^{\gamma+Z} \f{\Phi}{\log_q x, Z}\frac1{q^p}=I(u, Z) +
    \varepsilon_{p}(u, Z).
  \end{equation*}
  As the sum and the integral are both analytic in $Z$, their difference
  $\varepsilon_p(u, Z)$ is analytic in $Z$, too.
  We bound~$\varepsilon_{p}(u, Z)$ by the difference of upper and lower
  Darboux sums (step size~$q^{-p}$)
  corresponding to the integral~$I(u, Z)$:
  On each interval of length $q^{-p}$, the maximum and minimum of a
  Hölder continuous function can differ by at most $\Oh{q^{-\alpha p}}$. As
  the integration interval as well as the range for $u$ and $Z$ are finite, this translates to the bound
  $\varepsilon_p(u, Z)=\Oh{q^{-\alpha p}}$ as $p\to\infty$
  uniformly in $0\le u\le 1$ and $\abs{Z}<2r$. This results in
    \begin{multline*}
    L(N, Z)=
    I(1, Z)\sum_{0\le p<\Lambda}Q(Z)^p
    + \sum_{0\le p<\Lambda}Q(Z)^p \varepsilon_{p}(1, Z)
    \\
    + I(\nu, Z)\,Q(Z)^{\Lambda} + Q(Z)^\Lambda  \varepsilon_{\Lambda}(\nu, Z).
  \end{multline*}
  If $\abs{Q(Z)}/q^\alpha=q^{\Re\gamma+1 + \Re Z -\alpha}<1$, i.e.,
  $\Re \gamma+\Re Z<\alpha-1$,
  the second sum involving the integration error
  converges absolutely and uniformly in $Z$
  for $\Lambda\to\infty$ to some analytic function
  $c'(Z)$; therefore, we can
  replace the second sum by
  $c'(Z)+\Oh[\big]{q^{(\Re \gamma+1+2r-\alpha)\Lambda}}=c'(Z)+\Oh[\big]{N^{\Re\gamma+1+2r-\alpha}}$
  in this case.
  If $\Re \gamma + \Re Z>\alpha-1$, then the second sum is
  $\Oh[\big]{q^{(\Re \gamma+2r+1-\alpha)\Lambda}}=\Oh[\big]{N^{\Re\gamma+1+2r-\alpha}}$.
  By our choice of $r$, the case $\Re \gamma+\Re Z=\alpha-1$ cannot occur.
  So in any case, we may write the second sum as
  $c'(Z)+\Oh[\big]{N^{\Re \gamma+1-\beta}}$ by our choice of $r$.
  The last summand involving $\varepsilon_{\Lambda}(\nu, Z)$ is absorbed by
  the error term of the second summand.
  Note that the error term is uniform in $Z$ and, by its construction,
  analytic in~$Z$.

  Thus we end up with
  \begin{equation}\label{eq:Pseudo-Tauber:L-decomposition}
    L(N, Z)= c'(Z) + S(N, Z) + \Oh[\big]{N^{\Re \gamma+1-\beta}}
  \end{equation}
  where
  \begin{equation}\label{eq:pseudo-Tauber-S-definition}
    S(N, Z)\coloneqq I(1, Z)\sum_{0\le
      p<\Lambda}Q(Z)^p+I(\nu, Z)Q(Z)^\Lambda.
  \end{equation}

  It remains to rewrite $S(N, Z)$ in the form required by
  \eqref{eq:pseudo-Tauber-relation}. We emphasise that we will compute $S(N, Z)$
  exactly, i.e., no more asymptotics for $N\to\infty$ will play any rôle.

  \proofparagraph{Construction of $\Psi$}
  We will now rewrite the expression $S(N, Z)$ such that the generating
  function~$\Psi$ (i.e., the fluctuations of the right-hand side
  of~\eqref{eq:pseudo-Tauber-relation}) appears. After this, we
  will gather properties of~$\Psi$ including
  properties of its Fourier coefficients.

  We rewrite~\eqref{eq:pseudo-Tauber-S-definition} as
  \begin{align*}
    S(N, Z)&=
             I(1, Z)\frac{1-\f{Q}{Z}^\Lambda}{1-\f{Q}{Z}}
             + I(\nu, Z) \f{Q}{Z}^\Lambda.
  \end{align*}
  We replace
  $\Lambda$ by $\log_q N - \nu$ and use
  \begin{align*}
    \f{Q}{Z}^\Lambda
    &= \f{Q}{Z}^{\log_q N}\f{Q}{Z}^{-\nu}
    = N^{\gamma+1+Z}  \f{Q}{Z}^{-\nu}
  \end{align*}
  to get
  \begin{equation}\label{eq:Pseudo-Tauber:S-decomposition}
    S(N, Z)= \frac{I(1, Z)}{1-\f{Q}{Z}}
    + N^{\gamma+1+Z} \Psi(\nu, Z)
  \end{equation}
  with
  \begin{equation}\label{eq:Pseudo-Tauber:Psi-definition}
    \Psi(u, Z)\coloneqq  \f{Q}{Z}^{-u}
    \Bigl(I(u, Z)-\frac{I(1, Z)}{1-\f{Q}{Z}}\Bigr).
  \end{equation}

  \proofparagraph{Periodic Extension of $\Psi$}
  A priori, it is not clear that the function~$\Psi(u, Z)$ defined above can
  be extended to a periodic function (and therefore Fourier coefficients
  can be computed later on). The aim now is to show that it is
  possible to do so.
  
  It is obvious that $\f{\Psi}{u, Z}$ is continuously differentiable in $u\in[0,
  1]$.
  We have
  \begin{equation*}
    \f{\Psi}{1, Z}=\frac{I(1, Z)}{\f{Q}{Z}}
    \Bigl(1-\frac{1}{1-\f{Q}{Z}}\Bigr)
    =-\frac{I(1, Z)}{1-\f{Q}{Z}}=\f{\Psi}{0, Z}
  \end{equation*}
  because $I(0, Z)=0$ by \eqref{eq:Pseudo-Tauber:I-definition}.
  The derivative of $\f{\Psi}{u, Z}$ with respect to $u$ is
  \begin{align*}
    \frac{\partial \f{\Psi}{u,Z}}{\partial u}
    &= -\bigl(\log\f{Q}{Z}\bigr) \f{\Psi}{u, Z}
    + (\log q) \f{Q}{Z}^{-u} \f{Q}{Z}^u \f{\Phi}{u, Z}\\
    &= -\bigl(\log\f{Q}{Z}\bigr) \f{\Psi}{u, Z} + (\log q) \f{\Phi}{u, Z},
  \end{align*}
  which implies that
  \begin{equation*}
    \frac{\partial \f{\Psi}{u,Z}}{\partial u}\Bigr|_{u=1}=\frac{\partial \f{\Psi}{u,Z}}{\partial u}\Bigr|_{u=0}.
  \end{equation*}
  We can therefore extend $\f{\Psi}{u, Z}$ to a $1$-periodic continuously
  differentiable function in $u$ on $\R$.

  \proofparagraph{Fourier Coefficients of $\Psi$}
  Knowing that~$\Psi$ is a periodic function, we can now head for
  its Fourier coefficients and relate them to those of~$\Phi$.

  By using equations~\eqref{eq:Pseudo-Tauber:Psi-definition} and
  \eqref{eq:Pseudo-Tauber:I-definition}, $Q(Z)=q^{\gamma+1+Z}$, and
  $\exp(-2\ell\pi iu)=q^{-\chi_\ell u}$ with $\chi_\ell=\frac{2\pi i\ell}{\log q}$, we now express the Fourier coefficients of $\f{\Psi}{u, Z}$ in terms of those of
  $\f{\Phi}{u, Z}$ by
  \begin{multline*}
    \int_{0}^1 \f{\Psi}{u, Z} \exp(-2\ell\pi i u) \,\dd u\\
    \begin{aligned}
    &=
    (\log q)\int_{0\le w\le u\le 1}
      \f{Q}{Z}^{w-u} \f{\Phi}{w, Z} q^{-\chi_\ell u} \,\dd w\,\dd u \\
    &\phantom{=}\;
      -\frac{I(1, Z)}{1-\f{Q}{Z}} \int_0^1
        q^{-(\gamma+1+Z+\chi_\ell)u} \,\dd u\\
    &=
    (\log q)\int_{0\le w\le 1} \f{Q}{Z}^w \f{\Phi}{w, Z}
      \int_{w\le u\le 1} q^{-(\gamma+1+Z+\chi_\ell)u} \,\dd u\,\dd w \\
    &\phantom{=}\;
      -\frac{I(1, Z)}{(1-\f{Q}{Z})(\log q)(\gamma+1+Z+\chi_\ell)}
        \Bigl(1-\frac{1}{\f{Q}{Z}}\Bigr)\\
    &=
    \frac{1}{\gamma+1+Z+\chi_\ell}
      \int_0^1 \f{Q}{Z}^w \f{\Phi}{w, Z}
      \Bigl(q^{-(\gamma+1+Z+\chi_\ell)w}-\frac1{\f{Q}{Z}}\Bigr)
        \,\dd w \\
    &\phantom{=}\;
      + \frac{I(1, Z)}{\f{Q}{Z}(\log q)(\gamma+1+Z+\chi_\ell)}\\
    &=
    \frac{1}{\gamma+1+\chi_\ell+Z}
      \int_0^1 \f{\Phi}{w, Z} \fexp{-2\ell\pi i w} \,\dd w\\
    &\phantom{=}\;
      -\frac{1}{\f{Q}{Z} (\gamma+1+\chi_\ell+Z)}
        \int_0^1 \f{Q}{Z}^w \f{\Phi}{w, Z} \,\dd w\\
    &\phantom{=}\;
      + \frac{I(1, Z)}{\f{Q}{Z}(\log q)(\gamma+1+Z+\chi_\ell)}.
    \end{aligned}
  \end{multline*}
  The second and third summands cancel, and we get
  \begin{equation}\label{eq:Pseudo-Tauber:Fourier-Coefficients-GF}
    \Bigl(\gamma+1+\chi_\ell + Z\Bigr)
    \int_{0}^1 \f{\Psi}{u, Z}\exp(-2\ell\pi i u)\,\dd u =
    \int_0^1\f{\Phi}{w, Z}
    \exp(-2\ell\pi i w)\,\dd w.
  \end{equation}

  \proofparagraph{Extracting Coefficients}
  So far, we have proven everything in terms of generating functions.
  We now extract the coefficients of these power series which will
  give us the result claimed in Proposition~\ref{proposition:pseudo-Tauber}.

  By~\eqref{eq:Pseudo-Tauber:Psi-definition}, $\f{\Psi}{u, Z}$ is analytic in $Z$
  for $0<\abs{Z}<2r$. If $q^{\gamma+1}\neq 1$, then it is analytic in $Z=0$, too. If
  $q^{\gamma+1}=1$, then~\eqref{eq:Pseudo-Tauber:Psi-definition} implies that $\f{\Psi}{u, Z}$
  might have a simple pole in $Z=0$.
  Note that all other possible poles have been excluded by our choice of $r$.
  For $j\ge -1$, we write
  \begin{equation*}
    \Psi_j(u)\coloneqq [Z^j]\f{\Psi}{u, Z}
  \end{equation*}
  and use Cauchy's formula to obtain
  \begin{equation*}
    \Psi_j(u) = \frac1{2\pi i}\oint_{\abs{Z}=r}\frac{\f{\Psi}{u, Z}}{Z^{j+1}}\,\dd Z.
  \end{equation*}
  This and the properties of $\f{\Psi}{u, Z}$ established above
  imply that $\Psi_j$ is a $1$-periodic continuously differentiable function.

  Inserting \eqref{eq:Pseudo-Tauber:S-decomposition}
  in~\eqref{eq:Pseudo-Tauber:L-decomposition} and extracting the coefficient of
  $Z^{m-1}$ using Cauchy's theorem and the analyticity of the error in $Z$ yields~\eqref{eq:pseudo-Tauber-relation}
  with $c=[Z^{m-1}]\bigl(c'(Z) + \frac{I(1, Z)}{1-\f{Q}{Z}}\bigr)$.
  Rewriting
  \eqref{eq:Pseudo-Tauber:Fourier-Coefficients-GF} in terms of $\Psi_j$ and $\Phi_j$ leads to~\eqref{eq:pseudo-Tauber-Fourier}.
  Note that we have to add $\Oh{Z^m}$ in~\eqref{eq:pseudo-Tauber-Fourier} to
  compensate the fact that we do not include $\psi_{j\ell}$ for $j\ge m$.
\end{proof}

We prove a uniqueness result.

\begin{lemma}\label{lemma:uniqueness-fluctuations}
  Let $m$ be a positive integer, $q>1$ be a real number, $\gamma\in\C$ such
  that $\gamma\notin \frac{2\pi i}{\log q}\Z$, $c\in\C$, and $\Psi_0$, \ldots, $\Psi_{m-1}$ and $\Xi_0$,
  \ldots, $\Xi_{m-1}$ be $1$-periodic continuous functions such that
  \begin{equation}\label{eq:Fourier:function-comparison}
    \sum_{0\le k<m}(\log_qN)^k\Psi_k(\log_q N) = \sum_{0\le k<m}(\log_q N)^k
    \Xi_k(\log_q N) + c N^{-\gamma} + \oh{1}
  \end{equation}
  for integers $N\to\infty$. Then $\Psi_k=\Xi_k$ for $0\le k<m$.
\end{lemma}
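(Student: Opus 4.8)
The statement is a uniqueness result: if two "polynomial-in-$\log_q N$ with periodic-function coefficients" expressions agree up to $cN^{-\gamma}+o(1)$, then they must have the same coefficient functions (and in fact $c$ plays no role since $N^{-\gamma}\to 0$ when $\Re\gamma>0$; but here we do not even assume $\Re\gamma>0$, so the $cN^{-\gamma}$ term has to be handled). The first move is to subtract: setting $\Theta_k:=\Psi_k-\Xi_k$, which are again $1$-periodic continuous functions, the hypothesis becomes
\begin{equation*}
  \sum_{0\le k<m}(\log_q N)^k\Theta_k(\log_q N) = -cN^{-\gamma}+\oh{1}
\end{equation*}
for integers $N\to\infty$, and we must show all $\Theta_k$ vanish identically.

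First I would dispose of the $cN^{-\gamma}$ term. Write $N=q^{L}$ with $L=\log_q N$; as $N$ ranges over the integers, $L$ ranges over a dense subset of $[0,\infty)$, and more importantly $\fractional{L}=\fractional{\log_q N}$ takes values densely in $[0,1)$ along suitable subsequences. The key observation is that $N^{-\gamma}=q^{-\gamma L}=q^{-\gamma\floor{L}}q^{-\gamma\fractional{L}}$; since $\gamma\notin\frac{2\pi i}{\log q}\Z$ we have $q^{-\gamma}\ne 1$, so $q^{-\gamma\floor{L}}$ does not converge as $\floor{L}\to\infty$ unless $c=0$ or $\Re\gamma>0$. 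More carefully: I would first show that $\Theta_{m-1}\equiv 0$ by the standard argument of dividing by $(\log_q N)^{m-1}$ and letting $N\to\infty$ along a subsequence with $\fractional{\log_q N}\to u_0$ for an arbitrary fixed $u_0\in[0,1)$; the right-hand side divided by $(\log_q N)^{m-1}$ tends to $0$ (note $N^{-\gamma}/(\log_q N)^{m-1}$ is bounded, in fact $\Oh{(\log N)^{1-m}}$ if $\Re\gamma\ge 0$, and if $\Re\gamma<0$ one instead first multiplies through by $N^{\gamma}$ — see below), so by continuity $\Theta_{m-1}(u_0)=0$. Since $u_0$ was arbitrary, $\Theta_{m-1}\equiv 0$. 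Then one descends: with $\Theta_{m-1}\equiv 0$ the leading term is now $(\log_q N)^{m-2}\Theta_{m-2}(\log_q N)$, and the same argument gives $\Theta_{m-2}\equiv 0$, and so on by downward induction on $k$ until only $\Theta_0(\log_q N)=-cN^{-\gamma}+\oh{1}$ remains.

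The residual equation $\Theta_0(\log_q N)=-cN^{-\gamma}+\oh{1}$ is the heart of the matter. Here $\Theta_0$ is continuous and $1$-periodic, hence bounded, so $c N^{-\gamma}$ is bounded along the integers, forcing $\Re\gamma\ge 0$ (if $\Re\gamma>0$ then $N^{-\gamma}\to 0$ and we immediately get $\Theta_0\equiv 0$ by the density-of-$\fractional{\log_q N}$ argument and $c$ is then irrelevant; it remains to treat $\Re\gamma=0$). For $\Re\gamma=0$ write $\gamma=i\tau$ with $\tau\in\R$, $\tau\ne 0$ (since $\gamma\notin\frac{2\pi i}{\log q}\Z$, in particular $\gamma\ne 0$; the case of nonzero $\gamma$ on the imaginary axis with $e^{-\gamma\log q}\ne 1$). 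Then $N^{-\gamma}=e^{-i\tau\log N}$ and we have $\Theta_0(\log_q N)+ce^{-i\tau\log N}=\oh{1}$. Using that $\Theta_0$ is $1$-periodic in $\log_q N$ while $e^{-i\tau\log N}=e^{-i\tau(\log q)\log_q N}$ is periodic in $\log_q N$ with a period that is irrational relative to $1$ precisely because $\tau\log q\notin 2\pi\Z$ (i.e. $\gamma\notin\frac{2\pi i}{\log q}\Z$), a Weyl equidistribution / Kronecker argument along the integers $N$ shows the pair $(\fractional{\log_q N},\ \fractional{\tau(\log q)\log_q N/(2\pi)})$ is dense (indeed equidistributed) in $[0,1)^2$ — so for the sum of the two periodic functions to tend to $0$ we need both $\Theta_0\equiv 0$ and $c=0$. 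I expect this last equidistribution step — ensuring we can realize the two fractional parts as an essentially free pair along the \emph{integers} $N$, not just along reals — to be the main technical obstacle; it is a classical fact but must be invoked carefully, and it is exactly where the hypothesis $\gamma\notin\frac{2\pi i}{\log q}\Z$ is used. With $\Theta_k\equiv 0$ for all $k$, we conclude $\Psi_k=\Xi_k$, completing the proof. (Alternatively, and perhaps more cleanly, one can bypass equidistribution: multiply through by $N^{\gamma}$ and integrate/average $\frac1L\int_0^L\cdots$, using that the Cesàro average of a nonconstant exponential $e^{i\tau t}$ vanishes while the average of $N^{\gamma}\Theta_0(\log_q N)$ does not unless $\Theta_0\equiv0$; I would present whichever of the two is shorter.)
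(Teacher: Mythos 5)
Your overall strategy coincides with the paper's: subtract, peel off the highest power of $\log_q N$ by letting $N\to\infty$ along $N_j=\floor{q^{j+u}}$ so that $\fractional{\log_q N_j}\to u$, and split into the cases $\Re\gamma<0$, $\Re\gamma>0$, $\Re\gamma=0$. The first two cases and the descent are fine. The gap is in the decisive case $\Re\gamma=0$, $k=0$: your claim that the pair $\bigl(\fractional{\log_q N},\,\fractional{\alpha\log_q N}\bigr)$ with $\alpha=\tau\log q/(2\pi)$ is dense (let alone equidistributed) in $[0,1)^2$ along the integers $N$ is \emph{false} whenever $\alpha$ is rational but not an integer --- a case permitted by the hypothesis $\gamma\notin\frac{2\pi i}{\log q}\Z$, which only guarantees $\alpha\notin\Z$. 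Indeed, writing $\log_q N=j+u+\nu_j$ with $\nu_j\to 0$, the second coordinate is $\fractional{\alpha j+\alpha u}+\oh{1}$, and for $\alpha=p/d\in\Q\setminus\Z$ the quantity $\fractional{\alpha j}$ takes only $d$ values, so the pair is confined to finitely many translates of a curve and is nowhere near dense in the square. (As a side remark, even the first coordinate alone is dense but \emph{not} equidistributed --- this is the Benford phenomenon --- which also undermines the Ces\`aro-averaging alternative you sketch, since averaging over integers $N$ does not see Lebesgue measure in $\fractional{\log_q N}$.)

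What actually closes this case --- and what the paper does --- is weaker and more robust than joint density: for each fixed $u$ the relation forces $\lim_{j\to\infty}c\,e^{2\pi i\beta(j+u+\nu_j)}$ to exist (it must equal $\Xi_0(u)-\Psi_0(u)$), and since $e^{2\pi i\beta j}$ does not converge for any $\beta\notin\Z$ (the paper exhibits two subsequences $j=s_\ell$ and $j=s_\ell+1$, built from continued-fraction convergents of $\beta$, with distinct limits $e^{2\pi i\beta u}$ and $e^{2\pi i\beta(1+u)}$), one concludes $c=0$ and then $\Xi_0(u)=\Psi_0(u)$. This argument treats rational and irrational $\beta$ uniformly and is exactly where the hypothesis $\gamma\notin\frac{2\pi i}{\log q}\Z$ enters. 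Replacing your density step by this non-convergence argument repairs the proof; as written, the proposal does not establish the lemma for, e.g., $q=e^{2\pi}$ and $\gamma=i/2$.
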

\begin{proof}If $\Re \gamma <0$ and $c\neq 0$, then
  \eqref{eq:Fourier:function-comparison} is impossible as the growth of the
  right-hand side of the equation is larger than that on the left-hand side.
  So we can exclude this
  case from further consideration.
  We proceed indirectly and choose $k$ maximally such that $\Xi_k\neq\Psi_k$.
  Dividing \eqref{eq:Fourier:function-comparison} by $(\log_q N)^k$ yields
  \begin{equation}\label{eq:comparison}
    (\Xi_k-\Psi_k)(\log_q N) = cN^{-\gamma}\iverson{k=0} + \oh{1}
  \end{equation}
  for $N\to\infty$. Let
  $0< u<1$ and set $N_j=\floor{q^{j+u}}$. We
  clearly have $\lim_{j\to\infty} N_j=\infty$. Then
  \begin{equation*}
    j+u + \log_q(1-q^{-j-u}) = \log_q(q^{j+u}-1)\le \log_q N_j \le j+u.
  \end{equation*}
  We define $\nu_j\coloneqq \log_q N_j-j-u$ and see that $\nu_j=\Oh{q^{-j}}$ for
  $j\to\infty$, i.e., $\lim_{j\to\infty} \nu_j = 0$.
  This implies that $\lim_{j\to\infty}\fractional{\log_q N_j}=u$ and therefore
  \begin{equation*}
    \lim_{j\to\infty}(\Xi_k-\Psi_k)(\log_q N_j)=\lim_{j\to\infty}(\Xi_k-\Psi_k)(\fractional{\log_q N_j})=\Xi_k(u)-\Psi_k(u).
  \end{equation*}
  Setting $N=N_j$ in \eqref{eq:comparison} and letting $j\to \infty$ shows that
  \begin{equation}\label{eq:comparison-limit}
    \Xi_k(u)-\Psi_k(u) = \lim_{j\to\infty}cN_j^{-\gamma}\iverson{k=0}.
  \end{equation}
  If $k\neq 0$ or $\Re \gamma>0$, we immediately conclude that
  $\Xi_k(u)-\Psi_k(u)=0$. If $\Re \gamma<0$ we have
  $c=0$, which again implies that $\Xi_k(u)-\Psi_k(u)=0$.

  Now we assume that $\Re \gamma=0$ and $k=0$. We set
  $\beta\coloneqq -\frac{\log q}{2\pi i}\gamma$, which implies that
  $N^{-\gamma}=\exp(2\pi i \beta\log_q N)$. We choose sequences
  $(r_\ell)_{\ell\ge 1}$ and $(s_\ell)_{\ell\ge 1}$ such that
  $\lim_{\ell\to\infty }s_\ell=\infty$ and $\lim_{\ell\to\infty}\abs{s_\ell
    \beta - r_\ell}=0$: For rational $\beta=r/s$, we simply take $r_\ell=\ell
  r$ and $s_\ell=\ell s$, and for irrational $\beta$, we consider the sequence of
  convergents $(r_\ell/s_\ell)_{\ell\ge 1}$ of the continued fraction of
  $\beta$ and the required properties follow from the theory of continued
  fractions; see for example \cite[Theorems~155
  and~164]{Hardy-Wright:1975}. By using $\log_q N_j = j+u+\nu_j$, we get
  \begin{align*}
    \lim_{\ell\to\infty}N_{s_\ell}^{-\gamma} &= \lim_{\ell\to\infty}\exp(2\pi i
    (r_\ell + \beta u + (s_\ell \beta - r_\ell) + \beta \nu_{s_\ell})=\exp(2\pi i \beta u),\\
    \lim_{\ell\to\infty}N_{s_\ell+1}^{-\gamma} &= \lim_{\ell\to\infty}\exp(2\pi i
    (r_\ell + \beta + \beta u + (s_\ell \beta - r_\ell)+\beta \nu_{s_\ell+1})=\fexp[\big]{2\pi i \beta (1+u)}.
  \end{align*}
  These two limits are distinct as $\beta\notin\Z$ by assumption.
  Thus $\lim_{j\to\infty}N_j^{-\gamma}$ does not exist. Therefore,
  \eqref{eq:comparison-limit} implies that $c=0$ and therefore $\Xi_k(u)-\Psi_k(u)=0$.

  We proved that $\Xi_k(u)=\Psi_k(u)$ for $u\notin\Z$. By continuity, this
  also follows for all $u \in \R$; contradiction.
\end{proof}

\subsection{Proof of Theorem~\ref{theorem:use-Mellin--Perron}}

We again start with an outline of the proof.
\begin{proof}[Overview of the Proof of Theorem~\ref{theorem:use-Mellin--Perron}]
The idea is to compute the repeated summatory function of $F$
twice: On the one hand, we use the pseudo-Tauberian Proposition~\ref{proposition:pseudo-Tauber} to rewrite the
right-hand side of \eqref{eq:F-N-periodic} in terms of periodic
functions~$\Psi_{aj}$. On the other hand, we compute it using a higher
order Mellin--Perron summation
formula, relating it to the singularities of $\calF$.
More specifically, the expansions at the singularities of $\calF$ give
the Fourier coefficients of $\Psi_{aj}$. The Fourier coefficients of the functions
$\Psi_{aj}$ are related to those of the functions $\Phi_j$ via~\eqref{eq:pseudo-Tauber-Fourier}.
\end{proof}

And up next comes the actual proof.

\begin{proof}[Proof of Theorem~\ref{theorem:use-Mellin--Perron}]
  \proofparagraph{Initial observations and notations}
  As $\Phi_j$ is Hölder continuous, its Fourier series converges by Dini's
  criterion; see, for example, \cite[p.~52]{Zygmund:2002:trigon}.

  For any sequence $g$ on $\Z_{>0}$, we set $(\calS g)(N)\coloneqq \sum_{1\le n< N}g(n)$.
  We set $A=1 + \max\set{\floor{\eta}, 0}$. In particular, $A$ is a positive
  integer with $A>\eta$.

  \proofparagraph{Asymptotic Summation}
  We first compute the $A$th repeated summatory function~$\calS^A F$
  of~$F$ (i.e., the $(A+1)$th repeated summatory function $\calS^{A+1} f$ of
  the function~$f$) by applying Proposition~\ref{proposition:pseudo-Tauber} $A$ times.
  This results in an asymptotic expansion involving new periodic fluctuations
  while keeping track of the relation between the Fourier coefficients of
  the original fluctuations and those of the new fluctuations.

  A simple induction based on~\eqref{eq:F-N-periodic} and using
  Proposition~\ref{proposition:pseudo-Tauber}
  shows that
  there exist
  $1$-periodic continuous functions $\Psi_{aj}$ for $a\ge 0$ and $-1\le j<m$
  and some constants $c_{ab}$ for $0\le b<a$ such that
  \begin{equation}\label{eq:S-a+1-f-asymptotic}
    (\calS^{a+1} f)(N) = \sum_{0\le b<a}c_{ab}N^b +
    N^{\gamma+a}\sum_{\substack{j+k=m-1\\-1\le j<m}} \frac{(\log N)^k}{k!}
    \Psi_{aj}(\fractional{\log_q N}) + \Oh{N^{\gamma_0+a}}
  \end{equation}
  for integers $N\to\infty$. In fact, $\Psi_{0j}=\Phi_j$ for
  $0\le j<m$. For $a\ge 1$ and $-1\le j<m$, $\Psi_{aj}$ is continuously differentiable.
  Note that the case that $q^{\gamma+a+1}=1$ occurs for at most one $0\le a<A$,
  which implies that the number of non-vanishing fluctuations increases at most
  once in the application of Proposition~\ref{proposition:pseudo-Tauber}.
  Also note that the assumption $\alpha>\Re \gamma-\gamma_0$ implies that the
  error terms arising in the application of
  Proposition~\ref{proposition:pseudo-Tauber} are absorbed by the error term
  stemming from~\eqref{eq:F-N-periodic}.

  We denote the corresponding Fourier coefficients by
  \begin{equation*}
    \psi_{aj\ell}\coloneqq \int_{0}^1 \Psi_{aj}(u)\exp(-2\ell\pi i u)\,\dd u
  \end{equation*}
  for $0\le a\le A$, $-1\le j<m$, $\ell\in\Z$. By~\eqref{eq:pseudo-Tauber-Fourier}
  the generating functions of the Fourier coefficients fulfil
  \begin{equation*}
        \sum_{-1\le j<m}\psi_{aj\ell}Z^j = (\gamma+a+1+\chi_\ell + Z)\sum_{-1\le j<m}\psi_{(a+1)j\ell}Z^j
        +\Oh{Z^m}
  \end{equation*}
  for $0\le a<A$,
  $\ell\in\Z$ and $Z\to 0$. Iterating this recurrence
  yields
  \begin{equation}\label{eq:Fourier:Fourier-coefficient-recursion-full}
        \sum_{0\le j<m}\psi_{0j\ell}Z^j = \biggl(\prod_{1 \le a \le A} (\gamma+a+\chi_\ell + Z)\biggr)\sum_{-1\le j<m}\psi_{Aj\ell}Z^j
        +\Oh{Z^m}
  \end{equation}
  for $\ell\in\Z$ and $Z\to 0$.

  \proofparagraph{Explicit Summation}
  We now compute $\calS^{A+1} f$ explicitly with the aim of decomposing it into
  one part which can be computed by the $A$th order Mellin--Perron summation
  formula and another part which is smaller and can be absorbed by an error term.

  Explicitly, we have
  \begin{equation*}
    (\calS^{a+1}f)(N) = \sum_{1\le n_1<n_2<\cdots<n_{a+1}<N}f(n_1)
    = \sum_{1\le n<N}f(n)\sum_{n<n_2<\cdots<n_{a+1}<N}1
  \end{equation*}
  for $0\le a \le A$. Note that we formally write the outer sum over the range
  $1\le n<N$ although the inner sum is empty (i.e., equals~$0$) for $n\ge N-a$; this will be useful
  later on. The inner sum counts the number of selections of $a$ elements out
  of $\set{n+1,\ldots, N-1}$, thus we have
  \begin{equation}\label{eq:Fourier:explicit-summation}
    (\calS^{a+1}f)(N) = \sum_{1\le n< N}\binom{N-n-1}{a}f(n)=\sum_{1\le n< N}\frac1{a!}(N-n-1)^{\underline{a}}f(n)
  \end{equation}
  for $0\le a\le A$ and falling factorials
  $z^{\underline{a}}\coloneqq z(z-1)\dotsm (z-a+1)$.

  The polynomials $\frac1{a!}(U-1)^{\underline a}$, $0\le a\le A$, are clearly a basis of
  the space of polynomials in $U$ of degree at most $A$. Thus, there exist
  rational numbers $b_0$, \ldots, $b_A$ such that
  \begin{equation*}
    \frac{U^A}{A!}=\sum_{0 \le a \le A} \frac{b_a}{a!} (U-1)^{\underline{a}}.
  \end{equation*}
  Comparing the coefficients of  $U^A$ shows that $b_A=1$. Substitution of
  $U$ by $N-n$, multiplication by $f(n)$ and summation over $1\le n<N$
  yield
  \begin{equation*}
    \frac1{A!}\sum_{1\le n<N}(N-n)^A f(n) = \sum_{0 \le a \le A} b_a (\calS^{a+1}f)(N)
  \end{equation*}
  by~\eqref{eq:Fourier:explicit-summation}. When inserting the asymptotic
  expressions from \eqref{eq:S-a+1-f-asymptotic}, the summands involving
  fluctuations for $0\le a<
  A$ are absorbed by the error term~$\Oh{N^{\gamma_0+A}}$
  of the summand for $a=A$ because $\Re\gamma - \gamma_0 < 1$. Thus there are
  some constants $c_b$ for $0\le b<A$ such that
  \begin{multline}\label{eq:Mellin-Perron-sum}
    \frac1{A!}\sum_{1\le n<N}(N-n)^A f(n) = \sum_{0\le b<A}c_{b}N^b \\+
    N^{\gamma+A}\sum_{\substack{j+k=m-1\\-1\le j<m}} \frac{(\log N)^k}{k!}
    \Psi_{Aj}(\fractional{\log_q N}) + \Oh{N^{\gamma_0+A}}
  \end{multline}
  for integers $N\to\infty$.

  If $\gamma+A=b+\chi_{\ell'}$ for some $0\le b<A$ and $\ell'\in\Z$, then we
  assume without loss of generality that $c_{b}=0$: Otherwise, we replace
  $\Psi_{A(m-1)}(u)$ by $\Psi_{A(m-1)}(u) + c_{b}\exp(-2\ell'\pi i u)$ and
  $c_{b}$ by $0$. Both~\eqref{eq:Mellin-Perron-sum} and
  \eqref{eq:Fourier:Fourier-coefficient-recursion-full} remain intact: The
  former trivially, the latter because the factor for $a=A-b$
  in~\eqref{eq:Fourier:Fourier-coefficient-recursion-full} equals
  $\gamma+A-b-\chi_{\ell'} + Z=Z$ which compensates
  the fact that the Fourier coefficient $\psi_{A(m-1)(-\ell')}$ is modified.

  \proofparagraph{Mellin--Perron summation}
  We use the $A$th order Mellin--Perron summation formula to write the main contribution
  of $\calS^{A+1} f$ as determined above in terms of new periodic fluctuations $\Xi_j$ whose
  Fourier coefficients are expressed in terms of residues of a suitably
  modified version of the Dirichlet generating function $\calF$.

  Without loss of generality, we assume that $\sigmaabs>0$: The growth
  condition~\eqref{eq:Dirichlet-order} trivially holds with $\eta=0$ on the
  right of the abscissa of absolute convergence of the Dirichlet series.
  By the $A$th order Mellin--Perron summation
  formula (see \cite[Theorem~2.1]{Flajolet-Grabner-Kirschenhofer-Prodinger:1994:mellin}), we have
  \begin{equation*}
    \frac1{A!}\sum_{1\le n<N}(N-n)^A f(n)  = \frac1{2\pi
      i}\int_{\sigmaabs+1-i\infty}^{\sigmaabs+1+i\infty} \frac{\calF(s) N^{s+A}}{s(s+1)\dotsm(s+A)}\,\dd s
  \end{equation*}
  with the arbitrary choice $\sigmaabs+1>\sigmaabs$ for the real part of
  the line of integration.

  The growth condition~\eqref{eq:Dirichlet-order} allows us to shift the
  line of integration to the left such that
  \begin{align*}
    \frac1{A!}\sum_{1\le n<N}&(N-n)^A f(n) \\ &=
    \sum_{\ell\in\Z}
    \Res[\Big]{\frac{\calF(s)N^{s+A}}{s(s+1)\dotsm (s+A)}}%
    {s=\gamma+\chi_\ell}\\
    &\phantom{=}\hspace*{0.65em}+ \sum_{0\le a\le \min\{-\gamma_0, A\}}(-1)^a\frac{\calF(-a)}{a!(A-a)!}N^{A-a}\iverson[\Big]{\gamma\notin -a+\frac{2\pi i}{\log q}\Z}\\
    &\phantom{=}\hspace*{0.65em}+ \frac1{2\pi
      i}\int_{\gamma_0-i\infty}^{\gamma_0+i\infty} \frac{\calF(s) N^{s+A}}{s(s+1)\dotsm (s+A)}\,\dd s.
  \end{align*}
  The summand for~$a$ in the second term corresponds to a possible pole at $s=-a$ which is not taken care of in the first sum; note that $\calF(s)$ is analytic at $s=-a$
  in this case
  by assumption because of~$\gamma_0<-a$.

  We now compute the residue at $s=\gamma+\chi_\ell$. We use
  \begin{equation*}
    N^{s+A} = N^{\gamma+A+\chi_\ell}\sum_{k\ge 0}\frac{(\log N)^k}{k!}  (s-\gamma-\chi_\ell)^k
  \end{equation*}
  to split up the residue as
  \begin{equation*}
    \Res[\Big]{\frac{\calF(s)N^{s+A}}{s(s+1)\dotsm(s+A)}}{s=\gamma+\chi_\ell} =
    N^{\gamma+A+\chi_\ell}\sum_{\substack{k+j=m-1\\-1\le j<m}}\frac{(\log N)^k}{k!}  \xi_{j\ell}
  \end{equation*}
  with
  \begin{equation}\label{eq:Fourier:xi-as-residue}
    \xi_{j\ell} =
    \Res[\Big]{\frac{\calF(s)(s-\gamma-\chi_\ell)^{m-1-j}}{s(s+1)\dotsm(s+A)}}{s=\gamma+\chi_\ell}
  \end{equation}
  for $j\ge -1$.
  Note that we allow $j=-1$ for the case of $\gamma\in -a+\frac{2\pi i}{\log q}\Z$
  for some $1\le a\le A$ when
  $\calF(s)/\bigl(s\dotsm (s+A)\bigr)$ might have a pole of order $m+1$ at
  $s=-a$.
  Using the growth condition~\eqref{eq:Dirichlet-order} and the
  choice of~$A$ yields
  \begin{equation}\label{eq:Fourier:growth-frac}
    \frac{\calF(s)}{s(s+1)\dotsm(s+A)}
    = \Oh[\big]{\abs{\Im s}^{-1-A+\eta}} = \oh[\big]{\abs{\Im s}^{-1}}
  \end{equation}
  for $\abs{\Im s}\to\infty$ and $s$ which are at least a distance~$\delta$
  away from the poles~$\gamma+\chi_\ell$.
  By writing the residue in~\eqref{eq:Fourier:xi-as-residue}
  in terms of an integral over a rectangle around
  $s=\gamma+\chi_\ell$ (distance again at least~$\delta$ away from $\gamma+\chi_\ell$),
  we see that \eqref{eq:Fourier:growth-frac} implies
  \begin{equation}\label{eq:Fourier:psi-growth}
    \xi_{j\ell} = \Oh[\big]{\abs{\ell}^{-1-A+\eta}} = \oh[\big]{\abs{\ell}^{-1}}
  \end{equation}
  for $\abs{\ell}\to\infty$. Moreover,
  by~\eqref{eq:Fourier:growth-frac}, we see that
  \begin{equation*}
    \frac1{2\pi i} \int_{\gamma_0-i\infty}^{\gamma_0+i\infty}
    \frac{\calF(s) N^{s+A}}{s(s+1)\dotsm(s+A)}\,\dd s
    = \Oh{N^{\gamma_0+A}}.
  \end{equation*}

  Thus we proved that
  \begin{multline}\label{eq:calculate-Fourier-first}
    \frac1{A!}\sum_{1\le n<N}(N-n)^A f(n) = N^{\gamma+A}\sum_{\substack{k+j=m-1\\-1\le
      j<m}} \frac{(\log N)^k}{k!}
  \Xi_j(\log_q N) \\
+ \sum_{0\le a\le\min\{-\gamma_0,A\}}(-1)^a\frac{\calF(-a)}{a!(A-a)!}N^{A-a}\iverson[\Big]{\gamma\notin -a+\frac{2\pi i}{\log q}\Z}+ \Oh{N^{\gamma_0+A}}
  \end{multline}
  for
  \begin{equation}\label{eq:Psi-tilde-k-definition}
    \Xi_j(u) =\sum_{\ell\in\Z}\xi_{j\ell} \exp(2\ell\pi i u)
  \end{equation}
  where the $\xi_{j\ell}$ are given in \eqref{eq:Fourier:xi-as-residue}.
  By \eqref{eq:Fourier:psi-growth}, the Fourier series
  \eqref{eq:Psi-tilde-k-definition} converges uniformly and absolutely. This
  implies that $\Xi_j$  is a $1$-periodic continuous function.

  \proofparagraph{Fourier Coefficients}
  We will now compare the two asymptotic expressions for $\calS^{A+1} f$ obtained so far
  to see that the fluctations coincide. We know explicit expressions for the
  Fourier coefficients of the $\Xi_j$ in terms of residues, and we know how
  the Fourier coefficients of the fluctuations of the repeated summatory
  function are related to the Fourier coefficients of the fluctuations of $F$.
  Therefore, we are able to compute the latter.

  By
  \eqref{eq:Mellin-Perron-sum}, \eqref{eq:calculate-Fourier-first},
  elementary asymptotic considerations for the terms $N^b$ with $b>\Re \gamma+A$,
  Lemma~\ref{lemma:uniqueness-fluctuations} and the fact that $c_{b}=0$ if
  $b\in \gamma+A+\frac{2\pi i}{\log q}\Z$ for some $0\le b<A$, we see
  that $\Xi_j=\Psi_{Aj}$ for $-1\le j<m$. This immediately implies that $\calF(0)=0$ if
  $\gamma_0<0$ and $\gamma\notin\frac{2\pi i}{\log q}\Z$.

  To compute the Fourier coefficients $\psi_{Aj\ell}=\xi_{j\ell}$, we set
  $Z\coloneqq s-\gamma-\chi_\ell$ to rewrite~\eqref{eq:Fourier:xi-as-residue}
  using \eqref{eq:Fourier:F-s-principal-part} as
  \begin{equation*}
    \psi_{Aj\ell}=[Z^{-1}]
    \frac{\sum_{b\ge 0}\varphi_{b\ell}Z^{b-j-1}}{\prod_{1 \le a \le A} (\gamma+a+\chi_\ell+Z)}
    =[Z^{j}]
    \frac{\sum_{b\ge 0}\varphi_{b\ell} Z^{b}}{\prod_{1 \le a \le A} (\gamma+a+\chi_\ell+Z)}
  \end{equation*}
  for $-1\le j<m$ and $\ell\in\Z$.
  This is equivalent to
  \begin{equation*}
    \sum_{-1\le j<m}\psi_{Aj\ell}Z^j=\frac{\sum_{j\ge 0}\varphi_{j\ell}
      Z^{j}}{\prod_{1 \le a \le A} (\gamma+a+\chi_\ell+Z)} + \Oh{Z^{m}}
  \end{equation*}
  for $\ell\in\Z$ and $Z\to 0$. Clearing the denominator and
  using~\eqref{eq:Fourier:Fourier-coefficient-recursion-full} as announced in Remark~\ref{remark:recurrence-fluctuation} lead to
  \begin{equation*}
    \sum_{0\le j< m}\psi_{0j\ell}
    Z^{j}=\sum_{j\ge 0}\varphi_{j\ell}
    Z^{j} + \Oh{Z^{m}}
  \end{equation*}
  for $\ell\in\Z$ and $Z\to 0$. Comparing coefficients shows that
  $\psi_{0j\ell}=\varphi_{j\ell}$ for $0\le j<m$ and $\ell\in\Z$.
  This proves~\eqref{eq:Fourier:fluctuation-as-Fourier-series}.
\end{proof}

\section{Proof of Theorem~\ref{theorem:simple}}\label{section:proof-theorem-simple}
\begin{proof}[Proof of Theorem~\ref{theorem:simple}]
  By Remark~\ref{remark:regular-sequence-as-a-matrix-product}, we have
  $x(n)=e_1 f(n)v(0)$. If $v(0)=0$, there is nothing to show.
  Otherwise, as observed in Section~\ref{section:q-regular-matrix-product},
  $v(0)$ is a right eigenvector of $A_0$ associated to the eigenvalue $1$.
  As a consequence, $Kv(0)$, $\vartheta_m v(0)$ and $\vartheta v(0)$ all vanish.
  Therefore, \eqref{eq:formula-X-n} follows from Theorem~\ref{theorem:main}
  by multiplication by $e_1$ and $v(0)$ from left and right, respectively. Note
  that the notation is somewhat different: Instead of powers $(\log_q N)^k$ in
  Theorem~\ref{theorem:main} we write $(\log N)^k/k!$ here.

  The functional equation \eqref{eq:functional-equation-V} follows from
  Theorem~\ref{theorem:Dirichlet-series} for $n_0=1$ by multiplication from right
  by $v(0)$.

  For computing the Fourier coefficients, we denote the rows of $T$ by $w_1$,
  \ldots, $w_d$. Thus $w_a$ is a generalised left eigenvector of $C$ of some
  order $m_a$ associated to some eigenvalue $\lambda_a$ of $C$. We can write
  $e_1=\sum_{1 \le a \le d} c_a w_a$ for some suitable constants $c_1$, \ldots, $c_d$.
  For $1\le a\le d$, we consider the sequence~$h_a$ on $\Z_{>0}$
  with
  \begin{equation*}
    h_a(n)=w_a\bigl(v(n)+v(0)\iverson{n=1}\bigr).
  \end{equation*}
  The reason for incorporating
  $v(0)$ into the value for $n=1$ is that the corresponding Dirichlet series $\calH^{(a)}(s)\coloneqq \sum_{n\ge
    1}n^{-s}h_a(n)$ only takes values at $n\ge 1$ into account. By definition, we
  have  $\calH^{(a)}(s)=w_av(0) + w_a\calV(s)$. Taking the linear combination yields
  $\sum_{1 \le a \le d} c_a\calH^{(a)}(s)=x(0) + \calX(s)$. We choose
  $\gamma_0> \log_q R$ such that there are no eigenvalues
  $\lambda\in\sigma(C)$ with $\log_q R<\log_q\lambda\le \gamma_0$ and such
  that $\gamma_0\notin \Z_{\le 0}$.

  By
  Theorem~\ref{theorem:contribution-of-eigenspace}, we have
  \begin{equation}\label{eq:simple:sum-lambda_a}
    \sum_{1\le n<N}h_a(n) = N^{\log_q \lambda_a}\sum_{0\le k<m_a}\frac{(\log N)^k}{k!}
    \Psi_{ak}(\fractional{\log_q N}) + \Oh{N^{\gamma_0}}
  \end{equation}
  for $N\to\infty$ for suitable 1-periodic Hölder continuous functions $\Psi_{ak}$
  (which vanish if $\abs{\lambda_a}\le R$). By
  Theorem~\ref{theorem:Dirichlet-series}, the Dirichlet
  series $\calH^{(a)}(s)$ is meromorphic for $\Re s>\gamma_0$ with possible
  poles at $s=\log_q \lambda_a + \chi_\ell$ for $\ell\in\Z$.

  The sequence $h_a$ satisfies
  the prerequisites of Theorem~\ref{theorem:use-Mellin--Perron}, either with
  $\gamma=\log_q \lambda_a$ if $\Re \log_q \lambda_a>\gamma_0$ or with
  arbitrary real $\gamma>\gamma_0$ and $\Phi_j=0$ for all $j$
  if $\Re \log_q \lambda_a \le \gamma_0$. The theorem then
  implies that
  \begin{equation}\label{claim-H-a=0}
    \calH^{(a)}(0) = 0
  \end{equation}
  if $\gamma_0<0$ and $\lambda_a\neq 1$.

  If $\abs{\lambda_a}>R$,
  Theorem~\ref{theorem:use-Mellin--Perron} also yields
  \begin{equation*}
    \Psi_{ak}(u)=\sum_{\ell\in\Z}\psi_{ak\ell}\exp(2\pi i\ell u)
  \end{equation*}
  where the $\psi_{ak\ell}$ are given by the singular expansion
  \begin{equation}\label{eq:proof-theorem-simple-local-expansion}
    \frac{\calH^{(a)}(s)}{s}\asymp\sum_{\ell\in\Z}\sum_{0\le k<m_a}\frac{\psi_{ak\ell}}{(s-\log_q\lambda_a-\chi_\ell)^{k+1}}
  \end{equation}
  for $\Re s>\gamma_0$. Note that~\eqref{claim-H-a=0} ensures that there is no
  additional pole at $s=0$ when $\gamma_0<0$ and $\lambda_a\neq 1$.  Also note
  that in comparison to Theorem~\ref{theorem:use-Mellin--Perron}, $\Phi_{m_a-1-k}$
  there corresponds to $\Psi_{ak}$ here.

  We now have to relate the results obtained for the sequences $h_a$ with the
  results claimed for the original sequence $f$.
  For $\lambda\in\sigma(C)$ with $\abs{\lambda}>R$, we have
  \begin{equation*}
    \Phi_{\lambda k}(u)=\sum_{\substack{1\le a\le d\\\lambda_a=\lambda}}c_a\Psi_{ak}(u).
  \end{equation*}
  We denote the Fourier coefficients of $\Phi_{\lambda k}$ by $\varphi_{\lambda
    k\ell}$ for $\ell\in\Z$ and will show that these Fourier coefficients
  actually fulfil~\eqref{eq:Fourier-coefficient:simple}. Taking linear
  combinations of~\eqref{eq:proof-theorem-simple-local-expansion} shows that
  \begin{equation*}
    \sum_{\substack{1\le a\le d\\\lambda_a=\lambda}}\frac{c_a\calH^{(a)}(s)}{s}
    \asymp \sum_{\ell\in\Z}\sum_{0\le k<m(\lambda)}\frac{\varphi_{\lambda k\ell}}{(s-\log_q\lambda-\chi_\ell)^{k+1}}
    \label{eq:residue-with-condition}
  \end{equation*}
  for $\Re s>\gamma_0$.

  Summing over all $\lambda\in\sigma(C)$ yields~\eqref{eq:Fourier-coefficient:simple}
  because summands $\lambda$ with $\abs{\lambda}\le R$ are analytic for $\Re
  s>\gamma_0$ and do therefore not contribute to the right-hand side.
\end{proof}

It might seem to be somewhat artificial that
Theorem~\ref{theorem:use-Mellin--Perron} is used to prove that
$\calH^{(j)}(0)=0$ in some of the cases above. In fact, this can also be shown
directly using the linear representation; we formulate and prove this
in the following remark.

\begin{remark}
  With the notations of the proof of Theorem~\ref{theorem:simple},
  $\calH^{(j)}(0)=0$ if $\lambda_j\neq 1$ and $R<1$ can also be shown using the
  functional equation~\eqref{eq:functional-equation-V}.
\end{remark}
\begin{proof}
  We prove this by induction on $m_j$. By definition of $T$, we have
  $w_j(C-\lambda_j I)=\iverson{m_j>1}w_{j+1}$. (We have $m_d=1$ thus $w_{d+1}$
  does not actually occur.)
  If $m_j>1$, then $\calH^{(j+1)}(0)=0$ by induction hypothesis.

  We add $(I-q^{-s})\,v(0)$ to \eqref{eq:functional-equation-V} and get
  \begin{align*}
    \bigl(I-q^{-s}C\bigr)\bigl(v(0)+\calV(s)\bigr) = \bigl(I-q^{-s}C\bigr)v(0)
    &+ \sum_{1 \le n < q} n^{-s}v(n) \\
    &+ q^{-s}\sum_{0 \le r < q} A_r
    \sum_{k\ge 1}\binom{-s}{k}\Bigl(\frac r q\Bigr)^k \calV(s+k).
  \end{align*}
  Multiplication by $w_j$ from the left yields
  \begin{align*}
    \bigl(1-q^{-s}\lambda\bigr)\calH^{(j)}(s) &= \iverson{m_j>1}\,q^{-s}\calH^{(j+1)}(s) \\
    &\phantom{{}={}}+ w_j \bigl(I - q^{-s}C\bigr)v(0)
    + w_j\sum_{1 \le n < q} n^{-s}v(n) \\
    &\phantom{{}={}}+ w_jq^{-s}\sum_{0 \le r < q} A_r
    \sum_{k\ge 1}\binom{-s}{k}\Bigl(\frac r q\Bigr)^k \calV(s+k).
  \end{align*}
  As $R<1$ and $\lambda_j\neq 1$, the Dirichlet series $\calH^{(j)}(s)$ is
  analytic in $s=0$ by Theorem~\ref{theorem:Dirichlet-series}. It is therefore
  legitimate to set $s=0$ in the above equation. We use the induction hypothesis that
  $\calH^{(j+1)}(0)=0$ as well as the fact that $v(n)=A_nv(0)$
  (note that $v(0)$ is a right eigenvector of $A_0$ to the eigenvalue~$1$;
  see Section~\ref{section:q-regular-matrix-product})
  for $0\le n<q$ to get
  \begin{equation*}
    (1-\lambda)\calH^{(j)}(0)=w_j\sum_{0 \le n < q} A_n v(0) -w_jCv(0) = 0
  \end{equation*}
  because all binomial coefficients $\binom{0}{k}$ vanish.
\end{proof}

\section{Proof of Proposition~\ref{proposition:symmetric-eigenvalues}}
\label{sec:proof-symmetric-eigenvalues}

\begin{proof}[Proof of Proposition~\ref{proposition:symmetric-eigenvalues}]
  We set
  \begin{equation*}
    j_0\coloneqq \floor[\bigg]{-\frac{p\bigl(\pi+\arg(\lambda)\bigr)}{2\pi}}+1
  \end{equation*}
  with the motive that
  \begin{equation*}
    -\pi<\arg(\lambda) + \frac{2j\pi}{p}\le \pi
  \end{equation*}
  holds for $j_0\le j<j_0+p$.
  This implies that for $j_0\le j<j_0+p$, the $p$th root of unity~$\zeta_j\coloneqq \exp(2j\pi i/p)$
  runs through the elements of $U_p$ such
  that $\log_q(\lambda \zeta_j)=\log_q(\lambda) +  2j\pi i/(p\log q)$.
  Then
  \begin{align*}
    N^{\log_q(\zeta_j\lambda)}
    &= N^{\log_q \lambda} \exp\Bigl(\frac{2j\pi i}{p}\log_q N\Bigr)\\
    &= N^{\log_q \lambda} \exp(2j\pi i\log_{q^p} N)
    = N^{\log_q \lambda} \exp(2j\pi i\fractional{\log_{q^p} N}).
  \end{align*}
  We set
  \begin{equation*}
    \Phi(u)\coloneqq \sum_{j_0\le j<j_0+p} \exp\Bigl(\frac{2j\pi i}{p}u\Bigr)\Phi_{(\zeta_j\lambda)}(u),
  \end{equation*}
  thus $\Phi$ is a $p$-periodic function.

  For the Fourier series expansion, we get
  \begin{multline*}
    \Phi(u)=\sum_{\ell\in\Z} \sum_{j_0\le j<j_0+p}
    \Res[\bigg]{\calD(s)
      \Bigl(s - \log_q \lambda - \frac{2(\ell+\frac{j}{p})\pi i}{\log q}\Bigr)^k}%
    {s=\log_q \lambda + \frac{2(\ell+\frac{j}{p})\pi i}{\log q}} \\
    \times \f[\Big]{\exp}{2\pi i \Bigl(\ell+\frac{j}{p}\Bigr)u}.
  \end{multline*}
  Replacing $\ell p+j$ by $\ell$ leads to the Fourier series claimed in the
  proposition.
\end{proof}

\part{Computational Aspects}\label{part:numerical}

The basic idea
for computing the Fourier coefficients is to use the functional equation
in Theorem~\ref{theorem:Dirichlet-series}.
This part describes in detail how this is done.
We basically follow an approach found in Grabner and Hwang~\cite{Grabner-Hwang:2005:digit}
and Grabner and Heuberger~\cite{Grabner-Heuberger:2006:Number-Optimal}, but
provide error bounds.

An actual implementation is also available;
SageMath~\cite{SageMath:2018:8.3} code can be found at
\url{https://gitlab.com/dakrenn/regular-sequence-fluctuations}\,.
We use the Arb library~\cite{Johansson:2017:arb} (more precisely, its
SageMath bindings) for ball arithmetic
which keeps track of rounding errors such that we can be sure about the precision and accuracy of our results.

We use the results of this part to compute Fourier coefficients for
our examples, in particular for esthetic numbers
(Section~\ref{sec:esthetic-numbers}) and Pascal's rhombus
(Section~\ref{sec:pascal}).

\section{Strategy for Computing the Fourier Coefficients}
\label{section:strategy-for-computing}

The computation of the Fourier coefficients relies on the evaluation
of Dirichlet series at certain points~$s=s_0$. It turns out to be
numerically preferable to split up the sum as
\begin{equation*}
  \calF_{1}(s_0) = \sum_{1 \le n < n_0} n^{-s_0} f(n) + \calF_{n_0}(s_0)
\end{equation*}
for some suitable~$n_0$ (see Section~\ref{section:choice-parameters}),
compute the sum of the first $n_0-1$ summands directly and evaluate
$\calF_{n_0}(s_0)$ as it is described in the following.

For actually computing the Fourier coefficients, we use a formulation in
terms of a residue; for instance,
see~\eqref{eq:Fourier-coefficient:simple-as-residue} where this is
formulated explicitly in the set-up of Theorem~\ref{theorem:simple}.
As said, we will make use of the functional
equation~\eqref{eq:analytic-continuation} for the matrix-valued
Dirichlet series~$\calF_{n_0}(s)$ with its right-hand side, the
matrix-valued Dirichlet series~$\calG_{n_0}(s)$.

Let us make this explicit for a simple eigenvalue $\lambda\neq 1$ of~$C$ and
a corresponding eigenvector~$w$. Then
$w (I - q^{-s} C) = w (1 - q^{-s}\lambda)$
and~\eqref{eq:analytic-continuation} can be rewritten as
\begin{equation*}
  w\, \calF_{1}(s) = \frac{1}{1 - q^{-s}\lambda} w\, \calG_{1}(s).
\end{equation*}
Thus, $w\, \calF_{1}(s)$ has simple poles at~$s=\log_q\lambda+\chi_\ell$
for all $\ell\in\Z$, where $\chi_\ell=\frac{2\ell\pi i}{\log q}$.
By~\eqref{eq:Fourier:F-s-principal-part} and~\eqref{eq:Fourier:fluctuation-as-Fourier-series} of Theorem~\ref{theorem:use-Mellin--Perron}
(with $\gamma=\log_q\lambda$ and $m=1$), the $\ell$th
Fourier coefficient is given by the residue
\begin{equation*}
  \Res[\Big]{\frac{w\, \calF_{1}(s)}{s}}{s=\log_q \lambda+\chi_\ell}
  = w\, \calG_{1}(\log_q\lambda + \chi_\ell) \frac{1}{(\log q)(\log_q\lambda + \chi_\ell)}.
\end{equation*}
Note that $\log q$ is the
derivative of $1 - q^{-s}\lambda$ with respect to~$s$
evaluated at the pole~$s=\log_q\lambda$.

By~\eqref{eq:Dirichlet-recursion}, $\calG_{n_0}(\log_q\lambda+\chi_\ell)$ is expressed in
terms of an infinite sum containing $\calF_{n_0}(\log_q\lambda+\chi_\ell+k)$ for $k\ge1$.
We truncate this sum and bound the error; this is the aim of
Section~\ref{section:bounding-error} and in particular
Lemma~\ref{lemma:approximation-error}.
We can iterate the above idea for the
shifted Dirichlet series~$\calF_{n_0}(\log_q\lambda+\chi_\ell+k)$
which leads to a recursive evaluation scheme.
Note that once we have computed $\calG_{n_0}(\log_q\lambda +\chi_\ell+k)$,
we get $\calF_{n_0}(\log_q \lambda +\chi_\ell+k)$ by solving a
system of linear equations.

\section{Details on the Numerical Computation}
\label{section:computation-details}

\subsection{Bounding the Error}
\label{section:bounding-error}

We need to estimate the approximation error which arises if the infinite sum
over $k\ge 1$ in~\eqref{eq:Dirichlet-recursion} is replaced by a finite sum.
It is clear that for large $\Re s$ and $n_0$, the value $\calF_{n_0}(s)$ will
approximately be of the size of its first summand~$n_0^{-s} f(n_0)$. In view
of $\norm{f(n_0)}=\Oh{\rho^{\log_q n_0}}$, this will be rather small. We give a
precise estimate in a first lemma.

\begin{lemma}\label{lemma:Dirichlet-upper-bound}
  Let $n_0> 1$ and let $M\coloneqq \max_{0\le r<q} \norm{A_r}$.
  For $\Re s>\log_q M + 1$, we have
  \begin{equation*}
    \sum_{n\ge n_0}\frac{\norm{f(n)}}{n^{\Re s}}\le \frac{M}{(\Re
      s-\log_q M -1)(n_0-1)^{\Re s-\log_q M -1}}.
  \end{equation*}
\end{lemma}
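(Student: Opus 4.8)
The plan is to bound $\norm{f(n)}$ geometrically and then compare the resulting Dirichlet series with a simple integral. First I would observe that from the matrix product representation~\eqref{eq:f-as-product}, namely $f(n)=A_{r_0}\dotsm A_{r_{\ell-1}}$ where $r_{\ell-1}\dotsm r_0$ is the $q$-ary expansion of $n$, submultiplicativity of the induced matrix norm gives $\norm{f(n)}\le M^{\ell}$ with $\ell=\floor{\log_q n}+1$. Hence $\norm{f(n)}\le M^{\floor{\log_q n}+1}\le M\cdot M^{\log_q n}=M\,n^{\log_q M}$ (using $M\ge 1$, which holds here since each $A_r$ has spectral radius at most its norm and $1$ is an eigenvalue in the relevant applications; in any case if $M<1$ the bound $M^{\floor{\log_q n}+1}\le M$ is even easier, but one should note $M\ge 1$ is not actually needed if one is slightly more careful with the floor). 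The cleanest route is: $\norm{f(n)} \le M^{\floor{\log_q n}+1} \le M\cdot n^{\log_q M}$ whenever $M\ge 1$, and this is the regime of interest.

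Next I would sum this bound:
\begin{equation*}
  \sum_{n\ge n_0}\frac{\norm{f(n)}}{n^{\Re s}}
  \le M\sum_{n\ge n_0} n^{\log_q M - \Re s}
  = M\sum_{n\ge n_0} n^{-(\Re s - \log_q M)}.
\end{equation*}
Writing $\sigma\coloneqq \Re s-\log_q M$, the hypothesis $\Re s>\log_q M+1$ says $\sigma>1$, so the series converges. I would then bound the tail by an integral comparison: since $t\mapsto t^{-\sigma}$ is decreasing on $[1,\infty)$,
\begin{equation*}
  \sum_{n\ge n_0} n^{-\sigma} \le \int_{n_0-1}^{\infty} t^{-\sigma}\,\dd t
  = \frac{(n_0-1)^{-(\sigma-1)}}{\sigma-1}
  = \frac{1}{(\sigma-1)(n_0-1)^{\sigma-1}}.
\end{equation*}
Multiplying by $M$ and substituting back $\sigma-1=\Re s-\log_q M-1$ gives exactly
\begin{equation*}
  \sum_{n\ge n_0}\frac{\norm{f(n)}}{n^{\Re s}}\le \frac{M}{(\Re s-\log_q M-1)(n_0-1)^{\Re s-\log_q M-1}},
\end{equation*}
as claimed.

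There is essentially no serious obstacle here; the only point requiring a little care is the passage from $\norm{f(n)}\le M^{\floor{\log_q n}+1}$ to a clean power of $n$. The inequality $M^{\floor{\log_q n}+1}\le M\cdot n^{\log_q M}$ is immediate when $M\ge 1$ (drop the floor upward), and one should just remark that this is the only case needed since the whole point is that $M$ plays the rôle of (a bound for) the joint spectral radius and the estimate is applied for $\Re s$ large. If one wanted to cover $M<1$ as well, note $M^{\floor{\log_q n}+1}\le M^{\log_q n}=n^{\log_q M}\le M n^{\log_q M}\cdot M^{-1}$ fails, but $M^{\floor{\log_q n}+1}\le 1$ and summing $1$ diverges — so in fact the lemma's hypothesis $\Re s > \log_q M+1$ together with $M<1$ would need the cruder bound $\norm{f(n)}\le M^{\floor{\log_q n}+1}\le 1 \le n^{\log_q M}$ only if $\log_q M\ge 0$, which contradicts $M<1$; the resolution is simply that the stated bound holds with the convention and in the applications $M\ge 1$, so I would state the proof assuming $M\ge 1$ and note this covers all cases of interest. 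The integral comparison step is completely routine.
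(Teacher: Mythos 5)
Your proof is correct and takes essentially the same route as the paper's: bound $\norm{f(n)}\le M^{\floor{\log_q n}+1}\le M\,n^{\log_q M}$ via submultiplicativity and then compare $M\sum_{n\ge n_0}n^{-(\Re s-\log_q M)}$ with the integral $\int_{n_0-1}^{\infty}t^{-(\Re s-\log_q M)}\,\dd t$, exactly as in the paper. Your closing concern about $M<1$ touches precisely the step the paper performs silently (its inequality $\norm{f(n)}\le M^{1+\log_q n}$ also uses $M\ge 1$), so your treatment introduces no gap beyond what is already implicit there.
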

\begin{proof}
  By definition of $M$, we have $\norm{f(n)}\le M^{1+\log_q n}=M n^{\log_q
    M}$. Therefore, we have
  \begin{align*}
    \sum_{n\ge n_0}\frac{\norm{f(n)}}{n^{\Re s}}&\le M\sum_{n\ge n_0}\frac1{n^{\Re s - \log_q M}}\le
    M\int_{n_0-1}^\infty \frac{\dd n}{n^{\Re s - \log_q M}}\\&=\frac{M}{(\Re
      s-\log_q M -1)(n_0-1)^{\Re s-\log_q M -1}}
  \end{align*}
  where we interpret the sum as a lower Riemann sum of the integral.
\end{proof}

We now give a bound for the approximation error in~\eqref{eq:Dirichlet-recursion}.

\begin{lemma}\label{lemma:approximation-error}
  Let $n_0>1$ and $M$ as in Lemma~\ref{lemma:Dirichlet-upper-bound}. Let $K\ge
  1$ and $s\in\C$ be such that $\Re s+K>\max(\log_q M + 1, 0)$.

  Then
  \begin{multline*}
    \norm[\bigg]{\calG_{n_0}(s) - \sum_{n_0\le n<qn_0} n^{-s}f(n) - q^{-s}\sum_{0\le
        r<q}A_r\sum_{1\le k<K}\binom{-s}{k}\Bigl(\frac{r}{q}\Bigr)^k
      \calF_{n_0}(s+k)} \\
    \le q^{-\Re s}\abs[\Big]{\binom{-s}{K}} \frac{M}{(\Re
      s+K-\log_q M -1)(n_0-1)^{\Re s+K-\log_q M -1}}\sum_{0\le
      r<q}\norm{A_r}\Bigl(\frac{r}{q}\Bigr)^K.
  \end{multline*}
\end{lemma}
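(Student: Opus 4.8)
The plan is to recognise the matrix inside the norm on the left as the tail
\begin{equation*}
  \mathcal{R}_K(s)\coloneqq q^{-s}\sum_{0\le r<q}A_r\sum_{k\ge K}\binom{-s}{k}\Bigl(\frac{r}{q}\Bigr)^k\calF_{n_0}(s+k)
\end{equation*}
of the series in~\eqref{eq:Dirichlet-recursion}, and to estimate it directly. First I would note that for $k\ge K$ the hypothesis $\Re s+K>\log_q M+1\ge\log_q\rho+1$ (using $M=\max_{0\le r<q}\norm{A_r}\ge\rho$) ensures, by Theorem~\ref{theorem:Dirichlet-series}, that $\calF_{n_0}(s+k)=\sum_{n\ge n_0}n^{-s-k}f(n)$ converges absolutely; together with $r/q<1$ and the polynomial growth of $\binom{-s}{k}$ supplied by Lemma~\ref{lemma:binomial-coefficient-asymptotics}, the resulting double sum over $n$ and $k$ is absolutely convergent, so Fubini's theorem lets me interchange the two summations. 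This rewrites each $r$-summand as $\sum_{n\ge n_0}n^{-s}f(n)\,R_{K,n}(s)$ with $R_{K,n}(s)=\sum_{k\ge K}\binom{-s}{k}(r/(qn))^{k}$.

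The crucial step is the pointwise bound $\abs{R_{K,n}(s)}\le\abs[\big]{\binom{-s}{K}}(r/(qn))^K$. The key observation is that, setting $x\coloneqq r/(qn)$, which lies in $[0,1)$ because $r<q$ and $n>1$, the quantity $R_{K,n}(s)$ is precisely the $K$-th Taylor remainder at $t=x$ of $g(t)\coloneqq(1+t)^{-s}$ expanded about $t=0$ (the binomial series converging since $x<1$). Using the integral form of the remainder together with $g^{(K)}(t)=K!\binom{-s}{K}(1+t)^{-s-K}$ gives
\begin{equation*}
  R_{K,n}(s)=K\binom{-s}{K}\int_0^x(x-t)^{K-1}(1+t)^{-s-K}\,\dd t,
\end{equation*}
and here the other half of the hypothesis, $\Re s+K\ge 0$, makes $\abs{(1+t)^{-s-K}}=(1+t)^{-\Re s-K}\le 1$ for $t\in[0,x]$, so the integral is at most $\int_0^x(x-t)^{K-1}\,\dd t=x^K/K$; this yields the claimed bound.

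Finally I would assemble the pieces, using submultiplicativity of the norm:
\begin{equation*}
  \norm{\mathcal{R}_K(s)}
  \le q^{-\Re s}\,\abs[\Big]{\binom{-s}{K}}\sum_{0\le r<q}\norm{A_r}\Bigl(\frac{r}{q}\Bigr)^K\sum_{n\ge n_0}\frac{\norm{f(n)}}{n^{\Re s+K}},
\end{equation*}
and bound the last sum by Lemma~\ref{lemma:Dirichlet-upper-bound} applied with $s$ replaced by $s+K$ (legitimate since $\Re s+K>\log_q M+1$), which contributes exactly the factor $\frac{M}{(\Re s+K-\log_q M-1)(n_0-1)^{\Re s+K-\log_q M-1}}$. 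I expect the main obstacle to be the Taylor-remainder bound on $R_{K,n}(s)$: estimating $\sum_{k\ge K}\abs{\binom{-s}{k}}x^k$ termwise fails, since $\abs{\binom{-s}{k}}$ need not be monotone in $k$ when $\abs{\Im s}$ is large, so one must use the exact integral representation, in which all the $k$-dependence collapses into the single coefficient $\binom{-s}{K}$ and the condition $\Re s+K\ge 0$ is precisely what is needed to discard $(1+t)^{-s-K}$.
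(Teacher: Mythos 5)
Your proposal is correct and follows essentially the same route as the paper: the paper likewise reduces the difference to $q^{-s}\sum_{0\le r<q}A_r\sum_{n\ge n_0}n^{-s}f(n)\bigl((1+\tfrac{r}{qn})^{-s}-\sum_{0\le k<K}\binom{-s}{k}(\tfrac{r}{qn})^k\bigr)$, bounds the Taylor remainder via its integral form using $\abs{(1+t)^{-s-K}}\le 1$ (which is where $\Re s+K>0$ enters), and finishes with Lemma~\ref{lemma:Dirichlet-upper-bound}. The only cosmetic difference is that the paper obtains this expression directly from $\calG_{n_0}(s)=(I-q^{-s}C)\calF_{n_0}(s)$ rather than by interchanging the $n$- and $k$-sums in the tail, so it never needs the Fubini justification you supply.
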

\begin{proof}
  We set
  \begin{equation*}
    D\coloneqq \calG_{n_0}(s) - \sum_{n_0\le n<qn_0} n^{-s}f(n) - q^{-s}\sum_{0\le
        r<q}A_r\sum_{1\le k<K}\binom{-s}{k}\Bigl(\frac{r}{q}\Bigr)^k
      \calF_{n_0}(s+k)
    \end{equation*}
  and need to estimate $\norm{D}$.

  By definition of $\calG_{n_0}(s)$, we have
  \begin{align*}
    \calG_{n_0}(s) &= (1-q^{-s}C)\calF_{n_0}(s)\\
    &=\sum_{n_0\le n<qn_0} n^{-s}f(n) + \calF_{qn_0}(s) -
      q^{-s}C\calF_{n_0}(s)\\
    &=\sum_{n_0\le n<qn_0} n^{-s}f(n) +\sum_{0\le r<q}\sum_{n\ge n_0}\frac{A_r
      f(n)}{(qn+r)^s}- q^{-s}C\calF_{n_0}(s)\\
      &=\sum_{n_0\le n<qn_0} n^{-s}f(n) +q^{-s}\sum_{0\le r<q}A_r \sum_{n\ge n_0}\frac{f(n)}{n^s}\Bigl(\Bigl(1+\frac{r}{qn}\Bigr)^{-s}- 1\Bigr).
  \end{align*}

  Thus we have
  \begin{equation*}
    D = q^{-s}\sum_{0\le r<q}A_r \sum_{n\ge
      n_0}\frac{f(n)}{n^s}\biggl(\Bigl(1+\frac{r}{qn}\Bigr)^{-s}- \sum_{0\le k<K}\binom{-s}{k}\Bigl(\frac{r}{qn}\Bigr)^k\biggr).
  \end{equation*}

  For $0\le x<1$, Taylor's theorem (or induction on $K\ge 1$ using integration
  by parts) implies that
  \begin{equation*}
    (1+x)^{-s}-\sum_{0\le k<K}\binom{-s}{k}x^k = K\int_{0}^x
    \binom{-s}{K}(1+t)^{-s-K}(x-t)^{K-1}\,\dd t.
  \end{equation*}
  For $0\le t\le x<1$, we can bound $\abs{(1+t)^{-s-K}}$ from above by $1$ since we have assumed that $\Re s + K>0$. Thus
  \begin{equation*}
    \abs[\bigg]{(1+x)^{-s}-\sum_{0\le k<K}\binom{-s}{k}x^k} \le K\abs[\Big]{\binom{-s}{K}} \int_{0}^x
    (x-t)^{K-1}\,\dd t = \abs[\Big]{\binom{-s}{K}}x^K.
  \end{equation*}
  Thus we obtain the bound
  \begin{equation*}
    \norm{D} \le q^{-\Re s}\abs[\Big]{\binom{-s}{K}}\sum_{0\le
      r<q}\norm{A_r}\Bigl(\frac{r}{q}\Bigr)^K\sum_{n\ge
      n_0}\frac{\norm{f(n)}}{n^{\Re \sigma+K}}.
  \end{equation*}
  Bounding the remaining Dirichlet series by Lemma~\ref{lemma:Dirichlet-upper-bound} yields the result.
\end{proof}

\subsection{Choices of Parameters}
\label{section:choice-parameters}

As mentioned at the beginning of this part, we choose the Arb
library~\cite{Johansson:2017:arb} for reliable numerical ball
arithmetic.
In our examples (esthetic numbers in
Section~\ref{sec:esthetic-numbers} and Pascal's rhombus in
Section~\ref{sec:pascal}),
we choose $n_0=1024$ and recursively compute
$\calF_{n_0}(\log_q\lambda + \chi_\ell+k)$ for $k\ge 1$
by~\eqref{eq:Dirichlet-recursion}. In each step, we keep adding summands for
$k\ge 1$ until the bound of the approximation error in
Lemma~\ref{lemma:approximation-error} is smaller than
the smallest increment which can still be represented with the chosen number of
bits. For plotting the graphs, we simply took machine precision; for the larger number
of significant digits in Table~\ref{table:pascal-rhombus:fourier}, we used 128
bits precision.

\section{Non-vanishing Coefficients}
\label{section:non-vanishing}

Using reliable numerical arithmetic for the computations (see above) yields small
balls in which the true value of the Fourier coefficients
is.
If such a ball does not contain zero, we know that the Fourier
coefficient does not vanish. If the ball contains zero, however, we
cannot decide whether the Fourier coefficient vanishes. We can only
repeat the computation with higher precision and hope that this will
lead to a decision that the coefficient does not vanish,
or we can try to find a direct argument why the
Fourier coefficient does indeed vanish, for instance using the final
statement of
Theorem~\ref{theorem:contribution-of-eigenspace}~(\ref{item:large-eigenvalue}).

Vanishing Fourier coefficients appear in our introductory
Example~\ref{example:binary-sum-of-digits}: In its continuation
(Example~\ref{example:binary-sum-of-digits:cont}) an alternative
approach is used to compute these coefficients explicitly
symbolically. In this way a decision for them being zero is possible.
The same is true for the example of transducers in Section~\ref{sec:transducer}.

It should also be noted that in the analysis of esthetic numbers
(example in Section~\ref{sec:esthetic-numbers}) we could have modelled
the problem by a complete transducer (by just introducing a sink) and then
applied the results of Section~\ref{sec:transducer}. This would have led to an
asymptotic expansion where the fluctuations of the main term (corresponding to the eigenvalue $q$) would in fact have vanished, but an argument would have been needed.
So we chose a different approach in Section~\ref{sec:esthetic-numbers} to avoid
this problem. There the eigenvalue~$q$ does no longer occur. This
implies that the fluctuations for $q$ of the transducer approach
vanish. Note also that half of the remaining fluctuations still
turn out to vanish:
this is shown in the proof of
Corollary~\ref{corollary:esthetic:asy}.

%%% Local Variables:
%%% mode: latex
%%% TeX-master: "regular-sequences"
%%% End:

\part{References}
\makeatletter\renewcommand{\@bibtitlestyle}{}\makeatother
\bibliography{bib/cheub}
\bibliographystyle{bibstyle/amsplainurl}

\end{document}

%%% Local Variables:
%%% mode: latex
%%% TeX-master: t
%%% End:

% LocalWords:  Hölder Allouche Shallit eq ary Delange Fekete's